\newcommand{\T}{\mathbb{T}} %% torus
\newcommand{\C}{\mathbb{C}} %% complex numbers
\newcommand{\D}{\mathbb{D}} %% unit disk
\newcommand{\R}{\mathbb{R}}
\newcommand{\Z}{\mathbb{Z}} %% integers
\newcommand{\cD}{\overline{\D}} %% closed disk
\newcommand{\ip}[2]{\langle #1, #2 \rangle}
\newcommand{\refl}[1]{\tilde{#1}}
\newcommand{\mcE}{\mathcal{E}}
\newcommand{\mcF}{\mathcal{F}}
\newcommand{\mcP}{\mathcal{P}}
\newcommand{\mcA}{\mathcal{A}}
\newcommand{\mcG}{\mathcal{G}}
\newcommand{\mcH}{\mathcal{H}}
\newcommand{\mcK}{\mathcal{K}}
\renewcommand{\Re}{\text{Re}}
\renewcommand{\Im}{\text{Im}}
\newcommand{\Hrow}{H_{1\times n}^2}
\newcommand{\defn}{\overset{\text{def}}{=}}
\newcommand{\Lp}{L^2(\frac{d\sigma}{|p|^2})}
\newcommand{\Lq}{L^2(\frac{d\sigma}{|q|^2})}
\newcommand{\udots}{\text{\reflectbox{$\ddots$}}}
\newcommand\xqed[1]{%
  \leavevmode\unskip\penalty9999 \hbox{}\nobreak\hfill
  \quad\hbox{#1}}
\newcommand\eox{\xqed{$\blacklozenge$}}
\numberwithin{equation}{section}
\newtheorem{theorem}{Theorem}[section]
\newtheorem{prop}[theorem]{Proposition}
\newtheorem{corollary}[theorem]{Corollary}
\newtheorem{lemma}[theorem]{Lemma}
\newtheorem{introthm}{Theorem}
\newtheorem{question}{Question}
\theoremstyle{definition}
\newtheorem{definition}[theorem]{Definition}
\newtheorem{notation}[theorem]{Notation}
\newtheorem{example}[theorem]{Example}
\newtheorem*{question*}{Question}
\newtheorem{remark}[theorem]{Remark}
\title{Integrability and regularity of rational functions}
\author{Greg Knese}
\address{Washington University in St. Louis\\ Department of
  Mathematics\\ St. Louis, Missouri 63130}
\email{geknese@math.wustl.edu}
\date{\today}
\thanks{Partially supported by NSF grant DMS-1363239}
\keywords{bidisk, bidisc, polydisk, polydisc, Agler decomposition,
  rational functions, non-tangential convergence}
\subjclass[2010]{Primary 26C; Secondary 47A57, 46C05, 32A40, 30C15}
\begin{document}

\begin{abstract}
  Motivated by recent work in the mathematics and engineering
  literature, we study integrability and non-tangential regularity on
  the two-torus for rational functions that are holomorphic on the
  bidisk.  One way to study such rational functions is to fix the
  denominator and look at the ideal of polynomials in the numerator
  such that the rational function is square integrable.  A concrete
  list of generators is given for this ideal as well as a precise
  count of the dimension of the subspace of numerators with a
  specified bound on bidegree.  The dimension count is accomplished by
  constructing a natural pair of commuting contractions on a finite
  dimensional Hilbert space and studying their joint generalized
  eigenspaces.

  Non-tangential regularity of rational functions on the polydisk is
  also studied.  One result states that rational inner functions on
  the polydisk have non-tangential limits at \emph{every} point of the
  $n$-torus.  An algebraic characterization of higher non-tangential
  regularity is given.  We also make some connections with the earlier
  material and prove that rational functions on the bidisk which are
  square integrable on the two-torus are non-tangentially bounded at
  every point.  Several examples are provided.

\end{abstract}

\maketitle

\newpage

\tableofcontents 

\section{Introduction}
This paper is about integrability and boundary regularity properties
of rational functions in several variables.  Although this sounds like
well-traveled territory, the questions we are interested in seem to
have no general theory for systematically addressing them.  The paper
focuses on rational functions that are holomorphic on the bidisk 
\[
\D^2 \defn \{(z_1,z_2)\in \C^2: |z_1|,|z_2|<1\}
\]
and their behavior on or near
the distinguished boundary 
\[
\T^2 \defn \{z \in \C^2: |z_1|=|z_2|=1\}.
\]
Our questions are:

\begin{question} \label{q2} For fixed $p\in \C[z_1,z_2]$ with no zeros
  on $\D^2$, is there an algebraic characterization of the
  ideal
\[
\mathcal{I}_p \defn \{q \in \C[z_1,z_2]: q/p \in L^2(\T^2)\}?
\]
Namely, can a finite list of generators be explicitly described?
\end{question}

\begin{question} \label{q3} For fixed $p\in \C[z_1,z_2]$ with no zeros
  in $\D^2$, what is the dimension of
\[
\mcP_{j,k} \defn \{q\in \C[z_1,z_2]: q/p \in L^2(\T^2), \deg q \leq (j,k)\}?
\]
Here $\deg q$ refers to the bidegree of $q$.
\end{question}

\begin{question} \label{q1} When does a rational function $q/p$ on
  $\D^2$ possess a limit as $z\in \D^2 \to \zeta \in \T^2$
  non-tangentially? When does $q/p$ possess higher non-tangential
  regularity?
\end{question}

Readers can certainly imagine many other natural variations on these
questions---change the domain, change the regularity or integrability
conditions---but already these questions are rich.  After applying a
Cayley transform, many of these questions can be converted to
questions about rational functions on a product of upper half-planes
where the boundary of interest is now simply $\R^2$.  For local issues
this makes little difference, but for more global questions having a
compact boundary is important and in particular makes Question
\ref{q2} sensible ($\mathcal{I}_p$ is not an ideal if we replace $\T^2$ with $\R^2$).

Although these questions are certainly fundamental in nature, why are
they worthy of in-depth study?  There are several reasons.

In the engineering literature, there is interest in understanding
``non-essential singularities of the second kind'' of rational
functions.  These are singularities on $\T^2$ where both the numerator
and denominator vanish (assuming they have no factors in common).  An
early influential paper on this was Goodman \cite{goodman} which
studied when the Fourier coefficients of $q/p$ are in $\ell^1$
(bounded-input-bounded-output stability) or $\ell^2$ (square-summable
impulse response) or $\ell^\infty$ (bounded impulse response).  In
particular, a detailed study was given of the examples
\[
G_1(z) = \frac{(1-z_1)^8(1-z_2)^8}{2-z_1-z_2} \quad 
G_2(z) = \frac{(1-z_1)(1-z_2)}{2-z_1-z_2}
\]
\[
G_3(z) = \frac{2}{2-z_1-z_2}.
\]
As shown by computations in \cite{goodman}, $G_1,G_2$ are bounded in
$\D^2$; the Fourier coefficients of $G_1$ are in $\ell^1$; the Fourier
coefficients of $G_2$ are in $\ell^2 \setminus \ell^1$; the Fourier
coefficients of $G_3$ are in $c_0 \setminus \ell^2$.  Using the
techniques presented here it is possible to prove these facts more
systematically.  The recent paper \cite{KM} studies certain 2D linear
systems where singularities on $\T^2$ are forced by the structure at
hand; an example is given to vehicle platooning.  See \cite{KM} for
further references in the engineering literature.

In the mathematics literature, rational functions on $\D^2$ with
singularities on $\T^2$ play a role in several places in complex
analysis, essentially as important extremal functions or illustrative
examples.  In \cite{KneseSchwarz} they appear as the functions
satisfying equality in a certain version of the Schwarz lemma on the
polydisk.  In \cite{AM3point} they appear as solutions of a three
point interpolation problem for bounded holomorphic functions on
$\D^2$.  In \cite{dirichlet}, polynomials with no zeros on $\D^2$ and
some zeros on $\T^2$ appear in a characterization of cyclic
polynomials for Dirichlet type spaces on the bidisk.  A major impetus
for the present paper is our previous study of a certain class of
rational functions called rational inner functions on the bidisk
\cite{KneseAPDE} where the goal was to understand all rational inner
functions and not just the regular ones (those extending analytically
past $\cD^2$) as in the important work \cite{GW}.  This distinction
plays a role in \cite{Scheinker}, a paper about interpolation problems
on the polydisk, where certain theorems are only proven for regular
rational inner functions.  Question \ref{q1} is related to the work in
\cites{AMYcara} where non-tangential convergence is studied for
general bounded analytic functions on the bidisk, and rational
functions appear as important examples.  The paper \cite{AMhankel} is
also relevant.

Although it is something of an aside, these issues are also relevant
in some problems in dynamics, specifically in the study of algebraic
$\Z^d$-actions as in \cites{Lind1, Lind2}.  While the requirement of
non-vanishing in $\D^d$ does not seem to be relevant in this context,
the integrability properties of rational functions on $\T^d$ do seem
to be of interest.  The example $p(z) = 2-z_1-z_2$ makes an appearance
as Example 7.2 of \cite{Lind1} and Example 4.3 of \cite{Lind2}.  They
point out that $G_3$ (or just $1/p$) is in $L^1(\T^2)$ and
$(z_1-1)^3/p(z)$ has absolutely convergent Fourier series.

All of this serves to point out that regularity/integrability of
rational functions on the torus and on the polydisk plays an important
role in a number of contexts, yet there does not seem an associated
theory for addressing it.
We shall give a sampling of our answers to Questions \ref{q2},\ref{q3},\ref{q1}
here in the introduction, and leave more complete answers to later
sections.

Question \ref{q2} is answered directly by giving a finite list of
generators of the ideal $\mathcal{I}_p$; see Theorem \ref{generators}.
The list is too technical for the introduction, so as a temporary
replacement we point out a characterization using an inequality.  If
$p \in \C[z_1,z_2]$ has bidegree $(n,m)$, its reflection is given by
\[
\refl{p}(z) \defn z_1^n z_2^m \overline{p(1/\bar{z}_1,1/\bar{z}_2)}.
\]

\begin{introthm} \label{intthmineq} Assume $p \in \C[z_1,z_2]$ has no zeros
  in $\D^2$ and assume $p$ and $\refl{p}$ have no common factors.  Let
  $q \in \C[z_1,z_2]$.
  Then, $q \in \mathcal{I}_p$ if
  and only if there is a  constant $c>0$ such that
\[
|q(z)|^2 \leq c((n+m)|p(z)|^2 - 2 \Re[(z_1 \partial_1
p(z)+z_2\partial_2 p(z))\overline{p(z)}])
\]
for $z \in \T^2$.  Here $(n,m) = \deg p$.
\end{introthm}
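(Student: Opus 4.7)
The plan is to rewrite the right-hand side as a pointwise squared-norm sum drawn from an Agler/Geronimo--Woerdeman type sum-of-squares decomposition of $|p|^2 - |\refl{p}|^2$, and then to extract the equivalence by combining that identity with the explicit list of generators of $\mathcal{I}_p$ provided by Theorem \ref{generators}.

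I first rewrite the right-hand side. Since $p$ is stable on $\D^2$ and coprime to $\refl{p}$, one has an SOS decomposition
\[
|p(z)|^2 - |\refl{p}(z)|^2 = (1-|z_1|^2)\|E_1(z)\|^2 + (1-|z_2|^2)\|E_2(z)\|^2
\]
on $\cbi$ for some vector polynomials $E_1, E_2$. Two elementary computations on $\T^2$ feed into the reduction. First, using $\partial_{r_j} = z_j\partial_j + \bar z_j \bar\partial_j$ on $\T^2$ together with the holomorphy of $p$, one finds $\partial_{r_j}|p|^2 = 2\Re[z_j\partial_j p\cdot\overline{p}]$ on $\T^2$. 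Second, from the torus identity $\refl{p}(z) = z_1^n z_2^m \overline{p(z)}$, a short calculation (differentiating and re-multiplying by $\overline{\refl{p}}$) yields the degree identity $\partial_{r_j}|p|^2 + \partial_{r_j}|\refl{p}|^2 = 2\deg_j(p)\,|p|^2$ on $\T^2$. Differentiating the SOS decomposition radially at $\T^2$ (and using that $(1-|z_j|^2)$ vanishes there) gives $\partial_{r_j}(|p|^2 - |\refl{p}|^2) = -2\|E_j\|^2$; combining with the degree identity produces $\partial_{r_j}|p|^2 = \deg_j(p)|p|^2 - \|E_j\|^2$ on $\T^2$, and summing over $j$ delivers
\[
(n+m)|p|^2 - 2\Re[(z_1\partial_1 p + z_2\partial_2 p)\overline{p}] = \|E_1(z)\|^2 + \|E_2(z)\|^2 \quad \text{on } \T^2.
\]

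Next I would invoke Theorem \ref{generators} to assert that the scalar entries of $E_1$ and $E_2$ form a generating set for the ideal $\mathcal{I}_p$. With this in hand, both directions of the equivalence follow from pointwise arguments on $\T^2$. For sufficiency, suppose $|q|^2 \leq c(\|E_1\|^2+\|E_2\|^2)$ on $\T^2$; then at points where $p$ does not vanish, $|q/p|^2 \leq c\sum_i|e_i/p|^2$, and since each generator $e_i\in\mathcal{I}_p$, integration over $\T^2$ gives $q/p\in L^2(\T^2)$. Conversely, if $q\in\mathcal{I}_p$, write $q=\sum_i h_i e_i$ with polynomial coefficients $h_i$ (using the generating property), and apply Cauchy--Schwarz pointwise: $|q|^2 \leq \bigl(\sum_i|h_i|^2\bigr)\bigl(\sum_i|e_i|^2\bigr)$, and compactness of $\T^2$ bounds the first factor by a constant. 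The main obstacle is step two: showing the $E_j$ entries indeed generate $\mathcal{I}_p$. This rests entirely on Theorem \ref{generators}, whose proof leverages the coprimality of $p$ and $\refl{p}$ decisively (without it the SOS decomposition and the ideal both lose their tight structure). By comparison, the degree identity in the first step is routine.
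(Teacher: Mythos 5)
Your argument is correct and is essentially the paper's own proof (Proposition \ref{EonT} plus Corollary \ref{intthmcor}): rewrite the right-hand side on $\T^2$ as the pointwise squared norm of an Agler pair (your radial-differentiation of the sum-of-squares identity is the same computation as the paper's radial limit), then combine Theorem \ref{generators} with Cauchy--Schwarz for necessity and integrability of the generators for sufficiency. The one slip is that Theorem \ref{generators} gives the entries of $\vec{E}_1,\vec{F}_1,\vec{F}_2$ as generators of $\mathcal{I}_p$, not the entries of a single Agler pair $(E_1,E_2)$; this is harmless because $|\vec{F}_1|=|\vec{E}_1|$ and $|\vec{E}_2|=|\vec{F}_2|$ on $\T^2$ and both canonical pairs realize the same right-hand side there, so your Cauchy--Schwarz step still lands on a constant multiple of the stated bound.
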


This is given as Corollary \ref{intthmcor}.
We also study the ideal
\[
\mathcal{I}^{\infty}_{p} \defn \{q \in \C[z_1,z_2]: q/p \in L^{\infty}(\T^2) \}
\]
and construct one variable polynomials $g(z_1), h(z_2)$ such that
$gh\mathcal{I}_p \subset \mathcal{I}_p^{\infty}$ in Section
\ref{secinf}.

Question \ref{q3} can be answered directly.  

\begin{introthm} \label{intthmdim} Let $p \in \C[z_1,z_2]$ have no zeros in
  $\D^2$ and assume $p$ and $\refl{p}$ have no common factors.  Let
  $N_{\T^2}(p,\refl{p})$ denote the number of common zeros of $p$ and
  $\refl{p}$ on $\T^2$ where zeros are counted with appropriate
  multiplicities, as in B\'ezout's theorem.  Then,
\[
\dim \mcP_{j,k} = (j+1)(k+1) - \frac{1}{2}N_{\T^2}(p,\refl{p}).
\]
\end{introthm}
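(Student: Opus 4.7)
The plan is to realize the codimension $(j+1)(k+1)-\dim\mcP_{j,k}$ as the dimension of a fixed finite-dimensional Hilbert space $\mcK$ built from $p$, and to compute $\dim\mcK$ via a spectral decomposition of a pair of commuting contractions. By Theorem A (intthmineq), the obstruction to a polynomial $q$ belonging to $\mathcal{I}_p$ is concentrated at the common zeros of $p$ and $\refl{p}$ on $\T^2$: the pointwise inequality there forces $|q|^2$ to vanish to a controlled order at each such point. This localization lets me construct a natural linear map $\Phi_{j,k}\colon V_{j,k}\to\mcK$, with $V_{j,k}:=\C[z_1,z_2]_{\leq(j,k)}$, whose kernel is exactly $\mcP_{j,k}$. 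The target $\mcK$ is intrinsic to $p$, independent of $j$ and $k$, and can be realized as a jet/quotient space --- for example as the orthogonal complement inside $\Lp$ of a suitable subspace of polynomial multiples of $p$. Because $|z_1|=|z_2|=1$ on $\T^2$, multiplication by $z_1$ and $z_2$ descend to commuting contractions $T_1,T_2$ on $\mcK$.

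Next, I would simultaneously triangulate the commuting pair $(T_1,T_2)$ and decompose $\mcK$ into joint generalized eigenspaces. Using contractivity together with the inequality from Theorem A, one shows that every joint generalized eigenvalue $(\lambda_1,\lambda_2)$ must lie on $\T^2$ and be a common zero of $p$ and $\refl{p}$, and conversely that every such common zero arises in this way. The key local computation is that the dimension of the joint generalized eigenspace at $\zeta\in\T^2$ equals exactly half the local intersection multiplicity of $p$ and $\refl{p}$ at $\zeta$. Summing over all common torus zeros and invoking B\'ezout's theorem on $\mathbb{P}^1\times\mathbb{P}^1$ yields $\dim\mcK=\tfrac{1}{2} N_{\T^2}(p,\refl{p})$. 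Combined with surjectivity of $\Phi_{j,k}$ in the appropriate range of $(j,k)$, this produces the stated dimension formula.

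The main obstacle will be the local eigenspace calculation, and especially the emergence of the factor $\tfrac{1}{2}$. This factor ultimately reflects the fact that the reflective symmetry $|p|=|\refl{p}|$ on $\T^2$ forces the local intersection multiplicity of $p$ and $\refl{p}$ at every torus zero to be \emph{even} --- a symmetry-driven divisibility statement whose careful justification, together with the explicit construction of $T_1,T_2$ and the identification of their joint eigenspaces with polynomial jet-data at boundary zeros, forms the technical heart of the argument. A secondary difficulty is to establish surjectivity of $\Phi_{j,k}$ uniformly across all $(j,k)$ for which $(j+1)(k+1)\geq\tfrac{1}{2} N_{\T^2}(p,\refl{p})$, so that the clean formula persists down to the smallest admissible bidegrees and not only in the asymptotic regime.
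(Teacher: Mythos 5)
Your overall philosophy (a commuting pair of contractions whose joint spectral data counts common zeros, combined with B\'ezout) matches the paper's, but the mechanism you propose is different from the paper's and has a genuine gap at its core. The paper does \emph{not} work with a cokernel space supported at the torus zeros. It takes $\mcG=\mcP_{n-1,m-1}$ itself (a subspace of $\Lp$ consisting of the \emph{good} numerators), defines $T_jf=Pz_jf$, and shows that the joint spectrum of $(T_1,T_2^*)$ consists of the common zeros of $q(z)=z_2^mp(z_1,1/z_2)$ and $\refl{q}$ lying strictly \emph{inside} $\D^2$ (Theorem \ref{thmjointspec} explicitly rules out unimodular eigenvalues), with each joint generalized eigenspace of dimension equal to the \emph{full} local intersection multiplicity $N_\lambda(q,\refl{q})$. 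The factor $\frac12$ then comes from the global count: $2nm=2N_{\D^2}(q,\refl{q})+N_{\T^2}(q,\refl{q})$, because the off-torus common zeros pair up under reflection. The evenness of the torus multiplicities (Appendix C) is a reassuring aside, not the source of the $\frac12$.

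The concrete problems with your route are these. First, your space $\mcK$ is never constructed, and the suggestion to realize it ``as the orthogonal complement inside $\Lp$ of a suitable subspace'' cannot work as stated: the polynomials obstructing membership in $\mathcal{I}_p$ are precisely those \emph{not} in $\Lp$, so the quotient $V_{j,k}/\mcP_{j,k}$ has no natural embedding there. Second, and more seriously, the properties you require of $(T_1,T_2)$ are mutually inconsistent: a contraction on a finite-dimensional Hilbert space has no nontrivial Jordan structure at a unimodular eigenvalue (the corresponding eigenvectors reduce the operator to a unitary), so contractions with joint spectrum on $\T^2$ cannot carry the higher-order ``jet data at boundary zeros'' that your local eigenspace computation needs; your central claim that the joint generalized eigenspace at $\zeta\in\T^2$ has dimension $\tfrac12 N_\zeta(p,\refl{p})$ is therefore not just unproved but structurally obstructed in the form you describe. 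Third, Theorem \ref{intthmineq} gives a pointwise inequality on $\T^2$, not a finite list of jet conditions at the common zeros; extracting the codimension of $\mcP_{j,k}$ from it presupposes the local structure you are trying to establish. Finally, the clean formula is only proved (and only claimed in the body of the paper) for $j\geq n-1$, $k\geq m-1$, so seeking surjectivity of $\Phi_{j,k}$ for all $(j,k)$ with $(j+1)(k+1)\geq\tfrac12 N_{\T^2}(p,\refl{p})$ overreaches.
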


The assumption that $p$ and $\refl{p}$ have no common factors is no
serious reduction since common factors divide every element of
$\mathcal{I}_p$.  It is intuitively clear that common zeros of $p$ and
$\refl{p}$ on $\T^2$ should occur with even multiplicity by a
perturbation argument, however we give a proof using Puiseux
series in Appendix C.

Question \ref{q1} can actually be answered in more than two variables
and it has an especially clean answer for rational inner functions,
which are a generalization of finite Blaschke products to several
variables.  

\begin{introthm} \label{intthmnontan} If $\phi:\D^d \to \cD$ is a rational inner function,
  then for \emph{every} $\zeta \in \T^d$, the limit
\[
\lim_{z \to \zeta} \phi(z)
\]
exists as $z\in \D^d$ goes to $\zeta$ non-tangentially.
\end{introthm}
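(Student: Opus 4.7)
The plan is to reduce $\phi$ to an essential form and concentrate the analysis on the boundary singularities. Every rational inner function on $\D^d$ can be written as $\phi = c\,z^\alpha\,\tilde p/p$ with $|c|=1$, $p$ a polynomial having no zeros in $\D^d$, and $\tilde p$ its reflection, so that $|\tilde p|=|p|$ on $\T^d$. The monomial and unimodular constant are continuous on $\cpd$, so we may assume $\phi = \tilde p/p$. If $p(\zeta)\neq 0$ at the given $\zeta\in\T^d$, then $\phi$ is continuous at $\zeta$ and the non-tangential limit is simply $\phi(\zeta)$. The substantive case is $p(\zeta)=0$, which forces $\tilde p(\zeta)=0$ as well, leaving $\phi$ as a $0/0$ indeterminate form that needs to be resolved.

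Fix such a singular $\zeta$. After a coordinatewise rotation $z_j\mapsto\bar\zeta_j z_j$, we may take $\zeta = (1,\ldots,1)$, and set $u_j = 1 - z_j$. Non-tangential approach of $z$ to $\zeta$ corresponds to $u_j\to 0$ inside a Stolz sector about the positive real axis, in particular with $\mathrm{Re}(u_j)\gtrsim|u_j|$ uniformly. I would Taylor-expand $p(1-u)$ and $\tilde p(1-u)$ about $u=0$, isolating the lowest-order homogeneous parts $P_k(u)$ and $\tilde P_k(u)$; the orders of vanishing must match since $|\phi|\leq 1$ on $\D^d$. Along a fixed direction $u = \lambda a$ with $\lambda\to 0^+$, $\phi$ has the obvious pointwise limit $\tilde P_k(a)/P_k(a)$. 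The inner identity $|\tilde p|=|p|$ on $\T^d$ descends to a compatibility between $P_k$ and $\tilde P_k$, and combined with the reflection-type relationship they inherit from $p$ and $\tilde p$, should force the ratio $\tilde P_k/P_k$ to be a single unimodular constant on the non-tangential cone, identifying the candidate limit.

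The main obstacle is to upgrade this direction-by-direction computation to arbitrary non-tangential sequences $z^{(n)}\to\zeta$ that need not lie along any single line. This would follow from a uniform lower bound $|P_k(u)|\gtrsim|u|^k$ on the non-tangential cone, which would render the higher-order Taylor remainders of $p$ and $\tilde p$ negligible compared to the leading terms. Establishing this non-vanishing estimate is the key technical point: since $p$ has no zeros in $\D^d$, the leading form $P_k$ should vanish only in the ``tangent'' directions excluded from the Stolz sector, but proving this requires a careful stability-type argument exploiting that $p$ is nonvanishing in the polydisk. An alternative route that may sidestep part of this analysis is induction on $d$, using the bidisk case from \cite{KneseAPDE} as a base and a one-variable slicing argument in the inductive step, in which one fixes all but two variables near their boundary values and invokes the two-variable result uniformly.
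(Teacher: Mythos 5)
Your outline follows the same skeleton as the paper's argument (reduce to $\tilde p/p$, move to $\zeta=(1,\dots,1)$, expand in homogeneous terms at the boundary point, and control everything by the bottom form), but the two statements that carry all the weight are left as ``should'' claims, and they are precisely the content of the paper's proof. First, the uniform lower bound $|P_k(u)|\gtrsim |u|^k$ on a non-tangential cone: you correctly flag this as the key technical point but do not prove it. The paper's Theorem \ref{thmbottom} supplies it with a short stability argument you would need to reproduce: $P_M(\zeta)=\lim_{r\searrow 0}r^{-M}p(u-r\zeta)$, and $\zeta\mapsto p(u-r\zeta)$ is zero-free on the regions $R_r=\{\Re\zeta_j>r|\zeta_j|^2\}$ because $u-r\zeta\in\D^d$ there; these regions increase to $RHP^d$ as $r\searrow 0$, so Hurwitz's theorem gives that $P_M$ has \emph{no} zeros in all of $RHP^d$ (not merely off the tangent directions), and then homogeneity plus compactness of $AR_c\cap\{|\zeta_1|=1\}$ inside $RHP^d$ yields $|P_M(\zeta)|\geq Cr^M$. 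Without this, your direction-by-direction limits cannot be upgraded, and even the claim that the orders of vanishing of $p$ and $\tilde p$ match is not yet justified.

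Second, the identification of the candidate limit: you assert that the inner identity ``should force'' $\tilde P_k/P_k$ to be a unimodular constant, but this also needs an argument (a ratio of two homogeneous polynomials of the same degree need not be constant). The paper proves it (Proposition \ref{phomo}) in two steps: reflecting $(z-u)^\alpha$ at degree $n$ gives $(-1)^{|\alpha|}(z-u)^\alpha$ plus higher-order terms, so $Q_M=(-1)^M\bar P_M$ (in particular the bottom degrees agree automatically); and the inequality $|\tilde p|\leq|p|$ on $\D^d$ (Lemma \ref{pineq}), applied along $u-t\zeta$ with $t\to 0^{\pm}$, forces $|Q_M|=|P_M|$ on $RHP^d$, whence $Q_M=\mu P_M$ with $\mu\in\T$. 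Combined with the lower bound on $P_M$, the non-tangential limit is then read off as in Proposition \ref{nontanprop}. Finally, I would drop the proposed fallback of induction on $d$: the two-variable statement you want as a base case is not in \cite{KneseAPDE} (the closest boundary-regularity results are in \cite{AMYcara}), and freezing all but two variables near their boundary values neither produces a rational inner function of the remaining variables in a usable uniform way nor captures genuinely $d$-dimensional non-tangential approach, so that route would not sidestep the missing estimates.
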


A rational function $\phi = q/p$, holomorphic on $\D^d$,
is \emph{inner} if $|q|=|p|$ on $\T^d$.  By the maximum principle
$\phi$ maps $\D^d$ to $\cD$ and $q(z)$ must be of the form $\mu
z^{\alpha} \refl{p}(z)$ where $\mu \in \T$,
$\alpha$ is a multi-index, and $\refl{p}$ is the
\emph{reflection} of $p$ just as in two variables:
\[
\refl{p}(z_1,\dots, z_d) \defn z_1^{n_1} \cdots z_d^{n_d}
\overline{p(1/\bar{z}_1,\dots, 1/\bar{z}_d)}
\]
assuming the multidegree of $p$ is $(n_1,\dots, n_d)$; see
\cite{rudin}, Theorem 5.2.5.

To say $z \to \zeta$ non-tangentially means the quantities
$|z_j-\zeta_j|$ for $j=1,\dots, d$ and $1-|z_j|$ for $j=1,2,\dots, d$
are all comparable as $z =(z_1,\dots,z_d)\to \zeta= (\zeta_1,\dots,
\zeta_d)$. This result is perhaps surprising because rational inner
functions need not be continuous up to
$\D^d$.

\section*{Acknowledgments}
This paper is partially inspired from the ICMS workshop ``Function
theory in several complex variables in relation to modelling
uncertainty.''  I would like to thank the organizers of that
conference: Jim Agler, Zinaida Lykova, and Nicholas Young, as well as
attendees Joseph Ball and Eric Rogers for pointing out the reference
\cite{KM}.  I thank John McCarthy for useful conversations, and James
Pascoe for graciously allowing me to see an early version of a paper
containing the construction of Example \ref{pascoeex}.  I also owe a
great debt to the references and the authors of \cites{Pickbook,
  AMYcara, BSV, Bickel, CW, GW}.

\section{Overview of the paper} \label{overview}

Question \ref{q2} is addressed by studying the Hilbert space $\Lp$,
where $d\sigma$ is normalized Lebesgue measure on $\T^2$, and certain
special orthogonal decompositions in $\Lp$.  These make it possible to
construct generators for the ideal $\mathcal{I}_p$, thus answering
Question \ref{q2}.  Sections \ref{prelim}-\ref{secthma} are
occupied with this.  

The special orthogonal decompositions of $\Lp$ are used in
\cites{BickelKnese, KneseAPDE} to establish an important sums of
squares formula.  If $p \in \C[z_1,z_2]$ has no zeros in $\D^2$ and
bidegree $(n,m)$ then
\begin{equation} \label{sosformula}
|p(z)|^2 - |\refl{p}(z)|^2 = (1-|z_1|^2) \sum_{j=1}^{n} |A_j(z)|^2 +
(1-|z_2|^2)\sum_{j=1}^{m} |B_j(z)|^2
\end{equation}
for some $A_1,A_2,\dots, A_n, B_1,\dots, B_m \in \C[z_1,z_2]$.  This
formula has several applications: Agler's Pick interpolation theorem,
two variable matrix monotone functions, and determinantal formulas for
distinguished varieties, polynomials with no zeros on $\D^2\cup
(\C^2\setminus \cD^2)$, and hyperbolic polynomials; see \cites{AMY, CW,
  KneseDV, KneseAPDE, KneseSemi}.  Thus, it should pay off to
understand it better.  The Hilbert space approach for proving this
formula produces the $A_j$ and $B_j$ as elements of $\mathcal{I}_p$,
and understanding this approach in depth is the key to addressing
Question \ref{q3}.  A method adapted from Ball-Sadosky-Vinnikov
\cite{BSV} shows that minimal sums of squares formulas for $p$ are in
correspondence with joint invariant subspaces of a pair of commuting
truncated shift operators on a finite dimensional subspace of $\Lp$.
The joint eigenvalues of this pair of operators are directly related
to common zeros of $p$ and $\refl{p}$ and this enables us to compute
the dimension of $\mcP_{j,k}$ in terms of common zeros of $p$ and
$\refl{p}$, thus answering Question \ref{q3}.  Sections
\ref{seccomm}-\ref{secdimthm} are occupied with this.  We include a
background section on intersection multiplicities.

Section \ref{secnontan} is devoted to addressing Question \ref{q1}.
The beginning of this section is actually independent of the rest of
paper and hinges on a proposition stating that the bottom term in the
homogeneous expansion of $p \in \C[z_1,\dots,z_d]$ at a boundary zero
has no zeros in a product of half-planes.  After addressing Theorem
\ref{intthmnontan} we make some connections to earlier material. Namely, if a
rational inner function on $\D^2$ has higher regularity at a boundary
point where $p$ vanishes then this forces a larger intersection
multiplicity of this common zero of $p$ and $\refl{p}$.  We get the
interesting conclusion that the number of points with a certain amount
of regularity but no higher is finite and can be explicitly bounded.

We have attempted to make this paper as accessible as possible.
Consequently, there are several background sections and appendices
which experts in one area or another should be able to skim.  There is
a section with notation at the end of the paper. For further
background reading we recommend: \cites{Pickbook, CW} for reproducing
kernels and bounded analytic functions on $\D^2$, \cite{dangeloineq}
for positive semi-definite polynomials, \cite{simon} for the study of
measures of the form $\frac{1}{|p(e^{i\theta})|^2}d\theta$ in one
variable (Bernstein-Szeg\H{o} measures), \cites{CLO, Fischer, Fulton}
for algebraic curves and intersection multiplicities.

We begin with an example.

\section{Example: $p(z) = 2-z_1-z_2$}
The polynomial $p(z) = 2-z_1-z_2$ is
the simplest non-trivial example that can be used to illustrate many
of the theorems of this paper.

Note $\refl{p}(z) = 2z_1z_2-z_1-z_2$ and
\[
f(z) = \frac{\refl{p}(z)}{p(z)} = \frac{2z_1z_2 - z_1-z_2}{2-z_1-z_2}
\]
is a rational inner function which does not extend continuously to
$\T^2$.  To see this consider the path in $\D$ given by
$z_{\epsilon}(t)= (1-\epsilon e^{it}\cos t, 1- \epsilon e^{-it} \cos
t)$ where $t \nearrow \pi/2$ and $\epsilon >0$ is small. 
Then, for $t \in (0,
\frac{\pi}{2})$ 
\[
f(z_{\epsilon}(t)) = -1 +\epsilon
\]
but $z_{\epsilon}(\frac{\pi}{2}) = (1,1)$.  Theorem \ref{intthmnontan}
tells us that despite this discontinuity, $f$ has a limit along any
non-tangential path to $\T^2$.  Of course, $z_{\epsilon}$ approaches
tangentially, so there is no contradiction.  The key observation to
proving non-tangential convergence at $(1,1)$ is to expand 
\[
f(1-\zeta_1,1-\zeta_2) = -1 + \frac{2\zeta_1\zeta_2}{\zeta_1+\zeta_2}.
\]
If $z \to (1,1)$ non-tangentially in $\D^2$, then $\zeta \to (0,0)$
non-tangentially in $RHP^2$; $RHP=$ the right half plane.  This means
$|\zeta_1|,|\zeta_2|,\Re \zeta_1, \Re \zeta_2$ are all comparable
quantities in a non-tangential approach region and so $|\zeta_1 +
\zeta_2| \geq c|\zeta_1|$. This is enough to show $f(z) \to
-1$ as $z\to (1,1)$ non-tangentially.  A similar estimate will hold for
more general rational inner functions.  Specifically, the lowest order
homogeneous term of $p(1-\zeta_1,1-\zeta_2)$ will be non-vanishing in
$RHP^2$.  

It is also worth pointing out that a function can be bounded
non-tangentially at every point even though it is globally unbounded.
Let 
\[
g(z) = \frac{1-z_1}{2-z_1-z_2}.
\]
Then, $g(1-\zeta_1,1-\zeta_2)= \zeta_1/(\zeta_1+\zeta_2)$ which is
bounded in any non-tangential approach region to $(0,0)$ in
$RHP^2$.
At the same time, if we let $z(\theta) = (1-\theta^2)(e^{i\theta},
e^{-i\theta})$ then for $\theta$ close to $0$
\[
\begin{aligned}
|g(z(\theta))| &=
\frac{|1-(1-\theta^2)e^{i\theta}|}{2-2(1-\theta^2)\cos\theta} \\
&\geq C \frac{|\theta|}{1-\cos\theta + \theta^2\cos \theta} \\
& \geq C \frac{1}{|\theta|}
\end{aligned}
\]
which is unbounded.

The only common zero of $p$ and $\refl{p}$ on $\T^2$ is the point
$(1,1)$, and this zero occurs with multiplicity $2$.  Therefore, by
Theorem \ref{intthmdim}, the space $\mcP_{0,0}$ is trivial which just means
that
\[
\frac{1}{2-z_1-z_2}
\]
is not in $L^2(\T^2)$.  Of course, this could be checked by direct
computation but we emphasize that Theorem \ref{intthmdim} lets us show this
algebraically.  Also, Theorem \ref{intthmdim} tells us that $\mcP_{j,k} =
(j+1)(k+1)-1$ so that the space $\mathcal{I}_p$ has co-dimension one
among all polynomials, and by Theorem \ref{intthmineq} for all $q \in
\mathcal{I}_p$, $q(1,1) =0$ .  Thus, $q/p \in L^2(\T^2)$ iff $q(1,1) =
0$.  So, for example, we automatically know $g$ above is in
$L^2(\T^2)$.  

As mentioned in the overview section, these results are proven by
examining a sums of squares formula which in this case is
\[
|p|^2 - |\refl{p}|^2 = (1-|z_1|^2)2|1-z_2|^2 + (1-|z_2|^2)2|1-z_1|^2.
\]

It follows from later work that we can multiply elements of
$\mathcal{I}_p$ by some specific one variable polynomials
$p_1(z_1),p_2(z_2)$ to force $p_1p_2 \mathcal{I}_p \subset
\mathcal{I}^{\infty}_{p}$.  In this example, $(1-z_1)(1-z_2) \in
\mathcal{I}^{\infty}_{p}$ so that $G_2$ from the introduction is in
$L^{\infty}(\T^2)$.  From this it is not hard to reason that $G_1$ is
four times continuously differentiable and so if $G_1$ has Fourier
coefficients $\{a_{n,m}\}$ then $\sum_{n,m} (n+1)^2(m+1)^2 |a_{n,m}|^2
< \infty$ and therefore $\{a_{n,m}\} \in \ell^1$ by Cauchy-Schwarz.
This shows we can recover many of the details of \cite{goodman} from
our theorems.

\section{Background: Vector polynomials and matrix functions} \label{secvec}

In this section we make a few general observations about vectors and
vector polynomials as well as vector-valued Hardy spaces and
reproducing kernels.  Let
\[
\begin{aligned}
  &\C^n = \text{The space of $n$-dimensional column vectors} \\
  &\C^{1\times n} = \text{The space of $n$-dimensional row vectors} \\
  &\C^{n\times m} = \text{The space of $n\times m$  matrices with entries in $\C$}\\
& V[z_1,z_2] = \text{The space of two variable polynomials with
  coefficients in $V$}
\end{aligned}
\]
where $V$ is some vector space such as $\C^n, \C^{1\times n},
\C^{m\times n}$.

A theorem which is useful for dealing with vector polynomial equations
is the polarization theorem for holomorphic functions. See
\cite{Dangelo} for a proof.

\begin{theorem}[Polarization Theorem] \label{polarthm}
  Suppose $F:\Omega \times \Omega^* \to \C$ is holomorphic where
  $\Omega \subset \C^n$ is a domain and $\Omega^* = \{\bar{z}: z\in
  \Omega\}$.  If $F(z,\bar{z})= 0$ for all $z\in \Omega$, then
  $F(z,w) =0$ for all $(z,w) \in \Omega\times \Omega^*$.
\end{theorem}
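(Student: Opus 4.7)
The plan is to reduce to a local statement via Taylor expansion, then globalize via the identity theorem for holomorphic functions on a connected open set. Fix a point $p \in \Omega$; since $p \in \Omega$ we have $\bar p \in \Omega^*$, so $(p,\bar p) \in \Omega \times \Omega^*$. Choose a polydisk around $(p,\bar p)$ on which $F$ has a convergent power series expansion
\[
F(p+\zeta, \bar p + \eta) = \sum_{\alpha,\beta} c_{\alpha,\beta}\, \zeta^{\alpha} \eta^{\beta}.
\]
For $\zeta$ small, $p+\zeta \in \Omega$ and $\bar p + \bar\zeta = \overline{p+\zeta} \in \Omega^*$, so the hypothesis $F(z,\bar z) = 0$ specializes to
\[
\sum_{\alpha,\beta} c_{\alpha,\beta}\, \zeta^{\alpha} \bar\zeta^{\beta} = 0
\]
for all $\zeta$ in a neighborhood of $0 \in \C^n$.

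The crux is to conclude that every $c_{\alpha,\beta}$ vanishes, i.e.\ that the monomials $\{\zeta^{\alpha}\bar\zeta^{\beta}\}$ are linearly independent as real-analytic functions on any nonempty open subset of $\C^n \cong \R^{2n}$. Writing $\zeta_j = x_j + i y_j$ and $\bar\zeta_j = x_j - i y_j$ exhibits an invertible linear change of variables between $\{\zeta_j, \bar\zeta_j\}$ and the real coordinates $\{x_j, y_j\}$, so the family $\{\zeta^{\alpha}\bar\zeta^{\beta}\}$ and the family $\{x^{\gamma} y^{\delta}\}$ span the same space of polynomials, and the latter are obviously linearly independent (they are distinct monomials in $2n$ real variables, distinguishable by repeated differentiation at the origin). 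Hence $c_{\alpha,\beta} = 0$ for all $\alpha,\beta$, and $F$ vanishes identically on the polydisk neighborhood of $(p,\bar p)$.

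Finally, apply the identity theorem: a holomorphic function on a connected open subset of $\C^{2n}$ that vanishes on a nonempty open set must vanish identically. The set $\Omega$ is a domain by hypothesis, and $\Omega^* = \{\bar z : z \in \Omega\}$ is the image of $\Omega$ under the homeomorphism $z \mapsto \bar z$ and hence is also connected and open; therefore $\Omega \times \Omega^*$ is a connected open subset of $\C^{2n}$. Since $F$ is holomorphic there and vanishes on an open neighborhood of $(p,\bar p)$, we conclude $F \equiv 0$ on $\Omega \times \Omega^*$.

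The only genuinely nontrivial step is the linear independence claim, and even that is routine once one recognizes the change of variables between holomorphic and real coordinates; the remainder is setup plus a one-line invocation of the standard identity principle.
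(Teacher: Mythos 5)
Your proof is correct, and it is the standard argument (local power series expansion at a point of the diagonal $(p,\bar p)$, linear independence of the monomials $\zeta^\alpha\bar\zeta^\beta$ via the real-coordinate change of variables, then the identity theorem to globalize). The paper does not prove this theorem itself but defers to the reference \cite{Dangelo}, whose proof proceeds along essentially the same lines, so there is nothing to reconcile.
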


An important instance of the polarization theorem is the following
proposition.

\begin{prop} \label{isomprop} If $\vec{A} \in \C^n[z_1,z_2], \vec{B}
  \in \C^m[z_1,z_2]$ and if $|\vec{A}(z)|^2 = |\vec{B}(z)|^2$, then
  there exists an $m \times n$ isometric matrix $U$ such that
  $U\vec{A}(z) = \vec{B}(z)$.  If $\vec{A}$ and $\vec{B}$ have
  linearly independent entries, then $m=n$ and $U$ is a unitary.
\end{prop}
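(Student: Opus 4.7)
The plan is to reduce the squared-norm identity $|\vec A(z)|^2 = |\vec B(z)|^2$ to an equality of Gram matrices between the coefficient vectors of $\vec A$ and $\vec B$, and then invoke the standard linear-algebra fact that two families of vectors with the same Gram matrix are related by a partial isometry.

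First I would expand $\vec A(z) = \sum_\alpha \vec a_\alpha z^\alpha$ with $\vec a_\alpha \in \C^n$ and $\vec B(z) = \sum_\alpha \vec b_\alpha z^\alpha$ with $\vec b_\alpha \in \C^m$. A direct calculation gives
\[
|\vec A(z)|^2 = \sum_{\alpha, \beta} (\vec a_\beta^* \vec a_\alpha) z^\alpha \bar z^\beta
\]
and similarly for $\vec B$. The hypothesis says these two polynomial expressions in $z$ and $\bar z$ agree on $\C^2$. Polarization (Theorem \ref{polarthm}) applied to the holomorphic polynomial
\[
F(z,w) = \sum_{\alpha,\beta} (\vec a_\beta^* \vec a_\alpha - \vec b_\beta^* \vec b_\alpha)\, z^\alpha w^\beta,
\]
which vanishes on the anti-diagonal $w = \bar z$, forces $F \equiv 0$. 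Extracting coefficients yields
\[
\vec a_\beta^* \vec a_\alpha = \vec b_\beta^* \vec b_\alpha \qquad \text{for all multi-indices } \alpha, \beta.
\]

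Next I would use this Gram-matrix equality to build $U$. Define $U_0$ on $\text{span}\{\vec a_\alpha\} \subset \C^n$ by $U_0 \vec a_\alpha = \vec b_\alpha$; well-definedness and isometry follow from the identity of inner products above. Extend $U_0$ by zero on the orthogonal complement to obtain $U \in \C^{m \times n}$, which is isometric on the range of the coefficient map and satisfies
\[
U \vec A(z) = \sum_\alpha U\vec a_\alpha\, z^\alpha = \sum_\alpha \vec b_\alpha z^\alpha = \vec B(z).
\]

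For the linearly independent case I would observe that $\sum_k c_k A_k \equiv 0$ iff $\sum_k c_k a_{k,\alpha} = 0$ for every $\alpha$, i.e.\ iff $c^T \vec a_\alpha = 0$ for all $\alpha$; thus the entries of $\vec A$ are linearly independent iff $\{\vec a_\alpha\}$ spans $\C^n$, and likewise for $\vec B$. Under these hypotheses the isometry $U_0$ is defined on all of $\C^n$ and surjects onto all of $\C^m$, which forces $m = n$ and makes $U$ unitary. There is no real obstacle here; the only delicate point is the bookkeeping to ensure polarization treats $z$ and $\bar z$ as independent variables, which is handled cleanly by Theorem \ref{polarthm}.
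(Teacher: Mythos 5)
Your argument is essentially the paper's own proof: polarize the identity $|\vec A(z)|^2=|\vec B(z)|^2$ via Theorem \ref{polarthm} to get preservation of all inner products, then extend the map linearly; the only cosmetic difference is that you work with the coefficient vectors $\{\vec a_\alpha\}$ while the paper works with the evaluation vectors $\{\vec A(z)\}$, and these have the same span. One small correction: extending $U_0$ \emph{by zero} on the orthogonal complement of $\mathrm{span}\{\vec a_\alpha\}$ produces only a partial isometry, not the isometric matrix ($U^*U=I_n$) claimed in the statement; you should instead extend $U_0$ by any isometry carrying the orthocomplement of $\mathrm{span}\{\vec a_\alpha\}$ into the orthocomplement of $\mathrm{span}\{\vec b_\alpha\}$ in $\C^m$ (this is what the paper means by extending ``by standard linear algebra,'' and it is immaterial in the linearly independent case, where your construction already yields a unitary).
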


\begin{proof}
By the polarization theorem 
\[
\vec{A}(w)^* \vec{A}(z) = \vec{B}(w)^*\vec{B}(z),
\]
in this case $F(z,w) = \vec{A}(\bar{w})^* \vec{A}(z) -
\vec{B}(\bar{w})^* \vec{B}(z)$ and by assumption $F(z,\bar{z}) \equiv
0$.

In this situation, $\vec{A}$ and $\vec{B}$ are related by an isometric
matrix.  Indeed, the map
\[
\vec{A}(z) \mapsto \vec{B}(z)
\]
extends linearly to an isometry from the span of the vectors on the
left to the span of the vectors on the right.  Indeed, if $a: \C^2 \to
\C$ is a finitely supported function, then for $v_1 =
\sum_{z \in \C^2} a(z) \vec{A}(z)$ and $v_2 = \sum_{z \in \C^2} a(z)
\vec{B}(z)$ we have 
\[
v_1^*v_1 = \sum_{z,w\in \C^2} \overline{a(w)} a(z)
\vec{A}(w)^*\vec{A}(z) = \sum_{z,w\in \C^2} \overline{a(w)} a(z)
\vec{B}(w)^*\vec{B}(z) = v_2^* v_2.
\]
Thus, $v_1 \mapsto v_2$ is at once well-defined ($|v_1|=0$ iff
$|v_2|=0$), linear, and isometric.

This isometry is initially defined on $\text{span}\{\vec{A}(z): z \in
\C^2\}$, but it can be extended to all of $\C^n$ by standard linear
algebra and can then be realized via an $m\times n$ isometric matrix
$V$: $V\vec{A}(z) = \vec{B}(z)$.

If the entries of $\vec{A}$ and $\vec{B}$ form a linearly independent
set of polynomials, then $\text{span}\{\vec{A}(z): z \in \C^2\}=\C^n$
and $\text{span}\{\vec{B}(z): z \in \C^2\} = \C^m$, and $m=n$ because
these spaces are related by an isometry.  
\end{proof}

Often in this paper, we break apart a vector polynomial $\vec{A} \in
\C^N[z_1,z_2]$ into one variable pieces.  For instance, if $\vec{A}$
has degree at most $n-1$ in $z_1$ then it is possible to write
\[
\vec{A}(z) = A(z_2) \Lambda_n(z_1)
\]
where $A \in \C^{N\times n}[z_2]$ is a matrix polynomial and
\begin{equation} \label{Lam}
\Lambda_n(z_1) \defn \begin{pmatrix} 1 \\ z_1 \\ \vdots \\
  z_1^{n-1} \end{pmatrix} \in \C^{n}[z_1].
\end{equation}
This is simply a way of extracting the coefficients of powers of $z_1$
into a matrix.  

Vector polynomials appear most often in this paper in relation to
reproducing kernels.  If $\mcH$ is a finite dimensional Hilbert space
of polynomials and if $\vec{H}$ is a vector polynomial whose entries
form an orthonormal basis for $\mcH$, then $k_w(z) = \vec{H}(w)^*
\vec{H}(z)$ is a reproducing kernel for $\mcH$ in the sense that
\[
\ip{f}{k_w}_{\mcH} = f(w) 
\]
for all $f \in \mcH$. This formula can be proven by first verifying it
for the entries of $\vec{H}$; the general formula then follows by
linearity. 

If the entries of $\vec{H}$ are not an orthonormal basis, then
$\vec{H}(w)^*\vec{H}(z)$ need not be a reproducing kernel for $\mcH$.
Nevertheless, an expression of this form can be characterized as being
a \emph{positive semi-definite kernel function} which abstractly
refers to a function $k: \Omega \times \Omega \to \C$ with the
property that for any finitely supported function $a: \Omega \to \C$
we have
\[
\sum_{z,w \in \Omega} a(z) \overline{a(w)} k(z,w) \geq 0.
\]
Here $\Omega$ is just a set, but if $\Omega$ is actually a domain and
$k(z,w)$ is a polynomial in $z,\bar{w}$ then we get the following.

\begin{prop}\label{kernelprop} Suppose $\Omega$ is a domain in
$\C^n$ and $k$ is a positive semi-definite kernel function such that
$k(z,w)$ is a polynomial in $z,\bar{w}$. Then, there exists a vector
polynomial $\vec{H}$ such that $k(z,w) = \vec{H}(w)^*\vec{H}(z)$.  
\end{prop}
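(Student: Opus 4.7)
The plan is to expand $k$ in its (finite) monomial basis, show that positive semi-definiteness of $k$ is equivalent to positive semi-definiteness of the corresponding coefficient matrix, and then factor that matrix to produce $\vec{H}$.

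First I would enumerate the monomials $z^{\alpha_1},\ldots, z^{\alpha_N}$ appearing in $k(z,w)$ and write
\[
k(z,w) = \sum_{i,j=1}^{N} c_{ij}\, z^{\alpha_i} \bar w^{\alpha_j}.
\]
Testing the PSD hypothesis on functions $a$ supported on two points shows $k(w,z) = \overline{k(z,w)}$, so the matrix $C = (c_{ij})$ is automatically Hermitian.

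Next I would translate PSD for $k$ into PSD for $C$. Substituting the coefficient expansion into the definition and collecting terms,
\[
\sum_{z,w} a(z)\overline{a(w)}\, k(z,w) = \sum_{i,j} c_{ij}\, v_i \overline{v_j}, \quad v_i := \sum_z a(z)\, z^{\alpha_i}.
\]
The crucial point is that $(v_1,\ldots, v_N) \in \C^N$ can be made arbitrary by suitable choice of finitely supported $a$. To see this I would use a Vandermonde-type argument: the $N \times N$ matrix $(z_\ell^{\alpha_i})_{\ell,i}$ has determinant equal to a polynomial in the coordinates of $z_1,\ldots, z_N$ which is not identically zero, since the monomials $z^{\alpha_i}$ are linearly independent as holomorphic functions on any open set. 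Because $\Omega$ is open, this polynomial is nonzero for a generic choice of $z_1,\ldots, z_N \in \Omega$, and inverting the resulting matrix makes the map $a \mapsto (v_i)$ surjective onto $\C^N$. Hence $C$ is positive semi-definite as an ordinary matrix.

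Finally, factor the Hermitian PSD matrix as $C = LL^*$, define $h_i(z) := \sum_j L_{ji}\, z^{\alpha_j}$, and take $\vec H(z) = (h_1(z), \ldots, h_N(z))^T$. A short index calculation then gives
\[
\vec H(w)^* \vec H(z) = \sum_i \overline{h_i(w)}\, h_i(z) = \sum_{j,k} (LL^*)_{kj}\, z^{\alpha_k} \bar w^{\alpha_j} = k(z,w),
\]
completing the construction. The only real subtlety is the surjectivity step, which rests on openness of $\Omega$ combined with linear independence of monomials; everything else is bookkeeping with the coefficient matrix.
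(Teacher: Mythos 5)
Your proof is correct, but it takes a genuinely different route from the paper. The paper builds the reproducing kernel Hilbert space directly: it defines $\mcH = \operatorname{span}\{k_w : w \in \Omega\}$, declares $\ip{k_w}{k_z}_{\mcH} = k(z,w)$, checks via Cauchy--Schwarz that this is a well-defined, genuine inner product, and then obtains $\vec{H}$ as a vector whose entries are an orthonormal basis of $\mcH$. You instead work with the coefficient matrix $C$ of $k$ in the monomial basis, prove $C$ is Hermitian positive semi-definite, and factor $C = LL^*$ to produce $\vec{H}$. The nontrivial step in your version is the surjectivity of $a \mapsto (v_i)$, which you reduce to the non-vanishing of a generalized Vandermonde determinant; strictly speaking that non-vanishing is itself proved by a short induction (expand along a row, use linear independence of the monomials on the open set $\Omega$ to find a point where the resulting nonzero polynomial does not vanish), but this is a standard fact and your justification captures the right reason, with openness of $\Omega$ entering exactly where it should. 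What each approach buys: the paper's RKHS construction is intrinsic, meshes with the kernel machinery used throughout the paper, and automatically yields a $\vec{H}$ of minimal length (equal to the rank of the kernel), whereas your argument is more elementary linear algebra, makes the role of the polynomial hypothesis completely explicit, and produces an $\vec{H}$ with one entry per monomial (which can be trimmed to minimal length by using a rank factorization of $C$ instead of an arbitrary one). Either way the conclusion and its later uses in the paper are unaffected.
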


\begin{proof} We build a Hilbert space $\mcH$ and an inner product
  such that $k(z,w)$ is the reproducing kernel.  Indeed, let $\mcH$ be
  the finite dimensional vector space $\text{span} \{k_w: w \in
  \Omega\}$ where $k_w(z) \defn k(z,w)$.  Declare that
  $\ip{k_w}{k_z}_{\mcH} = k(z,w)$ and extend by linearity to all of
  $\mcH$.  This is well-defined because for any finitely supported
  function $a: \Omega \to \C$, if the polynomial $f(z) = \sum_{w \in
    \Omega} \overline{a(w)} k_w(z)$ is identically zero then
  $\ip{f}{g}_{\mcH} = 0$ simply because if $g = \sum_{z\in \Omega}
  \overline{b(z)} k_z$ then
\[
\ip{f}{g}_{\mcH} = \sum_{z \in \Omega} b(z) f(z) = 0.
\]
This is also a bona fide inner product because if $\ip{f}{f}_{\mcH} =
0$, then $|f(z)|^2= |\ip{f}{k_z}_{\mcH}|^2 \leq \ip{f}{f}_{\mcH}
k(z,z) = 0$ for all $z$ so that $f=0$. This inequality follows from
Cauchy-Schwarz for an a priori semi-definite inner product.  With
$\mcH$ built it is not hard to show $k(z,w) = \vec{H}(w)^*\vec{H}(z)$
for some $\vec{H}$ as before.
\end{proof} 

Next, we turn to some background on vector-valued Hardy spaces. Let
$\Hrow$ denote the row-vector valued Hardy space on the unit circle:
$\Hrow \defn H^2(\T) \otimes \C^{1\times n}$.  Row vectors end up
being natural for what follows because we chose column representations
for vector polynomials.  An $n\times n$ matrix function $\Phi$ whose
entries are rational functions of $z\in\C$ with no poles in $\cD$ is a
\emph{matrix rational inner function} if $\Phi$ is unitary valued on $\T$:
\[
\Phi(z)^* \Phi(z) = I \qquad z \in \T.
\]
By the maximum principle, $\|\Phi(z)\| \leq 1$ for all $z \in
\D$---actually, by the maximum principle applied to
$v_{1}^*\Phi(z)v_2$ for all $v_1,v_2 \in \C^n$.  Note $\Hrow \Phi
\subset \Hrow$; multiplication on right by $\Phi$ looks odd, but since
our space consists of row vector valued functions it is correct.

\begin{prop}\label{rowhardy}
  Let $\Phi$ be a $n\times n$ matrix rational inner function.  The
  space $\Hrow\ominus \Hrow \Phi$ is finite dimensional, consists of
  rational vector-valued functions with no poles in $\cD$, and has
  reproducing kernel
\[
K_{w}(z) = \frac{I-\Phi(w)^*\Phi(z)}{1-\bar{w} z}.
\]
\end{prop}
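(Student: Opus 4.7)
The plan is to verify the reproducing-kernel formula directly and then read off rationality and finite dimensionality from its explicit form.

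First I would check that $uK_w(\cdot) \in \Hrow \ominus \Hrow\Phi$ for every row vector $u \in \C^{1\times n}$ and every $w \in \D$. Fixing $g \in \Hrow$ and splitting $K_w$ into the two summands $I/(1-\bar{w}z)$ and $-\Phi(w)^*\Phi(z)/(1-\bar{w}z)$, the first pairing $\ip{u/(1-\bar{w}z)}{g\Phi}$ is handled entry-by-entry by the scalar Szeg\H{o} reproducing property. The second pairing collapses because $\Phi(z)\Phi(z)^* = I$ on $\T$ (which holds for a square matrix satisfying $\Phi^*\Phi = I$ on $\T$), and evaluates to the same thing as the first, so the two cancel.

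Next, for $f \in \Hrow \ominus \Hrow\Phi$, I evaluate $\ip{f}{uK_w}$. The $I/(1-\bar{w}z)$ piece gives $f(w)u^*$ by the scalar reproducing kernel. The remaining piece reduces to a multiple of $\int f(z)\Phi(z)^*/(1-w\bar{z})\,d\sigma(z)$, and the key observation is that orthogonality of $f$ to $\Hrow\Phi$ is equivalent to the matrix-valued boundary function $f\Phi^*$ having Fourier support on the strictly negative integers: the identity $\ip{f}{g\Phi} = \int f\Phi^* g^*\,d\sigma$ shows this vanishes for all $g \in \Hrow$ iff $f\Phi^* \in \overline{zH^2_{1\times n}}$. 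Expanding $1/(1-w\bar{z}) = \sum_{k\ge 0}w^k\bar{z}^k$ then makes this remaining piece vanish, giving $\ip{f}{uK_w} = f(w)u^*$ as desired.

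With the kernel formula in hand, rationality follows by inspection: $K_w(z)$ is rational in $z$ with candidate poles only at $z = 1/\bar{w}$ and at poles of $\Phi$. The former is removable, since extending $\Phi(z)^*\Phi(z) = I$ from $\T$ by rationality gives $\Phi(w)^*\Phi(1/\bar{w}) = I$, killing the numerator where the denominator vanishes; the latter are excluded on $\cD$ by hypothesis. For finite dimensionality, $\det\Phi$ is a scalar rational inner function, hence a finite Blaschke product, and after writing $\Phi = B/q$ with $q$ a scalar polynomial having no zeros on $\cD$ and $B$ a matrix polynomial, the model space embeds in the finite-dimensional space of row-vector rational functions with denominator dividing $q$ and numerator of degree bounded by $\max(\deg q, \deg B)-1$.

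The main technical subtlety is the Fourier-coefficient identification of orthogonality to $\Hrow\Phi$ with negative spectral support of $f\Phi^*$; this is where the inner condition $\Phi^*\Phi = I$ on $\T$ does its real work. The rest is bookkeeping with the row/column and adjoint conventions.
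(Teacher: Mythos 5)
Your proof is correct, and for the finite-dimensionality and rationality claims it takes a genuinely different route from the paper. The paper proves those two claims first, without any reference to the kernel: it multiplies by the adjugate to get $\Hrow \det(\Phi)I_n \subset \Hrow\Phi$, hence $\Hrow\ominus\Hrow\Phi \subset \bigoplus (H^2\ominus bH^2)$ with $b=\det\Phi$ a finite Blaschke product, and then runs a scalar Fourier-support computation ($f\perp bH^2$ forces $fp$ to be a polynomial of bounded degree, where $b=\mu\refl{p}/p$). The kernel formula itself is stated in the paper with the calculation omitted; your entry-by-entry Szeg\H{o} argument, the cancellation via $\Phi\Phi^*=I$ on $\T$, and the identification of $f\perp\Hrow\Phi$ with $f\Phi^*\in\overline{zH^2_{1\times n}}$ are exactly the right way to supply it. You then reverse the paper's logical order by deducing finite dimensionality and rationality \emph{from} the kernel: each $uK_w$ is rational with denominator controlled by $q$ once the apparent pole at $1/\bar{w}$ is removed (your polarization $\Phi(w)^*\Phi(1/\bar{w})=I$ is the right justification), so the kernels all lie in one fixed finite-dimensional space $V$ of rational functions with no poles in $\cD$. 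The one step you should make explicit is why the \emph{whole} model space sits inside $V$: the reproducing property you just established shows the span of the $uK_w$ is dense in $\Hrow\ominus\Hrow\Phi$, and a dense subspace contained in the finite-dimensional (hence closed) space $V$ forces $\Hrow\ominus\Hrow\Phi\subset V$. With that sentence added your argument is complete; its advantage is that it yields explicit spanning elements, while the paper's determinant reduction gives the qualitative conclusions without needing the kernel at all.
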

The last statement means that for any $\vec{f} \in \Hrow \ominus \Hrow
\Phi$ and for $w \in \D, v \in \C^{1\times n}$
\[
\ip{\vec{f}}{vK_{w}}_{\Hrow} = \ip{\vec{f}(w)}{v}_{\C^{1\times n}}.
\]
Because of this reproducing property $K$ is a positive semidefinite
kernel function which for matrix valued kernels means that for any
finitely supported function $\vec{a}: \D \to \C^n$ we have
\[
\sum_{z,w \in \D} \vec{a}(w)^* K(z,w) \vec{a}(z) \geq 0.
\]
More generally though, if $\Phi$ is an analytic $n\times m$ matrix
valued function such that $\|\Phi(z)\| \leq 1$ for all $z \in \D$,
then $K$ defined as above will still be a positive semidefinite
kernel. See \cite{Pickbook} for instance.

\begin{proof}[Proof of Proposition]
  Let $\text{adj}(\Phi)$ be the adjugate or ``classical adjoint'' of
  $\Phi$.  Then, $\Phi\ \text{adj}(\Phi)=\det(\Phi)I_n$.  Note that $b
  \defn \det(\Phi)$ is a finite Blaschke product.

Now, $\Hrow \det(\Phi)I_n\subset \Hrow \Phi$ so that $\Hrow \ominus
\Hrow \Phi \subset \Hrow \ominus \Hrow \det(\Phi)I_n$, and the latter
space is a direct sum of the scalar spaces $H^2 \ominus b H^2$.  This
last space is finite dimensional and consists of rational functions
with no poles in $\cD$.  To see this, write $b = \mu
\frac{\refl{p}}{p}$ where $|\mu|=1$, $p \in \C[z]$ and $\refl{p}(z) = z^N
\overline{p(1/\bar{z})}$ for some $N$.  Then, $f \perp bH^2$ iff $f
\bar{z}^N p = \bar{z} \bar{p} \bar{g}$ for some $g \in H^2$.  Or,
$fp=\bar{z} \refl{p} \bar{g} \in H^2 \cap Z^{N-1} \overline{H^2}$
which means $fp$ is a polynomial of degree at most $N-1$ and therefore
$f$ is a rational function with denominator $p$ and numerator with
degree at most $N-1$.

The formula for the reproducing kernel is a basic calculation
depending on the fact that $\Phi$ is unitary valued on $\T$ and
bounded and holomorphic in $\D$.  We omit the details.
\end{proof}

A useful construction of matrix rational inner functions is given
below.

\begin{lemma} \label{lurkisom} If $U$ is a unitary matrix written in
  block form $U = \begin{pmatrix} U_{11} & U_{12} \\ U_{21} &
    U_{22} \end{pmatrix}$ and if $\Xi(z) \defn U_{11} + z U_{12}(I-z
  U_{22})^{-1} U_{21} $, then $\Xi$ is a matrix valued rational inner
  function.
\end{lemma}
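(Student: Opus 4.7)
The plan is to exhibit a clean isometric identity by applying the unitary $U$ to cleverly chosen input vectors, and then separately verify the rationality/pole claims. Fix any $v \in \C^n$ (the domain of $U_{11}$) and any $z \in \C$ for which $I - zU_{22}$ is invertible. Set
\[
h \defn (I - zU_{22})^{-1} U_{21} v,
\]
so that $h - z U_{22} h = U_{21} v$. A direct block computation then yields
\[
U \begin{pmatrix} v \\ zh \end{pmatrix} = \begin{pmatrix} U_{11}v + z U_{12} h \\ U_{21} v + z U_{22} h \end{pmatrix} = \begin{pmatrix} \Xi(z) v \\ h \end{pmatrix}.
\]

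Since $U$ is unitary it preserves norms, so equating squared norms of input and output gives
\[
\|v\|^2 + |z|^2 \|h\|^2 = \|\Xi(z) v\|^2 + \|h\|^2.
\]
For $z \in \T$ the $h$-terms cancel and we obtain $\|\Xi(z) v\| = \|v\|$; since $v$ was arbitrary, $\Xi(z)$ is an isometric $n \times n$ matrix, hence unitary. For $|z| < 1$, the same identity instead gives $\|\Xi(z) v\|^2 = \|v\|^2 - (1-|z|^2)\|h\|^2 \leq \|v\|^2$, so $\Xi$ is a contractive matrix function on $\D$.

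It remains to confirm that $\Xi$ is rational with no poles in $\cD$. Rationality is immediate from the explicit formula. Because $U_{22}$ is a compression of the unitary $U$ we have $\|U_{22}\| \leq 1$, so $I - zU_{22}$ is invertible for $|z|<1$ and $\Xi$ is holomorphic on $\D$. The contraction bound just established then precludes a pole on $\T$: a rational function bounded on $\D$ cannot have a pole at a boundary point, because approaching such a point from inside $\D$ would force unbounded values. Thus $\Xi$ has no poles in $\cD$, and combined with the unitarity on $\T$ proved above, $\Xi$ is a matrix rational inner function.

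The only real subtlety is the possibility that $U_{22}$ has eigenvalues on $\T$, which at first glance threaten genuine poles of $\Xi$ on $\T$; the contraction estimate handles this cleanly via the maximum principle, which is really the heart of the argument. (Alternatively, one can observe directly that any unit-modulus eigenvector $e$ of $U_{22}$ must lie in $\ker U_{12}$, since $\|U_{22}e\|^2 + \|U_{12}e\|^2 = \|e\|^2$ from $U^*U = I$, so the apparent pole in the middle factor is annihilated by the outer $U_{12}$.)
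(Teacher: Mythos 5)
Your proof is correct and follows essentially the same route as the paper's: the same block identity $U(v, zh)^t = (\Xi(z)v, h)^t$, the same norm computation yielding $I - \Xi(z)^*\Xi(z) = (1-|z|^2)\,U_{21}^*(I-\bar z U_{22}^*)^{-1}(I-zU_{22})^{-1}U_{21}$, and the same removable-singularity/boundedness argument to dispose of the finitely many points of $\T$ where $\det(I - zU_{22})$ vanishes.
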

\begin{proof}
Observe
\[
\begin{pmatrix} U_{11} & U_{12} \\ U_{21} &
    U_{22} \end{pmatrix} \begin{pmatrix} I \\ z(I-z U_{22})^{-1}U_{21} \end{pmatrix}
= \begin{pmatrix} \Xi(z) \\ (I-z U_{22})^{-1} U_{21} \end{pmatrix}.
\]
Since $U$ is a unitary
%\begin{multline*}
\[
I + |z|^2 U_{21}^*(I-\bar{z} U_{22}^*)^{-1} (I-zU_{22})^{-1}
U_{21} = \Xi(z)^*\Xi(z) + U_{21}^*(I-\bar{z} U_{22}^*)^{-1} (I-zU_{22})^{-1}
U_{21}
\]
%\end{multline*}
which rearranges to
\[
I-\Xi(z)^*\Xi(z) = (1-|z|^2) U_{21}^*(I-\bar{z} U_{22}^*)^{-1} (I-zU_{22})^{-1}
U_{21}.
\]
This shows $\Xi(z)$ is unitary valued on $\T$ except possibly at
points where $\det(I-z U_{22})=0$. But, there can only be finitely
many such points and since $\Xi$ is bounded near these points any
singularities (which are at worst poles) must be removable.  Thus,
$\Xi$ is unitary valued on all of $\T$ and holomorphic on $\cD$.
\end{proof}

\section{Background: the Hilbert space $\Lp$} \label{prelim}

\begin{definition} Let $p \in \C[z_1,z_2]$ have degree $(n,m)$ and set
\[
\refl{p}(z) \defn z_1^n z_2^m \overline{p(1/\bar{z}_1,1/\bar{z}_2)},
\text{ the \emph{reflection} of } p.
\]
We shall say $p$ is \emph{semi-stable} if $p$ has no zeros in $\D^2$
and if $p$ and $\refl{p}$ have no common factors.
\end{definition}
Semi-stable polynomials have at most finitely many
zeros on $\T^2$ by B\'ezout's theorem since zeros on $\T^2$ are shared
with the reflected polynomial.

Let $p$ be semi-stable.  We will work in the Hilbert space $\Lp$ where
$d\sigma$ is normalized Lebesgue measure on $\T^2$. All orthogonal
complements and orthogonal direct sums below are taken with respect to
this space.

Let 
\[
\mcP_{j,k} \defn \{f \in \C[z_1,z_2]\cap \Lp :
\deg f \leq (j,k)\}
\]
where $\deg f$ denotes the bidegree of $f$---the ordered pair
consisting of the degree in $z_1$, the degree in $z_2$.

\begin{notation} \label{not:spaces}
We define a number of spaces using orthogonal complements.
\[
\mcE_1 \defn \mcP_{n-1,m} \ominus z_2 \mcP_{n-1,m-1}, \quad \mcF_1 \defn
\mcP_{n-1,m} \ominus \mcP_{n-1,m-1},
\]
\[
\mcE_2 \defn \mcP_{n,m-1} \ominus z_1 \mcP_{n-1,m-1}, \quad \mcF_2 \defn
\mcP_{n,m-1} \ominus \mcP_{n-1,m-1},
\]
and 
\[
\mcG \defn \mcP_{n-1,m-1}.
\]
\end{notation}
By Lemma 6.7 and Theorem 7.4 of \cite{KneseAPDE}, 
\[
\dim \mcE_1 = \dim \mcF_1 = n \text{ and } \dim \mcE_2 = \dim \mcF_2 =
m.
\]
The computation of $\dim \mcG$ is a main result of this paper.  It is
clear that $\dim \mcG \leq nm$.

\begin{notation} \label{not:vecs}
  We let $\vec{E}_1,\vec{F}_1 \in \C^n[z_1,z_2], \vec{E}_2, \vec{F}_2
  \in \C^m[z_1,z_2]$ be vector polynomials whose entries form an
  orthonormal basis for $\mcE_1, \mcF_1,\mcE_2, \mcF_2$, resp.  Let
  $\vec{G}$ be a vector polynomial whose entries form an orthonormal
  basis for $\mcG$.  Note that these vector polynomials are unique up
  to multiplication by a unitary matrix on the left.
\end{notation}

\begin{prop} \label{reflectionprop}
Let $p$ be semi-stable.  Using Notation \ref{not:vecs},
there exist choices of orthonormal bases for $\mcE_1,
\mcE_2$ so that
\[
\vec{E}_1(z) = z_1^{n-1} z_2^m \overline{\vec{F}_1(1/\bar{z}_1,
  1/\bar{z}_2)} \text{ and } \vec{E}_2(z) = z_1^{n} z_2^{m-1} \overline{\vec{F}_2(1/\bar{z}_1,
  1/\bar{z}_2)}.
\]
\end{prop}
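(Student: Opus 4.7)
The plan is to construct a conjugate-linear ``reflection'' operator
\[
T_1 f(z) \defn z_1^{n-1} z_2^m \overline{f(1/\bar{z}_1, 1/\bar{z}_2)}
\]
acting on polynomials of bidegree at most $(n-1,m)$, and show that it exchanges $\mcF_1$ with $\mcE_1$ isometrically. Once that is established, applying $T_1$ entrywise to any orthonormal basis of $\mcF_1$ yields an orthonormal basis of $\mcE_1$, and the stated identity is just the definition of $T_1$. An entirely analogous operator with the roles of $z_1,z_2$ interchanged will handle $\vec{E}_2,\vec{F}_2$.

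First I would check that $T_1$ really does carry $\mcP_{n-1,m}$ bijectively to itself. Clearly it raises no bidegree, and on $\T^2$ we have $|T_1 f| = |f|$ since $|z_1|=|z_2|=1$; hence $T_1 f \in \Lp$ iff $f \in \Lp$. The same pointwise identity shows, after using $\bar{z}_j = 1/z_j$ on $\T^2$, that $T_1 f(z)\,\overline{T_1 g(z)} = \overline{f(z)} g(z)$ on $\T^2$, so $T_1$ is a conjugate-linear isometry of $\mcP_{n-1,m}$ in $\Lp$: $\langle T_1 f, T_1 g\rangle_{\Lp} = \overline{\langle f,g\rangle_{\Lp}}$. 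In particular $T_1$ sends orthonormal sets to orthonormal sets and orthogonal complements to orthogonal complements.

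Next I would verify the key identity $T_1(\mcP_{n-1,m-1}) = z_2\,\mcP_{n-1,m-1}$. For $f$ of bidegree $\leq (n-1,m-1)$, expanding
\[
T_1 f(z) = z_2 \Bigl(z_1^{n-1} z_2^{m-1} \overline{f(1/\bar{z}_1,1/\bar{z}_2)}\Bigr)
\]
shows $T_1 f \in z_2\,\C[z_1,z_2]$ with $z_2$-cofactor of bidegree $\leq (n-1,m-1)$; the bracketed expression is the analogous reflection at one lower $z_2$-degree and, by the same $|\cdot|=|\cdot|$ argument, is a conjugate-linear bijection of $\mcP_{n-1,m-1}$. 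Combining these two facts,
\[
T_1(\mcF_1) = T_1(\mcP_{n-1,m}\ominus \mcP_{n-1,m-1}) = \mcP_{n-1,m}\ominus z_2 \mcP_{n-1,m-1} = \mcE_1,
\]
where the middle equality uses that $T_1$ is an antilinear isometry of $\mcP_{n-1,m}$. Picking any orthonormal basis forming the entries of $\vec{F}_1$ and setting $\vec{E}_1 \defn T_1 \vec{F}_1$ (entrywise) produces an orthonormal basis of $\mcE_1$ satisfying the claimed formula. The parallel construction with $T_2 f(z) = z_1^n z_2^{m-1}\overline{f(1/\bar{z}_1,1/\bar{z}_2)}$ settles the $\mcE_2,\mcF_2$ case.

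No part of this is genuinely hard; the one point to be careful about is that $T_1$ must be checked to preserve the particular weighted space $\Lp$ (not just $L^2(\T^2)$), but this is immediate from $|T_1 f|=|f|$ on $\T^2$ together with the fact that the weight $1/|p|^2$ depends only on $|p|$ on $\T^2$. No use is made of semi-stability beyond ensuring that $\mcP_{n-1,m},\mcP_{n-1,m-1}$ and the associated quotient spaces have the dimensions quoted from \cite{KneseAPDE}.
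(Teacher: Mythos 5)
Your proposal is correct and follows essentially the same route as the paper: the paper's proof also introduces the reflection map $Tf = z_1^{n-1}z_2^m\overline{f(1/\bar{z}_1,1/\bar{z}_2)}$, observes it is an anti-unitary on $\Lp$, and concludes it carries $\mcF_1$ onto $\mcE_1$ (and similarly for $\mcF_2,\mcE_2$). Your write-up merely makes explicit the details the paper leaves implicit, namely that the reflection preserves $\mcP_{n-1,m}$ and sends $\mcP_{n-1,m-1}$ onto $z_2\mcP_{n-1,m-1}$, hence exchanges the two orthogonal complements.
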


Throughout the paper, we will assume $\vec{F}_1,\vec{F}_2$ satisfy the
relationship above.

\begin{proof}
The map on $\Lp$ given by 
\[
Tf = z_1^{n-1} z_2^{m}\overline{f(1/\bar{z}_1,1/\bar{z}_2)}
\]
is an anti-unitary meaning
\[
\ip{Tf}{Tg}_{\Lp} = \ip{g}{f}_{\Lp}
\]
for all $f,g \in \Lp$.  So, it preserves orthogonality and maps an
orthonormal basis to an orthonormal basis.  Because of this, $T$ maps
$\mcF_1$ to $\mcE_1$ and therefore the entries of $z_1^{n-1}z_2^m
\overline{\vec{F}_1(1/\bar{z}_1,1/\bar{z}_2)}$ are an orthonormal
basis for $\mcE_1$.  Hence, this vector polynomial is a valid choice
for $\vec{E}_1$. The formula for $\vec{E}_2$ is similar.
\end{proof}

\begin{definition}
If vector polynomials $\vec{A}_1,\vec{A}_2$ satisfy
\[
|p(z)|^2 - |\refl{p}(z)|^2 = \sum_{j=1}^{2} (1-|z_j|^2)|\vec{A}_j(z)|^2
\]
the above formula will be called an \emph{Agler decomposition} and
$(\vec{A}_1,\vec{A}_2)$ will be called an \emph{Agler pair} for $p$.
\end{definition}
Note that this formula can be polarized.  The following key theorem
says, among other things, that $(\vec{E}_1,\vec{F}_2)$ and
$(\vec{F}_1, \vec{E}_2)$ are both Agler pairs. It is proven as
Corollary 7.5 and Proposition 5.5 of \cite{KneseAPDE}. It can also be
extracted from \cite{BickelKnese}.

  \begin{theorem} \label{thmsos}
Let $p$ be semi-stable. Then, using Notation \ref{not:vecs}
\[
\begin{aligned}
\overline{p(w)} p(z) - \overline{\refl{p}(w)} \refl{p}(z) =&
(1-\bar{w}_1 z_1)\vec{E}_1(w)^* \vec{E}_1(z) + (1-\bar{w}_2 z_2)
\vec{F}_2(w)^* \vec{F}_2(z) \\
=& (1-\bar{w}_1 z_1)\vec{F}_1(w)^* \vec{F}_1(z) + (1-\bar{w}_2 z_2)
\vec{E}_2(w)^* \vec{E}_2(z) \\
=& \sum_{j=1}^{2} (1-\bar{w}_j z_j) \vec{F}_j(w)^* \vec{F}_j(z) \\ &+
(1-\bar{w}_1 z_1)(1-\bar{w}_2 z_2) \vec{G}(w)^* \vec{G}(z)
\end{aligned} 
\]
and 
\[
\vec{G}(w)^* \vec{G}(z)= \frac{\vec{E}_1(w)^* \vec{E}_1(z) -
  \vec{F}_1(w)^* \vec{F}_1(z)}{1-\bar{w}_2 z_2}
= \frac{\vec{E}_2(w)^* \vec{E}_2(z) - \vec{F}_2(w)^*
  \vec{F}_2(z)}{1-\bar{w}_1 z_1}.
\]
\end{theorem}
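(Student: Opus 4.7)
My plan is to first prove the two single-term expressions for $\vec{G}(w)^*\vec{G}(z)$ in the last display, and then use those to move between the various equivalent forms of the main identity; the bulk of the work is in establishing one of the main sums-of-squares expressions.

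For the $\vec{G}$-formulas, I use reproducing-kernel manipulation in $\Lp$. Multiplication by $z_j$ is a unitary on $\Lp$ since $|z_j|=1$ a.e.\ on $\T^2$, so $z_2\mcP_{n-1,m-1}$ is a closed subspace of $\mcP_{n-1,m}$ whose orthonormal basis is given by the entries of $z_2\vec{G}$. Combined with the definitions of $\mcE_1$ and $\mcF_1$, this yields two orthogonal decompositions
$$\mcP_{n-1,m} = \mcP_{n-1,m-1}\oplus\mcF_1 = z_2\mcP_{n-1,m-1}\oplus\mcE_1.$$
Writing the reproducing kernel of $\mcP_{n-1,m}$ as a sum of basis-times-basis-conjugate contributions in both ways and equating gives
$$\vec{G}(w)^*\vec{G}(z)+\vec{F}_1(w)^*\vec{F}_1(z) = \bar{w}_2 z_2\,\vec{G}(w)^*\vec{G}(z)+\vec{E}_1(w)^*\vec{E}_1(z),$$
which rearranges to the first claimed formula; the second is symmetric using $\mcP_{n,m-1}$, $\mcF_2$, $\mcE_2$.

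For the main identity, the strategy is to identify $p$ and $\refl{p}$ as distinguished unit vectors in $\mcP_{n,m}$ with complementary orthogonality. Expanding $1/p$ in its Taylor series at the origin (valid since $p$ has no zeros in $\D^2$; $\T^2$-zeros are handled by replacing $p(z)$ with $p(rz)$ and letting $r\nearrow 1$), a direct Fourier calculation shows that $\langle z_1 q, p\rangle_{\Lp}=\int z_1q/p\,d\sigma$ equals the $(-1,0)$-Fourier coefficient of $q/p$, which vanishes since $q/p$ has only non-negative frequencies on $\D^2$. This gives $p\perp z_1\mcP_{n-1,m}+z_2\mcP_{n,m-1}$, and the conjugate expansion for $1/\bar{p}$ similarly yields $\refl{p}\perp \mcP_{n-1,m}+\mcP_{n,m-1}$. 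Since $\|p\|_{\Lp}=\|\refl{p}\|_{\Lp}=1$ and a dimension count shows each complement has codimension one in $\mcP_{n,m}$, we obtain $\C\refl{p}=\mcP_{n,m}\ominus(\mcP_{n-1,m}+\mcP_{n,m-1})$ and $\C p=\mcP_{n,m}\ominus(z_1\mcP_{n-1,m}+z_2\mcP_{n,m-1})$, and computing the reproducing kernel of $\mcP_{n,m}$ two ways gives
$$\overline{p(w)}p(z)-\overline{\refl{p}(w)}\refl{p}(z)=K_{\mcP_{n-1,m}+\mcP_{n,m-1}}(z,w)-K_{z_1\mcP_{n-1,m}+z_2\mcP_{n,m-1}}(z,w).$$

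The main obstacle is to match the resulting kernel difference with $(1-\bar{w}_1 z_1)\vec{E}_1(w)^*\vec{E}_1(z)+(1-\bar{w}_2 z_2)\vec{F}_2(w)^*\vec{F}_2(z)$, because $\mcP_{n-1,m}$ and $\mcP_{n,m-1}$ are not in general orthogonal and neither joint kernel splits as a simple sum of component kernels. My approach is to refine both decompositions using the shift isometries, e.g.\ $z_1\mcP_{n-1,m}+z_2\mcP_{n,m-1}=z_1\mcP_{n-1,m}\oplus P_{(z_1\mcP_{n-1,m})^{\perp}}(z_2\mcP_{n,m-1})$, and to convert $\vec{F}$-type kernels into $\vec{E}$-type kernels via the reflection identity $\vec{E}_1(z)=z_1^{n-1}z_2^m\overline{\vec{F}_1(1/\bar{z}_1,1/\bar{z}_2)}$ from Proposition \ref{reflectionprop}. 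Once any one form of the main identity is established, the other two-term form and the four-term form follow by algebraic manipulation: adding and subtracting $(1-\bar{w}_1 z_1)(1-\bar{w}_2 z_2)\vec{G}(w)^*\vec{G}(z)$ and substituting the $\vec{G}$-identities proved above.
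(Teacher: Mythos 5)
First, note that the paper does not prove Theorem \ref{thmsos} at all: it is quoted from Corollary 7.5 and Proposition 5.5 of \cite{KneseAPDE}, so there is no internal proof to compare against. Your derivation of the two $\vec{G}$-formulas is correct and is the standard argument: the two orthogonal decompositions $\mcP_{n-1,m}=\mcP_{n-1,m-1}\oplus\mcF_1=z_2\mcP_{n-1,m-1}\oplus\mcE_1$ hold by definition, multiplication by $z_2$ is unitary on $\Lp$, and equating the two expressions for the reproducing kernel of $\mcP_{n-1,m}$ gives $(1-\bar{w}_2z_2)\vec{G}(w)^*\vec{G}(z)=\vec{E}_1(w)^*\vec{E}_1(z)-\vec{F}_1(w)^*\vec{F}_1(z)$. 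The closing observation that any one of the three two-variable identities, together with the $\vec{G}$-formulas, yields the other two by algebra is also correct.

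The proof of the main identity, however, has genuine gaps at exactly the points where the real content of the theorem lives. (1) The assertion that ``a dimension count shows each complement has codimension one,'' i.e.\ that $\C\refl{p}=\mcP_{n,m}\ominus(\mcP_{n-1,m}+\mcP_{n,m-1})$, is equivalent to the completeness of the decomposition $\mcP_{n,m}=\mcG\oplus\mcF_1\oplus\mcF_2\oplus\C\refl{p}$ (equation \eqref{Pnm1}), which hinges on $\dim\mcF_1=n$ and $\dim\mcF_2=m$. Only the inequalities $\dim\mcF_1\le n$, $\dim\mcF_2\le m$ follow from soft Fourier-support arguments; the reverse inequalities are Lemma 6.7 and Theorem 7.4 of \cite{KneseAPDE} and require the one-variable matrix Fej\'er--Riesz machinery. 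No dimension count is actually supplied, and the Fourier-support trick used in the proof of Corollary \ref{cordecomp} (deleting a whole column of frequencies) does not apply to deleting the single monomial $z_1^nz_2^m$, since splitting an element of $\Lp$ by monomial support need not preserve membership in $\Lp$. (2) Even granted the codimension-one claims, to match the kernel difference with the right-hand side you must split $K_{\mcP_{n-1,m}+\mcP_{n,m-1}}$ as $\vec{G}(w)^*\vec{G}(z)+\vec{F}_1(w)^*\vec{F}_1(z)+\vec{F}_2(w)^*\vec{F}_2(z)$ and similarly for the shifted sum; this requires the cross-orthogonality relations $\mcF_1\perp\mcF_2$ and $z_1\mcE_1\perp z_2\mcE_2$ from Theorem \ref{thmorth}, which are nontrivial (the paper devotes Appendix A to extracting them from \cite{BickelKnese}) and which your proposal never addresses. (3) The orthogonality of $p$ and $\refl{p}$ to the respective sums rests on the fact that $q/p\in L^2(\T^2)$ for a polynomial $q$ forces $q/p$ to have nonnegative Fourier support (Proposition \ref{prop81}); your $r\nearrow1$ sketch does not obviously justify the limit interchange when $p$ vanishes on $\T^2$, since one lacks a dominating function for $q/p_r$. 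With these three inputs supplied your outline does close up, but they constitute the substance of the theorem rather than routine verifications.
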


\begin{example}
  To get a feel for the theorem, it helps to look at a trivial example
  $p(z) = 1$ thought of as a polynomial of degree $(1,2)$ so
  that $\refl{p}(z) = z_1 z_2^2$.  Then, $\mcE_1 = \C, \mcF_1 = \C
  z_2^2, \mcE_2 = \text{span}\{1,z_2\} , \mcF_2= \text{span}\{z_1,
  z_1z_2\}, \mcG = \text{span}\{1, z_2\}$.  And, $\vec{E}_1(z) = 1,
  \vec{F}_1(z) = z_2^2, \vec{E}_2(z) = \begin{pmatrix} 1 \\
    z_2 \end{pmatrix}, \vec{F}_2(z) = z_2 \vec{E}_2(z), \vec{G}(z) =
  \vec{E}_2(z)$.  The resulting formulas (evaluated on the diagonal
  $z=w$) are
\[
\begin{aligned}
1 - |z_1z_2^2|^2 & = (1-|z_1|^2) + (1-|z_2|^2) |z_1|^2(1+|z_2|^2) \\
&= (1-|z_1|^2) |z_2^2|^2 + (1-|z_2|^2)(1+|z_2|^2)
\end{aligned}
\]
and so on.  Of course everything is so easy in this case because $\Lp
= L^2(\T^2)$. \eox
\end{example}    

\begin{definition} \label{maxmin} Let $p \in \C[z_1,z_2]$ be
  semistable and $\deg p = (n,m)$.  Define $\vec{E}_j, \vec{F}_j$ as
  above. We will refer to $(\vec{E}_1,\vec{F}_2)$ as the \emph{max-min
    Agler pair} of $p$ and $(\vec{F}_1,\vec{E}_2)$ as the
  \emph{min-max Agler pair} of $p$.
\end{definition}

In general, there are important orthogonality relations which hold in
$\Lp$. Note that $\Lp \subset L^2(d\sigma)= L^2(\T^2)$ and therefore
Fourier coefficients, $\hat{f}(j,k) \defn \int_{\T^2} f \bar{z}_1^j
\bar{z}_2^k d\sigma$, are well-defined for elements of $\Lp$. Let
$\text{supp} \hat{f} = \{(j,k)\in \Z^2: \hat{f}(j,k)\ne 0\}$.  

\begin{theorem} \label{thmorth} Let $p$ be semi-stable with $\deg p =
  (n,m)$.  Using Notation \ref{not:spaces} we have that in $\Lp$
\[
\mcF_1 \perp \{f \in \Lp: \text{supp} \hat{f}
\subset \{(j,k): (j \geq 0), (k < m)\}\}
\]
\[
\mcE_1 \perp \{f \in \Lp: \text{supp} \hat{f} \subset \{(j,k):( j<n), (k>0)\}\}
\]
\[
\mcF_2 \perp \{f \in \Lp: \text{supp} \hat{f}
\subset \{(j,k): (j < n), (k \geq 0)\}\}
\]
\[
\mcE_2 \perp \{f \in \Lp: \text{supp} \hat{f} \subset \{(j,k):( j>0), (k<m)\}\}
\]
\[
\begin{aligned}
p &\perp \{f\in \Lp: \text{supp} \hat{f} \subset \{(j,k):( k>0) \text{ or
} (k=0 \text{ and } j>0)\}\} \\
p & \perp \{f \in \Lp: \text{supp} \hat{f} \subset \{(j,k):( j>0) \text{ or
} (j=0 \text{ and } k>0)\}\}
\end{aligned}
\]
\[
\begin{aligned}
\refl{p} & \perp \{f\in \Lp: \text{supp} \hat{f} \subset \{(j,k):( k<m) \text{ or
} (k=m \text{ and } j<n)\}\} \\
\refl{p} & \perp \{f \in \Lp: \text{supp} \hat{f} \subset \{(j,k):( j<n) \text{ or
} (j=n\text{ and } k<m)\}\}
\end{aligned}
\]
\end{theorem}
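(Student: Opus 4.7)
Proof proposal:

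The eight orthogonality claims share a common structure; I will focus on the first, namely $\mcF_1 \perp \{f \in \Lp : \mathrm{supp}\,\hat{f} \subset \{(j,k) : j \geq 0,\ k < m\}\}$, and then indicate how the remaining seven follow. The strategy is to translate $\Lp$-orthogonality into a Fourier-support statement on $\T^2$ and exploit the defining relation $\mcF_1 \perp \mcP_{n-1,m-1}$ in $\Lp$.

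Using the boundary identities $1/|p|^2 = z_1^n z_2^m/(p\,\refl{p})$ and $\overline{g}(z) = \widetilde{g}(z)/(z_1^{n-1} z_2^m)$ on $\T^2$ (where $\widetilde{g}$ denotes the $(n-1,m)$-reflection of $g \in \mcP_{n-1,m}$), one rewrites
\[
\ip{f}{g}_{\Lp} = \int_{\T^2} \frac{z_1\, f\, \widetilde{g}}{p\,\refl{p}} \, d\sigma.
\]
I would first assume $p$ has no zeros on $\T^2$. Then $1/p$ is analytic in a neighborhood of $\overline{\D^2}$ with Fourier support in $\{(a,b): a, b \geq 0\}$, and since $p$ is semi-stable, $1/\refl{p}$ is analytic in a neighborhood of $\{|z_1|, |z_2| \geq 1\}$ with Fourier support in $\{(a,b): a, b \leq 0\}$. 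A Parseval computation then converts $\ip{f}{g}_{\Lp}$ into a convolution sum in which the support hypothesis on $\hat{f}$, the bounded bidegree of $\widetilde{g}$, and the two reciprocal Fourier supports combine to force the surviving indices into the rectangle $[0,n-1] \times [0,m-1]$---exactly where the pairings $\ip{z_1^a z_2^b}{g}_{\Lp}$ vanish by the defining property of $\mcF_1$. The general case (with zeros of $p$ on $\T^2$) is handled by a perturbation $p(z) \rightsquigarrow p(rz)$, $r \uparrow 1$; since the finite-dimensional spaces $\mcP_{j,k}, \mcE_i, \mcF_i$ depend continuously on $p$ among semi-stable polynomials, the orthogonality passes to the limit.

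The main obstacle will be the Fourier-support bookkeeping, since $\hat{f}$ may be unbounded in $j$: one must carefully verify that the nonpositive Fourier support of $1/\refl{p}$ precisely cancels the positive-$j$ contributions of $f$, confining the nonvanishing terms to the known-vanishing rectangle. Once this first claim is proved, the other seven follow from three basic symmetries: (i) interchanging $z_1$ and $z_2$ converts the $\mcE_1, \mcF_1$ statements into the $\mcE_2, \mcF_2$ ones; (ii) the anti-unitary $T$ of Proposition \ref{reflectionprop} swaps $\mcF_i \leftrightarrow \mcE_i$ and correspondingly exchanges the Fourier-support conditions $\{k < m\} \leftrightarrow \{k > 0\}$; (iii) for the $p$-orthogonality, direct computation gives $\ip{f}{p}_{\Lp} = \int f/p\, d\sigma = \widehat{f/p}(0,0)$, which vanishes whenever $\hat{f}$ has lex-positive support (as $\widehat{1/p}$ is supported in the first quadrant and the convolution sum for $(0,0)$ is then empty), and the $\refl{p}$-orthogonality is the reflection of this through $z \mapsto z^{-1}$.
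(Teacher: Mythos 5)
Your reductions (i)--(iii) are sound and agree with what the paper does, and your argument for the $p$ and $\refl{p}$ relations is essentially right when $p$ has no zeros on $\T^2$. The gap is in the core claim for $\mcF_1$: the Fourier bookkeeping does not close. Writing $\ip{f}{g}_{\Lp}=\int_{\T^2} (f/p)\cdot(z_1\refl{g}/\refl{p})\,d\sigma$, the first factor has Fourier support only in $\{j\ge 0\}$ (the hypothesis $k<m$ on $\hat f$ is destroyed by convolving with $\widehat{1/p}$, whose support is the whole closed first quadrant), while the second has support in $\{j\le 0,\ k\le 0\}$; Parseval then leaves a sum over all of $\{j\ge 0,\ k\ge 0\}$, not over $[0,n-1]\times[0,m-1]$. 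The other splitting, $(f/\refl{p})\cdot(z_1\refl{g}/p)$, leaves a sum over $\{j\le -1,\ k\le -1\}$. In either case infinitely many indices survive, and the defining relation $g\perp\mcP_{n-1,m-1}$ only kills a finite rectangle of them. This is not a repairable bookkeeping slip: already for $\deg p=(1,m)$ the assertion $\mcF_1\perp z_2^k$ for $k<0$ is equivalent to saying that the scalar weight $w(z_2)=\int_\T|p(z_1,z_2)|^{-2}\,d\sigma(z_1)$ is a Bernstein--Szeg\H{o} weight of degree $m$ (i.e., $1/w$ is a nonnegative trigonometric polynomial of degree at most $m$), which one proves by actually evaluating the inner integral, not from support considerations. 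For general $n$ this becomes a matrix-weight statement, and that structural input is exactly what the paper's Appendix A imports from \cite{BickelKnese} (Propositions 2.3, 5.1, 8.1 and Corollary 9.2 there) via the lattice of invariant subspaces of $\phi=\refl{p}/p$.

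Two further problems. First, the reduction to $p$ with no zeros on $\T^2$ via $p(rz)$, $r\uparrow 1$, cannot rest on continuity of the spaces: for $p=2-z_1-z_2$ one has $\dim\mcP_{0,0}=1$ for every $r<1$ but $\dim\mcP_{0,0}=0$ at $r=1$, so dimensions jump and the orthogonal complements defining $\mcE_j,\mcF_j$ do not vary continuously. Second, the support statement that $f\in L^2$ with $\operatorname{supp}\hat f\subset\{j\ge0\}$ and $f/p\in L^2$ forces $\operatorname{supp}\widehat{f/p}\subset\{j\ge0\}$ --- which you need even for the $p$ and $\refl{p}$ relations once $p$ vanishes on $\T^2$ --- is itself nontrivial (Proposition 8.1 of \cite{BickelKnese}); a convolution argument with $\widehat{1/p}$ only applies when $1/p$ is bounded on $\T^2$.
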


We recommend drawing pictures of the various support sets above.
In Appendix A, we explain how this follows from the work in
\cites{KneseAPDE, BickelKnese}.  One direct consequence we use later
is
\begin{equation} \label{pperp}
\begin{aligned} 
p &\perp \bar{z}_1^j z_2^k \mcG \text{ for } j \geq 0 \text{ and } k >0
\\
\refl{p} &\perp \bar{z}_1^j z_2^k \mcG \text{ for } j \geq 0 \text{
  and } k \geq 0 \text{ (sic). }
\end{aligned}
\end{equation}

An important corollary of the above orthogonality conditions is the
following.

\begin{corollary} \label{cordecomp} Using the setup of the previous
  theorem, if $N\geq n-1, M\geq m-1$, then
\[
\mcP_{N,M} = \mcG \oplus \bigoplus_{j=0}^{N-n} z_1^j \mcF_2 \oplus
\bigoplus_{k=0}^{M-m} z_2^k \mcF_1 \oplus \bigoplus_{\substack{0\leq j
    \leq N-n  \\0\leq k \leq M-m}} z_1^jz_2^k\C \refl{p} \\
\]
\end{corollary}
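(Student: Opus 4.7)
The plan is to prove the decomposition in three stages: pairwise orthogonality of the summands, containment in $\mcP_{N,M}$, and spanning by induction on $(N-n+1)+(M-m+1)$.

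All pairwise orthogonalities among $\mcG$, $z_1^j\mcF_2$, $z_2^k\mcF_1$, and $z_1^j z_2^k\C\refl p$ reduce, after moving a monomial $z_1^a z_2^b$ to the opposite side of the inner product, to one of the orthogonality statements in Theorem \ref{thmorth}. For example, $z_1^j\mcF_2 \perp z_1^{j'}z_2^{k'}\refl p$ follows from $\ip{z_1^{j-j'}\bar z_2^{k'} f_2}{\refl p}_{\Lp}=0$, since the Fourier support of the left argument has $z_2$-component at most $m-1-k' < m$, and $\refl p$ is orthogonal to this region by Theorem \ref{thmorth}. Distinct shifts of $\refl p$ are orthogonal by direct calculation: because $|\refl p|^2 = |p|^2$ on $\T^2$, one has $\ip{z_1^{j_1}z_2^{k_1}\refl p}{z_1^{j_2}z_2^{k_2}\refl p}_{\Lp} = \int_{\T^2} z_1^{j_1-j_2} z_2^{k_1-k_2}\,d\sigma = \delta_{(j_1,k_1),(j_2,k_2)}$. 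Containment is immediate from the bidegrees of $\mcG, \mcF_j$, and $\refl p$.

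Spanning is the main step. The base case $(N,M)=(n-1,m-1)$ of the induction is the definition $\mcP_{n-1,m-1}=\mcG$. For the inductive step, by symmetry assume $N \geq n$ (the case $N=n-1, M \geq m$ is handled analogously using $\mcF_1$ to peel off the top $z_2$-row). I will show
\[
\mcP_{N,M} = \mcP_{N-1,M} \oplus \Bigl(z_1^{N-n}\mcF_2 \oplus \bigoplus_{k=0}^{M-m} z_1^{N-n}z_2^k \C\refl p\Bigr),
\]
which combined with the inductive hypothesis for $\mcP_{N-1,M}$ gives the full formula. Orthogonality of the two summands follows from stage one. For spanning, given $q \in \mcP_{N,M}$, let $s(z_2)$ denote its coefficient of $z_1^N$, a polynomial of degree $\leq M$. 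The $z_1^N$-coefficient of $z_1^{N-n}f_2$ is the $z_1^n$-coefficient $f_{2,n}(z_2) \in \C[z_2]_{\leq m-1}$, and the map $f_2 \mapsto f_{2,n}$ is a bijection $\mcF_2 \to \C[z_2]_{\leq m-1}$: it is injective because $f_{2,n}=0$ would force $f_2 \in \mcP_{n-1,m-1}$ while $\mcF_2 \perp \mcP_{n-1,m-1}$, and both spaces have dimension $m$. The $z_1^N$-coefficient of $z_1^{N-n}z_2^k\refl p$ is $z_2^k\,\tilde p_0(z_2)$, where $\tilde p_0(z_2)$ is the $z_1^n$-coefficient of $\refl p$, a polynomial of degree exactly $m$ with leading coefficient $\overline{p(0,0)} \neq 0$ since $p$ is non-vanishing on $\D^2$. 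Euclidean division then writes $s = \gamma\,\tilde p_0 + r$ with $\deg \gamma \leq M-m$ and $\deg r \leq m-1$; choosing $f_2 \in \mcF_2$ with $f_{2,n}=r$ and expanding $\gamma(z_2) = \sum_k c_k z_2^k$, the element $w \defn z_1^{N-n}f_2 + \sum_k c_k z_1^{N-n}z_2^k\refl p$ matches $q$ in the $z_1^N$-coefficient. Hence $q-w$ has $z_1$-degree $\leq N-1$ and still lies in $\Lp$, so $q-w \in \mcP_{N-1,M}$, closing the induction.

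The main obstacle is this spanning step. The orthogonalities are essentially mechanical consequences of Theorem \ref{thmorth}, whereas spanning requires the Euclidean division against $\tilde p_0$; this division succeeds precisely because $p(0,0) \neq 0$ forces $\tilde p_0$ to have full degree $m$.
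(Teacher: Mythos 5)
Your proof is correct and follows essentially the same route as the paper: pairwise orthogonality from Theorem \ref{thmorth}, followed by an induction that peels off the top row of $z_1$- (or $z_2$-) coefficients one step at a time. The only difference is the spanning sub-step, where the paper bounds $\dim\bigl(\mcP_{N,M}\ominus\mcP_{N-1,M}\bigr)$ by a Fourier-support/linear-dependence count, while you match the top $z_1^N$-coefficient constructively via the bijection $\mcF_2\to\C[z_2]_{\le m-1}$ and Euclidean division by the $z_1^n$-coefficient of $\refl{p}$; both arguments rest on the same observation that an element orthogonal to $\mcP_{N-1,M}$ is determined by its top coefficient.
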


A direct sum of the form $\bigoplus_{j=0}^{-1}$ is to be interpreted
as the trivial subspace.  Before we prove the corollary we discuss a
few special cases and variations.
The case $N=n, M=m$ is 
\begin{equation} \label{Pnm1}
\mcP_{n,m} = \mcG \oplus \mcF_1 \oplus \mcF_2 \oplus \C\refl{p}.
\end{equation}
If we apply the anti-unitary reflection operation $f \mapsto z_1^n
z_2^m\overline{f(1/\bar{z}_11/\bar{z}_2)}$, we get the decomposition
\begin{equation} \label{Pnm2}
\mcP_{n,m} = z_1z_2 \mcG \oplus z_1 \mcE_1 \oplus z_2 \mcE_2 \oplus \C
p.
\end{equation}

A couple other useful variations of the corollary are
\begin{equation} \label{cordecomp1}
\mcP_{N,M} = z_2^{M-m+1} \mcG \oplus \bigoplus_{j=0}^{N-n} z_1^j
\mcF_2 \oplus \bigoplus_{k=0}^{M-m} z_2^k \mcE_1 \oplus
\bigoplus_{\substack{0\leq j \leq N-n \\ 0\leq k \leq M-m}} z_1^j z_2^k
  \C \refl{p}
\end{equation}
\begin{equation} \label{cordecomp2}
\mcP_{N,M} = z_2^{M-m+1} \mcG \oplus \bigoplus_{j=0}^{N-n} z_2^{M-m+1}
z_1^j \mcF_2 \oplus \bigoplus_{k=0}^{M-m} z_1^{N-n+1} z_2^{k} \mcE_1
\oplus \bigoplus_{\substack{0\leq j \leq N-n \\ 0\leq k \leq M-m}}
z_1^jz_2^k \C p
\end{equation}
The formula \eqref{cordecomp1} follows from applying the reflection
operation $f\mapsto
z_1^{n-1}z_2^M\overline{f(1/\bar{z}_1,1/\bar{z}_2)}$ to $\mcP_{n-1,M}
= \mcG \oplus \bigoplus_{k=0}^{M-m} z_2^k \mcF_1$ to get $\mcP_{n-1,M}
= z_2^{M-m+1} \mcG \oplus \bigoplus_{k=0}^{M-m} z_2^k \mcE_1$.  The
formula \eqref{cordecomp2} follows from reflecting formula
\eqref{cordecomp1} and then interchanging the roles of $z_1$ and
$z_2$.  

\begin{remark} \label{quadremark}
A consequence of \eqref{cordecomp1} or \eqref{cordecomp2} and Theorem \ref{thmorth}
is that $\mcP_{N,M} \ominus z_2^{M-m+1} \mcG$ is orthogonal to the
entire quadrant
\[
\{f\in \Lp: \text{supp} \hat{f} \subset \{ (j,k): j < n \text{ and } k
>  M- m\} \}.
\]
Another way to say this is if $g \in \Lp$ has finite Fourier support
(i.e. is a Laurent polynomial) and the Fourier support is contained in
\[
\{(j,k): j \geq 0 \text{ and } k<m\}
\]
and if $g \perp \mcG$, then in fact $g$ is orthogonal to 
\[
\{f \in \Lp: \text{supp} \hat{f} \subset \{ (j,k): j<n \text{ and } k
\geq 0\}\}.
\]
This follows by multiplying everything in the previous statement by a
power of $\bar{z}_2$.
\end{remark}

\begin{proof}[Proof of Corollary \ref{cordecomp}]
  The fact that all of the spaces involved are pairwise orthogonal
  follows directly from Theorem \ref{thmorth}.

  Since by definition $\mcP_{n-1,m-1} = \mcG$, by induction it is
  enough to show for $j,k \geq 0$
\[
\mcP_{n+j,m-1+k} = \mcP_{n-1+j,m-1+k} \oplus z_1^j \mcF_2 \oplus
\bigoplus_{\alpha = 0}^{k-1} z_1^j z_2^\alpha \C \refl{p}
\]
and by symmetry a similar relation holds for $\mcP_{n-1+j,m+k}$.

Our orthogonality relations directly show
\begin{equation} \label{orthshow}
z_1^j \mcF_2 \oplus
\bigoplus_{\alpha = 0}^{k-1} z_1^j z_2^\alpha \C \refl{p} \subset
\mcP_{n+j,m-1+k} \ominus \mcP_{n-1+j,m-1+k}.
\end{equation}
The space on the left is $m+k$ dimensional.  The space on the right is
at most $m+k$ dimensional.  Indeed, more than $m+k$ elements in this
space would necessarily be linearly dependent as some combination of
them would have no Fourier support on $\{(n+j, 0),(n+j,1),\dots,(n+j,
m-1+k)\}$ and hence would be orthogonal to itself.

Since the dimensions of both sides of \eqref{orthshow} are equal we
must have equality and not just inclusion.
\end{proof}

There is more we can say about $\vec{E}_j, \vec{F}_j$.  Recalling
\eqref{Lam}, we may write
\begin{equation} \label{EFbreakup}
\begin{aligned} 
\vec{E}_1(z) &= E_1(z_2) \Lambda_n(z_1) & \quad & \vec{F}_1(z) =
F_1(z_2) \Lambda_n(z_1) \\
\vec{E}_2(z) &= E_2(z_2) \Lambda_m(z_2) & \quad & \vec{F}_2(z) =
F_2(z_1) \Lambda_m(z_2) 
\end{aligned}
\end{equation}
for some matrix polynomials $E_1,F_1 \in \C^{n\times n}[z_2]$,
$E_2,F_2 \in \C^{m\times m}[z_1]$.  \textbf{Warning}: $E_1,F_1$ are
functions of $z_2$, while $E_2,F_2$ are functions of $z_1$!

By Proposition \ref{reflectionprop}, 
\begin{equation} \label{matrixreflection}
E_1(z_2) =  z_2^{m} \overline{F_1(1/\bar{z}_2)} X_n \qquad 
E_2(z_1) =  z_1^{n} \overline{F_2(1/\bar{z}_1)} X_m
\end{equation}
where $X_N \in \C^{N\times N}$ is the matrix
\begin{equation} \label{Xmat}
X_N = \begin{pmatrix} 0 & \cdots & 0 & 1 \\
0 & \cdots & 1 & 0 \\
\vdots & \udots & \udots & \vdots \\
1 & 0 & \cdots & 0
\end{pmatrix} = ( \delta_{j,N-k})_{j,k =1,\dots, N}
\end{equation}
which appears due to the fact that $z^{N-1}\Lambda_N(1/z) = X_N
\Lambda_N(z)$. We emphasize that we are taking entrywise complex
conjugates of the above matrices $F_1, F_2$.

\begin{prop} \label{Estable}
With the above definitions, $\det E_1(z_2), \det E_2(z_1)$ are
non-vanishing for $z_1,z_2 \in \D$, while all zeros of $\det F_1(z_2),
\det F_2(z_1)$ are in $\cD$.
\end{prop}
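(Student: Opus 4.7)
The plan is to prove the assertion about $\det E_1, \det E_2$ first by a direct orthogonality argument in $\Lp$, then deduce the assertion about $\det F_1, \det F_2$ from the reflection relation \eqref{matrixreflection}.

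For $\det E_1(z_2)$, I would argue by contradiction: suppose $\det E_1(w_2) = 0$ for some $w_2 \in \D$. Take a nonzero row vector $v^*$ in the left kernel of $E_1(w_2)$ and set $f(z) = v^* \vec{E}_1(z) = v^* E_1(z_2)\Lambda_n(z_1)$. Then $f \in \mcE_1$ and $f$ vanishes identically on the slice $\{z_2 = w_2\}$, so $f = (z_2 - w_2) g$ for some $g \in \C[z_1, z_2]$ with $\deg g \leq (n-1, m-1)$. Because $|z_2 - w_2| \geq 1 - |w_2| > 0$ on $\T^2$, we get $g/p \in L^2(\T^2)$, so $g \in \mcP_{n-1, m-1} = \mcG$. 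The defining relation $\mcE_1 \perp z_2 \mcG$ together with $z_2 \bar{z}_2 = 1$ on $\T^2$ then yields
$$0 = \ip{f}{z_2 g}_{\Lp} = \int_{\T^2}(1 - w_2 \bar{z}_2) |g|^2 \frac{d\sigma}{|p|^2};$$
taking real parts and using $\Re(w_2 \bar{z}_2) \leq |w_2|$ gives $0 \geq (1 - |w_2|) \|g\|^2$, forcing $g = 0$, hence $f = 0$, hence $v = 0$ (since the entries of $\vec{E}_1$ are orthonormal, in particular linearly independent)---a contradiction. The proof for $\det E_2(z_1)$ is symmetric, with $\mcE_2 \perp z_1 \mcG$ playing the analogous role.

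For $\det F_1$, I would invoke \eqref{matrixreflection}: $E_1(z_2) = z_2^m \overline{F_1(1/\bar{z}_2)} X_n$. Since $X_n$ is a permutation matrix with $\det X_n = \pm 1$, taking determinants gives
$$\det E_1(z_2) = \pm z_2^{mn} \overline{\det F_1(1/\bar{z}_2)}.$$
Any zero $a$ of $\det F_1$ with $|a| > 1$ would produce a zero $1/\bar{a} \in \D$ of $\det E_1$, contradicting the first step. So every zero of $\det F_1$ lies in $\cD$, and the case of $\det F_2$ is analogous.

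The main obstacle is the first step: one has to convert the purely algebraic statement ``$f$ is divisible by $z_2 - w_2$'' into a quantitative Hilbert-space contradiction with the orthogonality $\mcE_1 \perp z_2 \mcG$. The trick that unlocks it is the identity $(z_2 - w_2)\bar{z}_2 = 1 - w_2 \bar{z}_2$ on $\T^2$, whose real part is bounded below by the strictly positive quantity $1 - |w_2|$; without the $w_2 \in \D$ hypothesis this positivity fails and boundary zeros of $\det E_1$ are not excluded, which is consistent with the boundary zeros permitted for $\det F_1$.
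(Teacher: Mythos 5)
Your proof is correct. The paper itself does not argue this proposition internally: it is disposed of by citation (Proposition 5.3 and Lemma 6.7 of \cite{KneseAPDE}, combined with \eqref{matrixreflection}), so the non-vanishing of $\det E_1,\det E_2$ on $\D$ is imported from the earlier paper's study of these one-variable matrix polynomials. Your argument replaces that citation with a short self-contained computation: a left-kernel vector of $E_1(w_2)$ gives $f=v^*\vec{E}_1\in\mcE_1$ vanishing on the slice $\{z_2=w_2\}$, the quotient $g=f/(z_2-w_2)$ lands in $\mcG$ because $|z_2-w_2|\geq 1-|w_2|$ on $\T^2$ preserves membership in $\Lp$, and the defining orthogonality $\mcE_1\perp z_2\mcG$ together with $\Re(1-w_2\bar z_2)\geq 1-|w_2|>0$ forces $g=0$, hence $f=0$, hence $v=0$ by linear independence of the entries of $\vec{E}_1$. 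The passage from the $E_j$ statement to the $F_j$ statement by taking determinants in \eqref{matrixreflection} is exactly what the paper intends, and your phrasing even covers the degenerate case $\det F_1\equiv 0$ (which would force $\det E_1\equiv 0$). What your route buys is independence from the machinery of \cite{KneseAPDE}; what the citation buys the paper is brevity, and it is where the facts you use implicitly---notably $\dim\mcE_1=n$, which is what makes $E_1$ a square matrix and the entries of $\vec{E}_1$ an orthonormal (hence linearly independent) set---were originally established.
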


This proposition follows from Proposition 5.3 and Lemma 6.7 of
\cite{KneseAPDE} and \eqref{matrixreflection} above.

In Appendix B, we discuss how $E_1$ and $E_2$ can be constructed using
the one variable matrix Fej\'er-Riesz lemma.

\section{General Agler pairs} \label{secagler}

In this section, we examine how general Agler pairs relate to the
canonical Agler pairs constructed earlier.  Along the way, we relate
the spaces $\mcE_j,\mcF_j$ to certain spaces of one variable vector
valued functions and this combined with Corollary \ref{cordecomp}
produces a list of generators for $\mathcal{I}_p$.

Our first observation is that every Agler decomposition leads to what
is known in systems engineering terminology as a transfer function
representation or realization.  This will show the canonical pairs
$(\vec{E}_1,\vec{F}_2), (\vec{F}_1,\vec{E}_2)$ are minimal in a
certain sense.

\begin{lemma} \label{tfr} Assume $p$ is semi-stable and $\deg p
  =(n,m)$.  Let $(\vec{A}_1, \vec{A}_2)$ be an Agler pair for $p$ with
  $\vec{A}_1 \in \C^{N}[z_1,z_2], \vec{A}_2 \in \C^{M}[z_1,z_2]$.
  Then, there exists a $(1+N+M)\times (1+N+M)$ unitary matrix $U
  = \begin{pmatrix} A & B \\ C & D \end{pmatrix}$, where the block
  decomposition corresponds to the direct sum decomposition
  $\C^{1+N+M} = \C \oplus \C^{N+M}$, such that
\[
\frac{\refl{p}(z)}{p(z)} = A + B \Delta(z)(I-D\Delta(z))^{-1} C
\]
where $\Delta(z) = \begin{pmatrix} z_1 I_N & 0 \\ 0 & z_2
  I_M \end{pmatrix}$.  
\end{lemma}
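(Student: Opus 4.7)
The plan is to run the standard ``lurking isometry'' construction. Begin by polarizing the Agler decomposition via Theorem \ref{polarthm} to get
\[
\overline{p(w)}\, p(z) - \overline{\refl{p}(w)}\,\refl{p}(z) = (1-\bar{w}_1 z_1)\vec{A}_1(w)^*\vec{A}_1(z) + (1-\bar{w}_2 z_2)\vec{A}_2(w)^*\vec{A}_2(z).
\]
Grouping the $\bar{w}_j z_j$ terms on the left and the constants on the right, and writing $\vec{A}(z) = \begin{pmatrix}\vec{A}_1(z) \\ \vec{A}_2(z)\end{pmatrix}$ together with the block-diagonal $\Delta(z)$ from the statement, this rearranges to
\[
\overline{p(w)}\, p(z) + (\Delta(w)\vec{A}(w))^*(\Delta(z)\vec{A}(z)) = \overline{\refl{p}(w)}\,\refl{p}(z) + \vec{A}(w)^*\vec{A}(z).
\]

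Reading this as an equality of inner products in $\C^{1+N+M}$, it says precisely that the assignment
\[
V:\ \begin{pmatrix} p(z) \\ \Delta(z)\vec{A}(z) \end{pmatrix} \longmapsto \begin{pmatrix} \refl{p}(z) \\ \vec{A}(z) \end{pmatrix}, \qquad z \in \C^2,
\]
extends by linearity to a well-defined isometry from the span of the domain vectors onto the span of the range vectors; the well-definedness and linearity check is the Gram-matrix argument that appeared in the proof of Proposition \ref{isomprop}. I then extend $V$ to a unitary $U$ on all of $\C^{1+N+M}$ by picking any unitary identification between the orthogonal complements of the two spans, whose dimensions agree since $V$ is dimension-preserving and both ambient spaces have the same dimension.

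Decompose $U = \begin{pmatrix} A & B \\ C & D \end{pmatrix}$ with respect to $\C^{1+N+M} = \C \oplus \C^{N+M}$. The intertwining relation splits into
\[
Ap(z) + B\Delta(z)\vec{A}(z) = \refl{p}(z), \qquad Cp(z) + D\Delta(z)\vec{A}(z) = \vec{A}(z).
\]
Solving the second equation for $\vec{A}(z) = (I-D\Delta(z))^{-1}Cp(z)$ (the inverse exists on $\D^2$ because $\|D\| \le 1$ and $\|\Delta(z)\| < 1$), substituting into the first, and dividing through by $p(z)$ (nonzero on $\D^2$), yields the claimed transfer-function formula.

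The only real subtlety is the rearrangement that recasts the Agler identity as a Gram identity --- once it is in that form, every subsequent step (well-definedness of $V$, unitary extension, invertibility of $I-D\Delta(z)$, and the algebraic manipulation) is routine. I do not anticipate any genuine obstacle, though one must be careful to state the block decomposition relative to the correct direct sum so that the formula comes out as written.
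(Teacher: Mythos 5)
Your proposal is correct and follows essentially the same route as the paper's proof in Appendix A: both rearrange the (polarized) Agler identity into a Gram identity, invoke the lurking-isometry argument (the paper cites Proposition \ref{isomprop}, which contains exactly the well-definedness check you reproduce) to obtain a square isometric, hence unitary, matrix $U$ intertwining $\begin{pmatrix} p \\ \Delta \vec{A}\end{pmatrix}$ with $\begin{pmatrix}\refl{p} \\ \vec{A}\end{pmatrix}$, and then solve the second block equation for $\vec{A}$ and substitute. The only cosmetic difference is that you extend a partial isometry across orthogonal complements rather than quoting the proposition directly, and you add the (correct) remark on invertibility of $I-D\Delta(z)$ in $\D^2$.
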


The proof is a standard argument, so we relegate it to Appendix A for
the curious reader.

\begin{lemma} Let $p$ be semi-stable and $\deg p = (n,m)$.  Let
  $\vec{A}_1 \in \C^N[z_1,z_2]$ and $\vec{A}_2 \in \C^{M}[z_1,z_2]$
  and let $(\vec{A}_1, \vec{A}_2)$ be an Agler pair for $p$. Then
  $N\geq n$, $M\geq m$, $\vec{A}_1$ has bidegree at most $(n-1,m)$,
  and $\vec{A}_2$ has bidegree at most $(n,m-1)$.
\end{lemma}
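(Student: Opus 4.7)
The plan is to combine two reductions. First, fixing $z_2$ at a generic point of $\T$ turns the Agler decomposition into a one-variable identity, from which Proposition \ref{isomprop} extracts $N\ge n$ and $\deg_{z_1}\vec A_1\le n-1$; symmetrically, $M\ge m$ and $\deg_{z_2}\vec A_2\le m-1$. Second, a coefficient-extraction argument on the polarized Agler identity upgrades these to the remaining bidegree bounds $\deg_{z_2}\vec A_1\le m$ and $\deg_{z_1}\vec A_2\le n$.

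For the first step: for all but finitely many $\zeta_2\in\T$, the polynomial $p(\,\cdot\,,\zeta_2)$ has exact $z_1$-degree $n$ (the leading $z_1$-coefficient of $p$ is a nonzero polynomial in $z_2$). For such $\zeta_2$, substituting $z_2=\zeta_2$ in the Agler identity eliminates the second term on the right. Since $\refl p(z_1,\zeta_2)=\bar\zeta_2^{\,m}z_1^n\overline{p(1/\bar z_1,\zeta_2)}$ agrees up to a unimodular factor with the one-variable reflection of $p(\,\cdot\,,\zeta_2)$, what remains is the classical one-variable Christoffel--Darboux / Bernstein--Szeg\H{o} identity
\[
|p(z_1,\zeta_2)|^2-|\refl p(z_1,\zeta_2)|^2=(1-|z_1|^2)\sum_{j=0}^{n-1}|\phi_j(z_1)|^2,
\]
with $\phi_0,\dots,\phi_{n-1}$ orthonormal polynomials of strictly increasing degrees $0,1,\dots,n-1$, hence linearly independent. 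Proposition \ref{isomprop} applied to $\vec A_1(\,\cdot\,,\zeta_2)$ and $(\phi_0,\dots,\phi_{n-1})^T$ forces $N\ge n$ and realizes $\vec A_1(\,\cdot\,,\zeta_2)$ as the image of this vector under an $N\times n$ isometry. In particular $\vec A_1(\,\cdot\,,\zeta_2)$ has $z_1$-degree at most $n-1$; valid for infinitely many $\zeta_2$, this forces $\deg_{z_1}\vec A_1\le n-1$. Swapping the roles of $z_1$ and $z_2$ gives $M\ge m$ and $\deg_{z_2}\vec A_2\le m-1$.

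For the second step I will show $\deg_{z_2}\vec A_1\le m$; the bound $\deg_{z_1}\vec A_2\le n$ follows by the same method. Polarize the Agler identity to
\[
\overline{p(w)}p(z)-\overline{\refl p(w)}\refl p(z)=(1-\bar w_1z_1)\vec A_1(w)^*\vec A_1(z)+(1-\bar w_2z_2)\vec A_2(w)^*\vec A_2(z),
\]
an equality of polynomials in $(z,\bar w)\in\C^2\times\C^2$. Suppose $b:=\deg_{z_2}\vec A_1>m$ and extract the coefficient of $z_2^b$: the LHS contributes $0$ because $\deg_{z_2}p,\deg_{z_2}\refl p\le m<b$, and the $\vec A_2$ term contributes $0$ because the first step already gave $\deg_{z_2}\bigl((1-\bar w_2z_2)\vec A_2(w)^*\vec A_2(z)\bigr)\le m<b$. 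What remains is
\[
(1-\bar w_1z_1)\,\vec A_1(w)^*\vec A_1^{(b)}(z_1)\equiv 0,
\]
where $\vec A_1^{(b)}(z_1)$ denotes the $z_2^b$-coefficient of $\vec A_1(z)$. Hence $\vec A_1(w)^*\vec A_1^{(b)}(z_1)\equiv 0$, i.e.\ every coefficient vector $\vec a_{j,k}$ of $\vec A_1$ is orthogonal in $\C^N$ to every coefficient vector $\vec a_{l,b}$ of $\vec A_1^{(b)}$. Taking $(j,k)=(l,b)$ gives $|\vec a_{l,b}|^2=0$ for each $l$, so $\vec A_1^{(b)}\equiv 0$, contradicting $b=\deg_{z_2}\vec A_1$.

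The only delicate point is the ordering: step two depends on step one because the cancellation of the $\vec A_2$ contribution in the $z_2^b$-coefficient uses the already-established bound $\deg_{z_2}\vec A_2\le m-1$. With that in place, both steps are short.
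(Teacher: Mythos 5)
Your argument is correct, but it takes a genuinely different route from the paper. The paper gets $N\geq n$, $M\geq m$ from the transfer-function realization of Lemma \ref{tfr} (the realization of $\refl{p}/p$ has numerator and denominator of bidegree at most $(N,M)$, and coprimality of $p,\refl{p}$ forces $(n,m)\leq (N,M)$), and it gets the bidegree bounds on $\vec{A}_1,\vec{A}_2$ by citing Theorem 2.10 of \cite{KneseRIF}. You instead restrict to slices $z_2=\zeta_2\in\T$, identify the resulting one-variable identity with the classical Christoffel--Darboux formula for the Bernstein--Szeg\H{o} measure $\frac{d\theta}{2\pi|p(e^{i\theta},\zeta_2)|^2}$, and invoke Proposition \ref{isomprop} against the $n$ linearly independent orthonormal polynomials; then you finish the remaining degree bounds by extracting the top $z_2$- (resp. $z_1$-) coefficient of the polarized identity and using the self-pairing $|\vec{a}_{l,b}|^2=0$. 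Both steps check out: the coefficient extraction is legitimate because the polarized identity holds for independent $z,w$, and your ordering (using $\deg_{z_2}\vec{A}_2\leq m-1$ before extracting $z_2^b$) is handled correctly. What your route buys is self-containedness: it replaces the external citation and the realization machinery by classical OPUC facts (for which the paper already points to \cite{simon}) plus Proposition \ref{isomprop}. One small point you should make explicit: for the Bernstein--Szeg\H{o} identity you need $p(\cdot,\zeta_2)$ to be zero-free on $\cD$, not merely of exact degree $n$; this holds for all but finitely many $\zeta_2\in\T$, since $p$ has no zeros on $\D\times\T$ (the Hurwitz argument in the proof of Lemma \ref{commonzeros}) and only finitely many zeros on $\T^2$ by semi-stability. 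With that sentence added, the proof is complete.
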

\begin{proof}
  The bounds $N\geq n, M\geq m$ follow from Lemma \ref{tfr} because
  the numerator and denominator of the transfer function realization
  for $\refl{p}/p$ have bidegree at most $(N,M)$ and since $p$ and
  $\refl{p}$ have no common factors this bidegree bound holds for $p$
  and $\refl{p}$ as well.  The bounds on bidegrees for
  $\vec{A}_1,\vec{A}_2$ follow from Theorem
  2.10 of \cite{KneseRIF}.
\end{proof}

Using the lemma we can write
\begin{equation} \label{AAdecomp}
\vec{A}_1(z) = A_1(z_2) \Lambda_n(z_1) \quad \vec{A}_2(z) = A_2(z_1)
\Lambda_m(z_2)
\end{equation}
for one variable matrix polynomials $A_1\in \C^{N\times n}[z_2],A_2\in
\C^{M\times m}[z_1]$.

One of the main results of \cite{KneseAPDE} was a characterization of
our canonical Agler pairs.  The following is a direct result of
Theorem 1.3 of \cite{KneseAPDE} and Proposition \ref{Estable} above.

\begin{lemma}\label{lemmachar} 
  Assume the setup of the previous lemma. If $A_1(z_2)$ is invertible
  for all $z_2 \in \D$, then $(\vec{A}_1,\vec{A}_2) =
  (\vec{E}_1,\vec{F}_2)$ up to unitary multiplication.  If $A_2(z_1)$
  is invertible for all $z_1 \in \D$, then $(\vec{A}_1,\vec{A}_2) =
  (\vec{F}_1, \vec{E}_2)$ up to unitary multiplication.
\end{lemma}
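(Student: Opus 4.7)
The plan is to use invertibility of $A_1$ on $\D$, together with the invertibility of $E_1$ on $\D$ from Proposition \ref{Estable}, to force the matrix relation $A_1 = U E_1$ for a constant unitary $U$; once that is done, $\vec{A}_2 = U_2 \vec{F}_2$ will drop out of what remains of the Agler decomposition.

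First, since $A_1(z_2) \in \C^{N\times n}[z_2]$ is invertible, it must be square, so $N = n$. I will subtract the given Agler decomposition from the canonical one $(\vec{E}_1, \vec{F}_2)$ supplied by Theorem \ref{thmsos} and restrict to $z_2 \in \T$, killing the $(1-|z_2|^2)$ term and leaving $|\vec{A}_1|^2 = |\vec{E}_1|^2$ for $z_1 \in \D$, $z_2 \in \T$. Writing $\vec{A}_1 = A_1(z_2)\Lambda_n(z_1)$ and $\vec{E}_1 = E_1(z_2)\Lambda_n(z_1)$, and noting that a polynomial in $z_1, \bar z_1$ vanishing on the open set $\D$ is identically zero, this upgrades to
\[
A_1(z_2)^* A_1(z_2) = E_1(z_2)^* E_1(z_2) \qquad (z_2 \in \T).
\]

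The crux will be upgrading this $\T$-identity to a constant-unitary relation. I set $V(z_2) := A_1(z_2) E_1(z_2)^{-1}$; both $V$ and $V^{-1} = E_1 A_1^{-1}$ are holomorphic on $\D$, and the displayed identity gives $V^* V = I$ on $\T$ away from the finite set where $\det E_1$ or $\det A_1$ vanishes. Since the entries of $V$ are rational in $z_2$ and bounded by $1$ in a punctured neighborhood of each such $\T$-point, they cannot have poles there, so $V$ extends holomorphically across $\T$; the maximum principle then yields $\|V(z_2)\|, \|V(z_2)^{-1}\| \leq 1$ on $\D$, forcing $V$ to be unitary at every point of $\D$. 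A holomorphic matrix-valued function with values in the unitary group is constant: expanding $V(z_2) = \sum_{k \geq 0} V_k z_2^k$ and matching coefficients of $z_2^j \bar z_2^0$ with $j \geq 1$ in $V^* V = I$ gives $V_0^* V_j = 0$, so $V_j = 0$ for $j \geq 1$. Hence $V \equiv U$ for a constant unitary $U$, and $\vec{A}_1 = U \vec{E}_1$.

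To recover $\vec{A}_2$, I substitute $\vec{A}_1 = U\vec{E}_1$ back into the two Agler decompositions; the first terms cancel, leaving $|\vec{A}_2|^2 = |\vec{F}_2|^2$ as a polynomial identity on $\C^2$. Proposition \ref{isomprop} then furnishes an $m \times M$ isometric matrix $U_2$ with $U_2 \vec{A}_2 = \vec{F}_2$; isometry gives $M \leq m$, the preceding lemma gives $M \geq m$, so $M = m$ and $U_2$ is unitary. The second statement of the lemma follows from the symmetric argument, interchanging the roles of $z_1, z_2$ and of $(\vec{E}_1, \vec{F}_2)$ with $(\vec{F}_1, \vec{E}_2)$. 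The hard part is the constancy-of-$V$ argument---in particular, the extension of $V$ across $\T$ before the maximum principle is applied---while everything else reduces to routine manipulation of Theorem \ref{thmsos} and Proposition \ref{isomprop}.
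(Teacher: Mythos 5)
Your proof is correct, but it is worth noting that the paper itself does not prove this lemma at all: it simply cites Theorem 1.3 of \cite{KneseAPDE} together with Proposition \ref{Estable}. What you have done is reconstruct a self-contained argument, and its engine --- a matrix rational function on $\D$ that is inner and has a holomorphic, contractive inverse must be a constant unitary --- is precisely the rigidity the paper invokes later (in Appendix B, where $\Psi=AE_1^{-1}$ and $\Psi^{-1}$ both inner forces $\Psi$ constant), while your preliminary steps (set $z_2\in\T$ to extract $A_1^*A_1=E_1^*E_1$, remove the finitely many $\T$-singularities of $A_1E_1^{-1}$, apply the maximum principle) replicate the proof of Lemma \ref{mif}. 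So your route buys independence from the external Theorem 1.3 of \cite{KneseAPDE} at the cost of redoing work the paper outsources. Two small points of care: your phrase ``bounded by $1$ in a punctured neighborhood'' of a $\T$-point is really the conclusion rather than the hypothesis --- what you know is boundedness along $\T$ off a finite set, which suffices to exclude a pole of a rational function there, as in the paper's wording ``any singularities on $\T$ are removable''; and the step ``isometry gives $M\le m$'' leans on Proposition \ref{isomprop} exactly as stated, which (like the lemma's phrase ``up to unitary multiplication'') implicitly excludes the degenerate case where $\vec{A}_2$ has linearly dependent entries, e.g.\ $\vec{F}_2$ padded with zeros; under that standing nondegeneracy convention, also used in the theorem of Section \ref{seccomm}, your argument is sound, including the constancy argument for $V$ (where $V_0=V(0)$ is unitary, so $V_0^*V_j=0$ does force $V_j=0$).
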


\begin{lemma} \label{mif}
Assume the setup of the previous lemma.
  If we define $A_1,A_2$ as in \eqref{AAdecomp} then
\begin{equation} \label{isoms}
A_1(z_2) E_1(z_2)^{-1} \qquad A_2(z_1) E_2(z_1)^{-1}
\end{equation}
are holomorphic in $\D$ and extend to be holomorphic and
isometry-valued on $\T$.  

If $N=n$, then $A_1 E_1^{-1}$ is rational inner and $F_1 A_1^{-1}$ is
also a well-defined matrix rational inner function.  Similarly, if
$M=m$, then $A_2E_2^{-1}$ and $F_2 A_2^{-1}$ are
matrix rational inner functions.

In particular, 
\[
\Phi_1(z_2) := F_1(z_2) E_1(z_2)^{-1} \quad \Phi_2(z_1) := F_2(z_1)
E_2(z_1)^{-1}
\]
are both one variable matrix rational inner functions on $\D$.  
\end{lemma}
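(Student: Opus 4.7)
The plan is to distill the two-variable Agler decomposition into a one-variable matrix identity on $\T$, and then apply a removable-singularity argument to build matrix rational inner functions. First, we restrict both the given decomposition for $(\vec{A}_1,\vec{A}_2)$ and the max-min decomposition for $(\vec{E}_1,\vec{F}_2)$ from Theorem \ref{thmsos} to $|z_2|=1$, where the $(1-|z_2|^2)$-terms vanish. Dividing by $1-|z_1|^2$ off $|z_1|=1$ and extending by polynomial continuity yields $|\vec{A}_1(z_1,z_2)|^2 = |\vec{E}_1(z_1,z_2)|^2$ for every $z_1\in\C$ and $z_2\in\T$. Using $\vec{A}_1 = A_1(z_2)\Lambda_n(z_1)$, $\vec{E}_1 = E_1(z_2)\Lambda_n(z_1)$ and the linear independence of the monomials $\bar{z}_1^j z_1^k$ ($0\le j,k\le n-1$) in $z_1$, we read off the matrix identity $A_1(z_2)^* A_1(z_2) = E_1(z_2)^* E_1(z_2)$ for $z_2\in\T$. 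The identical argument applied to the min-max pair $(\vec{F}_1,\vec{E}_2)$ gives $F_1^*F_1 = E_1^*E_1$ on $\T$, and hence $F_1^*F_1 = A_1^*A_1$ on $\T$ as well. Symmetric identities hold for $A_2, F_2, E_2$ in $z_1$.

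For the holomorphy and isometry claim, Proposition \ref{Estable} guarantees that $\det E_1$ does not vanish on $\D$, so $A_1 E_1^{-1}$ is automatically holomorphic there, and the matrix identity above shows it is isometry-valued on $\T$ off the finite set $Z = \{z_2\in\T : \det E_1(z_2) = 0\}$. To remove the apparent singularities at $z_0\in Z$, write $A_1 E_1^{-1} = A_1\,\mathrm{adj}(E_1)/\det E_1$ and note that isometry-valuedness off $Z$ gives $|p(z_2)|\le |\det E_1(z_2)|$ on $\T$ for every entry $p$ of the polynomial matrix $A_1\,\mathrm{adj}(E_1)$. A one-variable polynomial whose modulus is dominated by $|\det E_1|$ on $\T$ near $z_0$ must vanish at $z_0$ to at least the order of vanishing of $\det E_1$, so the apparent pole cancels. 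This extends $A_1 E_1^{-1}$ holomorphically across $\T$, and isometry-valuedness persists on all of $\T$ by continuity. The argument for $A_2 E_2^{-1}$ in $z_1$ is verbatim.

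When $N=n$, $A_1 E_1^{-1}$ is a square isometry on $\T$, hence unitary-valued, so it is a matrix rational inner function. Its determinant $\det A_1/\det E_1$ is then a scalar rational inner function, i.e., a finite Blaschke product with no zeros in $\D$; since $\det E_1$ is nonvanishing on $\D$, so is $\det A_1$. Thus $A_1^{-1}$ is holomorphic on $\D$, and reapplying the singularity-removal step with $A_1$ in the role of $E_1$ and using $F_1^*F_1 = A_1^*A_1$ in place of $A_1^*A_1 = E_1^*E_1$ shows $F_1 A_1^{-1}$ is matrix rational inner as well. The corresponding statements for $A_2 E_2^{-1}$ and $F_2 A_2^{-1}$ when $M=m$ follow symmetrically, and the final assertions that $\Phi_1 = F_1 E_1^{-1}$ and $\Phi_2 = F_2 E_2^{-1}$ are matrix rational inner are obtained by specializing to the min-max Agler pair $(\vec{F}_1,\vec{E}_2)$, where automatically $N=n$ and $M=m$. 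The only genuinely technical step is the removable-singularity argument in the second paragraph; everything else unwinds directly from Theorem \ref{thmsos} and Proposition \ref{Estable}.
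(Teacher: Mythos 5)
Your first two paragraphs and your final conclusion about $\Phi_1,\Phi_2$ are correct and essentially follow the paper's route: restricting the polarized Agler identities to $|z_2|=1$ gives $A_1^*A_1=E_1^*E_1$ on $\T$, Proposition \ref{Estable} gives interior holomorphy of $A_1E_1^{-1}$, and your order-of-vanishing comparison of the entries of $A_1\,\text{adj}(E_1)$ against $\det E_1$ is a legitimate way to remove the boundary singularities (the paper asserts this step without detail). Applying this with $A_1=F_1$ does yield that $\Phi_1=F_1E_1^{-1}$ and $\Phi_2=F_2E_2^{-1}$ are matrix rational inner.

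The gap is in your treatment of $F_1A_1^{-1}$ when $N=n$. You write that $\det(A_1E_1^{-1})$ is ``a finite Blaschke product with no zeros in $\D$,'' but finite Blaschke products generically \emph{do} vanish in $\D$, and nothing forces this one to be a unimodular constant. Concretely, take $p=1$ regarded as having bidegree $(1,2)$, so $E_1=1$ and $F_1=z_2^2$; then $\vec{A}_1=z_2$, $\vec{A}_2=(1,\,z_1z_2)^t$ is an Agler pair with $N=n=1$, and $\det A_1=z_2$ vanishes at $0\in\D$. Hence $A_1^{-1}$ need not be holomorphic in $\D$, and your plan of ``reapplying the singularity-removal step with $A_1$ in the role of $E_1$'' collapses: the identity $F_1^*F_1=A_1^*A_1$ on $\T$ controls only boundary singularities and cannot exclude interior poles of $F_1A_1^{-1}$. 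What must actually be shown is that $A_1E_1^{-1}$ divides $F_1E_1^{-1}$ on the right in the sense of inner functions, i.e.\ that the interior zeros of $\det A_1$ are absorbed by $F_1$. The paper gets this from a lurking-isometry argument: it factors the positive semi-definite kernel \eqref{similarly} as $\vec{H}(w)^*\vec{H}(z)$, applies Proposition \ref{isomprop} to obtain a unitary sending $(\vec{A}_1,\,z_2\vec{H})$ to $(\vec{F}_1,\,\vec{H})$, and then Lemma \ref{lurkisom} produces an everywhere-holomorphic matrix rational inner $\Xi$ with $\Xi A_1=F_1$, which forces $\Xi=F_1A_1^{-1}$ and in particular makes the interior singularities removable. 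Some global argument of this kind (or an invariant-subspace factorization of $\Phi_1$) is unavoidable here; a determinant count on $\T$ alone will not do it. The same correction applies to $F_2A_2^{-1}$.
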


\begin{proof}
  Since $(\vec{A}_1,\vec{A}_2)$ is an Agler pair,
\begin{multline} \label{Aglerpair1}
\sum_{j=1}^{2} (1-\bar{w}_j z_j)\vec{A}_j(w)^*\vec{A}_j(z) \\=
(1-\bar{w}_1 z_1)\vec{E}_1(w)^* \vec{E}_1(z) + (1-\bar{w}_2
z_2)\vec{F}_2(w)^* \vec{F}_2(z).
\end{multline}
If we set $z_2=w_2 \in \T$ we get $\vec{A}_1(w)^*\vec{A}_1(z) =
\vec{E}_1(w)^* \vec{E}_1(z)$ and if we rewrite in terms of matrices we get
\[
\Lambda_n(w_1)^* E_1(z_2)^*E_1(z_2) \Lambda_n (z_1) = \Lambda_n(w_1)^* A_1(z_2)^*
A_1(z_2) \Lambda_n(z_1)
\]
for $z_1,w_1 \in \C$ and $z_2 \in \T$.  So,
\begin{equation} \label{EA}
E_1(z_2)^*E_1(z_2) = A_1(z_2)^* A_1(z_2)
\end{equation}
for $z_2 \in \T$.  This implies $\Phi(z_2) \defn A_1(z_2) E_1(z_2)^{-1}$ is
isometry-valued on $\T$---in particular any singularities on $\T$ are
removable---and extends to be holomorphic on $\cD$.  An analogous
argument holds for $A_2 E_2^{-1}$.

As discussed in Section \ref{secvec}
\[
\frac{I - \Phi(w_2)^* \Phi(z_2)}{1-\bar{w}_2 z_2} 
\]
is positive semi-definite and after multiplying on the left by
$\Lambda_n(w_1)^*E_1(w_2)^*$ and the right by $E_1(z_2)\Lambda_n(z_1)$
we see that
\[
\frac{ \vec{E}_1(w)^*\vec{E}_1(z) - \vec{A}_1(w)^*
  \vec{A}_1(z)}{1-\bar{w}_2 z_2}
\]
is also positive semi-definite.  
% which means
% \[
% \frac{|\vec{E}_1(z)|^2 - |\vec{A}_1(z)|^2}{1-|z_2|^2} =
% \frac{|\vec{A}_2(z)|^2 - |\vec{F}_2(z)|^2}{1-|z_1|^2}.
% \]
By \eqref{Aglerpair1},
%\begin{multline*}
\[
(1-\bar{w}_1 z_1)(\vec{E}_1(w)^*\vec{E}_1(z) - \vec{A}_1(w)^*
  \vec{A}_1(z)) = (1-\bar{w}_2 z_2)(\vec{A}_2(w)^* \vec{A}_2(z) -
  \vec{F}_2(w)^*\vec{F}_2(z)),
\]
%\end{multline*}
and so $(1-\bar{w}_1z_1)$ divides the right hand side and
$(1-\bar{w}_2 z_2)$ divides the left.  So,
\begin{equation} \label{consequence}
\frac{ \vec{E}_1(w)^*\vec{E}_1(z) - \vec{A}_1(w)^*
  \vec{A}_1(z)}{1-\bar{w}_2 z_2} = \frac{\vec{A}_2(w)^* \vec{A}_2(z) -
  \vec{F}_2(w)^*\vec{F}_2(z)}{1-\bar{w}_1 z_1}
\end{equation}
is a positive semi-definite polynomial and similarly so is
\begin{equation} \label{similarly}
\frac{ \vec{E}_2(w)^*\vec{E}_2(z) - \vec{A}_2(w)^*
  \vec{A}_2(z)}{1-\bar{w}_1 z_1} = \frac{\vec{A}_1(w)^* \vec{A}_1(z) -
  \vec{F}_1(w)^*\vec{F}_1(z)}{1-\bar{w}_2 z_2}.
\end{equation}

Now assume $N=n$.  Then, $A_1$ is square and hence $A_1E_1^{-1}$ will
be rational inner.  We now show $\Psi(z_2) \defn F_1(z_2)
A_1(z_2)^{-1}$ is a matrix rational inner function.  By Proposition
\ref{kernelprop} we can factor \eqref{similarly} as
$\vec{H}(w)^*\vec{H}(z)$ for some vector polynomial $\vec{H}\in
\C^{K}[z_1,z_2]$ and then
\[
\vec{A}_1(w)^* \vec{A}_1(z) + \bar{w}_2z_2 \vec{H}(w)^* \vec{H}(z) =
\vec{F}_1(w)^*\vec{F}_1(z) + \vec{H}(w)^* \vec{H}(z).
\]
By Proposition \ref{isomprop}, there exists an $(n+K)\times (n+K)$
unitary $\begin{pmatrix} U_{11} & U_{12} \\ U_{21} &
  U_{22} \end{pmatrix}$ such that
\[
\begin{pmatrix} \vec{F}_1(z) \\
  \vec{H}(z) \end{pmatrix} = \begin{pmatrix} U_{11} & U_{12} \\ U_{21}
  & U_{22} \end{pmatrix} \begin{pmatrix} \vec{A}_1(z) \\ z_2
  \vec{H}(z) \end{pmatrix} = \begin{pmatrix} U_{11} \vec{A}_1(z) +
  U_{12} z_2\vec{H}(z) \\ U_{21} \vec{A}_1(z) + U_{22} z_2
  \vec{H}(z) \end{pmatrix}.
\]
Solve for $\vec{H}$ using the second component and insert the result
into the first component to get $\Xi(z_2) \vec{A}_1(z) = \vec{F}_1(z)$
where
\begin{equation} \label{Xi}
\Xi(z_2) = U_{11} + z_2 U_{12} (I-z_2 U_{22})^{-1}U_{21}.
\end{equation}
In terms of matrices $\Xi(z_2) A_1(z_2) = F_1(z_2)$, and since $\det
A_1$ is not identically zero by \eqref{EA} we see that $\Xi = \Psi$.
In particular, $\Psi$ is analytic in $\D$.  By Lemma \ref{lurkisom}, 
$\Xi$ is a matrix-valued rational inner function.
\end{proof}

The following is an important corollary of the proof of Lemma
\ref{mif}.  This was first proven in \cite{KneseAPDE}.

\begin{corollary} \label{corunique} Let $p$ be semi-stable. Then, $p$
  has a unique Agler pair (up to unitary multiplication) iff $\mcE_j =
  \mcF_j$ for $j=1$ or $2$ iff $\mcG = \{0\}$.
\end{corollary}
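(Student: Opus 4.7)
The plan is to organize the proof around the sums of squares identity for $\vec{G}$ from Theorem \ref{thmsos} and the two positivity relations \eqref{consequence}--\eqref{similarly}, and establish the two implications $\mcG = \{0\} \Longleftrightarrow \mcE_j = \mcF_j$ and $\mcG = \{0\} \Longleftrightarrow$ uniqueness in parallel.

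First I would handle the equivalence between $\mcG = \{0\}$ and $\mcE_j = \mcF_j$. Theorem \ref{thmsos} gives
\[
\vec{G}(w)^*\vec{G}(z) = \frac{\vec{E}_1(w)^*\vec{E}_1(z) - \vec{F}_1(w)^*\vec{F}_1(z)}{1-\bar{w}_2 z_2} = \frac{\vec{E}_2(w)^*\vec{E}_2(z) - \vec{F}_2(w)^*\vec{F}_2(z)}{1-\bar{w}_1 z_1},
\]
so $\mcG = \{0\}$ forces $\vec{E}_j(w)^*\vec{E}_j(z) = \vec{F}_j(w)^*\vec{F}_j(z)$ for both $j=1,2$, and Proposition \ref{isomprop} then gives $\vec{E}_j = U_j \vec{F}_j$ for a unitary $U_j$, whence $\mcE_j = \mcF_j$. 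Conversely, if $\mcE_j = \mcF_j$ for some $j$, any two orthonormal-basis vector polynomials for the same subspace share the same reproducing kernel, so the corresponding numerator in the displayed formula vanishes; as a positive semidefinite polynomial kernel, $\vec{G}(w)^*\vec{G}(z)$ must then be identically zero, so $\vec{G} = 0$ and $\mcG = \{0\}$.

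Next I would use $\mcG = \{0\}$ to prove uniqueness of the Agler pair. Let $(\vec{A}_1, \vec{A}_2)$ be an arbitrary Agler pair for $p$. From \eqref{consequence} and \eqref{similarly} both
\[
P(z,w) := \frac{\vec{E}_1(w)^*\vec{E}_1(z) - \vec{A}_1(w)^*\vec{A}_1(z)}{1-\bar{w}_2 z_2}, \qquad S(z,w) := \frac{\vec{A}_1(w)^*\vec{A}_1(z) - \vec{F}_1(w)^*\vec{F}_1(z)}{1-\bar{w}_2 z_2}
\]
are positive semidefinite polynomial kernels, and summing them yields $P + S = \vec{G}(w)^*\vec{G}(z)$ by the $\vec{G}$-formula above. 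Thus $\mcG = \{0\}$ forces $P \equiv S \equiv 0$, so $\vec{A}_1(w)^*\vec{A}_1(z) = \vec{E}_1(w)^*\vec{E}_1(z)$, and Proposition \ref{isomprop} gives $\vec{A}_1 = U \vec{E}_1$ for a unitary $U$. The analogous argument on the $z_1$ side (using the other forms of \eqref{consequence}--\eqref{similarly}) yields $\vec{A}_2 = V \vec{E}_2 = V' \vec{F}_2$ up to unitary, so the Agler pair is unique.

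Finally, for the reverse implication, uniqueness of the Agler pair together with the fact, recorded just before Theorem \ref{thmsos}, that both $(\vec{E}_1, \vec{F}_2)$ and $(\vec{F}_1, \vec{E}_2)$ are Agler pairs forces $\vec{E}_1$ and $\vec{F}_1$ to agree up to a unitary, hence $\vec{E}_1(w)^*\vec{E}_1(z) = \vec{F}_1(w)^*\vec{F}_1(z)$, and the $\vec{G}$-formula from Theorem \ref{thmsos} immediately gives $\vec{G} \equiv 0$, i.e.\ $\mcG = \{0\}$. No step looks genuinely difficult; the only subtlety I anticipate is keeping track of the positivity/polynomial divisibility chain in Step 2, but once $P,S \geq 0$ are in hand, cancellation against $K_G = 0$ is automatic.
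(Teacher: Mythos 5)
Your proposal is correct and follows essentially the same route as the paper: the equivalence of $\mcG=\{0\}$ with $\mcE_j=\mcF_j$ via the last formula of Theorem \ref{thmsos}, uniqueness forcing $\vec{E}_j,\vec{F}_j$ to be unitary multiples because both canonical pairs are Agler pairs, and the converse by observing that the two positive semidefinite kernels in \eqref{consequence} and \eqref{similarly} sum to $\vec{G}(w)^*\vec{G}(z)$ and hence vanish when $\mcG=\{0\}$. The paper's proof is just a terser rendering of the same argument.
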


\begin{proof} If $p$ has a unique Agler pair, then
  $\vec{E}_j,\vec{F}_j$ are unitary multiples of each other for
  $j=1,2$ by Theorem \ref{thmsos}.  This is equivalent to equality of
  the spaces $\mcE_j=\mcF_j$.  By the last equation of Theorem
  \ref{thmsos}, this is equivalent to $\vec{G} = 0$ as well as
  $\mcG=\{0\}$.  Also, by this equation $\mcE_1=\mcF_1$ iff
  $\mcE_2=\mcF_2$.

Finally, if $\vec{E}_j,\vec{F}_j$ are unitary multiples, then the
positive semidefinite expressions in \eqref{consequence} and
\eqref{similarly} must equal zero, meaning $|\vec{A}_j|^2 =
|\vec{E}_j|^2=|\vec{F}_j|^2$ for $j=1,2$.  Hence, Agler pairs are
unique in this case.
\end{proof}

Our next goal is to show that $\mcG$ can be viewed as two different
one variable vector valued Hardy spaces.

\begin{lemma} \label{Hardylemma} Using the notation of Lemma
  \ref{mif}, given $f\in \mcG$ we may write
\[
f(z) = \vec{f}_1(z_2)
  \Lambda_n(z_1) = \vec{f}_2(z_1) \Lambda_m(z_2)
\]
for $\vec{f}_1 \in \C^{1\times n}[z_2], \vec{f}_2 \in \C^{1\times
  m}[z_1]$.  The map $f \mapsto \vec{f}_1 E_1^{-1}$ is a unitary from
$\mcG$ onto $\Hrow \ominus \Hrow \Phi_1$ and the map $f \mapsto
\vec{f}_2 E_2^{-1}$ is a unitary from $\mcG$ onto $H_{1\times m}^2
\ominus H_{1\times m}^2 \Phi_2$.

The inverses of the maps are given by $\vec{f} \mapsto \vec{f}
\vec{E}_j$.  Consequently, any $f \in \mcG$ is of the
form $\vec{f}_1(z_2)\vec{E}_1(z)=\vec{f}_2(z_1)\vec{E}_2(z)$ where
$\vec{f}_1(z_2), \vec{f}_2(z_1)$ are rational
functions with no poles in $\cD$
\end{lemma}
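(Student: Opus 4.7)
The plan is to package the result as a reproducing kernel Hilbert space isomorphism. The engine is the identity
\[
\vec{G}(w)^*\vec{G}(z) = \vec{E}_1(w)^* K(z_2,w_2) \vec{E}_1(z),
\]
where $K(z_2,w_2) = (I - \Phi_1(w_2)^*\Phi_1(z_2))/(1-\bar{w}_2 z_2)$ is the reproducing kernel of $\Hrow \ominus \Hrow \Phi_1$ given by Proposition \ref{rowhardy}. I will derive this identity directly from Theorem \ref{thmsos}, substituting $\vec{E}_1 = E_1\Lambda_n$ and $\vec{F}_1 = F_1\Lambda_n = \Phi_1 \vec{E}_1$ (the latter via $\Phi_1 = F_1 E_1^{-1}$) into the formula
$\vec{G}(w)^*\vec{G}(z) = (\vec{E}_1(w)^*\vec{E}_1(z) - \vec{F}_1(w)^*\vec{F}_1(z))/(1-\bar{w}_2 z_2).$

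With the key identity in hand, I will define $V: \Hrow \ominus \Hrow \Phi_1 \to \mcG$ by $V(\vec{h})(z) := \vec{h}(z_2)\vec{E}_1(z)$ and verify it is unitary on a convenient spanning set. Evaluating on ``weighted reproducing kernels,'' the identity yields $V(\vec{E}_1(w)^*K_{w_2}) = k_w$, the reproducing kernel of $\mcG$ at $w$. These elements span $\Hrow \ominus \Hrow \Phi_1$: the rows $\Lambda_n(w_1)^*$ span $\C^{1\times n}$ as $w_1$ varies over $\D$ by a Vandermonde argument, and $E_1(w_2)^*$ is invertible in $\D$ by Proposition \ref{Estable}, so $\Lambda_n(w_1)^*E_1(w_2)^*$ exhausts $\C^{1\times n}$ for each $w_2 \in \D$. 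Inner product preservation on the spanning set follows from the reproducing property of $K$ in Proposition \ref{rowhardy}:
\[
\ip{\vec{E}_1(w)^*K_{w_2}}{\vec{E}_1(w')^*K_{w'_2}}_{\Hrow} = \vec{E}_1(w)^*K(w'_2,w_2)\vec{E}_1(w') = k_w(w') = \ip{k_w}{k_{w'}}_{\mcG},
\]
where the middle equality is the kernel identity. The resulting matching of norms of linear combinations guarantees $V$ is well-defined on the span, isometric, and (since the $k_w$ span $\mcG$) surjective.

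To finish, if $f = V(\vec{h}) \in \mcG$, writing $f(z) = \vec{h}(z_2)E_1(z_2)\Lambda_n(z_1) = \vec{f}_1(z_2)\Lambda_n(z_1)$ forces $\vec{f}_1 = \vec{h}E_1$ and hence $\vec{h} = \vec{f}_1 E_1^{-1}$. Thus $V^{-1}$ is exactly the claimed map $f \mapsto \vec{f}_1 E_1^{-1}$. The second unitary is proved verbatim with $\vec{E}_2, \vec{F}_2, \Phi_2$ in the roles of $\vec{E}_1,\vec{F}_1,\Phi_1$, and the final ``consequently'' statement is immediate because elements of $\Hrow \ominus \Hrow \Phi_j$ are rational vectors with no poles in $\cD$ by Proposition \ref{rowhardy}.

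The principal obstacle is the well-definedness of $V$: a priori the formula $\vec{h}\vec{E}_1$ yields a rational (not polynomial) expression on $\D^2$, and one needs the kernel identity to see that the image actually lands in the polynomial space $\mcG$. This will be handled not by a direct algebraic check but by the norm identity above, which propagates well-definedness from the spanning set to all of the finite-dimensional space $\Hrow \ominus \Hrow \Phi_1$.
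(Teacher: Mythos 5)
Your proof is correct and follows essentially the same route as the paper: the polarized identity from Theorem \ref{thmsos} rewrites the reproducing kernel of $\mcG$ as $\vec{E}_1(w)^*K_{w_2}(z_2)\vec{E}_1(z)$, and the unitary is verified on reproducing kernels, with Proposition \ref{rowhardy} supplying the model-space kernel and rationality and Proposition \ref{Estable} the invertibility of $E_1$ in $\D$. The only difference is cosmetic: you construct the map from $\Hrow\ominus\Hrow\Phi_1$ into $\mcG$ and get surjectivity by showing the weighted kernels $\vec{E}_1(w)^*K_{w_2}$ span (Vandermonde plus invertibility of $E_1$), whereas the paper defines $f\mapsto\vec{f}_1E_1^{-1}$ in the other direction and proves ontoness by the mirror-image orthogonality argument.
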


The space $\Hrow$ above is row vector $\C^{1\times n}$-valued Hardy space on
$\D$ as explained in Section \ref{secvec}.  

\begin{proof}
  The space $\mcG$ is a reproducing kernel Hilbert space with
  reproducing kernel
\[
k_{w}(z) = \frac{\vec{E}_1(w)^*\vec{E}_1(z) - \vec{F}_1(w)^*
  \vec{F}_1(z)}{1-\bar{w}_2 z_2} = \vec{E}_1(w)^* \frac{I -
  \Phi_1(w_2)^* \Phi_1(z_2)}{1-\bar{w}_2 z_2} \vec{E}_1(z)
\]
by Theorem \ref{thmsos}.
The space $\Hrow \ominus \Hrow \Phi_1$ is a vector-valued reproducing
kernel Hilbert space with reproducing kernel
\[
K_{w_2}(z_2) = \frac{I-\Phi_1(w_2)^* \Phi_1(z_2)}{1-\bar{w}_2 z_2}
\]
by Proposition \ref{rowhardy}.

Let $T$ be the map in question: $T f= \vec{f}_1 E_1^{-1}$. Then,
\[
T k_w = \vec{E}_1(w)^*K_{w_2}.
\]
Since reproducing kernels span $\mcG$, this shows that $\mcG$ maps
into $\Hrow \ominus \Hrow\Phi_1$.  The map $T$ is onto because if $\vec{f}
\in \Hrow \ominus \Hrow\Phi_1$ is orthogonal to the range of $T$, then
\[
\ip{\vec{f}}{\vec{E}_1(w)^*K_{w_2}}_{\Hrow} = \vec{f}(w_2)\vec{E}_1(w)
=0 \text{  for all } w\in \D^2.
\]
But, this implies $\vec{f}(w_2)E_1(w_2)\Lambda_n(w_1) \equiv 0$, which
implies $\vec{f}(w_2)E_1(w_2)\equiv 0$, which implies $\vec{f} =0$
since $E_1$ is invertible except at finitely many points.

The map is a unitary because
\[
\ip{\vec{E}_1(w)^*K_{w_2}}{\vec{E}_1(z)^* K_{z_2}}_{H_{1\times n}^2} =
\vec{E}_1(w)^*K_{w_2}(z_2)\vec{E}_1(z) = k_{w}(z) = \ip{k_{w}}{k_{z}}_{\mcG}
\]
is enough to show $T$ is isometric on linear combinations of
reproducing kernels.

It is clear that the inverse of $T$ is given by $\vec{f}(z_2) \mapsto
\vec{f}(z_2)\vec{E}_1(z)$.  Proposition \ref{rowhardy} states that
$\vec{f}(z_2)$ is rational with no poles in $\cD$.
\end{proof}

\section{Generators for $\mathcal{I}_p$ and Theorem
  \ref{intthmineq}} \label{secthma}

Question \ref{q2} from the introduction asks for a list of generators
of $\mathcal{I}_p$.  We could settle for saying $\mathcal{I}_p$ is
generated by bases for $\mcG,\mcF_1$, and $\mcF_2$ as well as
$\refl{p}$ by Corollary \ref{cordecomp}.  Since the space $\mcG$ is in
some ways more elusive (e.g. a main theorem of this paper is a formula
for $\dim \mcG$), it is worth pointing out that we can replace $\mcG$
with $\mcE_1$.  We can also remove $\refl{p}$ from the list.

\begin{theorem} \label{generators} Let $p\in \C[z_1,z_2]$ be
  semi-stable. The ideal $\mathcal{I}_p$ is
  generated by the entries of $\vec{E}_1, \vec{F}_1$, and $\vec{F}_2$.
\end{theorem}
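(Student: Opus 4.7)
The plan is to set $J := (\vec E_1, \vec F_1, \vec F_2) \subset \mathbb{C}[z_1, z_2]$, note the inclusion $J \subset \mathcal{I}_p$ is immediate because the entries of $\vec E_1, \vec F_1, \vec F_2$ all lie in $\mcP_{n-1,m} \cup \mcP_{n,m-1} \subset \mathcal{I}_p$, and then attack the reverse inclusion. By Corollary \ref{cordecomp} (with $N, M$ sufficiently large), any $q \in \mathcal{I}_p$ is a polynomial combination of entries of $\vec G, \vec F_1, \vec F_2$ together with a polynomial multiple of $\refl p$, so it will suffice to establish two things: first, that each entry of $\vec G$ lies in $J$; and second, that $\refl p \in J$.

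For the first claim, I would exploit the kernel identity from Theorem \ref{thmsos},
\[
\vec E_1(w)^* \vec E_1(z) - \vec F_1(w)^* \vec F_1(z) = (1 - \bar w_2 z_2)\vec G(w)^* \vec G(z),
\]
as a polynomial identity in $\mathbb{C}[z_1, z_2, \bar w_1, \bar w_2]$, and match coefficients of $\bar w_1^a \bar w_2^b$. The LHS coefficients are $\mathbb{C}$-linear combinations of entries of $\vec E_1$ and $\vec F_1$, so they lie in $J$. Setting $v_{a,b}(z) := \sum_i \overline{(G_i)_{a,b}} G_i(z)$ (the $\bar w_1^a \bar w_2^b$-coefficient of $\vec G(w)^*\vec G(z)$), the RHS yields $v_{a,b}(z) - z_2 v_{a, b-1}(z)$ (with the convention $v_{a,-1} := 0$). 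An induction on $b$ then gives $v_{a,b} \in J$ for every $a, b \geq 0$. The final step uses that $G_1, \ldots, G_N$ form a basis of $\mcG$: linear independence ensures the vectors $\{((G_1)_{a,b}, \ldots, (G_N)_{a,b})\}_{a,b}$ span $\mathbb{C}^N$, so the family $\{v_{a,b}\}$ already $\mathbb{C}$-spans $\mcG$, and every entry of $\vec G$ is therefore in $J$.

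For the second claim, I would invoke the max-min Agler decomposition of Theorem \ref{thmsos},
\[
\overline{p(w)} p(z) - \overline{\refl p(w)} \refl p(z) = (1 - \bar w_1 z_1) \vec E_1(w)^* \vec E_1(z) + (1 - \bar w_2 z_2) \vec F_2(w)^* \vec F_2(z),
\]
whose right-hand side lies in $J$ for each fixed $w$. From \eqref{Pnm1} and the first claim, $p \equiv c\refl p \pmod{J}$ for some scalar $c$. Substituting gives $(c\overline{p(w)} - \overline{\refl p(w)}) \refl p(z) \in J$ for every $w \in \C^2$. Either this scalar is nonzero for some $w$---which gives $\refl p \in J$ at once---or else $\refl p \equiv \bar c\, p$ identically, which combined with $\gcd(p, \refl p) = 1$ forces $p$ to be a nonzero constant (the trivial case we set aside). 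The hard part will be the coefficient-matching bookkeeping in the first claim, particularly the final linear-independence argument that converts $v_{a,b} \in J$ for all $a, b$ into each entry of $\vec G$ lying in $J$.
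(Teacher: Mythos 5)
Your proposal is correct, but it reaches the two key containments by a genuinely different route than the paper. For $\mcG$, the paper invokes Lemma \ref{Hardylemma}: every $f\in\mcG$ has the form $\vec{f}_1(z_2)\vec{E}_1(z)$ with $\vec{f}_1$ rational and pole-free on $\cD$, so clearing denominators gives $g(z_2)f\in\langle \vec{E}_1\rangle$ with $g$ zero-free on $\cD$; applying the same to the reflection of $f$ gives $\refl{h}(z_2)f\in\langle\vec{F}_1\rangle$ with $\refl{h}$ having all zeros in $\D$, and a one-variable B\'ezout identity $Ag+B\refl{h}=1$ recovers $f$. You instead extract coefficients of $\bar w_1^a\bar w_2^b$ from the polarized identity $\vec{E}_1(w)^*\vec{E}_1(z)-\vec{F}_1(w)^*\vec{F}_1(z)=(1-\bar w_2 z_2)\vec{G}(w)^*\vec{G}(z)$ of Theorem \ref{thmsos}, run an induction on the $\bar w_2$-power, and use linear independence of the entries of $\vec{G}$ to see that the $v_{a,b}$ span $\mcG$; this is more elementary (no Hardy-space model, no rationality or coprimality input) and, like the paper's argument, actually places $\mcG$ in the ideal generated by $\vec{E}_1,\vec{F}_1$ alone, though it forgoes the structural representation $f=\vec{f}_1(z_2)\vec{E}_1(z)$ that the paper reuses elsewhere (e.g.\ for Theorem \ref{infinitythm}). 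For $\refl{p}$, the paper has a one-line trick---evaluate the Agler decomposition at a zero $w$ of $p$ that is not a zero of $\refl{p}$ (such $w$ exists for nonconstant semi-stable $p$ since $Z_p$ is infinite while $Z_p\cap Z_{\refl p}$ is finite)---whereas you combine \eqref{Pnm1} with your first claim to get $p\equiv c\refl{p}\pmod J$ and then argue by cases; this works, at the mild cost of explicitly setting aside the constant case, which the paper's argument also implicitly excludes. Both proofs share the same outer skeleton, namely the reduction via Corollary \ref{cordecomp}, and your coefficient-matching bookkeeping, including the final spanning argument (the conjugated coefficient vectors of a linearly independent family still span $\C^N$), is sound.
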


\begin{proof} By Lemma \ref{Hardylemma}, if $f \in \mcG$, then $f =
  \vec{f}(z_2) \vec{E}_1(z)$ where $\vec{f}(z_2)$ is a $\C^{1\times
    n}$ valued rational function with no poles in $\cD$.  So, there
  exists a polynomial $g(z_2)$ with no zeros in $\cD$ such that
  $g(z_2)f(z)$ is a multiple of $\vec{E}_1$.  The same argument
  applies to $\refl{f}(z)= z_1^{n-1}z_2^{m-1}
  \overline{f(1/\bar{z}_1,1/\bar{z}_2)}$ so that there is an $h(z_2)
  \in \C[z_2]$ with no zeros in $\cD$ such that $h(z_2) \refl{f}(z)$
  is a multiple of $\vec{E}_1$.  If we reflect this at an appropriate
  degree, we get $\refl{h}(z_2) f(z)$ is a multiple of $\vec{F}_1$.
  Note that $\refl{h}$ will have all zeros in $\D$.  Since $g$ and
  $\refl{h}$ have no common zeros, there exist $A(z_2),B(z_2)$ such
  that $Ag+B\refl{h} = 1$.  Thus, $(Ag+B\refl{h})f = f$ is in the
  ideal generated by the entries of $\vec{F}_1,\vec{E}_1$.

By Corollary \ref{cordecomp}, any $f \in \mathcal{I}_p$ can be written
as a combination of polynomial multiples of $\vec{F}_1,\vec{F}_2$ and
$\refl{p}$ and an element of $\mcG$.  Thus, the entries of
$\vec{E}_1,\vec{F}_1,\vec{F}_2$ and $\refl{p}$ generate the ideal
$\mathcal{I}_p$.

Finally, $\refl{p}$ is in the ideal generated by $\mcE_1,\mcF_2$.  To
see this simply let $w$ be a zero of $p$ which is not a zero of
$\refl{p}$ and insert this value for $w$ into Theorem \ref{thmsos}.
This immediately exhibits $\refl{p}$ as a combination of elements of
$\mcE_1,\mcF_2$.  
\end{proof}

\begin{remark} \label{constructgenerators}
If one wants to construct generators of $\mathcal{I}_p$ it is only
necessary to construct $\vec{E}_1$: $\vec{F}_1$ is just a reflection
of $\vec{E}_1$ (as in Proposition \ref{reflectionprop}) and
$\vec{F}_2(w)^*\vec{F}_2(z)$ can then be solved for using Theorem
\ref{thmsos}.  Once $\vec{F}_2(w)^*\vec{F}_2(z)$ is known, we can
extract coefficients of powers of $z_2,\bar{w}_2$ to get
$F_2(w_1)^*F_2(z_1)$ and if we further extract coefficients of powers
of $z_1,\bar{w}_1$ we can write
\[
F_2(w_1)^*F_2(z_1) = (I_m,\bar{w}_1 I_m,\dots, \bar{w}_1^{n-1} I_m)
H \begin{pmatrix} I_m \\ z_1 I_m \\ \vdots \\ z_1^{n-1}
  I_m \end{pmatrix}
\]
for some $nm\times nm$ positive semi-definite matrix $H$.  We can
factor $H = J^*J$ for some $m\times nm$ matrix $J$ since $H$
necessarily has rank $m$.  Then, $F_2(z_1) = J (I_m, z_1I_m,\dots,
z_1^{n-1} I_m)^t$.  This is the approach taken in Example
\ref{pascoeex}.
\end{remark}

Notice that the common zeros of $\mathcal{I}_p$ are all on $\T^2$ as
one would expect.  This is because $\vec{F}_1$ has no zeros in
$\C\times (\C\setminus \cD)$ and $\vec{E}_1$ has no zeros in $\C\times
\D$ by Lemma \ref{Estable} and this leaves any common zeros in
$\C\times \T$.  By symmetry any common zeros must also be in $\T\times
\C$ and this leaves $\T^2$.

Before we prove Theorem \ref{intthmineq} from the introduction as
Corollary \ref{intthmcor} below, we need the following fact.

\begin{prop} \label{EonT} Suppose $p$ is semi-stable and $\deg p
  =(n,m)$.  Let $(\vec{A}_1,\vec{A}_2)$ be an Agler pair for
  $p$. Then, for
  $z \in \T^2$
\[
|\vec{A}_1(z)|^2 = n|p(z)|^2 - 2\Re(\overline{p(z)}(z_1 \partial_1 p(z)))
\]
\[
|\vec{A}_2(z)|^2 = m|p(z)|^2 - 2\Re(\overline{p(z)}(z_2\partial_2
p(z))).
\]
\end{prop}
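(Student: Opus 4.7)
The plan is to extract $|\vec A_1(z)|^2$ from the polarized Agler decomposition (Theorem \ref{thmsos})
\[
\overline{p(w)}p(z) - \overline{\refl{p}(w)}\refl{p}(z) = (1-\bar w_1 z_1)\vec A_1(w)^*\vec A_1(z) + (1-\bar w_2 z_2)\vec A_2(w)^*\vec A_2(z)
\]
by differentiating in $z_1$ and specializing to $w = z \in \T^2$. Since $1-\bar z_j z_j = 0$ on $\T^2$ for $j=1,2$, the only term that survives on the right is the one in which $\partial_{z_1}$ lands on the factor $(1-\bar w_1 z_1)$, producing $-\bar z_1|\vec A_1(z)|^2$. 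Multiplying through by $-z_1$ yields the intermediate identity
\[
|\vec A_1(z)|^2 = z_1\overline{\refl{p}(z)}\,\partial_1\refl{p}(z) - z_1\overline{p(z)}\,\partial_1 p(z) \qquad (z \in \T^2).
\]

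The second step is to rewrite the reflected term as something involving only $p$. Writing $p(z) = \sum a_{jk} z_1^j z_2^k$ so that $\refl{p}(z) = \sum \overline{a_{jk}}\, z_1^{n-j} z_2^{m-k}$, direct differentiation gives the polynomial identity
\[
z_1\partial_1\refl{p}(z) = n\refl{p}(z) - \sum_{j,k} j\,\overline{a_{jk}}\, z_1^{n-j} z_2^{m-k}.
\]
On $\T^2$ the second sum equals $z_1^n z_2^m\,\overline{z_1\partial_1 p(z)}$ (using $\overline{z_j} = z_j^{-1}$ on $\T$), while the relation $\refl{p}(z) = z_1^n z_2^m \overline{p(z)}$ on $\T^2$ gives $\overline{\refl{p}(z)} = z_1^{-n}z_2^{-m} p(z)$ and $|\refl{p}(z)| = |p(z)|$. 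Multiplying the previous display on the left by $\overline{\refl{p}(z)}$ and combining these identities produces
\[
z_1\overline{\refl{p}(z)}\,\partial_1\refl{p}(z) = n|p(z)|^2 - p(z)\,\overline{z_1\partial_1 p(z)}.
\]

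Substituting this into the intermediate identity and observing that $-\overline{p(z)}z_1\partial_1 p(z) - p(z)\overline{z_1\partial_1 p(z)} = -2\Re(\overline{p(z)}\,z_1\partial_1 p(z))$ yields the first formula. The second formula follows by the symmetric argument with $\partial_{z_2}$ in place of $\partial_{z_1}$. The only real obstacle is keeping the reflection bookkeeping straight on $\T^2$; once the displayed identities involving $\refl{p}$ are in hand, the calculation is short and purely algebraic, and nowhere uses any special property of $(\vec A_1, \vec A_2)$ beyond its being an Agler pair, consistent with the statement holding for \emph{any} Agler pair.
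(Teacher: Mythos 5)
Your proof is correct. Each step checks out: the polarized identity you start from is available (the paper notes explicitly, right after defining Agler decompositions, that the formula can be polarized, via Theorem \ref{polarthm}); differentiating it in $z_1$ with $w$ held fixed and then setting $w=z\in\T^2$ kills the two terms carrying the factors $1-\bar z_jz_j$ and leaves $-\bar z_1|\vec A_1(z)|^2$; and your reflection bookkeeping, namely $z_1\partial_1\refl{p}=n\refl{p}-\sum_{j,k}j\,\overline{a_{jk}}z_1^{n-j}z_2^{m-k}$ together with $\overline{\refl{p}(z)}=z_1^{-n}z_2^{-m}p(z)$ and $|\refl{p}|=|p|$ on $\T^2$, correctly converts $z_1\overline{\refl{p}}\,\partial_1\refl{p}$ into $n|p|^2-p\,\overline{z_1\partial_1 p}$. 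The route is genuinely different from the paper's: the paper works only with the diagonal (unpolarized) decomposition, restricts $z_2\in\T$ to get $|\vec A_1(z)|^2=\bigl(|p(z)|^2-|\refl{p}(z)|^2\bigr)/(1-|z_1|^2)$, and then computes the boundary value by a radial limit, writing $|\refl{p}(rz_1,z_2)|=r^n|p(z_1/r,z_2)|$ and applying a L'H\^opital-type computation as $r\to1$. Your argument trades that limit for polarization plus a holomorphic differentiation and an explicit coefficient identity for $\refl{p}$; it is purely algebraic, stays entirely on $\T^2$, and makes visible that the formula is just the boundary restriction of the $z_1$-derivative of the polarized Agler identity, while the paper's version is shorter to set up and never needs the off-diagonal identity. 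Both correctly use nothing about $(\vec A_1,\vec A_2)$ beyond the defining decomposition, as the statement requires.
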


\begin{proof}
We can compute
  $|\vec{A}_1(z)|^2$ for $z \in \T^2$ directly as follows.

For $z_2 \in \T$, 
\[
|\vec{A}_1(z)|^2 = \frac{|p(z)|^2 - |\refl{p}(z)|^2}{1-|z_1|^2}
\]
and for $r \in (0,1)$ and $z_1 \in \T$
\[
|\vec{A}_1(rz_1,z_2)|^2 = \frac{|p(rz_1,z_2)|^2 -
  r^{2n}|p(z_1/r,z_2)|^2}{1-r^2}.
\]
Letting $r\to 1$ we get for $z \in \T^2$
\[\begin{aligned}
|\vec{A}_1(z)|^2 &= \frac{4\Re(\overline{p(z)}(\partial_1 p(z)z_1)) -
  2n |p(z)|^2}{-2}\\
& = n|p(z)|^2 - 2\Re(\overline{p(z)}(z_1 \partial_1 p(z))).
\end{aligned}\]
The proof for $\vec{A}_2$ is similar.
\end{proof}

\begin{corollary} \label{intthmcor} Suppose $p \in \C[z_1,z_2]$ is
  semi-stable and $\deg p= (n,m)$.  Let $f\in \C[z_1,z_2]$. Then, $f
  \in \mathcal{I}_p$ iff there is a constant $c>0$ such that for $z
  \in \T^2$
\[
|f(z)|^2 \leq c( (n+m)|p(z)|^2 - 2\Re(\overline{p(z)}(z_1\partial_1 p
+ z_2 \partial_2p)) ).
\]
\end{corollary}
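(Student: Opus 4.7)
The plan is to identify the right-hand side of the claimed inequality with a quantity built from the canonical Agler vectors. By Proposition \ref{EonT} applied to the max--min Agler pair $(\vec{E}_1,\vec{F}_2)$, the quantity
\[
R(z) \defn (n+m)|p(z)|^2 - 2\Re\bigl(\overline{p(z)}(z_1\partial_1 p(z) + z_2\partial_2 p(z))\bigr)
\]
satisfies $R(z) = |\vec{E}_1(z)|^2 + |\vec{F}_2(z)|^2$ for $z \in \T^2$. Applying the same proposition to the min--max pair $(\vec{F}_1,\vec{E}_2)$ also yields $R(z) = |\vec{F}_1(z)|^2 + |\vec{E}_2(z)|^2$ on $\T^2$. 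Thus the corollary asserts that $f \in \mathcal{I}_p$ iff $|f|^2 \leq cR$ on $\T^2$ for some constant $c > 0$.

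For sufficiency, assume $|f(z)|^2 \leq c(|\vec{E}_1(z)|^2 + |\vec{F}_2(z)|^2)$ for $z \in \T^2$. Since $p$ has only finitely many zeros on $\T^2$, one may divide by $|p(z)|^2$ almost everywhere and integrate against $d\sigma$ to obtain
\[
\int_{\T^2}\frac{|f|^2}{|p|^2}d\sigma \leq c\bigl(\|\vec{E}_1\|_{\Lp}^2 + \|\vec{F}_2\|_{\Lp}^2\bigr) = c(n+m) < \infty,
\]
where the final equality uses that the entries of $\vec{E}_1$ and $\vec{F}_2$ form orthonormal bases of the $n$- and $m$-dimensional subspaces $\mcE_1, \mcF_2 \subset \Lp$. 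Hence $f \in \Lp$, i.e.\ $f \in \mathcal{I}_p$.

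For necessity, assume $f \in \mathcal{I}_p$. Theorem \ref{generators} supplies polynomial row vectors $\vec{g}_1, \vec{g}_2, \vec{g}_3$ so that $f = \vec{g}_1 \vec{E}_1 + \vec{g}_2 \vec{F}_1 + \vec{g}_3 \vec{F}_2$. Applying $|a+b+c|^2 \leq 3(|a|^2+|b|^2+|c|^2)$ together with Cauchy--Schwarz gives
\[
|f(z)|^2 \leq 3\bigl(|\vec{g}_1(z)|^2|\vec{E}_1(z)|^2 + |\vec{g}_2(z)|^2|\vec{F}_1(z)|^2 + |\vec{g}_3(z)|^2|\vec{F}_2(z)|^2\bigr).
\]
The $|\vec{g}_i|$ are bounded on the compact set $\T^2$ by some $M$, so $|f|^2 \leq 3M^2(|\vec{E}_1|^2 + |\vec{F}_1|^2 + |\vec{F}_2|^2)$ there. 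The key final step is to absorb $|\vec{F}_1|^2$: on $\T^2$ we have $|\vec{F}_1|^2 \leq |\vec{F}_1|^2 + |\vec{E}_2|^2 = R = |\vec{E}_1|^2 + |\vec{F}_2|^2$, whence $|f|^2 \leq 6M^2 R$ on $\T^2$.

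The whole argument is an assembly of pieces already in hand, so there is no genuine obstacle; the pivotal conceptual step is the coincidence that both canonical Agler pairs yield the same boundary quantity $R(z)$, which makes it possible to control the leftover $|\vec{F}_1|^2$ term that arises from the generator description of $\mathcal{I}_p$.
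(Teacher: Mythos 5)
Your proof is correct and follows essentially the same route as the paper: Theorem \ref{generators} plus Cauchy--Schwarz and Proposition \ref{EonT} for necessity, and integrability of the right-hand side (the entries of $\vec{E}_1,\vec{F}_2$ lie in $\Lp$) for sufficiency. The only cosmetic difference is that you absorb the $|\vec{F}_1|^2$ term by applying Proposition \ref{EonT} to both canonical Agler pairs, whereas the paper uses the pointwise identity $|\vec{E}_1|=|\vec{F}_1|$ on $\T^2$ coming from the reflection relation; these amount to the same observation.
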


\begin{proof}
By Theorem \ref{generators} any $f$ can be written in terms of
$\vec{E}_1,\vec{F}_1,\vec{F}_2$.  Since
$|\vec{E}_1|=|\vec{F}_2|$ on $\T^2$, it follows
that on $\T^2$ we have
\[
|f| \leq c( |\vec{E}_1|+|\vec{F}_2|)
\]
Since $(\vec{E}_1,\vec{F}_2)$ is an Agler pair, Proposition \ref{EonT}
gives the estimate as claimed after applying Cauchy-Schwarz.

On the other hand, if the inequality holds then $f$ is bounded by
elements of $\Lp$ and therefore must belong to
this space.
\end{proof}

\section{The ideal $\mathcal{I}_{p}^{\infty}$} \label{secinf}

Recall $\mathcal{I}_{p}^{\infty}$ is the the set of $q$ such that
$q/p$ is essentially bounded on $\T^2$.  One way for $q/p$ to be
bounded on $\T^2$ is if $q \in \langle p, \refl{p}\rangle$, the ideal
generated by $p,\refl{p}$.  However this cannot be all elements of
$\mathcal{I}_{p}^{\infty}$ since elements of $\langle p,\refl{p}
\rangle$ vanish at all common zeros of $p$ and $\refl{p}$, which could
include zeros not on $\T^2$ and these should not affect boundedness of
$q/p$ on $\T^2$.  It turns out that we can explicitly construct one
variable polynomials $g(z_1),h(z_2)$ such that $gh \mathcal{I}_p
\subset \mathcal{I}_p^{\infty}$.  It is not surprising that such
polynomials exist but the actual choice of $g,h$ may be of some
interest. 

If $g \in \C[z_1]$ is a one variable polynomial, we can factor $g =
g_1 g_2$ where $g_1$ has no zeros on $\T$ and $g_2$ has all of its
zeros on $\T$.  We will refer to $g_2$ as the $\T$-factor of $g$.  The
$\T$-factor is unique up to constant multiples.  

The following theorem identifies a large subset of
$\mathcal{I}_p^{\infty}$.  We leave the search for a complete
characterization of $\mathcal{I}_p^{\infty}$ for future work.

\begin{theorem} \label{infinitythm} Let $p\in \C[z_1,z_2]$ be
  semi-stable, $\deg p = (n,m)$, and we will use $E_1,E_2$ as defined
  in \eqref{EFbreakup}.  Let $g\in \C[z_2]$ be the $\T$-factor of
  $\det E_1(z_2)$ and let $h \in \C[z_1]$ be the $\T$-factor of $\det
  E_2(z_1)$.  Then, $g(z_2)h(z_1) \mathcal{I}_{p} \subset
  \mathcal{I}_p^{\infty}$.
\end{theorem}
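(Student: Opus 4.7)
The plan is to reduce the bound, via the generators of $\mathcal{I}_p$ supplied by Theorem~\ref{generators}, to an estimate on the single vector polynomial $\vec{E}_1$, and then use the boundary identity of Proposition~\ref{EonT} to convert the question into a local comparison between the vanishing orders of $g h$ and $p$ at the finitely many zeros of $p$ on $\T^2$. By Theorem~\ref{generators}, $\mathcal{I}_p$ is generated as a $\C[z_1,z_2]$-module by the entries of $\vec{E}_1, \vec{F}_1, \vec{F}_2$; since polynomials are bounded on $\T^2$, it is enough to show $ghE/p \in L^\infty(\T^2)$ for every entry $E$ of these three vector polynomials. Lemma~\ref{mif} gives $\vec{F}_1 = \Phi_1(z_2)\vec{E}_1$ with $\Phi_1$ matrix rational inner (hence contractive on $\cD$), so $|\vec{F}_1(z)| \le |\vec{E}_1(z)|$ whenever $z_2 \in \cD$; in particular on $\T^2$. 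Thus the case of $\vec{F}_1$ reduces to that of $\vec{E}_1$. The swap $(z_1,z_2) \mapsto (z_2,z_1)$ exchanges $g \leftrightarrow h$ and carries the max-min pair $(\vec{E}_1,\vec{F}_2)$ of $p$ to the max-min pair for $p'(z_1,z_2) := p(z_2,z_1)$, so the case of $\vec{F}_2$ reduces to that of $\vec{E}_1$ applied to $p'$. The whole theorem is thereby reduced to showing
\[
g(z_2)h(z_1)\vec{E}_1(z)/p(z) \in L^\infty(\T^2).
\]

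To attack this, I would apply Proposition~\ref{EonT}, which gives on $\T^2$
\[
|\vec{E}_1(z)|^2 = n|p(z)|^2 - 2\Re\bigl(\overline{p(z)}\,z_1\partial_1 p(z)\bigr) \le n|p(z)|^2 + 2|p(z)||\partial_1 p(z)|.
\]
Dividing by $|p|^2$ and multiplying by $|g(z_2)h(z_1)|^2$ yields, on $\T^2 \setminus \{p=0\}$,
\[
|g(z_2)h(z_1)\vec{E}_1(z)/p(z)|^2 \le n|gh|^2 + 2|gh|^2\,\frac{|\partial_1 p(z)|}{|p(z)|}.
\]
Since $gh$ and $\partial_1 p$ are polynomials (hence bounded on $\T^2$), the problem reduces to the pointwise inequality $|g(z_2)h(z_1)|^2 \le C|p(z)|$ on $\T^2$; by compactness it suffices to establish this near each of the (finitely many) zeros of $p$ on $\T^2$.

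The main obstacle is this last local inequality. Fix $\zeta \in \T^2$ with $p(\zeta)=0$. Proposition~\ref{EonT} then forces $|\vec{E}_1(\zeta)| = |\vec{E}_2(\zeta)| = 0$, and, using $\vec{E}_1 = E_1(z_2)\Lambda_n(z_1)$, $\vec{E}_2 = E_2(z_1)\Lambda_m(z_2)$ together with the nonvanishing of $\Lambda_n(\zeta_1), \Lambda_m(\zeta_2)$, this forces $\det E_1(\zeta_2) = \det E_2(\zeta_1) = 0$, so $g(\zeta_2) = h(\zeta_1) = 0$. What remains is to check that the combined vanishing order of $g(z_2)h(z_1)$ at $\zeta$ along $\T^2$ dominates the order of vanishing of $|p|$ there. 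I expect this to come out of a local analysis using Puiseux parametrizations of the curve $\{p=0\}$ through $\zeta$: the vanishing orders of $\det E_1, \det E_2$ at $\zeta_2, \zeta_1$ are controlled by the intersection multiplicity of $p$ and $\refl{p}$ at $\zeta$ (the same invariant as in Theorem~\ref{intthmdim}), and this multiplicity also bounds the rate at which $|p|$ can decay on $\T^2$ near $\zeta$, since $|p|=|\refl{p}|$ on $\T^2$. Matching these two measurements of vanishing yields the required inequality and completes the proof.
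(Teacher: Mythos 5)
Your reductions at the start are fine (and in fact Proposition \ref{EonT} already gives $|\vec{F}_1|=|\vec{E}_1|$ and $|\vec{E}_2|=|\vec{F}_2|$ on $\T^2$, so the $\Phi_1$ and variable-swap steps are not even needed), but the proof is not complete: everything has been funneled into the pointwise inequality $|g(z_2)h(z_1)|^2 \leq C|p(z)|$ on $\T^2$, and that inequality is exactly where you stop. The final paragraph is a plan, not an argument. The difficulty is real: the decay of $|p|$ restricted to $\T^2$ near a boundary zero is a genuinely two-variable phenomenon governed by how the zero curve of $p$ osculates the torus (this is precisely why $g=(1-z_1)/(2-z_1-z_2)$ blows up along the tangential curve $z(\theta)=(1-\theta^2)(e^{i\theta},e^{-i\theta})$ in Section 3), whereas $g$ and $h$ are one-variable objects built from $\det E_1$ and $\det E_2$. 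You would need a quantitative link between the $\T$-vanishing orders of these determinants and the order of tangency of $Z_p$ to $\T^2$ (equivalently, the intersection multiplicity $N_\zeta(p,\refl{p})$), and no such link is established anywhere in your argument or in the cited results. Until that is done, the theorem has not been proved.

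The paper avoids this analytic comparison entirely by an algebraic route. Lemma \ref{Llemma} exhibits explicit elements $L_{w_1}$ of the ideal $\langle p,\refl{p}\rangle$, and Lemma \ref{Llemma2} identifies them as $\Lambda_n(w_1)^* X_n F_1(z_2)^t \vec{E}_1(z)$; multiplying by the adjugate of $F_1^t$ shows the entries of $\det F_1(z_2)\,\vec{E}_1(z)$ lie in $\langle p,\refl{p}\rangle$. Any element $Ap+B\refl{p}$ of that ideal divided by $p$ equals $A+B\refl{p}/p$, which is automatically bounded on $\T^2$ because $|\refl{p}|=|p|$ there; one then strips off the factor of $\det F_1$ with no zeros on $\T$ (harmless for boundedness on $\T^2$) and uses that $\det F_1$ and $\det E_1$ share their $\T$-zeros to land on $g\vec{E}_1/p\in L^\infty(\T^2)$. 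If you want to salvage your approach, you should either prove your local inequality via such an ideal-membership statement or abandon the $|gh|^2\leq C|p|$ reduction in favor of it.
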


\begin{lemma} \label{Llemma} For an arbitrary $p \in \C[z_1,z_2]$ of
  degree $(n,m)$, let
\[
L_{w_1}(z) = L (z_1,z_2;w_1) = z_2^{m} \frac{p(z) \overline{p(w_1, 1/\bar{z}_2)}
  - \refl{p}(z) \overline{\refl{p}(w_1, 1/\bar{z}_2)} }{1-z_1
  \bar{w}_1}.
\]
Then, $L_{w_1} \in \langle p,\refl{p}\rangle$ for each
$w_1\in \C$.
\end{lemma}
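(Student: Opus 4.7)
The numerator $N(z;\bar w_1) := z_2^m\bigl(p(z)\overline{p(w_1,1/\bar z_2)} - \refl{p}(z)\overline{\refl{p}(w_1,1/\bar z_2)}\bigr)$ is manifestly a polynomial in $z_1, z_2, \bar w_1$: the $z_2^m$ factor absorbs the negative powers of $z_2$ appearing inside the conjugate terms, and the resulting degree in $\bar w_1$ is at most $n$. To check that $L_{w_1}$ is actually a polynomial, I would verify that $(1 - z_1\bar w_1)$ divides $N$. Substituting $z_1 = 1/\bar w_1$ and using the reflection identities
\[
\refl{p}(1/\bar w_1, z_2) = \bar w_1^{-n} z_2^{m}\,\overline{p(w_1,1/\bar z_2)}, \qquad \overline{\refl{p}(w_1,1/\bar z_2)} = \bar w_1^{n} z_2^{-m}\,p(1/\bar w_1, z_2),
\]
both of which follow directly from $\refl{p}(z_1,z_2) = z_1^n z_2^m \overline{p(1/\bar z_1, 1/\bar z_2)}$, a short cancellation shows that the two terms of $N$ agree when $z_1 = 1/\bar w_1$. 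Hence $L_{w_1}$ is a polynomial, and the defining identity can be rewritten as
\[
(1 - z_1\bar w_1)\, L_{w_1}(z) = p(z) P(z_2;\bar w_1) - \refl{p}(z) R(z_2;\bar w_1),
\]
with $P, R \in \C[z_2,\bar w_1]$ the polynomial factors appearing in $N$.

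Next, I would promote $\bar w_1$ to a formal indeterminate $t$ and work in $R_0 := \C[z_1, z_2, t]$ with the extended ideal $I := \langle p,\refl{p}\rangle R_0$. The identity above, viewed in $R_0$, gives $(1 - z_1 t)\, L(z,t) \in I$. The main step is then to observe that $1 - z_1 t$ is a non-zero-divisor in $R_0/I$. This is an elementary consequence of the fact that $R_0/I = A[t]$, where $A := \C[z_1,z_2]/\langle p,\refl{p}\rangle$, and that, viewed as an element of $A[t]$, the expression $1 - z_1 t$ is a polynomial in $t$ whose constant term is the unit $1 \in A$: if $f(t) = \sum_{j\geq 0} a_j t^j \in A[t]$ is nonzero with $j_0$ the least index for which $a_{j_0} \neq 0$, then the coefficient of $t^{j_0}$ in $(1 - z_1 t)\, f(t)$ is $a_{j_0} \neq 0$, so $(1 - z_1 t)\, f(t) \neq 0$ in $A[t]$.

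Consequently $L(z,t) \in I$, so we can write $L(z,t) = a(z,t)\, p(z) + b(z,t)\, \refl{p}(z)$ for some $a, b \in \C[z_1,z_2,t]$. Specializing $t = \bar w_1$ yields $L_{w_1}(z) \in \langle p,\refl{p}\rangle$ in $\C[z_1,z_2]$ for every $w_1 \in \C$, completing the proof. The only non-routine observation is the unit-constant-term argument showing $1 - z_1 t$ is not a zero-divisor modulo $\langle p,\refl{p}\rangle$; once that is in hand, everything follows from polynomial division.
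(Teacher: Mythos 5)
Your argument is correct, but it takes a genuinely different route from the paper's. The paper's entire proof is the single regrouping
\[
L(z;w_1) = p(z)\, \frac{z_2^m \overline{p(w_1,1/\bar{z}_2)} - \bar{w}_1^n \refl{p}(z)}{1-z_1\bar{w}_1} + \refl{p}(z)\, \frac{\bar{w}_1^n p(z) - z_2^m \overline{\refl{p}(w_1,1/\bar{z}_2)}}{1-z_1\bar{w}_1},
\]
obtained by adding and subtracting $\bar{w}_1^n p(z)\refl{p}(z)$ in the numerator; each of the two new numerators vanishes at $z_1 = 1/\bar{w}_1$ (by exactly the reflection identities you quote), so each fraction is itself a polynomial and $L_{w_1}$ is exhibited explicitly in the form $p\cdot(\,\cdot\,) + \refl{p}\cdot(\,\cdot\,)$. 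You instead prove only polynomiality of $L_{w_1}$ and then upgrade $(1-z_1 t)L \in I$ to $L \in I$ by observing that $1-z_1t$ is a non-zero-divisor in $A[t]$; that lowest-coefficient argument is sound over an arbitrary coefficient ring, so no hypothesis on $p$ is needed, matching the lemma's generality. What your route buys is a clean separation of the two issues (divisibility versus ideal membership) with no clever regrouping; what it loses is explicitness --- the paper's identity hands you the actual polynomial cofactors of $p$ and $\refl{p}$, in keeping with the constructive use of $L$ in Lemma \ref{Llemma2} and Proposition \ref{proppq}. One detail worth making explicit on your side: the formal step requires $(1-z_1t)$ to divide the numerator in $\C[z_1,z_2,t]$, not merely for each fixed $w_1 \neq 0$; this is fine because your cancellation at $z_1 = 1/\bar{w}_1$ is really an identity of Laurent polynomials, i.e.\ the numerator maps to zero under $\C[z_1,z_2,t]\to\C[z_2][t,t^{-1}]$, $z_1\mapsto t^{-1}$, whose kernel is exactly $(1-z_1t)$.
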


\begin{proof}
Observe
\[
L(z;w_1) = p(z) \frac{z_2^m \overline{p(w_1,1/\bar{z}_2)} - \bar{w}_1^n
  \refl{p}(z)}{1-z_1\bar{w}_1} + \refl{p}(z) \frac{\bar{w}_1^n p(z) - z_2^m
  \overline{\refl{p}(w_1,1/\bar{z}_2)}}{1-z_1\bar{w}_1}.
\]
The denominator $(1-z_1\bar{w}_1)$ divides the numerator in both
fractions above.
\end{proof}

\begin{lemma} \label{Llemma2} With $L$ defined as Lemma \ref{Llemma}, if $p$ is
  semi-stable then
\[
L(z;w_1) = \Lambda_n(w_1)^* X_n F_1(z_2)^t \vec{E}_1(z) =
\Lambda_n(w_1)^* X_n E_1(z_2)^t \vec{F}_1(z).
\]
Recall $X_n ,E_1,F_1$ from \eqref{EFbreakup} and
\eqref{matrixreflection}.  
\end{lemma}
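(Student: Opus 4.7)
The plan is to derive both expressions from Theorem \ref{thmsos} by making the substitution $w_2 = 1/\bar{z}_2$, which makes the factor $1-\bar{w}_2 z_2$ vanish, and then invoking the reflection identity \eqref{matrixreflection} to swap $E_1$ and $F_1$ (and $E_2$ and $F_2$).

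First I would polarize the identity of Theorem \ref{thmsos} and formally substitute $\bar{w}_2 = 1/z_2$ into the first decomposition
\[
\overline{p(w)}p(z)-\overline{\refl{p}(w)}\refl{p}(z)=(1-\bar{w}_1z_1)\vec{E}_1(w)^*\vec{E}_1(z)+(1-\bar{w}_2z_2)\vec{F}_2(w)^*\vec{F}_2(z).
\]
The second term drops out, and after dividing by $(1-\bar{w}_1z_1)$ and multiplying by $z_2^m$ the left side becomes exactly $L(z;w_1)$, yielding
\[
L(z;w_1)=z_2^m\,\vec{E}_1(w_1,1/\bar{z}_2)^*\vec{E}_1(z).
\]
Both sides are polynomials in $z_1, z_2, \bar{w}_1$ (the left side by Lemma \ref{Llemma}, the right side by the computation below), so the formal substitution is legitimate.

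Next I would compute $\vec{E}_1(w_1,1/\bar{z}_2)^*$ using \eqref{EFbreakup} and \eqref{matrixreflection}. From $\vec{E}_1(z)=E_1(z_2)\Lambda_n(z_1)$ we get $\vec{E}_1(w_1,1/\bar{z}_2)=E_1(1/\bar{z}_2)\Lambda_n(w_1)$. Setting $z_2\mapsto 1/\bar{z}_2$ in $E_1(z_2)=z_2^m\overline{F_1(1/\bar{z}_2)}X_n$ gives $E_1(1/\bar{z}_2)=\bar{z}_2^{-m}\overline{F_1(z_2)}X_n$. Taking conjugate transpose and using $X_n^*=X_n$ yields
\[
E_1(1/\bar{z}_2)^*=X_n F_1(z_2)^t z_2^{-m}.
\]
Plugging in and canceling the $z_2^m\cdot z_2^{-m}$ factors gives the first equality $L(z;w_1)=\Lambda_n(w_1)^*X_n F_1(z_2)^t\vec{E}_1(z)$.

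For the second equality I would repeat the procedure starting from the other decomposition in Theorem \ref{thmsos},
\[
\overline{p(w)}p(z)-\overline{\refl{p}(w)}\refl{p}(z)=(1-\bar{w}_1z_1)\vec{F}_1(w)^*\vec{F}_1(z)+(1-\bar{w}_2z_2)\vec{E}_2(w)^*\vec{E}_2(z),
\]
producing $L(z;w_1)=z_2^m\vec{F}_1(w_1,1/\bar{z}_2)^*\vec{F}_1(z)$; the analogous reflection identity (obtained by inverting \eqref{matrixreflection}, using $X_n^2=I$) gives $F_1(1/\bar{z}_2)^*=X_n E_1(z_2)^t z_2^{-m}$, and the same cancellation completes the proof. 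The main obstacle, such as it is, is just careful bookkeeping: one must track how the reflection operation interacts with conjugate transposition, keep straight which variable each factor is evaluated at, and confirm that the substitution $w_2=1/\bar{z}_2$ produces an identity of polynomials in $z_1,z_2,\bar{w}_1$ rather than merely a pointwise identity on a codimension-one subset.
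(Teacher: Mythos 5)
Your proposal is correct and is essentially the paper's own argument: the paper proves the lemma in one line by setting $w_2 = 1/\bar{z}_2$ in Theorem \ref{thmsos} and multiplying through by $z_2^m$, which is exactly your substitution, and your use of \eqref{EFbreakup} and \eqref{matrixreflection} (with $X_n^* = X_n$, $X_n^2 = I$) just makes explicit the bookkeeping the paper leaves implicit. The second equality via the min-max decomposition $(\vec{F}_1,\vec{E}_2)$ and the symmetric form of the reflection identity is likewise the intended route.
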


This is just a result of setting $w_2 = 1/\bar{z}_2$ and multiplying
through by $z_2^m$ in Theorem \ref{thmsos}.

\begin{prop} Assume the setup of Theorem \ref{infinitythm}.  Then, the
  entries of $g\vec{E}_1, g\vec{F}_1, h \vec{E}_2, h \vec{F}_2$ belong
  to $\mathcal{I}_{p}^{\infty}$.
\end{prop}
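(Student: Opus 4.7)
The plan is to use Lemmas \ref{Llemma} and \ref{Llemma2} to manufacture elements of $\mathcal{I}_p^{\infty}$ of the form $F_1(z_2)^t \vec{E}_1(z)$ and $E_1(z_2)^t \vec{F}_1(z)$ (entrywise), then strip away the matrix factor by left-multiplying by the adjugate. First I observe that Lemma \ref{Llemma} places $L_{w_1} \in \langle p, \refl{p}\rangle$ for every $w_1 \in \C$; writing $L_{w_1} = Ap + B\refl{p}$ and using $|\refl{p}/p| = 1$ a.e.\ on $\T^2$ shows $L_{w_1}/p \in L^{\infty}(\T^2)$, so $L_{w_1} \in \mathcal{I}_p^{\infty}$. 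Lemma \ref{Llemma2} writes $L(z;w_1) = \Lambda_n(w_1)^* X_n F_1(z_2)^t \vec{E}_1(z)$ as a polynomial in $\bar{w}_1$ of degree at most $n-1$ whose coefficient vector is precisely $X_n F_1(z_2)^t \vec{E}_1(z)$. Choosing $n$ distinct values $w_1^{(1)},\dots,w_1^{(n)}$ and inverting the resulting Vandermonde system expresses each entry of $X_n F_1(z_2)^t \vec{E}_1(z)$ as a $\C$-linear combination of the $L_{w_1^{(i)}}$'s, hence in $\mathcal{I}_p^{\infty}$. Since $X_n$ is a permutation matrix ($X_n^2 = I$), the entries of $F_1(z_2)^t \vec{E}_1(z)$ lie in $\mathcal{I}_p^{\infty}$; the same argument applied to the second identity in Lemma \ref{Llemma2} gives the same for $E_1(z_2)^t \vec{F}_1(z)$.

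Next I left-multiply by the appropriate adjugate to obtain
\[
\det F_1(z_2) \cdot \vec{E}_1(z), \qquad \det E_1(z_2) \cdot \vec{F}_1(z) \in (\mathcal{I}_p^{\infty})^n
\]
entrywise. The remaining task is to cancel the non-$\T$ factors of these determinants. By Lemma \ref{mif}, $\Phi_1 = F_1 E_1^{-1}$ is a matrix rational inner function, so the scalar $\det \Phi_1 = \det F_1/\det E_1$ is a rational inner function; in particular $|\det F_1| = |\det E_1|$ on $\T$. Combined with Proposition \ref{Estable} (zeros of $\det E_1$ avoid $\D$, zeros of $\det F_1$ lie in $\cD$), a local comparison near any $z_0 \in \T$ via $|z_2 - z_0| \asymp |\theta|$ for $z_2 = z_0 e^{i\theta}$ forces the $\T$-zeros of $\det E_1$ and $\det F_1$ to coincide with matching multiplicities. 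Therefore
\[
\det E_1(z_2) = g_1(z_2)\,g(z_2), \qquad \det F_1(z_2) = c\,k_1(z_2)\,g(z_2)
\]
for some $g_1,k_1 \in \C[z_2]$ nonvanishing on $\T$ and a constant $c \ne 0$. Since $g_1$ and $k_1$ are bounded below on $\T$, cancellation within $\mathcal{I}_p^{\infty}$ is legal: if $q \in \C[z_1,z_2]$ and $k(z_2)q(z)/p(z)$ is bounded on $\T^2$ with $k$ nonvanishing on $\T$, then $q(z)/p(z)$ is also bounded. This yields $g(z_2)\vec{E}_1(z)$ and $g(z_2)\vec{F}_1(z)$ in $(\mathcal{I}_p^{\infty})^n$. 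The identical argument, obtained by swapping the roles of $z_1$ and $z_2$ and applying the symmetric analog of Lemma \ref{Llemma} in the variable $w_2$, yields the claim for $h(z_1)\vec{E}_2(z)$ and $h(z_1)\vec{F}_2(z)$.

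The main obstacle is the $\T$-factor matching for $\det E_1$ and $\det F_1$, which relies in an essential way on recognizing $\det \Phi_1$ as a scalar rational inner function and on Proposition \ref{Estable}; the Vandermonde extraction and the adjugate-cancellation step are otherwise routine linear algebra.
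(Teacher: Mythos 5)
Your proof is correct and follows essentially the same route as the paper: extract the entries of $F_1(z_2)^t\vec{E}_1(z)$ (and $E_1(z_2)^t\vec{F}_1(z)$) from Lemmas \ref{Llemma} and \ref{Llemma2}, multiply by the adjugate, and strip off the non-$\T$ factors of the determinant. The only cosmetic difference is in matching the $\T$-zeros of $\det E_1$ and $\det F_1$ with multiplicity: you derive $|\det E_1|=|\det F_1|$ on $\T$ from the rational innerness of $\Phi_1$ and compare orders of vanishing locally, whereas the paper reads it off directly from the reflection identity \eqref{matrixreflection}; both are valid.
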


\begin{proof}
  By Lemmas \ref{Llemma}, \ref{Llemma2}, the entries of $F_1(z_2)^t
  \vec{E}_1(z)$ belong to the ideal $\langle p, \refl{p} \rangle$.
  After multiplying by the adjugate matrix of $F_1^t$, we see that the
  entries of $\det F_1(z_2) \vec{E}_1(z)$ belong to $\langle
  p,\refl{p}\rangle$.  By \eqref{matrixreflection}, $\det F_1(z_2)$,
  $\det E_1(z_2)$ have the same zeros with the same multiplicities on
  $\T$.  Thus, if we divide out the factor of $\det F_1(z_2)$
  containing all zeros not on $\T$ we are left with a multiple of $g$.
  Therefore, the entries of $g \vec{E}_1(z)$ belong to
  $\mathcal{I}_p^{\infty}$.  By a similar argument we form the same
  conclusion for the entries of $g \vec{F}_1 h\vec{E}_2, h \vec{F}_2$.
\end{proof}

\begin{proof}[Proof of Theorem \ref{infinitythm}]
  Since any $f \in \mathcal{I}_p$ can be written as a combination of
  $\vec{E}_1,\vec{F}_1,\vec{F}_2$ (Theorem \ref{generators}) it
  follows that $ghf \in \mathcal{I}_{p}^{\infty}$.
\end{proof}

\section{A commuting pair of contractive matrices} \label{seccomm} We
now begin to study Question/Theorem \ref{q3} which asks for an exact
count of the dimension of $\mcP_{j,k}$.  This is accomplished by
finding a pair of commuting contractive operators on $\mcG$ whose
joint eigenvalues are directly related to common zeros of $p$ and
$\refl{p}$.

Let $p \in \C[z_1,z_2]$ be semi-stable, $\deg p = (n,m)$, and refer to
Notation \ref{not:spaces}.  Let $P$ be the orthogonal projection onto
$\mcG$ in $\Lp$. Define $T_j:\mcG \to \mcG$ by
\[
T_jf = P z_j f.
\]
Our goal in this section is to show $T_1$ and
$T_2^*$ commute and the joint invariant subspaces are directly related
to minimal Agler decompositions.

The following is proven in \cite{BSV} in a more general set-up, but
even if we directly applied their theorem here it would still take
work to get to this level of specificity.  The result is found in the
case of $p$ with no zeros on $\cD^2$ in \cite{GW}.

\begin{theorem} With $T_1,T_2$ defined as above, the operators $T_1$
  and $T_2^*$ commute.
\end{theorem}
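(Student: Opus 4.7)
The plan is to reduce the commutator to a concrete expression and then kill each term via Theorem \ref{thmorth}. Since $|z_j|=1$ on $\T^2$, multiplication $M_{z_j}$ is unitary on $\Lp$ with $M_{z_j}^* = M_{\bar z_j}$, so $M_{z_1}$ commutes with $M_{z_2}^*$. Writing $Q = I - P$ and expanding, for $f \in \mcG$ we obtain
\[
(T_1 T_2^* - T_2^* T_1) f \;=\; P \bar z_2 Q z_1 f \;-\; P z_1 Q \bar z_2 f,
\]
so it suffices to identify the defects $Q z_1 f$ and $Q \bar z_2 f$ and check that each of these projections vanishes.

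Two decompositions from Corollary \ref{cordecomp} pin down the defects. Since $z_1 f \in \mcP_{n,m-1}$ and $\mcP_{n,m-1} = \mcG \oplus \mcF_2$ (taking $N=n$, $M=m-1$ in the corollary), we conclude $Q z_1 f \in \mcF_2$. For the second defect, $\bar z_2 f - T_2^* f \perp \mcG$ by definition of $P$; applying the unitary $M_{z_2}$ gives $f - z_2 T_2^* f \perp z_2\mcG$, and since $f - z_2 T_2^* f \in \mcP_{n-1,m}$ and $\mcP_{n-1,m} = z_2\mcG \oplus \mcE_1$ (the reflected form of the corollary), we get $f - z_2 T_2^* f \in \mcE_1$. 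Therefore $Q\bar z_2 f = \bar z_2 e$ for some $e \in \mcE_1$.

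Theorem \ref{thmorth} then finishes off each term. The Fourier support (as a Laurent polynomial) of $z_2 \mcG$ lies in $\{(j,k): j<n,\ k\ge 0\}$, to which $\mcF_2$ is orthogonal by Theorem \ref{thmorth}; hence $\bar z_2 \mcF_2 \perp \mcG$ and $P\bar z_2 Q z_1 f = 0$. Similarly, the support of $\bar z_1 z_2 \mcG$ lies in $\{(j,k): j<n,\ k>0\}$, to which $\mcE_1$ is orthogonal, and using the unitarity of $M_{z_1}$ in the form $\langle z_1\bar z_2 e, g\rangle = \langle e, \bar z_1 z_2 g\rangle$ we conclude $z_1\bar z_2 \mcE_1 \perp \mcG$ and so $P z_1 Q\bar z_2 f = 0$. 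Both terms vanish, giving $T_1 T_2^* = T_2^* T_1$. The point requiring the most care is recognizing that the orthogonality statements in Theorem \ref{thmorth} permit negative Fourier indices in $j$, which is exactly what accommodates the $\bar z_1$ factor arising in the second defect term.
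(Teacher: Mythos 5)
Your proof is correct. It rests on the same two pillars as the paper's argument---the orthogonal decompositions of Corollary \ref{cordecomp} and the orthogonality relations of Theorem \ref{thmorth}---but it is organized differently. The paper proves the identity
\[
z_1z_2P\bar z_1\bar z_2 - P = P_{\mcF_1}+P_{\mcF_2}+P_{\refl p} - \bigl(P_p+P_{z_1\mcE_1}+P_{z_2\mcE_2}\bigr)
\]
by comparing the two decompositions \eqref{Pnm1} and \eqref{Pnm2} of the single space $\mcP_{n,m}$, and then checks that all six projections annihilate the pairing of $z_1\mcG$ against $z_2\mcG$. You instead expand the commutator directly into the two defect terms $P\bar z_2 Q z_1 f$ and $P z_1 Q\bar z_2 f$, and pin each defect down using the smaller decompositions $\mcP_{n,m-1}=\mcG\oplus\mcF_2$ and $\mcP_{n-1,m}=z_2\mcG\oplus\mcE_1$; as a result you only need the two orthogonality facts $\mcF_2\perp z_2\mcG$ and $\mcE_1\perp \bar z_1 z_2\mcG$, rather than all six used in the paper. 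Your identification of the second defect (reflect by $M_{z_2}$, land in $\mcE_1$, then multiply back by $\bar z_2$) is exactly the point where care is needed, and your closing remark---that Theorem \ref{thmorth} permits negative Fourier indices in $j$, which is what absorbs the $\bar z_1$ shift---is precisely the observation the paper also relies on when it invokes $z_1\mcE_1\perp z_2\mcG$. Both arguments are sound; yours is marginally leaner in its bookkeeping.
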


\begin{proof}
The condition $T_1T_2^* - T_2^* T_1 = 0$ means
\[
P z_1 P \bar{z}_2 f - P \bar{z}_2 P z_1 f = 0
\]
for all $f \in \mcG$.  This is equivalent to 
\[
z_1 P \bar{z}_2 f - \bar{z}_2 P z_1 f \perp \mcG
\]
for all $f \in \mcG$, which is equivalent to 
\[
z_1 z_2 P \bar{z}_2 \bar{z}_1 f - Pf \perp z_2 \mcG
\]
for all $f \in z_1 \mcG$.  Let $P_1$ denote orthogonal projection onto
$\mcP_{n,m}$; let $P_{p}, P_{\refl{p}}$ denote orthogonal projection
onto $\C p, \C \refl{p}$ respectively; let $P_{\mcH}$ denote
orthogonal projection onto a subspace $\mcH$.

By equations \eqref{Pnm1} and \eqref{Pnm2},
\[
%\begin{aligned}
P_1 = P +  P_{\mcF_1} + P_{\mcF_2}+ P_{\refl{p}} 
= z_1z_2 P \bar{z}_1 \bar{z}_2 + P_{z_1\mcE_1} + P_{z_2\mcE_2} + P_p
%\end{aligned}
\]
and so
\[
z_1 z_2 P\bar{z}_1 \bar{z}_2 - P =   P_{\mcF_1} +P_{\mcF_2} +
P_{\refl{p}} -(P_{p} + P_{z_1\mcE_1} + P_{z_2\mcE_2}).
\]
Then for $g \in z_2\mcG$ and $f \in z_1\mcG$ we
have
\[
\ip{( P_{\mcF_1} +P_{\mcF_2} +
P_{\refl{p}} -(P_{p} + P_{z_1\mcE_1} + P_{z_2\mcE_2})  )f}{g} = 0
\]
since $z_1\mcE_1 \perp z_2 \mcG$, $z_2\mcE_2 \perp
z_1\mcG$, $\mcF_1 \perp z_1\mcG$, $\mcF_2 \perp
z_2\mcG$, and $p,\refl{p} \perp f,g$.  All of this follows from
Theorem \ref{thmorth}.
\end{proof}

\begin{lemma} \label{Glemma}
Suppose $\mcG = \mcG_1 \oplus \mcG_2$ for some subspaces
$\mcG_1,\mcG_2$.  The following are equivalent
\begin{itemize}
\item $\mcG_1$ is an invariant subspace of $T_1^*$ 
\item $\mcG_1 \subset z_1\mcG_1 \oplus \mcE_2$ 
\item  $\mcG_2$ is an invariant subspace of $T_1$
\item $z_1\mcG_2 \subset \mcG_2 \oplus \mcF_2$.
\end{itemize}
Similarly, the following are equivalent 
\begin{itemize}
\item $\mcG_2$ is an invariant subspace of $T_2^*$
\item $\mcG_2 \subset z_2 \mcG_2 \oplus \mcE_1$ 
\item $\mcG_1$ is an invariant subspace of $T_2$ 
\item $z_2 \mcG_1 \subset \mcG_1 \oplus \mcF_1$.
\end{itemize}

Thus, if $\mcG_1$ is an invariant subspace of $(T_1^*, T_2)$, then the
subspaces
\begin{equation} \label{AA}
\mcA_1 =( z_2 \mcG_2 \oplus \mcE_1) \ominus \mcG_2 \text{ and } \mcA_2 =
(z_1 \mcG_1 \oplus \mcE_2) \ominus \mcG_1
\end{equation}
are well-defined and $\dim \mcA_1 = n, \dim \mcA_2 = m$.
\end{lemma}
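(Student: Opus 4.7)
The plan is to organize the proof around the orthogonal decompositions
\[
\mcP_{n,m-1} = \mcG \oplus \mcF_2 = z_1\mcG \oplus \mcE_2,
\]
both of which are immediate from Notation~\ref{not:spaces}: the first is just the definition of $\mcF_2$, the second that of $\mcE_2$. The symmetric pair $\mcP_{n-1,m} = \mcG \oplus \mcF_1 = z_2\mcG \oplus \mcE_1$ will handle the $T_2$-side. Since multiplication by $z_j$ is unitary on $\Lp$, its adjoint is multiplication by $\bar z_j$, so $T_j^* = P\bar z_j|_{\mcG}$; the four $T_1$-conditions in the lemma will then be extracted from how the two decompositions of $\mcP_{n,m-1}$ compute $T_1 f$ and $T_1^* f$ for $f \in \mcG$.

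The two ``computational'' equivalences go as follows. For $f \in \mcG$, the element $z_1 f$ lies in $\mcP_{n,m-1}$, so $z_1 f = g + h$ with $g \in \mcG$ and $h \in \mcF_2$; since $\mcF_2 \perp \mcG$, projecting yields $T_1 f = P z_1 f = g$. Hence $\mcG_2$ is $T_1$-invariant iff $z_1\mcG_2 \subset \mcG_2 \oplus \mcF_2$. For the adjoint, I would write $f \in \mcG \subset \mcP_{n,m-1}$ as $f = z_1 g + e$ with $g \in \mcG$ and $e \in \mcE_2$. Then $\bar z_1 f = g + \bar z_1 e$; since $e \perp z_1\mcG$ (by the defining property of $\mcE_2$) and multiplication by $\bar z_1$ is unitary on $\Lp$, one has $\bar z_1 e \perp \mcG$, and so $T_1^* f = P\bar z_1 f = g$. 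This shows $\mcG_1$ is $T_1^*$-invariant iff $\mcG_1 \subset z_1\mcG_1 \oplus \mcE_2$.

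The remaining two equivalences in the first list, and each matching pair in the second list, are instances of the general Hilbert-space fact that a subspace $H_1 \subset H$ is $T^*$-invariant iff $H_2 = H \ominus H_1$ is $T$-invariant. The full $T_2$-list is then obtained by running the same argument with the roles of $z_1$ and $z_2$ exchanged, using the $\mcP_{n-1,m}$ decompositions.

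For the concluding claim, joint invariance of $\mcG_1$ under $(T_1^*, T_2)$ activates both $\mcG_1 \subset z_1\mcG_1 \oplus \mcE_2$ and $\mcG_2 \subset z_2\mcG_2 \oplus \mcE_1$, so $\mcA_1$ and $\mcA_2$ make sense as orthogonal differences. The two direct sums are genuinely orthogonal because $\mcE_2 \perp z_1\mcG$ and $\mcE_1 \perp z_2\mcG$, so the dimension count collapses to
\[
\dim \mcA_2 = \dim z_1\mcG_1 + \dim \mcE_2 - \dim \mcG_1 = m, \qquad \dim \mcA_1 = n,
\]
using $\dim \mcE_1 = n$, $\dim \mcE_2 = m$, and that multiplication by $z_j$ is injective on $\mcG$ (being unitary on $\Lp$). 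I do not foresee a serious obstacle; the only point to watch is keeping straight which of the two decompositions of $\mcP_{n,m-1}$ computes $T_1$ versus $T_1^*$, which is dictated by whether $z_1$ is being applied to $f$ directly (giving $\mcG \oplus \mcF_2$) or whether we are splitting $f$ itself to peel off the $\mcE_2$-summand (giving $z_1\mcG \oplus \mcE_2$).
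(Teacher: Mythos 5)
Your proof is correct and follows essentially the same route as the paper: both use the decompositions $\mcP_{n,m-1} = \mcG \oplus \mcF_2 = z_1\mcG \oplus \mcE_2$ (and their $z_2$-counterparts) to compute $T_1 f$ and $T_1^* f$, pass between the $T$- and $T^*$-statements by the standard adjoint/orthocomplement duality, and obtain the dimension count from $\dim z_j\mcG_j = \dim\mcG_j$ together with $\dim\mcE_1 = n$, $\dim\mcE_2 = m$. No gaps.
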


\begin{proof}
  Suppose $\mcG_1$ is invariant under $T_1^*$.  For any $f \in \mcG$
  we can write $f = z_1 g + h$ where $g \in \mcG$ and $h \in \mcE_2$
  since $\mcP_{n,m-1} = z_1\mcG \oplus \mcE_2$.  If this $f$ is
  actually in $\mcG_1$ then $T_1^* f = P\bar{z}_1 f = g + P\bar{z}_1 h
  =g$ since $\mcE_2 \perp z_1 \mcG$.  By invariance, $g \in \mcG_1$ so
  that $\mcG_1 \subset z_1\mcG_1 \oplus \mcE_2$.  Conversely, if
  $\mcG_1 \subset z_1\mcG_1 \oplus \mcE_1$, then $\bar{z}_2 \mcG_1
  \subset \mcG_1 \oplus \bar{z}_2\mcE_1$ and so $T_2^* \mcG_1 \subset
  \mcG_1$ since $\mcE_1 \perp z_2\mcG$.

  By properties of adjoints, $\mcG_1$ is invariant for $T_1^*$ iff
  $\mcG_2$ is invariant for $T_1$.  If $\mcG_2$ is invariant for
  $T_1$, then for any $f\in \mcG_2$ we can write $z_1f = g + h$
  where $g\in \mcG_2, h \in \mcF_2$.  Thus, $z_1\mcG_2 \subset
  \mcG_2\oplus \mcF_2$.  The converse is similar.  

The claims about  $T_2,T_2^*$ are similar to those for $T_1,T_1^*$.

Finally, when $\mcG_1$ is invariant under both $T_1^*, T_2$, then
$\mcA_1$ and $\mcA_2$ are well-defined by the inclusions above and the
statement about dimensions follows from the fact that $z_j\mcG_j$ has
the same dimension as $\mcG_j$.  
\end{proof}

The following result is closely related to a result in \cite{BSV}, but
again they work in higher generality and it takes additional arguments
to get to these finite dimensional statements.  We emphasize that the
point of the theorem is that an Agler pair $(\vec{A}_1,\vec{A}_2)$
corresponds to an invariant subspace of $(T_1^*,T_2)$ exactly when the
dimensions of $\vec{A}_1$ and $\vec{A}_2$ match the bidegree of $p$,
namely $(n,m)$.

\begin{theorem} Let $p\in \C[z_1,z_2]$ be semi-stable, $\deg p =
  (n,m)$ and define $T_1,T_2$ as above.  Let $\mcG_1$ be an invariant
  subspace of $T_2,T_1^*$ and let $\mcG_2 = \mcG \ominus \mcG_1$ which
  will be invariant under $T_1,T_2^*$.  Let $\vec{A}_1\in
  \C^n[z_1,z_2], \vec{A}_2\in \C^m[z_1,z_2]$ be vector polynomials
  whose entries form an orthonormal basis for $\mcA_1,\mcA_2$ from
  \eqref{AA}.  Then, the pair $(\vec{A}_1, \vec{A}_2)$ is an Agler
  pair.

  Conversely, suppose $(\vec{A}_1,\vec{A}_2)$ is an Agler pair where
  either $\vec{A}_1 \in \C^n[z_1,z_2]$ or $\vec{A}_2 \in
  \C^m[z_1,z_2]$.  We assume the entries of $\vec{A}_j$ are linearly
  independent.  Then, there exists an invariant subspace $\mcG_1$ of
  $(T_1^*, T_2)$ such that the entries of $\vec{A}_j$ form an
  orthonormal basis for $\mcA_j$ as defined in \eqref{AA} for $j=1,2$.
\end{theorem}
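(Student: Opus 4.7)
The plan for the forward direction is to compute the $\Lp$-reproducing kernel of $\mcA_1$ by a subtraction argument and then verify the Agler identity by direct substitution. Let $\vec{G}_1, \vec{G}_2$ be vector polynomials whose entries form $\Lp$-orthonormal bases of $\mcG_1, \mcG_2$. Because $\mcG_2$ is $T_2^*$-invariant, Lemma \ref{Glemma} gives $\mcG_2 \subset z_2\mcG_2 \oplus \mcE_1$, and the right-hand side is an orthogonal sum in $\Lp$ by Theorem \ref{thmorth}, so its reproducing kernel equals $\bar{w}_2 z_2 \vec{G}_2(w)^*\vec{G}_2(z) + \vec{E}_1(w)^*\vec{E}_1(z)$. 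Subtracting the kernel of $\mcG_2$ yields
\[
\vec{A}_1(w)^*\vec{A}_1(z) = \vec{E}_1(w)^*\vec{E}_1(z) - (1-\bar{w}_2 z_2)\vec{G}_2(w)^*\vec{G}_2(z),
\]
with the symmetric formula (built from $\vec{E}_2, \vec{G}_1$) for $\vec{A}_2(w)^*\vec{A}_2(z)$. Substituting into $(1-\bar{w}_1 z_1)\vec{A}_1(w)^*\vec{A}_1(z) + (1-\bar{w}_2 z_2)\vec{A}_2(w)^*\vec{A}_2(z)$, expanding via the Theorem \ref{thmsos} identities $\vec{E}_1^*\vec{E}_1 = \vec{F}_1^*\vec{F}_1 + (1-\bar{w}_2 z_2)\vec{G}^*\vec{G}$ and $\vec{E}_2^*\vec{E}_2 = \vec{F}_2^*\vec{F}_2 + (1-\bar{w}_1 z_1)\vec{G}^*\vec{G}$, and using $\vec{G}^*\vec{G} = \vec{G}_1^*\vec{G}_1 + \vec{G}_2^*\vec{G}_2$, the extra $\vec{G}^*\vec{G}$ contributions cancel, leaving $\overline{p(w)}p(z) - \overline{\refl{p}(w)}\refl{p}(z)$ and hence the Agler decomposition on the diagonal.

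For the converse, the plan is to extract a candidate $\mcG_2$ from the Agler pair. Subtracting the hypothesized Agler decomposition from the max--min one in Theorem \ref{thmsos} gives
\[
(1-\bar{w}_1 z_1)[\vec{E}_1(w)^*\vec{E}_1(z) - \vec{A}_1(w)^*\vec{A}_1(z)] = (1-\bar{w}_2 z_2)[\vec{A}_2(w)^*\vec{A}_2(z) - \vec{F}_2(w)^*\vec{F}_2(z)].
\]
By coprimality of the two linear factors, both sides are divisible by $(1-\bar{w}_1 z_1)(1-\bar{w}_2 z_2)$, and the common quotient $K$ is positive semidefinite as in the proof of Lemma \ref{mif}, so by Proposition \ref{kernelprop} we have $K(w,z) = \vec{H}(w)^*\vec{H}(z)$ for some vector polynomial $\vec{H}$. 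Next I would bound the bidegree of $\vec{H}$ by $(n-1,m-1)$: matching the top $z_2$-coefficient in $(1-\bar{w}_2 z_2)\vec{H}^*\vec{H} = \vec{E}_1^*\vec{E}_1 - \vec{A}_1^*\vec{A}_1$ (the right-hand side has $z_2$-degree $\leq m$) forces the leading $z_2$-part of $\vec{H}$ to vanish, while the $z_1$-bound is immediate since $(1-\bar{w}_2 z_2)$ is independent of $z_1$. Choosing $\vec{H}$ so that its entries form an $\Lp$-orthonormal basis of their span $\mcG_2 \subset \mcG = \mcP_{n-1,m-1}$, Proposition \ref{isomprop} applied to the polynomial identity $\vec{E}_1^*\vec{E}_1 + \bar{w}_2 z_2 \vec{H}^*\vec{H} = \vec{A}_1^*\vec{A}_1 + \vec{H}^*\vec{H}$ (and the analog using $\vec{F}_2, \vec{A}_2$, $z_1\vec{H}$) yields $\Lp$-span equalities $\mcE_1 \oplus z_2\mcG_2 = \mathcal{M} + \mcG_2$ and $\mcF_2 \oplus \mcG_2 = \mathcal{N} + z_1\mcG_2$, where $\mathcal{M}, \mathcal{N}$ denote the spans of the entries of $\vec{A}_1, \vec{A}_2$. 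These give $\mcG_2 \subset z_2\mcG_2 \oplus \mcE_1$ and $z_1\mcG_2 \subset \mcG_2 \oplus \mcF_2$, so by Lemma \ref{Glemma}, $\mcG_2$ is invariant under $T_1$ and $T_2^*$, and we set $\mcG_1 = \mcG \ominus \mcG_2$.

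To verify the orthonormal basis claim, apply the forward direction to this $\mcG_1$ to produce an ONB vector polynomial $\vec{A}_1'$ for the resulting $\mcA_1$ with $\vec{A}_1'(w)^*\vec{A}_1'(z) = \vec{A}_1(w)^*\vec{A}_1(z)$ (the kernel formula from the forward direction agrees with the converse formula once $\vec{G}_2 = \vec{H}$); Proposition \ref{isomprop} then gives a unitary $U$ with $\vec{A}_1 = U\vec{A}_1'$, so the entries of $\vec{A}_1$ form an $\Lp$-ONB of $\mcA_1$, and similarly for $\vec{A}_2$. The main obstacle is guaranteeing compatibility between the abstract positivity factorization $K = \vec{H}^*\vec{H}$ produced by Proposition \ref{kernelprop} and the $\Lp$-inner product on $\mcG_2$; pinning down the bidegree bound, so that $\mcG_2 \subset \mcG$, and arranging $\vec{H}$ to be $\Lp$-orthonormal so that the polynomial identities from Proposition \ref{isomprop} translate cleanly into $\Lp$-subspace statements, constitutes the delicate technical core.
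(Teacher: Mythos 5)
Your forward direction is correct and is essentially the paper's argument: the orthogonal decompositions $\mcA_1\oplus\mcG_2=\mcE_1\oplus z_2\mcG_2$ and $\mcA_2\oplus\mcG_1=\mcE_2\oplus z_1\mcG_1$ give the kernels of $\mcA_1,\mcA_2$ by subtraction, and the Theorem \ref{thmsos} identities together with $|\vec{G}|^2=|\vec{G}_1|^2+|\vec{G}_2|^2$ make the cross terms cancel.

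The converse, however, has a genuine gap, and it sits exactly at the point you flag as the ``delicate technical core'' but do not resolve. Proposition \ref{kernelprop} produces $\vec{H}$ with $K(w,z)=\vec{H}(w)^*\vec{H}(z)$, but the only freedom in $\vec{H}$ (if its entries are linearly independent) is left multiplication by a unitary, and a unitary preserves the $\Lp$-Gram matrix of the entries. So you cannot ``choose $\vec{H}$ so that its entries form an $\Lp$-orthonormal basis of their span'': whether the entries of $\vec{H}$ are $\Lp$-orthonormal is a fact that must be \emph{proved}, namely that $K$ coincides with the $\Lp$-reproducing kernel of the subspace $\mcG_2$ it spans. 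Without this, the identity $\vec{E}_1^*\vec{E}_1+\bar{w}_2z_2\vec{H}^*\vec{H}=\vec{A}_1^*\vec{A}_1+\vec{H}^*\vec{H}$ still gives span equalities and hence the invariance of $\mcG_2$ (modulo the separate, unaddressed point that the entries of $\vec{H}$ must be shown to lie in $\Lp$ at all — bidegree $\leq(n-1,m-1)$ alone does not place a polynomial in $\mcG=\mcP_{n-1,m-1}$), but it does \emph{not} identify $\vec{A}_1(w)^*\vec{A}_1(z)$ as the $\Lp$-reproducing kernel of $\mcA_1$, which is what the orthonormal-basis conclusion requires. The paper closes this gap by a different mechanism: Lemma \ref{mif} factors $\Phi_1=F_1E_1^{-1}$ as $\Psi\Phi$ with $\Phi=A_1E_1^{-1}$, $\Psi=F_1A_1^{-1}$ matrix inner, and Lemma \ref{Hardylemma} transports the orthogonal decomposition $\Hrow\ominus\Hrow\Phi_1=(\Hrow\ominus\Hrow\Phi)\oplus(\Hrow\ominus\Hrow\Psi)\Phi$ unitarily to $\mcG=\mcG_2\oplus\mcG_1$; this simultaneously shows $\mcG_2\subset\mcG$ and that $K$ is its genuine $\Lp$-reproducing kernel, after which your concluding argument goes through. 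Some version of this model-space identification (or an equivalent substitute) is needed; note also that the paper's hypothesis is only that \emph{one} of $\vec{A}_1,\vec{A}_2$ has the minimal number of entries, so the symmetric use of both decompositions should be routed through a single $\vec{H}$ as the paper does, rather than assumed for both indices.
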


% An interesting consequence is that if one of $\vec{A}_1$ or
% $\vec{A}_2$ has the minimum possible dimension, then so does the
% other.

\begin{proof} $(\Rightarrow)$
  Let $\vec{G}_j$ be a vector polynomial whose entries form an
  orthonormal basis for $\mcG_j$. Then, since $\mcA_2 \oplus \mcG_1 =
  \mcE_2 \oplus z_1\mcG_1$, we have $|\vec{A}_2|^2 + |\vec{G}_1|^2 =
  |\vec{E}_2|^2 + |z_1|^2 |\vec{G}_1|^2$ and similarly $|\vec{A}_1|^2+
  |\vec{G}_2|^2 = |\vec{E}_1|^2 + |z_2|^2|\vec{G}_2|^2$.  But,
  $|\vec{E}_2|^2 - |\vec{F}_2|^2 = (1-|z_1|^2)|\vec{G}|^2 =
  (1-|z_1|^2) (|\vec{G}_1|^2+|\vec{G}_2|^2) $.

Then, 
\[
\begin{aligned}
\sum_{j=1}^{2}(1-|z_j|^2) |\vec{A}_j|^2 =& (1-|z_1|^2) (|\vec{E}_1|^2 -
(1-|z_2|^2)|\vec{G}_2|^2) \\ &+ (1-|z_2|^2) (|\vec{E}_2|^2  -(1-|z_1|^2) |\vec{G}_1|^2) \\
=& (1-|z_1|^2) |\vec{E}_1|^2 + (1-|z_2|^2)|\vec{F}_2|^2
\end{aligned}
\]
which shows $(\vec{A}_1,\vec{A}_2)$ is an Agler pair.

$(\Leftarrow)$ Suppose $(\vec{A}_1,\vec{A}_2)$ is an Agler pair with
$\vec{A}_1 \in \C^n[z_1,z_2]$.  By Lemma \ref{mif}, $\Phi_1 = F_1
E_1^{-1}, \Phi = A_1 E_1^{-1}, \Psi = F_1 A_1^{-1}$ are all $n\times
n$ matrix inner functions where $\Phi_1 = \Psi \Phi$.

The space $\Hrow \ominus \Hrow \Phi_1$ therefore possesses the orthogonal
decomposition
\[
\Hrow \ominus \Hrow \Phi_1 = (H_{1\times
  n}^2\ominus \Hrow \Phi) \oplus
(\Hrow\ominus \Hrow \Psi) \Phi.
\]
By Lemma \ref{Hardylemma}, we have $\mcG = \mcG_2 \oplus \mcG_1$ where
$\mcG_2 = (\Hrow\ominus \Hrow \Phi)\vec{E}_1$ and $\mcG_1 =
(\Hrow\ominus \Hrow\Psi)\Phi \vec{E}_1= (\Hrow \ominus \Hrow \Psi)
\vec{A}_1$.  The kernel in \eqref{consequence}, namely
\[
k_w(z) \defn \frac{ \vec{E}_1(w)^*\vec{E}_1(z) - \vec{A}_1(w)^*
  \vec{A}_1(z)}{1-\bar{w}_2 z_2} = \frac{\vec{A}_2(w)^* \vec{A}_2(z) -
  \vec{F}_2(w)^*\vec{F}_2(z)}{1-\bar{w}_1 z_1},
\]
is the reproducing kernel for $\mcG_2$  because
\[
k_w(z) = \vec{E}_1(w)^* \underset{K_{w_2}(z_2)}{\underbrace{\frac{ I -\Phi(w_2)^*
  \Phi(z_2)}{1-\bar{w}_2 z_2}}} \vec{E}_1(z)
\]
which means that for $f(z) = \vec{f}(z_2)\vec{E}(z)$ where $\vec{f}
\in \Hrow \ominus \Hrow \Phi$ we have
\[
\ip{f}{k_w}_{\mcG} = \ip{\vec{f}}{\vec{E}_1(w)^*K_{w_2}}_{\Hrow} =
\vec{f}(w_2) \vec{E}_1(w) = f(w)
\]
because of Lemma \ref{Hardylemma}.  By Theorem \ref{thmsos}, the
reproducing kernel for $\mcG$ is 
\[
\frac{ \vec{E}_1(w)^*\vec{E}_1(z) - \vec{F}_1(w)^*
  \vec{F}_1(z)}{1-\bar{w}_2 z_2}
\]
and therefore the reproducing kernel for $\mcG_1= \mcG\ominus \mcG_2$
is the above kernel minus $k_w(z)$ which is just
\[
j_w(z) \defn \frac{ \vec{A}_1(w)^*\vec{A}_1(z) - \vec{F}_1(w)^*
  \vec{F}_1(z)}{1-\bar{w}_2 z_2} = \frac{ \vec{E}_2(w)^*\vec{E}_2(z) - \vec{A}_2(w)^*
  \vec{A}_2(z)}{1-\bar{w}_1 z_1}.
\]
The second equality is \eqref{similarly}, which holds for Agler pairs
in general.

As in Section \ref{secvec}, we can factor the reproducing kernel for
$\mcG_2$ as $\vec{G}_2(w)^* \vec{G}_2(z)$ for some vector polynomial
$\vec{G}_2$.  So, $k_w(z) = \vec{G}_2(w)^*\vec{G}_2(z)$.  The
reproducing kernel (on the diagonal $z=w$) for $\mcE_1 \oplus z_2
\mcG_2$ is thus $|\vec{E}_1|^2 + |z_2|^2|\vec{G}_2|^2$ which equals
$|\vec{A}_1|^2+|\vec{G}_2|^2$ since $|\vec{G}_2|^2 = k_z(z)$.  This
shows $\mcE_1\oplus z_2\mcG_2$
contains $\mcG_2$, because we can relate $\begin{pmatrix} \vec{E}_1 \\
  z_2\vec{G}_2 \end{pmatrix}$ and $\begin{pmatrix} \vec{A}_1 \\
  \vec{G}_2 \end{pmatrix}$ by a unitary matrix, which means
$\vec{G}_2$ can be given directly as a combination of
$\vec{E}_1,z_2\vec{G}_2$.

Thus, $\mcG_2$ is invariant under $T_2^*$ by Lemma \ref{Glemma} and
$\mcA_1 := (\mcE_1 \oplus z_2\mcG_2) \ominus \mcG_2$ is well-defined
and $n$-dimensional.  The vector polynomial $\vec{A}_1$ will be a
unitary multiple of a vector polynomial consisting of an orthonormal
basis for $\mcA_1$ (by Section \ref{secvec}), which means the entries
of $\vec{A}_1$ also form an orthonormal basis for $\mcA_1$.

Since $k_z(z) = |\vec{G}_2|^2$, we have $|\vec{F}_2|^2 + |\vec{G}_2|^2
= |\vec{A}_2|^2 + |z_1|^2 |\vec{G}_2|^2$ and this shows $z_1\mcG_2
\subset \mcG_2\oplus \mcF_2$. By Lemma \ref{Glemma}, $\mcG_2$ is an
invariant subspace of $T_1$.  Thus, $\mcG_2$ is invariant under $T_1,
T_2^*$ and so $\mcG_1$ is invariant under $T_1^*, T_2$ and the spaces
in \eqref{AA} are well-defined.  

The formula $j_z(z) +|\vec{A}_2|^2 = |\vec{E}_2|^2+ |z_1|^2 j_z(z)$
shows that $|\vec{A}_2|^2$ is the reproducing kernel for
$\mcA_2$. Since the entries of $\vec{A}_2$ are assumed to be linearly
independent, it follows $\vec{A}_2$ is a unitary multiple of a vector
consisting of an orthonormal basis of $\mcA_2$.  
\end{proof}

\section{Common zeros of $p$ and $\refl{p}$ as joint eigenvalues}
The pair of commuting contractions from the previous section can be
used to count common zeros of $p$ and $\refl{p}$ in certain regions,
since as we show below the joint eigenvalues of $T_1,T_{2}^*$ are a
simple transformation of the common zeros of $p$ and $\refl{p}$.  We
also show that $T_1, T^*_2$ \emph{dilate} to multiplication operators
$M_{z_1},M_{\bar{z}_2}$ on $\Lp$.  

As a side note, the common zeros of $p$ and $\refl{p}$ are called
\emph{intersecting zeros} of $p$ in the paper \cite{GWzeros}, where
they discuss the interesting problem of how to construct a stable
polynomial (no zeros in $\cD^2$) with given intersecting zeros.  It
would be interesting to pursue their work in the case of semi-stable
$p$.

Let $\C_{\infty} = \C \cup \{\infty\}$ denote the Riemann sphere and
define $\D^{-1} := \{z\in \C: |z|>1\} \cup \{\infty\} \subset
\C_{\infty}$.  If $p \in \C[z_1,z_2]$ has bidegree $(n,m)$, we
interpret $p(a,\infty) = 0$ to mean $q(z_2) := z_2^m p(a,1/z_2)$
vanishes at $z_2=0$, or equivalently, $p(a,\cdot)$ has degree less
than $m$.  We interpret $p(\infty,\infty) = 0$ to mean
$z_1^nz_2^mp(1/z_1,1/z_2)$ vanishes at $(0,0)$.

\begin{lemma} \label{commonzeros} Suppose $p \in \C[z_1,z_2]$ is
  semi-stable.  Then, all common zeros of $p$ and $\refl{p}$ lie in
  $(\D\times \D^{-1}) \cup \T^2 \cup (\D^{-1}\times\D)$.
\end{lemma}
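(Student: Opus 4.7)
The plan is to partition $\C_\infty^2$ into nine regions $\D^\alpha \times \D^\beta$ for $\alpha,\beta \in \{1, 0, -1\}$ (with $\D^0 \defn \T$, $\D^1 \defn \D$), and rule out common zeros of $p$ and $\refl{p}$ in each of the six regions not listed in the conclusion, namely $\D^2$, $(\D^{-1})^2$, $\D\times\T$, $\T\times\D$, $\D^{-1}\times\T$, and $\T\times\D^{-1}$.

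The two "pure" cases are immediate. The region $\D^2$ is excluded by the hypothesis that $p$ has no zeros there. The region $(\D^{-1})^2$ is excluded because the relation $\refl{p}(z) = z_1^n z_2^m \overline{p(1/\bar{z}_1,1/\bar{z}_2)}$ sends a point in $(\D^{-1})^2 \cap \C^2$ to a point in $\D^2$ where $p$ does not vanish; the cases involving $\infty$ are handled by the convention for evaluation at $\infty$ given in the excerpt, i.e., by examining $z_1^n z_2^m p(1/z_1, 1/z_2)$ at the appropriate origin.

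The four "mixed" cases involving $\T$ in exactly one coordinate are symmetric, so I would describe $\D \times \T$ in detail. Suppose $(a,b) \in \D \times \T$ is a common zero. For each $b' \in \D$, the polynomial $p(\cdot, b')$ has no zeros in $\D$ since $p$ is nonvanishing on $\D^2$. Letting $b' \to b$, Hurwitz's theorem implies that $p(\cdot, b)$ either has no zeros in $\D$ or is identically zero. The first alternative contradicts $p(a,b) = 0$. In the second alternative, $(z_2 - b)$ divides $p$ in $\C[z_1,z_2]$; writing $p = (z_2 - b)q$ and substituting into the reflection formula, one computes using $1/\bar{b} = b$ that $\refl{p} = -\bar{b}(z_2 - b)\refl{q}$ (with an appropriate interpretation of $\refl{q}$), so that $(z_2 - b)$ divides $\refl{p}$ as well, contradicting the semi-stability hypothesis. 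The case $\T \times \D$ is identical with the roles of $z_1$ and $z_2$ swapped. For $(a,b) \in \D^{-1} \times \T$, I would run the same argument with $\refl{p}$ in place of $p$, using that $\refl{p}$ has no zeros in $(\D^{-1})^2$ (from the previous step) and that $p = \refl{\refl{p}}$ up to a unimodular constant; the $\T \times \D^{-1}$ case is symmetric.

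The main obstacle is the bookkeeping in the reflection factorization, both in verifying that $(z_2 - b) \mid p$ forces $(z_2 - b) \mid \refl{p}$ via the identity $1 - \bar{b}z_2 = -\bar{b}(z_2 - b)$ when $|b|=1$, and in handling points involving $\infty$ (where degrees may drop) so that the Hurwitz limit argument still applies; but these degenerate points either sit inside one of the allowed regions $\D \times \D^{-1}$ or $\D^{-1} \times \D$, or can be handled by applying the same Hurwitz-plus-factor-extraction argument to the leading coefficient polynomial in the appropriate variable.
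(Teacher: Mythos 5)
Your proposal is correct and follows essentially the same route as the paper: the pure regions $\D^2$ and $(\D^{-1})^2$ are excluded trivially and by reflection, and the mixed regions are excluded by applying Hurwitz's theorem to one-variable slices, with an identically vanishing slice forcing a linear factor $(z_j-b)$ common to $p$ and $\refl{p}$, contradicting semi-stability. The only cosmetic difference is that you treat $\D^{-1}\times\T$ and $\T\times\D^{-1}$ by rerunning the argument on $\refl{p}$, whereas the paper obtains them by the reflection symmetry of the common zero set; these are equivalent (and note $\refl{\refl{p}}=p$ exactly, since $z_1,z_2\nmid p$).
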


\begin{proof}
Evidently, $p$ and $\refl{p}$ have no common zeros in $\D^2 \cup
(\D^{-1})^{2}$.  

Next, $p$ has no zeros on $\T\times \D$ as we now explain.  If we
define $q_z(w) = p(z,w)$, then $q_z$ has no zeros in $\D$ for each $z
\in \D$.  If we send $z \in \D$ to a point $a \in \T$, then by
Hurwitz's theorem $q_a$ is either identically zero or non-vanishing in
$\D$.  If $q_a$ is identically zero, then $z_1-a$ divides
$p(z_1,z_2)$.  However this would imply $z_1-a$ divides $\refl{p}$
which contradicts our assumption that there are no common factors.

We conclude $p$ and $\refl{p}$ have no common zeros in $\T\times \D$
as well as $\T \times \D^{-1}$, $\D\times \T$, and $\D^{-1} \times
\T$.  The only place left for common zeros is the set
\[
(\D\times \D^{-1}) \cup \T^2\cup  (\D^{-1} \times \D).
\]
\end{proof}

The first key observation is that the common zeros of $p$ and
$\refl{p}$ are closely related to joint eigenvalues of $(T_1,T_2^*)$.
When $p$ has no zeros on the closed bidisk, the following can
essentially be found in \cite{GW} with the minor difference that we
deal with joint eigenvalues.

\begin{theorem} \label{thmjoint} Let $p\in \C[z_1,z_2]$ be
  semi-stable.  If $(w_1,w_2)\in \D\times \D^{-1}$ is a common zero of
  $p$ and $\refl{p}$, then $(\bar{w}_1,1/\bar{w}_2)$ is a joint
  eigenvalue of $(T_1^*, T_2)$ and $(w_1,1/ w_2)$ is a joint
  eigenvalue of $(T_1,T_2^*)$.
\end{theorem}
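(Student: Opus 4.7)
The plan is to construct joint eigenvectors explicitly from the sums-of-squares identity of Theorem \ref{thmsos}. Writing $(a,b) := (w_1, w_2)$, I would first handle the $(T_1^*, T_2)$ claim by evaluating the second line of that theorem at $w = (a,b)$. Since $\overline{p(a,b)} = \overline{\refl{p}(a,b)} = 0$, the LHS vanishes, leaving the polynomial identity
\[
(1 - \bar a z_1)\,\vec{F}_1(a,b)^* \vec{F}_1(z) = -(1 - \bar b z_2)\,\vec{E}_2(a,b)^* \vec{E}_2(z).
\]
Coprimality of the linear factors forces $(1 - \bar b z_2)$ to divide the left polynomial and $(1 - \bar a z_1)$ the right; the common quotient
\[
f(z) := \frac{\vec{F}_1(a,b)^* \vec{F}_1(z)}{1 - \bar b z_2} = -\frac{\vec{E}_2(a,b)^* \vec{E}_2(z)}{1 - \bar a z_1}
\]
is a polynomial of bidegree at most $(n-1,m-1)$. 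I would then check $f \in \mcG$ (since $|1-\bar b z_2|\ge |b|-1 > 0$ on $\T^2$ keeps $f/p \in L^2(\T^2)$) and $f \ne 0$ (because Proposition \ref{Estable} gives $\det F_1(b) \ne 0$ for $b \in \D^{-1}$, so $\vec{F}_1(a,b) = F_1(b)\Lambda_n(a) \ne 0$).

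The two eigenvalue equations follow from two short partial-fractions identities. Using $\tfrac{z_2}{1-\bar b z_2} = -\tfrac{1}{\bar b} + \tfrac{1/\bar b}{1-\bar b z_2}$ on the first form of $f$ gives the polynomial identity $(z_2 - 1/\bar b)\,f(z) = -\tfrac{1}{\bar b}\,\vec{F}_1(a,b)^* \vec{F}_1(z) \in \mcF_1$; combined with $\mcF_1 \perp \mcG$, this yields $T_2 f = (1/\bar b)\,f$. Using $\bar z_1 = 1/z_1$ on $\T^2$ together with $\tfrac{1}{z_1(1-\bar a z_1)} = \tfrac{1}{z_1} + \tfrac{\bar a}{1-\bar a z_1}$ on the second form of $f$ gives, in $\Lp$, $(\bar z_1 - \bar a)\,f = -\bar z_1\,\vec{E}_2(a,b)^* \vec{E}_2(z) \in \bar z_1 \mcE_2$; since $\mcE_2 \perp z_1 \mcG$ and multiplication by $\bar z_1$ is unitary on $\Lp$, we have $\bar z_1 \mcE_2 \perp \mcG$, and hence $T_1^* f = \bar a f$.

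For the $(T_1, T_2^*)$ claim I would use the reflection identity $\refl{p}(a,b) = 0 \Leftrightarrow p(1/\bar a, 1/\bar b) = 0$ to observe that $(1/\bar a, 1/\bar b) \in \D^{-1} \times \D$ is also a common zero, and apply the first line of Theorem \ref{thmsos} at that point. The same coprimality argument produces the polynomial
\[
F(z) := \frac{\vec{E}_1(1/\bar a,1/\bar b)^* \vec{E}_1(z)}{1 - z_2/b} = -\frac{\vec{F}_2(1/\bar a,1/\bar b)^* \vec{F}_2(z)}{1 - z_1/a},
\]
which lies in $\mcG$ and is nonzero by the $\det E_1$ part of Proposition \ref{Estable}. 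The simplification $(z_1-a)/(1-z_1/a) = -a$ immediately gives $(z_1 - a)F = a\,\vec{F}_2(1/\bar a, 1/\bar b)^* \vec{F}_2(z) \in \mcF_2 \perp \mcG$, so $T_1 F = a F$; and the partial fraction $\tfrac{1}{z_2(1-z_2/b)} = \tfrac{1}{z_2} + \tfrac{1/b}{1-z_2/b}$ yields $(\bar z_2 - 1/b)F = \bar z_2\,\vec{E}_1(1/\bar a,1/\bar b)^* \vec{E}_1(z) \in \bar z_2 \mcE_1 \perp \mcG$ (using $\mcE_1 \perp z_2\mcG$), so $T_2^* F = (1/b)\,F$.

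The main obstacle is combinatorial rather than technical: one has to recognize that the two halves of the theorem require different candidate eigenvectors built from two different common zeros---$(a,b)$ itself for the pair $(T_1^*, T_2)$, and its reflection $(1/\bar a, 1/\bar b)$ for the pair $(T_1, T_2^*)$---and one must pair each zero with the appropriate line of Theorem \ref{thmsos}. Once these choices are made, everything reduces to short partial-fractions computations plus the orthogonality relations $\mcF_j \perp \mcG$ and $\mcE_j \perp z_j\mcG$ that are built into the definitions of these spaces.
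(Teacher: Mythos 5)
Your construction is, in substance, the paper's own: evaluate the appropriate line of Theorem \ref{thmsos} at the common zero, divide off the linear factor to obtain a nonzero element of $\mcG$ (nonvanishing via Proposition \ref{Estable}, membership in $\Lp$ because the divisor is bounded away from zero on $\T^2$), and then read off the two eigenvalue equations from the orthogonality relations $\mcF_j\perp\mcG$ and $\mcE_j\perp z_j\mcG$; your partial-fraction verifications are correct, and your explicit treatment of the $(T_1,T_2^*)$ half via the reflected zero and the first line of Theorem \ref{thmsos} is welcome, since the paper writes out only the $(T_1^*,T_2)$ half.

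There is, however, a concrete gap: you evaluate the vector polynomials directly at $(w_1,w_2)$, and in the second half at $(1/\bar w_1,1/\bar w_2)$, which is not defined for all points the statement allows. By the paper's convention $\D^{-1}$ contains $\infty$, and common zeros with $w_2=\infty$ genuinely occur: for instance $p(z_1,z_2)=3-\tfrac{3}{2}z_1-\tfrac{1}{2}z_2+z_1z_2$ is semi-stable and has the common zero $(\tfrac12,\infty)$ of $p$ and $\refl{p}$. At such a point your expressions $\vec F_1(a,b)$, $\vec E_2(a,b)$ and the factor $1-\bar b z_2$ are meaningless. Likewise $w_1=0$ is allowed, and then the reflected point $(1/\bar w_1,1/\bar w_2)$ used in your second half has an infinite coordinate, and the identity $\refl p(a,b)=a^n b^m\,\overline{p(1/\bar a,1/\bar b)}$ you invoke degenerates (it also fails to give an equivalence when $a=0$). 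This is precisely why the paper substitutes $w=(\lambda_1,1/\lambda_2)$ with $\lambda=(w_1,1/w_2)\in\D^2$, multiplies through by $\bar\lambda_2^m$, and uses \eqref{matrixreflection} to rewrite the coefficient vectors as $\Lambda_n(\lambda_1)^*X_nE_1(\bar\lambda_2)^t$ and $\Lambda_m(\lambda_2)^*X_mE_2(\lambda_1)^*$: all evaluations then take place at points of $\D$, the resulting identity is polynomial in $\bar\lambda$ and remains valid at $\lambda_2=0$, and nonvanishing comes from $\det E_1,\det E_2\neq 0$ on $\D$. Your argument is repaired by the same manoeuvre (equivalently, by working with $q(z)=z_2^m p(z_1,1/z_2)$ and its reflection throughout); with that change your partial-fraction computations go through verbatim.
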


\begin{proof} A point $(\lambda_1,\lambda_2)$ is a joint eigenvalue of
  $(T_1^*, T_2)$ if and only if there is an $f \in \mcG$ such that
\[
\bar{z}_1 f = \lambda_1 f + \bar{z}_1 h \text{ for some } h \in \mcE_2
\]
\[
z_2f = \lambda_2 f + g \text{ for some } g \in \mcF_1
\]
or equivalently, there is $f \in \mcG$ such that $(1-z_1
\lambda_1)f \in \mcE_2$ and $(z_2-\lambda_2)f \in \mcF_1$.

If we replace $w$ with $(\lambda_1, 1/\lambda_2)$ and multiply through
by $\bar{\lambda}_2^m$ in the first formula of Theorem \ref{thmsos},
we get
\begin{align}
\overline{q(\lambda)}p(z) -& \overline{\refl{q}(\lambda)}
\refl{p}(z) \nonumber\\
&=
(1-\bar{\lambda}_1 z_1) \bar{\lambda}_2^m
\vec{F}_1(\lambda_1,1/\lambda_2)^*\vec{F}_1(z) +(\bar{\lambda}_2 -
z_2) \bar{\lambda}_2^{m-1} \vec{E}_2(\lambda_1,1/\lambda_2)^*
\vec{E}_2(z) \nonumber \\
%\begin{equation} \label{subl}
&= (1-\bar{\lambda}_1 z_1) \Lambda_n(\lambda_1)^* X_n
E_1(\bar{\lambda}_2)^t \vec{F}_1(z) + (\bar{\lambda}_2-z_2)
\Lambda_m(\lambda_2)^* X_m E_2(\lambda_1)^* \vec{E}_2(z) \label{subl}
\end{align}
where we use \eqref{EFbreakup} and \eqref{matrixreflection}.

Now, if $(w_1,w_2)\in \D\times \D^{-1}$ is a common zero of $p$ and
$\refl{p}$, then $\lambda = (\lambda_1,\lambda_2)\defn (w_1,1/w_2)$ is a
common zero of $q(z):=z_2^mp(z_1,1/z_2)$ and $\refl{q}$.  If we
subsitute this value for $\lambda$ into \eqref{subl} we get zero.

By Proposition \ref{Estable}, we know $\det E_1(\bar{\lambda}_1) \ne
0$, since $\lambda_1 \in \D$.  Thus, $\Lambda_n(\lambda_2)^*X_n
E_1(\bar{\lambda}_1)^t \ne 0$, and hence
\[
f_1(z):= \Lambda_n(\lambda_1)^* X_n E_1(\bar{\lambda}_2)^t
\vec{F}_1(z) \in \mcF_1
\]
is nonzero. Since \eqref{subl}$=0$
\[
(1-\bar{\lambda}_1 z_1) f_1(z) = -(\bar{\lambda}_2-z_2)
\Lambda_m(\lambda_2)^* X_m E_2(\lambda_1)^* \vec{E}_2(z)
\]
and so $(\bar{\lambda}_2 - z_2)$ divides $f_1$.  We can then define
$f:= (\bar{\lambda}_2-z_2)^{-1} f_1$ which will be an element of
$\mcG$. Note $\lambda_2 \in \D$ so dividing by this factor does not
affect whether $f \in \Lp$.  Then,
\[
(1-\bar{\lambda}_1 z_1) f(z)  = -\Lambda_m(\lambda_2)^* X_m
E_2(\lambda_1)^* \vec{E}_2(z) \in \mcE_2.
\]
So,
\[
f_1=(\bar{\lambda}_2-z_2)f \in \mcF_1 \text{ and } (1-\bar{\lambda}_1 z_1) f \in \mcE_2
\]
which implies $(\bar{\lambda}_1,\bar{\lambda}_2) =
(\bar{w}_1,1/\bar{w}_2)$ is a joint eigenvalue of $(T_1^*, T_2)$, as desired.
\end{proof}

In operator model theory language, the following theorem says that the
commuting contractions $T_1,T_2^*$ have a unitary dilation to
$M_{z_1}, M_{\bar{z}_2}$ on $\Lp$; see \cite{Pickbook}.  This is
interesting because although And\^o's dilation theorem guarantees that
some unitary dilation exists, it is surprising that the unitaries are
simple and natural.

\begin{theorem} \label{dilationthm} Let $p\in \C[z_1,z_2]$ be
  semi-stable and define $T_1,T_2$ as above.  For any $s \in
  \C[z_1,z_2]$, and $g \in \mcG$
\[
s(T_1,T_2^*) g= P s(z_1,\bar{z}_2) g.
\]
\end{theorem}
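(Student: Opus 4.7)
The plan is to reduce to monomials and proceed in three steps. By linearity it suffices to verify, for all $g \in \mcG$ and all $j, k \geq 0$, that $T_1^j (T_2^*)^k g = P z_1^j \bar{z}_2^k g$.

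First I would prove by induction on $j$ that $T_1^j g = P z_1^j g$ for every $g \in \mcG$; the case $j=0$ is immediate, and the inductive step rewrites $T_1^j g = T_1(P z_1^{j-1} g) = P z_1 P z_1^{j-1} g$, so the task reduces to showing $P z_1 (I-P) z_1^{j-1} g = 0$. Corollary \ref{cordecomp} with $N = n+j-2$, $M = m-1$ gives
\[
\mcP_{n+j-2,\,m-1} = \mcG \oplus \bigoplus_{i=0}^{j-2} z_1^i \mcF_2,
\]
so we may write $(I-P) z_1^{j-1} g = \sum_{i=0}^{j-2} z_1^i f_i$ with $f_i \in \mcF_2$. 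Multiplying by $z_1$ yields $\sum_{l=1}^{j-1} z_1^l f_{l-1}$, and each term is orthogonal to $\mcG$: for $h \in \mcG$ and $l \geq 1$, $\langle z_1^l f_{l-1}, h\rangle = \langle f_{l-1}, \bar{z}_1^l h\rangle = 0$ because $\bar{z}_1^l h$ has Fourier support in $\{(r,s) : r<n,\ s\geq 0\}$, a set orthogonal to $\mcF_2$ by Theorem \ref{thmorth}. A symmetric argument gives $T_2^k g = P z_2^k g$.

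A short adjoint computation then shows $(T_2^*)^k g = P \bar{z}_2^k g$: for $h \in \mcG$,
\[
\langle (T_2^*)^k g, h\rangle = \langle g, T_2^k h\rangle = \langle g, P z_2^k h\rangle = \langle g, z_2^k h\rangle = \langle \bar{z}_2^k g, h\rangle = \langle P \bar{z}_2^k g, h\rangle.
\]

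For the main mixed case, using the commutativity of $T_1$ and $T_2^*$ established at the start of Section \ref{seccomm}, I compute for $h \in \mcG$:
\[
\langle T_1^j (T_2^*)^k g, h\rangle = \langle g, (T_1^*)^j T_2^k h\rangle = \langle T_1^j g, T_2^k h\rangle = \langle P z_1^j g, P z_2^k h\rangle = \langle \bar{z}_2^k P z_1^j g, h\rangle,
\]
so $T_1^j (T_2^*)^k g = P \bar{z}_2^k P z_1^j g = P z_1^j \bar{z}_2^k g - P \bar{z}_2^k (I-P) z_1^j g$. The hard part is verifying that this last term vanishes. By Corollary \ref{cordecomp} again, $(I-P) z_1^j g = \sum_{i=0}^{j-1} z_1^i f_i$ with $f_i \in \mcF_2$, so it suffices to show $\bar{z}_2^k z_1^i \mcF_2 \perp \mcG$ for all $i, k \geq 0$. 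For $h \in \mcG$ and $f \in \mcF_2$, $\langle z_1^i \bar{z}_2^k f, h\rangle = \langle f, \bar{z}_1^i z_2^k h\rangle$, and the Fourier support of $\bar{z}_1^i z_2^k h$ lies in $\{(r,s) : r<n,\ s\geq 0\}$, which is orthogonal to $\mcF_2$ by Theorem \ref{thmorth}. This will close the proof.
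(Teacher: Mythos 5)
Your proof is correct. It uses the same underlying ingredients as the paper's argument --- the orthogonal decompositions of Corollary \ref{cordecomp} and the support-based orthogonality relations of Theorem \ref{thmorth} --- but organizes them along a genuinely different route. The paper runs a single two-parameter induction directly on the mixed monomial: assuming $T_1^k(T_2^*)^j g = P z_1^k \bar z_2^j g$, it decomposes $z_1^k g$ inside $\mcP_{n-1+k,\,m-1+j} = z_2^j \mcG \oplus (\mcP_{n-1+k,\,m-1+j}\ominus z_2^j\mcG)$ and kills the cross terms with Remark \ref{quadremark}; commutativity of $T_1$ and $T_2^*$ is never invoked there. You instead first establish the pure-power identities $T_1^j g = Pz_1^j g$ and $(T_2^*)^k g = P\bar z_2^k g$, and then splice them together by a duality computation that genuinely requires the commutativity theorem proved at the start of Section \ref{seccomm} (to rewrite the adjoint $T_2^k(T_1^*)^j$ as $(T_1^*)^j T_2^k$), finishing with one more application of Corollary \ref{cordecomp} to see that $P\bar z_2^k(I-P)z_1^j g=0$. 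What your route buys is that every orthogonality verification reduces to an explicit Fourier-support computation against $\mcF_1$ or $\mcF_2$, so Remark \ref{quadremark} and the double induction are avoided; the cost is that your proof of the dilation theorem now depends logically on the commutativity result, whereas the paper's version is self-contained in that respect. Since commutativity is proved earlier in the same section, this dependence is legitimate, and all of your individual steps (the index ranges in Corollary \ref{cordecomp} with $M=m-1$, the support estimates, and the final passage from equality of inner products on $\mcG$ to equality of vectors) check out.
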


\begin{proof} Let $g \in \mcG$.  Let $j,k\geq 0$ and assume $Pz_1^k
  \bar{z}_2^j g = T_1^k (T_2^*)^j g$ and we will show $Pz_1^{k+1}
  \bar{z}_2^j g = T_1^{k+1} (T_2^*)^j g$ and $Pz_1^k
  \bar{z}_2^{j+1} g = T_1^k (T_2^*)^{j+1} g$.  By induction and by
  linearity the theorem will then follow.

We shall think of $z_1^kg$ as an element of 
\[
\mcP_{n-1+k, m-1+j} =
z_2^{j} \mcG \oplus( \mcP_{n-1+k,m-1+j} \ominus z_2^j \mcG).
\]
So,
using this decomposition we write
\[
z_1^k g = z_2^j g_0 + h
\]
with $g_0\in \mcG, h \in \mcP_{n-1+k,m-1+j} \ominus z_2^j \mcG$.
Thus,
\[
z_1^k \bar{z}_2^j g = g_0 + \bar{z}_2^j h
\]
and $P z_1^k \bar{z}_2^j g = T_1^k (T_2^*)^j g = g_0$ by hypothesis.
Since $\mcP_{n,m-1} = \mcG \oplus \mcF_2$ we can write $z_1g_0 = T_1
g_0 + f$ where $f \in \mcF_2$.  Then,
\[
z_1^{k+1} \bar{z}_2^j g = T_1 g_0 + f + z_1 \bar{z}_2^j h
\]
and so $P(z_1^{k+1} \bar{z}_2^j g) = T_1^{k+1}(T_2^{*})^j g + Pf +
P(z_1\bar{z}_2^j h)$.  But, $f \in \mcF_2 \perp \mcG$ so $Pf =0$, and
$h\in \mcP_{n-1+k,m-1+j}\ominus z_2^j\mcG \perp \bar{z}_1 z_2^j \mcG$
by Remark \ref{quadremark} since
\[
\bar{z}_1 z_2^j \mcG \subset \{f \in \Lp: \text{supp}\hat{f} \subset
\{ (a,b) \in \Z^2: a<n \text{ and } b \geq j\} \}.
\]
Thus, $P z_1\bar{z}_2^j h = 0$ and we conclude that $T_1^{k+1}
(T_2^*)^{j} g = P z_1^{k+1} \bar{z}_2^j g$.

The claim involving $T_2^*$ is similar with the key ingredient found
in Remark \ref{quadremark}.
\end{proof}

For $Q \in \C[z_1,z_2]$, we let $Z_Q = \{z \in \C^2: Q(z) =0\}$.

 \begin{theorem}\label{thmjointspec}
   Let $p\in \C[z_1,z_2]$ be semi-stable and $\deg p = (n,m)$.  Let
   $q(z) = z_2^m p(z_1,1/z_2), \refl{q}(z) = z_2^m
   \refl{p}(z_1,1/z_2)$.  Then, $q(T_1,T_2^*) = \refl{q}(T_1,T_2^*) =
   0$, and the joint spectrum $\sigma(T_1,T_2^*)$ is $Z_q\cap
   Z_{\refl{q}}\cap\D^2$.
\end{theorem}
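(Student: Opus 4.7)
My plan is to reduce the two polynomial identities to orthogonality statements in $\Lp$ via the dilation formula (Theorem~\ref{dilationthm}), then deduce the spectrum from these identities together with Theorem~\ref{thmjoint}. The central algebraic tool is the reflection identity $p(z)=z_1^n z_2^m\overline{\refl{p}(z)}$ valid on $\T^2$, combined with the unitarity of monomial multiplication on $\Lp$. For $q(T_1,T_2^*)=0$: Theorem~\ref{dilationthm} gives $q(T_1,T_2^*)g=P(q(z_1,\bar z_2)g)$, and on $\T^2$ we have $q(z_1,\bar z_2)=\bar z_2^m p(z_1,z_2)=z_1^n\overline{\refl{p}(z)}$. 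Combining this with $\overline{\refl{p}}/|p|^2=1/\refl{p}$ on $\T^2$, the vanishing reduces to $\langle z_1^n g,h\refl{p}\rangle_{\Lp}=0$ for all $g,h\in\mcG$. Apply Corollary~\ref{cordecomp} with $N=2n-1$, $M=2m-1$: the element $z_1^n g$ sits in $\mcP_{2n-1,m-1}=\mcG\oplus\bigoplus_{j=0}^{n-1}z_1^j\mcF_2$, while $h\refl{p}$ sits in $\bigoplus_{0\le j\le n-1,\,0\le k\le m-1}z_1^jz_2^k\C\refl{p}$; these are distinct orthogonal summands of $\mcP_{2n-1,2m-1}$, so the inner product vanishes.

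For $\refl{q}(T_1,T_2^*)=0$, the analogous reduction uses $\bar z_2^m\refl{p}=z_1^n\bar p$ on $\T^2$ and ultimately requires $\langle z_1^n g,hp\rangle_{\Lp}=0$ for $g,h\in\mcG$. Expand $h=\sum h_{jk}z_1^jz_2^k$ with $0\le j\le n-1$, $0\le k\le m-1$; using unitarity of $M_{z_1^jz_2^k}$ on $\Lp$, each summand becomes $\langle z_1^{n-j}\bar z_2^k g,p\rangle_{\Lp}$. The $z_1$-Fourier-support of $z_1^{n-j}\bar z_2^k g$ is contained in $\{n-j,\ldots,2n-j-1\}\subseteq\{a\ge 1\}$, so by the second $p$-orthogonality of Theorem~\ref{thmorth} each term is zero. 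With both identities proved, any joint eigenvalue $(\lambda_1,\lambda_2)$ of $(T_1,T_2^*)$ must satisfy $q(\lambda)=\refl{q}(\lambda)=0$, giving $\sigma(T_1,T_2^*)\subseteq Z_q\cap Z_{\refl{q}}$; contractivity of $T_1,T_2^*$ gives $\sigma\subseteq\cD^2$. To tighten to $\sigma\subseteq\D^2$, suppose $T_1 f=\lambda_1 f$ with $|\lambda_1|=1$: then $\|PM_{z_1}f\|=\|M_{z_1}f\|$ forces $M_{z_1}f\in\mcG$, whence $z_1 f=T_1 f=\lambda_1 f$, so $(z_1-\lambda_1)f\equiv 0$ on $\T^2$ and therefore $f\equiv 0$; the same argument rules out eigenvalues of $T_2^*$ on $\T$.

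The reverse containment $Z_q\cap Z_{\refl{q}}\cap\D^2\subseteq\sigma(T_1,T_2^*)$ follows from Theorem~\ref{thmjoint}: the involution $(w_1,w_2)\mapsto(w_1,1/w_2)$ carries $Z_p\cap Z_{\refl{p}}\cap(\D\times\D^{-1})$ bijectively onto $Z_q\cap Z_{\refl{q}}\cap\D^2$ (with $w_2=\infty$ corresponding to $\lambda_2=0$), and Theorem~\ref{thmjoint} identifies each image point as a joint eigenvalue of $(T_1,T_2^*)$. The main obstacle I anticipate is the boundary case $w_2=\infty$, i.e.\ $\lambda_2=0$: one must verify that the eigenvector $f=f_1/(\bar\lambda_2-z_2)$ produced in the proof of Theorem~\ref{thmjoint} remains a bona fide element of $\mcG$ in this case. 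This works because the equation in that proof forces $(\bar\lambda_2-z_2)$ to divide $f_1$ as polynomials (it divides the right-hand side, and $(1-\bar\lambda_1 z_1)$ shares no factor with $(\bar\lambda_2-z_2)$), and division by $z_2$ is unitary on $\Lp$, so $f$ lives in $\mcG$ with the correct bidegree $(n-1,m-1)$.
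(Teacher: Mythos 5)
Your proof is correct and takes essentially the same route as the paper's: Theorem \ref{dilationthm} to write $q(T_1,T_2^*)f = P\bigl(q(z_1,\bar z_2)f\bigr)$, the orthogonality structure of $\Lp$ to show this vanishes, a Pythagorean/norm-equality argument to exclude unimodular eigenvalues, and Theorem \ref{thmjoint} for the reverse inclusion. The only difference is bookkeeping in the middle step: the paper pairs $P(\bar z_2^m p f)$ against $g\in\mcG$ and reads off $\ip{p}{z_2^m \bar f g}_{\Lp}=0$ directly from the Fourier-support relations of Theorem \ref{thmorth}, whereas you rewrite $q(z_1,\bar z_2)=z_1^n\overline{\refl{p}}$ on $\T^2$ and invoke Corollary \ref{cordecomp} (itself a consequence of Theorem \ref{thmorth}); also, your extra verification of the $\lambda_2=0$ (i.e.\ $w_2=\infty$) case is not strictly needed, since $\infty\in\D^{-1}$ is already covered by the statement of Theorem \ref{thmjoint}.
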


\begin{proof}
Notice that on $\T^2$, $q(z_1,\bar{z}_2) = \bar{z}_2^m p(z)$.
Let $f,g  \in \mcG$.  By Theorem \ref{dilationthm}
\[
q(T_1,T_2^*)f = P q(z_1,\bar{z}_2) f = P \bar{z}_2^m p(z) f.
\]
The inner product of this with $g$ is 
\[
\ip{p }{ z_2^m \bar{f} g}_{\Lp}.
\]
The frequency support of $z_2^m\bar{f}g$ is in $\{(j,k): j<n, k>0\}$,
and such a function is orthogonal to $p$ and $\refl{p}$ by Theorem
\ref{thmorth}.  This shows $q(T_1,T_2^*)f = 0$.  A similar argument
applies to $\refl{q}$.

Thus, if we have a joint eigenvalue $\lambda$ with joint eigenvector
$f$, then $q(T_1,T_2^*) f = q(\lambda)f = 0$ and $\refl{q}(T_1,T_2^*)f
= \refl{q}(\lambda)f = 0$.  So, $q(\lambda) = \refl{q}(\lambda) = 0$.
This shows $\sigma(T_1,T_2^*) \subset Z_q\cap Z_{\refl{q}} \cap
\cD^2$.  Neither $T_1$ nor $T_2^*$ can have unimodular eigenvalues
because we would get $(z_1-\lambda_1)f =g \in \mcF_2$ and this implies
$\|f\|^2 = |\lambda_1|^2\|f\|^2 + \|g\|^2$ and so $|\lambda_1|<1$; and
similarly for $T_2^*$.

By Theorem \ref{thmjoint}, common zeros of $q, \refl{q}$ inside $\D^2$
are joint eigenvalues of $(T_1,T_2^*)$.  Thus,
\[
\sigma(T_1,T_2^*) = Z_q \cap Z_{\refl{q}} \cap \D^2.
\]
\end{proof}

We see that $\dim \mcG$ is at least the number of elements of $(Z_q
\cap Z_{\refl{q}} \cap \D^2)$.  Our goal is to show $\dim \mcG =
\#(Z_q\cap Z_{\refl{q}}\cap \D^2)$ if we count roots with appropriate
multiplicities.

\section{Switching from $\D^2$ to $\D\times \D^{-1}$}
As the previous section indicates, the polynomial $q(z) =
z_2^mp(z_1,1/z_2)$, which has no zeros in $\D\times \D^{-1}$, is in
some ways more natural than $p$.  One approach to counting $\dim \mcG$
is to study $\Lq$ instead and write out formulas and orthogonality
relations analogous to Theorems \ref{thmsos} and \ref{thmorth}.  

Rather than go through all of that, we shall do some simple
conversions between $p$ and $q$ that we will need later. The main
technical fact we need is as follows.

\begin{prop} \label{proppq} Let $p\in \C[z_1,z_2]$ be semi-stable and
  define $q$ as above.  There exist one variable polynomials $h_1 \in
  \C[z_2], h_2 \in \C[z_1]$ with no zeros in $\D$ such that the
  entries of
\[
h_1(z_2) \bar{F}_1(z_2) X_n \Lambda_n(z_1) \text{ and } h_2(z_1)
F_2(z_1) X_m \Lambda_m(z_2)
\]
belong to the ideal $\langle q,\refl{q} \rangle$. Here $\bar{F}_1(z_2)
= \overline{F_1(\bar{z}_2)}$.  
\end{prop}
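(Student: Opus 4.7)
The plan is to apply a Lemma \ref{Llemma}-style construction that manufactures polynomials in $\langle q,\refl{q}\rangle$, and then use adjugate matrices (together with Proposition \ref{Estable}) to isolate the desired vector polynomials. The key algebraic inputs are the reflection identities obtained from \eqref{matrixreflection}: substituting $z_2\mapsto 1/s$ in $E_1(z_2)=z_2^m\bar{F}_1(1/z_2)X_n$ gives $s^m E_1(1/s)=\bar{F}_1(s)X_n$, and similarly $z_1^n\bar{F}_2(1/z_1)=E_2(z_1)X_m$. Also useful is the ``reversal'' $s^{m-1}\Lambda_m(1/s)=X_m\Lambda_m(s)$.

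For the first vector I would take Theorem \ref{thmsos} in its $(\vec{E}_1,\vec{F}_2)$ form and perform the double substitution $z_2\mapsto 1/s$, $\bar{w}_2\mapsto s$, which kills the $\vec{F}_2$ contribution since $(1-\bar{w}_2z_2)=1-s/s=0$. Multiplying by $s^m$ and using $s^m p(z_1,1/s)=q(z_1,s)$ (and the analogous identity for $\refl{p},\refl{q}$), the left side becomes a polynomial in $\bar{w}_1,z_1,s$ whose $\bar{w}_1^k$-coefficients are manifestly $q\cdot(\cdot)+\refl{q}\cdot(\cdot)$, hence lie in $\langle q,\refl{q}\rangle$. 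The surviving $\vec{E}_1^*\vec{E}_1$ term on the right becomes, after applying $s^m E_1(1/s)=\bar{F}_1(s)X_n$,
\[
(1-\bar{w}_1 z_1)\,\Lambda_n(w_1)^*\,\bar{E}_1(s)^T\,\bar{F}_1(s)\,X_n\,\Lambda_n(z_1).
\]
An ascending back-substitution on the $\bar{w}_1$-coefficients shows that dividing by $(1-\bar{w}_1 z_1)$ preserves ideal membership, so the entries of $\bar{E}_1(z_2)^T\bar{F}_1(z_2)X_n\Lambda_n(z_1)$ all lie in $\langle q,\refl{q}\rangle$. Multiplying on the left by $\text{adj}(\bar{E}_1(z_2)^T)$ then yields $\det\bar{E}_1(z_2)\cdot\bar{F}_1(z_2)X_n\Lambda_n(z_1)$ with entries in the ideal; take $h_1(z_2):=\det\bar{E}_1(z_2)$, whose zeros are conjugates of the zeros of $\det E_1(z_2)$ and therefore lie outside $\D$ by Proposition \ref{Estable}.

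For the second vector I would use the same decomposition but substitute $\bar{w}_1\mapsto 1/z_1$ and $z_2\mapsto 1/s$, now killing the $\vec{E}_1$ term instead. A direct polarization check using the definitions of $\refl{p},q,\refl{q}$ gives $z_1^n\overline{p(1/\bar{z}_1,w_2)}=\refl{q}(z_1,\bar{w}_2)$ and $z_1^n\overline{\refl{p}(1/\bar{z}_1,w_2)}=q(z_1,\bar{w}_2)$, so multiplying through by $z_1^n s^m$ converts the left side into $\refl{q}(z_1,\bar{w}_2)q(z_1,s)-q(z_1,\bar{w}_2)\refl{q}(z_1,s)$, whose $\bar{w}_2^k$-coefficients lie in $\langle q,\refl{q}\rangle$. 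The surviving $\vec{F}_2^*\vec{F}_2$ term, after $z_1^n\bar{F}_2(1/z_1)=E_2(z_1)X_m$ and $s^{m-1}\Lambda_m(1/s)=X_m\Lambda_m(s)$, rearranges to
\[
(s-\bar{w}_2)\,\Lambda_m(w_2)^*\,X_m\,E_2(z_1)^T\,F_2(z_1)\,X_m\,\Lambda_m(s).
\]
Dividing by $(s-\bar{w}_2)$ via a descending back-substitution and extracting $\bar{w}_2^k$-coefficients shows that the entries of $X_m E_2(z_1)^T F_2(z_1)X_m\Lambda_m(z_2)$ lie in $\langle q,\refl{q}\rangle$, and multiplying on the left by $\text{adj}(E_2(z_1)^T)X_m$ yields $\det E_2(z_1)\cdot F_2(z_1)X_m\Lambda_m(z_2)$ with entries in the ideal. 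Taking $h_2(z_1):=\det E_2(z_1)$, which is nonvanishing on $\D$ by Proposition \ref{Estable}, completes the argument.

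The main obstacle is tracking the polarized conjugation and applying the reflection identities correctly to recognize the exact right-hand sides; once that bookkeeping is in place, ideal membership of coefficients under division by $(1-\bar{w}_1 z_1)$ or $(s-\bar{w}_2)$ reduces to a one-line back-substitution in each case.
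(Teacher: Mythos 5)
Your proposal is correct and follows essentially the same route as the paper: the substitutions $w_2=1/\bar z_2$ (resp.\ $w_1=1/\bar z_1$) into Theorem \ref{thmsos} are exactly the content of Lemmas \ref{Llemma} and \ref{Llemma2} (resp.\ the function $K_{w_2}$), the passage to $q,\refl q$ is the paper's $H$ and $J$, and the adjugate trick together with Proposition \ref{Estable} finishes the argument identically. The only cosmetic difference is that you justify ideal membership after dividing by $1-\bar w_1z_1$ or $s-\bar w_2$ via coefficient back-substitution, whereas the paper exhibits explicit polynomial combinations in Lemma \ref{Llemma}.
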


\begin{proof}
Recall from Lemma \ref{Llemma} and Lemma \ref{Llemma2}
 \[
\begin{aligned}
 L_{w_1}(z) &= L (z_1,z_2;w_1) = z_2^{m} \frac{p(z) \overline{p(w_1, 1/\bar{z}_2)}
   - \refl{p}(z) \overline{\refl{p}(w_1, 1/\bar{z}_2)} }{1-z_1
   \bar{w}_1}\\
&= \Lambda_n(w_1)^* X_n F_1(z_2)^t E_1(z_2) \Lambda_n(z_1).
\end{aligned}
\]
Now let $H_{w_1}(z) = H(z_1,z_2;w_1) = z_2^{2m} L(z_1,1/z_2;w_1)$.
Then, $H_{w_1} \in \langle q, \refl{q} \rangle$ for each $w_1$ by
Lemma \ref{Llemma} since 
\[
H(z_1,z_2;w_1) = z_2^{m} \frac{q(z) \overline{q(w_1, 1/\bar{z}_2)}
  - \refl{q}(z) \overline{\refl{q}(w_1, 1/\bar{z}_2)} }{1-z_1
  \bar{w}_1}.
\]
On the other hand,
\[
\begin{aligned}
H(z_1,z_2;w_1) &= \Lambda_n(w_1)^* X_n z_2^m F_1(1/z_2)^t z_2^m
E_1(1/z_2) \Lambda_n(z_1) \\
&= \Lambda_n(w_1)^* \overline{E_1(\bar{z}_2)}
\overline{F_1(\bar{z}_2)} X_n \Lambda_n(z_1).
\end{aligned}
\]
Therefore, the entries of $\bar{E}_1(z_2) \bar{F}_1(z_2) X_n
\Lambda_n(z_1)$ belong to $\langle q,\refl{q} \rangle$.  If we
multiply by the adjugate of $\bar{E}_1$ we see that the entries of
$\det(\bar{E}_1(z_2)) \bar{F}_1(z_2) X_n \Lambda_n(z_1)$ belong to
$\langle q ,\refl{q} \rangle$.  Since $\det(\bar{E}_1(z_2))$ has no
zeros in $\D$, the first part of the proposition is proved.

The second part is similar. Let
\[
K_{w_2}(z) =  K(z_1,z_2;w_2) = z_1^{n} \frac{p(z) \overline{p(1/\bar{z}_1, w_2)}
  - \refl{p}(z) \overline{\refl{p}(1/\bar{z}_1, w_2)} }{1-z_2
  \bar{w}_2}.
\]
Then, by Theorem \ref{thmsos}
\[
K(z_1,z_2;w_2) = \Lambda_m(w_2)^* X_m E_2(z_1)^t
F_2(z_1) \Lambda_m(z_2)
\]
and if we define $J(z_1,z_2;w_2) =(z_2\bar{w}_2)^{m-1}
K(z_1,1/z_2;1/w_2)$ then
\[
\begin{aligned}
J(z_1,z_2;w_2) &= z_1^{n} \frac{q(z) \overline{q(1/\bar{z}_1, w_2)}
  - \refl{q}(z) \overline{\refl{q}(1/\bar{z}_1, w_2)} }{1-z_2
  \bar{w}_2} \\
&=\Lambda_m(w_2)^* E_2(z_1)^t F_2(z_1) X_m
\Lambda_m(z_2). 
\end{aligned}
\]
As before, the entries of $\det E_2(z_1) F_2(z_1) X_m \Lambda_m(z_2)$
belong to $\langle q, \refl{q} \rangle$.  Since $\det E_2$ has no
zeros in $\D$, this proves the second claim of the proposition.
\end{proof}

\section{Background: intersection multiplicities} \label{background}
This section discusses intersection multiplicities for plane curves.
We also discuss B\'ezout's theorem for $\C_{\infty} \times \C_{\infty}$. 

Historically, there are at least 3 equivalent ways to compute the
intersection multiplicity of a common zero of two plane curves.  One
can use resultants (see \cite{Fischer} section 2.7), however this
method requires putting the polynomials into general position through
linear change of variables.  This simple approach seems to be fraught
since the polynomials we are interested in do not behave well under
linear transformations.  The other ways to compute intersection
multiplicity, outlined below, are dimension counts of quotients of
local rings, dimension counts of generalized eigenspaces, and order
of vanishing of resultants of Puiseux expansions.

Let $I \subset \C[z_1,z_2]$ be a zero-dimensional ideal; meaning
$V(I)\defn \{z: \forall f \in I, f(z)=0\}$ is a finite set.  For $\lambda
\in V(I)$ we let $\mathcal{O}_{\lambda}$ denote the localization of
$\C[z_1,z_2]$ at $\lambda$, or in concrete terms the ring of rational
functions whose denominators do not vanish at $\lambda$.  The
intersection multiplicity $N_{\lambda}(I)$ is defined by
\[
N_{\lambda} (I) \defn \dim (\mathcal{O}_{\lambda}/I \mathcal{O}_{\lambda}).
\]
See \cites{CLO, Fulton}. Here the ideal $I\mathcal{O}_{\lambda}$ is the ideal
generated by $I$ in $\mathcal{O}_{\lambda}$.  If $I = \langle
p, q\rangle$, the ideal generated by $p, q \in
\C[z_1,z_2]$, we may write $N_{\lambda}(p, q)$ for
$N_{\lambda} (I)$.

The intersection multiplicities can be computed as the dimensions of
certain generalized eigenspaces as well.  Let $[f]$ denote the
equivalence class of $f \in \C[z_1,z_2]$ in $Q = \C[z_1,z_2]/I$.
Then, the maps
\[
M_1 [f] := [z_1 f] \qquad M_2[f] := [z_2 f]
\]
are well-defined linear maps on $Q$ such that $M_1$ and $M_2$ commute.
A point $\lambda$ is in $V(I)$ if and only if $\lambda =
(\lambda_1,\lambda_2)$ is a joint eigenvalue of $M_1,M_2$ and the
corresponding joint generalized eigenspace is isomorphic to
$\mathcal{O}_{\lambda}/I\mathcal{O}_{\lambda}$.  The book \cite{CLO} proves something that
amounts to the same thing, namely if $g \in \C[z_1,z_2]$, then the
eigenvalues of the map $[f] \mapsto [gf]$ are the values of $g$ on
$V(I)$. Furthermore, if $g$ takes on distinct values $g(\lambda_1),
\dots, g(\lambda_M)$ on the points of $V(I)$ then the generalized
eigenspaces are isomorphic to $\mathcal{O}_{\lambda_j}/I \mathcal{O}_{\lambda_j}$ for
$j=1,\dots, M$.  See exercise 12 chapter 4.2 of \cite{CLO}.  Thus, the
dimensions of generalized eigenspaces can be used to compute
intersection multiplicities. 
In other words, if
\begin{equation} \label{geneig}
G_{\lambda} = \{[f] \in Q: \exists N,M \text{ such that } (z_1-\lambda_1)^Nf,
(z_2-\lambda_2)^M f \in I\}
\end{equation}
then $\dim G_{\lambda} = N_{\lambda}(I)$.

\begin{remark} \label{intaxioms}
The book \cite{Fulton} also gives a list of properties of the
intersection multiplicity of two polynomials $p,q$ with no common
factor that yields an algorithm for its computation.  Among these are
\begin{enumerate}
\item $N_{\lambda} (p,q) = 0$ if and only if $\lambda$ is not a common zero of
  $p,q$.
\item $N_{\lambda} (p,q) \geq m_{\lambda} (p) m_{\lambda}(q)$ where
  $m_{\lambda}$ denotes the order of vanishing at $\lambda$ (the
  degree of the lowest non-zero term in the homogeneous expansion at
  $\lambda$) of the given polynomial.
\item $N_{\lambda} (p,q) = N_{\lambda} (p, q+ r p)$ for any $r \in
  \C[z_1,z_2]$.  
\item If $p = \prod p_j^{s_j}$ and $q = \prod q_j^{t_j}$ is the
  decomposition into irreducible components, then $N_{\lambda}(p,q) =
  \sum s_j t_k N_{\lambda}(p_j,q_k)$.  
\end{enumerate}
\end{remark}

In Appendix C, we make use of an older method for computing intersection
multiplicity using Puiseux series if $I=\langle p,q\rangle$; see
\cite{Fischer} section 8.7.  For
simplicity we assume $\lambda =(0,0) \in V(I)$.  We may write $p=u_1
\prod p_j^{s_j}$ and $q = u_2 \prod q_j^{t_k}$ where now $u_1,u_2$ are
functions analytic and non-vanishing in a neighborhood of $(0,0)$ and
the $p_j$'s and $q_k$'s are irreducible Weierstrass polynomials.  The
intersection multiplicity can be computed via
\[
N_0 (p,q) = \sum s_j t_k N_0(p_j,q_k)
\]
where it is shown separately in \cite{Fischer} how to compute
$N_0(p_j,q_k)$ for Weierstrass polynomials.  We may as well assume for
simplicity of notation that $p=p_j, q=q_k$.  By Puiseux's theorem (see
Chapter 7 of \cite{Fischer}), there exist univariate functions
$\phi, \psi$ which are analytic in a neighborhood of $0$ such that 
\[
p(t^N, \phi(t)) = 0 \text{ and } q(t^M, \psi(t)) = 0
\]
for some positive integers $N,M$.  The intersection multiplicity of
$p$ and $q$ at $0$ can now be computed as the order of vanishing of
the following formal power series in fractional powers of $t$ 
\[
f(t) = \prod_{j=1}^{N}\prod_{k=1}^{M} (\phi(\mu^j t^{1/N}) - \psi(\nu^k
t^{1/M})).
\]
Here $\mu = e^{2\pi i/N}, \nu = e^{2\pi i/M}$.  One can show $f$ is actually a power series in $t$ (and does not involve
fractional powers in the end) and the order of vanishing of $f$ at $0$
equals $N_0(p,q)$.  

A few words about B\'ezout's theorem for $\mathbb{P}\times \mathbb{P}$
will be helpful for later. Although this is a standard result in
algebraic geometry it is difficult to find an elementary discussion of
it in the literature, in contrast to the setting of two-dimensional
projective space $\mathbb{P}^2 \ne \mathbb{P}\times \mathbb{P}$ .

Let $F,G \in \C[z_0,z_1,w_0,w_1]$ be bihomogeneous, meaning
homogeneous in $(z_0,z_1)$ and $(w_0,w_1)$ separately.  We can then
associate bidegrees $(n_1,n_2), (m_1,m_2)$ to $F$ and $G$
respectively; e.g. $n_1$ is the degree of $F$ with respect to
$(z_0,z_1)$.  Assuming $F,G$ have no common factors, B\'ezout's
theorem for $\mathbb{P}\times \mathbb{P}$ says that $F,G$ have
\[
n_1m_2 + n_2 m_1
\]
common zeros in $\mathbb{P}\times \mathbb{P}$ with multiplicities
counted using the local ring definition as presented in Section
\ref{background}. This is found in \cite{shaf} (Chapter 4, Section
2.1, Example 4.9), however we caution that it is stated for ``divisors
in general position,'' which if one tracks through the definitions in
\cite{shaf} gives the result above.

In this paper we deal with the related situation of $p,q \in \C[z,w]$
with bidegrees $(n_1,n_2),(m_1,m_2)$ and no common factors which we
can ``bihomogenize'' via
\[
F(z_0:z_1,w_0:w_1) = z_0^{n_1}w_0^{n_2} p(z_1/z_0, w_1/w_0) 
\]
\[
 G(z_0:z_1,w_0:w_1) = z_0^{m_1} w_0^{m_2} q(z_1/z_0, w_1/w_0).
\]
Then for instance a common zero of $p,q$ at $(a,\infty)$ is just a
common zero of $F,G$ at $(z_0,z_1,w_0,w_1)= (1,a, 0,1)$; we are simply
using the Riemann sphere $\C_{\infty}$ model instead of projective
space $\mathbb{P}$.  Thus, $p,q$ will have $n_1m_2+n_2m_1$ common
zeros in $\C_{\infty} \times \C_{\infty}$ as before.

\section{The dimension theorem: Theorem
  \ref{intthmdim}} \label{secdimthm} Let $p\in \C[z_1,z_2]$ be
semi-stable and define $q(z)= z_2^mp(z_1,1/z_2)$ and $\refl{q}$ as in
Theorem \ref{thmjointspec}.  

For $\lambda \in Z_q \cap Z_{\refl{q}} \cap \D^2$ we define the joint
generalized eigenspace of $(T_1,T_2^*)$ for eigenvalue $\lambda$ to be
\[
\mcG_{\lambda} := \{ f\in \mcG: \exists N,M \text{ such that }(T_1-\lambda_1)^N f =
(T_2^* - \lambda_2)^M f = 0\}
\]
 
The goal of this section is to prove the following theorem.
\begin{theorem} \label{thmdimsum} Let $p\in \C[z_1,z_2]$ be
  semi-stable with $\deg p = (n,m)$.  Let $q(z) = z_2^mp(z_1,1/z_2)$.
  Then, for each $\lambda \in Z_q \cap Z_{\refl{q}} \cap \D^2$
\[
N_{\lambda}(q,\refl{q}) = \dim \mcG_{\lambda}.
\]
Therefore,
\[
\dim \mcG = \sum_{\lambda \in Z_q\cap Z_{\refl{q}} \cap \D^2}
N_{\lambda} (q,\refl{q}) %:= N_{\D^2}(q,\refl{q}).
\]
\end{theorem}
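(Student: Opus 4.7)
The plan is to reduce to a local statement via the joint generalized eigenspace decomposition of $(T_1, T_2^*)$ and then identify each local piece with the corresponding local ring. By Theorem \ref{thmjointspec} we have $q(T_1, T_2^*) = \refl{q}(T_1, T_2^*) = 0$ and $\sigma(T_1, T_2^*) = Z_q \cap Z_{\refl{q}} \cap \D^2$. Since $T_1$ and $T_2^*$ commute on the finite-dimensional space $\mcG$, standard linear algebra (simultaneous block-triangular form for commuting operators) yields
\[
\mcG = \bigoplus_{\lambda \in Z_q \cap Z_{\refl{q}} \cap \D^2} \mcG_\lambda,
\]
so that $\dim \mcG = \sum_\lambda \dim \mcG_\lambda$ and the second equality of the theorem follows from the first. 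What remains is the local claim $\dim \mcG_\lambda = N_\lambda(q, \refl{q})$ for each $\lambda$ in the joint spectrum.

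For the local claim, fix $\lambda$ and set $Q_\lambda := \mathcal{O}_\lambda / \langle q, \refl{q}\rangle \mathcal{O}_\lambda$, so that $\dim Q_\lambda = N_\lambda(q, \refl{q})$ by Section \ref{background}. Because $T_1 - \lambda_1$ and $T_2^* - \lambda_2$ are nilpotent on $\mcG_\lambda$, the functional calculus $s \mapsto s(T_1, T_2^*)|_{\mcG_\lambda}$ is well-defined for $s \in \mathcal{O}_\lambda$, and by Theorem \ref{thmjointspec} it factors through an algebra homomorphism $\rho_\lambda : Q_\lambda \to \mathrm{End}(\mcG_\lambda)$. The plan is to exhibit a vector $v_\lambda \in \mcG_\lambda$ whose orbit map $\Psi_\lambda : Q_\lambda \to \mcG_\lambda$ defined by $[s] \mapsto s(T_1, T_2^*) v_\lambda$ is a $\C$-linear isomorphism; the natural candidate is a joint eigenvector of $(T_1, T_2^*)$ at $\lambda$, whose existence is guaranteed by Theorem \ref{thmjoint}.

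The main obstacle is to establish both surjectivity and injectivity of $\Psi_\lambda$. Surjectivity asserts that the single generalized eigenvector $v_\lambda$ generates all of $\mcG_\lambda$ under the commuting pair $(T_1, T_2^*)$, while injectivity is the nondegeneracy statement that $s(T_1, T_2^*) v_\lambda = 0$ forces $s \in \langle q, \refl{q}\rangle \mathcal{O}_\lambda$. The tool I would use for both is Proposition \ref{proppq}, which places explicit polynomial multiples of entries of $\vec{F}_1, \vec{F}_2$ inside $\langle q, \refl{q}\rangle$, combined with the Puiseux-series parametrization of the analytic branches of $Z_q$ and $Z_{\refl{q}}$ at $\lambda$ from Section \ref{background}; together these should translate the condition $s(T_1, T_2^*) v_\lambda = 0$ into a concrete order-of-vanishing condition along those branches, matching exactly the Puiseux computation of $N_\lambda(q,\refl{q})$. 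I expect the injectivity half to be the subtler of the two, since surjectivity will then follow by a dimension count combined with the fact that joint generalized eigenspaces of commuting operators are the unique $(T_1, T_2^*)$-invariant subspaces on which both operators have the single joint eigenvalue $\lambda$.
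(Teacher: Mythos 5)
Your reduction step is fine: since $T_1,T_2^*$ commute on the finite dimensional space $\mcG$ and their joint spectrum is $Z_q\cap Z_{\refl{q}}\cap\D^2$ (Theorem \ref{thmjointspec}), $\mcG$ does decompose as the direct sum of the $\mcG_\lambda$, and this is exactly how the paper passes from the first display to the second. The local claim is where the proposal breaks down, and the break is concrete: you propose to take $v_\lambda$ to be a joint \emph{eigenvector} of $(T_1,T_2^*)$ at $\lambda$ and to show that the orbit map $\Psi_\lambda([s])=s(T_1,T_2^*)v_\lambda$ is an isomorphism of $Q_\lambda=\mathcal{O}_\lambda/\langle q,\refl{q}\rangle\mathcal{O}_\lambda$ onto $\mcG_\lambda$. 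But if $T_1v_\lambda=\lambda_1v_\lambda$ and $T_2^*v_\lambda=\lambda_2v_\lambda$, then $s(T_1,T_2^*)v_\lambda=s(\lambda)v_\lambda$ for every $s$, so the image of $\Psi_\lambda$ is the single line $\C v_\lambda$ and its kernel is the whole maximal ideal of $Q_\lambda$. Thus $\Psi_\lambda$ can be an isomorphism only when $N_\lambda(q,\refl{q})=1$, while the theorem must handle arbitrary multiplicities at interior joint eigenvalues. A joint eigenvector is annihilated by the maximal ideal (it sits in the socle of the module $\mcG_\lambda$), so it is the worst possible candidate for a cyclic vector; what you would need is a \emph{generator} of $\mcG_\lambda$ as a $Q_\lambda$-module, and the existence of such a generator (cyclicity, i.e. $\mcG_\lambda\cong Q_\lambda$) is essentially the statement being proved. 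For the same reason your suggestion that surjectivity ``then follows by a dimension count'' is circular: the dimension count is the content of the theorem.

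The paper never tries to realize $\mcG_\lambda$ as the orbit of one vector. It instead writes down an explicit linear map $V:G_\lambda\to\mcG_\lambda$, $V[f]=Pf^{\#}$ with $f^{\#}(z_1,z_2)=z_2^{m-1}f(z_1,1/z_2)$, from the generalized eigenspace $G_\lambda$ of the multiplication operators on $\C[z_1,z_2]/\langle q,\refl{q}\rangle$ (whose dimension is $N_\lambda(q,\refl{q})$ by Section \ref{background}) into $\mcG_\lambda$, and proves in three lemmas that $V$ is well defined, injective, and surjective. Well-definedness (that $Pf^{\#}$ lies in $\mcG_\lambda$ and that $V$ kills the ideal) rests on the dilation formula of Theorem \ref{dilationthm} together with the quadrant orthogonality of Remark \ref{quadremark} and \eqref{pperp}; injectivity uses the decomposition \eqref{cordecomp2}, Proposition \ref{proppq}, and a Nullstellensatz argument with the ideal $\langle h_1h_2,(z_1-\lambda_1)^N,(z_2-\lambda_2)^M\rangle=\C[z_1,z_2]$; surjectivity uses Corollary \ref{cordecomp} and \eqref{cordecomp1}, Proposition \ref{proppq}, and the standard trick of replacing $h_1h_2$ by $1-(1-h_1h_2/(h_1(\lambda_2)h_2(\lambda_1)))^{N+M}$. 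So your instinct that Proposition \ref{proppq} is the essential external input is correct, but it enters through these orthogonal decompositions rather than through Puiseux expansions (which the paper uses only in Appendix C, for evenness of boundary multiplicities), and without an explicit map such as $V$ the injectivity and surjectivity you need have nothing concrete to attach to.
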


Let $\mathcal{I} = \langle q, \refl{q}\rangle$, the ideal generated by
$q,\refl{q}$,  and $Q = \C[z_1, z_2]/\mathcal{I}$ which is
necessarily finite dimensional.  
We will use $[f]$ to denote the
 equivalence class of $f \in \C[z_1,z_2]$ in $Q$.  
%We define
% $M_{z_j}[f] = [z_jf]$ for $j=1,2$.  For $\lambda \in V(\mathcal{I})$ we define
% the generalized eigenspace of $(M_{z_1},M_{z_2})$
Recall \eqref{geneig}
\[
G_\lambda = \{ [f] \in Q: \exists N,M \text{ such that } (z_1-
\lambda_1)^N f, (z_2-\lambda_2)^Mf \in \mathcal{I}\}.
\]
and $\dim G_{\lambda} = N_{\lambda}(q,\refl{q})$.

For $f \in \C[z_1,z_2]$ we define 
\[
f^{\#}(z_1,z_2) \defn z_2^{m-1} f(z_1,1/z_2) \in \C[z_1,z_2,z_2^{-1}]
\]
a Laurent polynomial in $z_2$.  Notice that on $\T^2$, $f^{\#}(z) =
z_2^{m-1} f(z_1,\bar{z}_2)$ and also $q^{\#}(z) = \bar{z}_2 p(z)$.

\begin{lemma} Suppose $\lambda \in Z_q\cap Z_{\refl{q}}\cap \D^2$.  If
  $[f] \in G_{\lambda}$, then $f^{\#} \in \Lp$, $Pf^{\#} \in
  \mcG_{\lambda}$,  and if $[f] = 0$, then $f^{\#}\perp \mcG$.  Thus,
  the linear map $V[f] \defn P f^{\#}$ from $G_{\lambda}$ to
  $\mcG_{\lambda}$ is well-defined.
\end{lemma}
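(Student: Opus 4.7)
The plan is to verify three sub-assertions hidden in the statement: (i) $f^\# \in \Lp$; (ii) $Pf^\# \in \mcG_\lambda$; and (iii) $f^\# \perp \mcG$ whenever $f \in \mathcal{I}$. I would prove them in the order (iii), (i), (ii), since the orthogonality (iii) is the algebraic heart of the argument and the other two lean on it.

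For (iii) I would unwind the definition. Writing $f = a q + b \refl{q}$ with $a, b \in \C[z_1, z_2]$ and using $q(z_1, 1/z_2) = z_2^{-m} p(z)$ and $\refl{q}(z_1, 1/z_2) = z_2^{-m} \refl{p}(z)$, I get
\[
f^\# = \alpha\, p + \beta\, \refl{p}, \qquad \alpha := z_2^{-1} a(z_1, 1/z_2),\ \beta := z_2^{-1} b(z_1, 1/z_2),
\]
both Laurent polynomials whose Fourier supports sit in $\{j \geq 0,\ k \leq -1\}$. Expanding $\alpha$ and $\beta$ in their $z_1^j \bar z_2^k$ monomials on $\T^2$, every term of $\ip{f^\#}{g}_{\Lp}$ for $g \in \mcG$ takes the shape $\ip{p}{\bar z_1^j z_2^k g}_\Lp$ or $\ip{\refl{p}}{\bar z_1^j z_2^k g}_\Lp$ with $j \geq 0$ and $k \geq 1$, and every such pairing vanishes by \eqref{pperp}.

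For (i) I would feed $(z_1 - \lambda_1)^N f \in \mathcal{I}$ into the same algebraic formula to obtain $(z_1 - \lambda_1)^N f^\# = \alpha\, p + \beta\, \refl{p}$ with bounded $\alpha, \beta$ on $\T^2$. Dividing by $p$ and using $|\refl{p}/p| = 1$ on $\T^2$ shows $(z_1 - \lambda_1)^N f^\# / p$ is bounded there; since $\lambda_1 \in \D$ forces $|z_1 - \lambda_1| \geq 1 - |\lambda_1| > 0$ on $\T^2$, $f^\#/p$ itself is bounded on $\T^2$ and $f^\# \in \Lp$.

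The main obstacle is (ii). Theorem \ref{dilationthm} applied to $g = Pf^\# \in \mcG$ gives $(T_1 - \lambda_1)^N Pf^\# = P(z_1 - \lambda_1)^N Pf^\#$, and subtracting $P(z_1 - \lambda_1)^N f^\# = P((z_1 - \lambda_1)^N f)^\# = 0$ (by (iii)) reduces the task to $P(z_1 - \lambda_1)^N (I - P) f^\# = 0$, together with the $T_2^*$ analogue which uses the $\T^2$ identity $(\bar z_2 - \lambda_2)^M f^\# = ((z_2 - \lambda_2)^M f)^\#$. My approach is to upgrade (iii) to an intertwining statement: for every $f \in \C[z_1, z_2]$ with $f^\# \in \Lp$ one has $P z_1 f^\# = T_1 P f^\#$ and $P \bar z_2 f^\# = T_2^* Pf^\#$, so $V$ becomes a $\C[z_1, z_2]$-module map between $Q = \C[z_1, z_2]/\mathcal{I}$ (with $z_i$ acting by multiplication) and $\mcG$ (with $z_1 \mapsto T_1$, $z_2 \mapsto T_2^*$), automatically sending joint generalized eigenspaces to joint generalized eigenspaces. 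These intertwining identities are equivalent to $(I - P) f^\# \perp \bar z_1 \mcG$ and $(I - P) f^\# \perp z_2 \mcG$, which I expect to read off from the decomposition of $\Lp$ in Corollary \ref{cordecomp} together with the support refinement in Remark \ref{quadremark}, pushing the Fourier-support argument of (iii) one step further. Establishing these two orthogonalities is where I expect the real work of the proof to lie.
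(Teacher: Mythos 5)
Your proposal is correct and matches the paper's proof in all essentials: parts (i) and (iii) are exactly the paper's computations (write the relevant polynomial in the ideal as $Aq+B\refl{q}$, apply $\#$, use $|\refl{p}|=|p|$ on $\T^2$ and \eqref{pperp}), and for (ii) the two orthogonalities you flag as the remaining work, $(I-P)f^{\#}\perp \bar z_1\mcG$ and $(I-P)f^{\#}\perp z_2\mcG$, do follow immediately from Remark \ref{quadremark} applied to the Laurent polynomial $(I-P)f^{\#}$ (which has Fourier support in $\{j\geq 0,\ k<m\}$ and is orthogonal to $\mcG$), which is precisely how the paper argues before invoking Theorem \ref{dilationthm}. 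The only cosmetic difference is that you package step (ii) as a one-step intertwining ($Pz_1f^{\#}=T_1Pf^{\#}$, $P\bar z_2 f^{\#}=T_2^*Pf^{\#}$) and iterate, whereas the paper subtracts off $(I-P)f^{\#}$ directly at the level of the powers $(z_1-\lambda_1)^N$ and $(\bar z_2-\lambda_2)^M$; the decomposition and key lemmas are the same.
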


\begin{proof} If $[f] \in G_{\lambda}$, then there exist $N,M$ such that
  $(z_1-\lambda_1)^N f, (z_2-\lambda_2)^Mf \in \mathcal{I}$.  So, we may write
\[
(z_1-\lambda_1)^N f = A q + B \refl{q} \text{ and } (z_2-\lambda_2)^M
f = C q + D \refl{q}
\]
for some $A,B,C,D \in \C[z_1,z_2]$.  Then, applying the $\#$ operation and
restricting $z \in \T^2$
\[
\begin{aligned}
  (z_1-\lambda_1)^N f^{\#} &= A(z_1,\bar{z}_2) \bar{z}_2 p +
B(z_1,\bar{z}_2) \bar{z}_2 \refl{p} \\
  (\bar{z}_2-\lambda_2)^N f^{\#} &= C(z_1,\bar{z}_2) \bar{z}_2 p +
D(z_1,\bar{z}_2) \bar{z}_2 \refl{p} 
\end{aligned}
\]
Since $\lambda_1 \in \D$, we see that $f^{\#} \in \Lp$.  By
\eqref{pperp}, $z_1^j\bar{z}_2^{k} p$ is orthogonal to $\mcG$ for
$j\geq 0$ and $k>0$ and the same holds for $\refl{p}$.  So,
$A(z_1,\bar{z}_2)\bar{z}_2p \perp \mcG, B(z_1,\bar{z}_2)\bar{z}_2
\refl{p} \perp \mcG$ and similarly for $C,D$.  Therefore,
$(z_1-\lambda_1)^N f^{\#}, (\bar{z}_2-\lambda_2)^Mf^{\#} \perp \mcG$.
We may write $f^{\#} = Pf^{\#} + (I-P)f^{\#}$.

By Remark \ref{quadremark}, since $(I-P)f^{\#} \perp \mcG$ we see that
$(I-P)f^{\#}$ is in fact orthogonal to $\bar{z}_1^j z_2^k \mcG$ for
$j,k \geq 0$.  Therefore, $(z_1-\lambda_1)^N (I-P)f^{\#},
(\bar{z}_2-\lambda_2)^M (I-P)f^{\#}$ are both orthogonal to $\mcG$,
whence we conclude $(z_1-\lambda_1)^N Pf^{\#}, (\bar{z}_2-
\lambda_2)^{M}Pf^{\#}$ are orthogonal to $\mcG$.  By Theorem
\ref{dilationthm},
\[
\begin{aligned}
0 &= P (z_1-\lambda_1)^N Pf^{\#} = (T_1-\lambda_1)^N Pf^{\#} \\
0 &= P (\bar{z}_2-\lambda_1)^M Pf^{\#} = (T_2^{*}-\lambda_2)^M Pf^{\#} 
\end{aligned}
\]
and this implies $Pf^{\#} \in \mcG_{\lambda}$.

Next, if $[f]=0$, we know $f = A q + B \refl{q}$ for some $A, B \in
\C[z_1,z_2]$.  Again, $f^{\#} = \bar{z}_2 A(z_1,\bar{z}_2) p +
\bar{z}_2 B(z_1,\bar{z}_2) \refl{p}$. Again by \eqref{pperp},
$f^{\#} \perp \mcG$ so that $V[f] = 0$.
\end{proof}

\begin{lemma} Assume the setup of the previous lemma.
  The map $V: G_{\lambda} \to \mcG_{\lambda}$ is injective.  
\end{lemma}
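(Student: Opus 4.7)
The plan is to reduce injectivity of $V$ to a module-theoretic statement and settle it via a socle argument. First I would observe that the map $\#\colon \Lp \to \Lq$ sending $g \mapsto z_2^{m-1} g(z_1, 1/z_2)$ is a unitary involution (using $|q(z)| = |z_2^m p(z_1,\bar z_2)|$ on $\T^2$ and the invariance of $d\sigma$ under $z_2 \mapsto \bar z_2$), and it restricts to a unitary bijection from $\mcG$ onto the $q$-analog $\mcP_{n-1,m-1} \cap \Lq$ (both consisting of polynomials of bidegree at most $(n-1,m-1)$ in the respective weighted spaces). Consequently the hypothesis $V[f]=Pf^{\#}=0$ is equivalent to $f$ being orthogonal to this $q$-analog of $\mcG$ inside $\Lq$.

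Next I would establish that $V$ intertwines the natural module actions. Direct computation gives $(z_1 f)^{\#} = z_1 f^{\#}$ and $(z_2 f)^{\#} = \bar z_2 f^{\#}$ on $\T^2$. The Laurent polynomial $(I-P)f^{\#}$ has Fourier support in $\{(j,k)\colon j \geq 0,\ k < m\}$, since both $f^{\#}$ and $Pf^{\#} \in \mcG$ do, so Remark \ref{quadremark} yields $(I-P)f^{\#} \perp \bar z_1 \mcG$ and $(I-P)f^{\#} \perp z_2 \mcG$ in $\Lp$. Combining these facts produces
\[
V([z_1 f]) = T_1 V[f], \qquad V([z_2 f]) = T_2^{\ast} V[f].
\]
Hence $V$ is a module homomorphism for the $\C[z_1,z_2]$-actions via $(M_1,M_2)$ on $G_\lambda$ and $(T_1,T_2^{\ast})$ on $\mcG_\lambda$, so $\ker V$ is a $\C[z_1,z_2]$-submodule of $G_\lambda$.

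The operators $(M_j - \lambda_j)$ are nilpotent on $G_\lambda$, so the module is Artinian and supported at the maximal ideal of $\lambda$. Because semi-stability of $p$ makes $q,\refl{q}$ coprime, the ideal $\langle q,\refl{q}\rangle$ is a complete intersection at $\lambda$, so $G_\lambda \cong \mathcal{O}_\lambda / \langle q,\refl{q}\rangle \mathcal{O}_\lambda$ is Gorenstein and its socle is one-dimensional, spanned by a class $[f_0]$ satisfying $(z_j-\lambda_j)f_0 \in \langle q,\refl{q}\rangle$ for $j=1,2$ while $f_0 \notin \langle q,\refl{q}\rangle$. Since every nonzero submodule of a finite dimensional module over a local Artinian ring meets the socle, injectivity of $V$ reduces to verifying $V[f_0] \neq 0$.

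The hard part is this last verification. Assuming towards a contradiction that $Pf_0^{\#}=0$, I would combine the identities (on $\T^2$)
\[
(z_1-\lambda_1) f_0^{\#} = A \bar z_2 p + B \bar z_2 \refl{p},\qquad
(\bar z_2 - \lambda_2) f_0^{\#} = C \bar z_2 p + D \bar z_2 \refl{p},
\]
from the proof of the previous lemma with the strong orthogonality $f_0^{\#} \perp \{r \in \Lp\colon \mathrm{supp}\,\hat r \subset \{j<n,\ k \geq 0\}\}$ supplied by Remark \ref{quadremark}. Since $\lambda_1, \bar\lambda_2 \in \D$, the factors $(z_1-\lambda_1)$ and $(\bar z_2-\lambda_2)$ do not vanish on $\T^2$; the goal is to use Corollary \ref{cordecomp} together with these orthogonality constraints to exhibit $f_0^{\#}$ itself as $A' \bar z_2 p + B' \bar z_2 \refl{p}$ on $\T^2$ for suitable polynomials $A', B'$. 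Applying $\#$ to this expression then places $f_0$ inside $\langle q,\refl{q}\rangle$, contradicting the socle property. The delicate step, and the main obstacle, is carrying out this ``polynomial division'' by $(z_1-\lambda_1)$ and $(\bar z_2-\lambda_2)$ cleanly so that the resulting coefficients are genuine polynomials rather than merely rational functions, while tracking Fourier supports and the weighted $\Lp$-orthogonality.
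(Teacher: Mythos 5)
Your opening reductions are plausible: the intertwining $V([z_1f])=T_1V[f]$, $V([z_2f])=T_2^*V[f]$ can indeed be checked using Remark \ref{quadremark}, and since $\langle q,\refl{q}\rangle$ is a complete intersection at the isolated point $\lambda$, the local algebra $G_\lambda$ is Gorenstein with one-dimensional socle, so injectivity does reduce to showing $V[f_0]\neq 0$ for a socle generator $f_0$. But this reduction buys essentially nothing (it only replaces the exponents $N,M$ by $1$), and the step you yourself flag as ``the hard part'' is precisely the content of the lemma, which you leave unproven. There is therefore a genuine gap, and moreover the endgame you sketch is aimed at the wrong mechanism: no ``polynomial division'' by $(z_1-\lambda_1)$ or $(\bar z_2-\lambda_2)$ is needed, and the orthogonality $f^{\#}\perp\mcG$ by itself cannot be upgraded to an identity $f^{\#}=A'\bar z_2 p+B'\bar z_2\refl{p}$ with polynomial coefficients --- that would say $f\in\langle q,\refl{q}\rangle$ using only $f^{\#}\perp\mcG$, which is false in general (the whole point is that the eigenvector conditions must enter).

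What the paper actually does, and what is missing from your proposal, is the following three-step combination. First, from $f^{\#}\perp\mcG$ one applies the decomposition \eqref{cordecomp2} to $z_2^{K-m+1}f^{\#}$ and converts back, writing $f$ as $f_0\,q$ plus terms of the form $z_1^{*}\vec{f}_1(z_2)\bar F_1(z_2)X_n\Lambda_n(z_1)$ and $\vec{f}_2(z_1)F_2(z_1)X_m\Lambda_m(z_2)$. Second, Proposition \ref{proppq} (which you never invoke) provides one-variable polynomials $h_1(z_2),h_2(z_1)$ with no zeros in $\D$ such that those extra terms, hence $h_1h_2f$, lie in $\langle q,\refl{q}\rangle$. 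Third, since $h_1h_2$ does not vanish at $\lambda\in\D^2$, the ideal $\langle h_1h_2,(z_1-\lambda_1)^N,(z_2-\lambda_2)^M\rangle$ is all of $\C[z_1,z_2]$ by the Nullstellensatz, and writing $1=a_0h_1h_2+a_1(z_1-\lambda_1)^N+a_2(z_2-\lambda_2)^M$ and using $(z_1-\lambda_1)^Nf,(z_2-\lambda_2)^Mf\in\langle q,\refl{q}\rangle$ yields $f\in\langle q,\refl{q}\rangle$, i.e.\ $[f]=0$. Without the $h_1h_2$ device and this Nullstellensatz trick, your plan has no route from the weighted-$L^2$ orthogonality to ideal membership, so the proposal as it stands does not prove the lemma.
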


\begin{proof} Suppose $[f] \in G_{\lambda}$ and $f^{\#} \perp \mcG$.
  Letting $(J,K) = \deg f$, $g := z_2^{K-m+1} f^{\#} \in \mcP_{J,K} \ominus
  z_2^{K-m+1} \mcG$.  By \eqref{cordecomp2}
\[
g(z) = g_0(z_1,z_2) p(z) +z_1^{L-n+1} \vec{g}_1(z_2) \vec{E}_1(z) +
z_2^{K-m+1} \vec{g}_2(z_1) \vec{F}_2(z)
\]
for $g_0 \in \C[z_1,z_2]$ of degree at most $(L-n,K-m)$,
$\vec{g}_2 \in \C^{m}[z_1]$ of degree at most $L-n$, $\vec{g}_1 \in
\C^{n}[z_2]$ of degree at most $K-m$.  We convert back to $f$ by
replacing $z_2$ with $1/z_2$ and multiplying through by $z_2^K$ to get
%\begin{multline*}
\[
f(z) = f_0(z_1,z_2) q(z) + z_1^{L-n+1} \vec{f}_1(z_2) \bar{F}_1(z_2)
X_n \Lambda_n(z_1) + \vec{f}_2(z_1) F_2(z_1) X_m \Lambda_m(z_2)
\]
%\end{multline*}
for appropriate polynomials $f_0, \vec{f}_1, \vec{f}_2$; here
$\bar{F}_1(z_2)= \overline{F_1(\bar{z}_2)}$, where we are taking an
entrywise conjugate.  By Proposition \ref{proppq}, there exist one
variable polynomials $h_1 \in \C[z_2], h_2\in \C[z_1]$ with no zeros
in $\D$ such that $h_1(z_2)h_2(z_1) f(z) \in \langle q,\refl{q}
\rangle$.  Since $[f] \in G_{\lambda}$, we know $(z_1-\lambda_1)^N f,
(z_2-\lambda_2)^Mf \in \langle q,\refl{q}\rangle$.  There exist
$a_0,a_1,a_2 \in \C[z_1,z_2]$ such that
\[
1 = a_0(z) h_1(z_1)h_2(z_2) + a_1(z) (z_1-\lambda_1)^{N} + a_2(z)
(z_2-\lambda_2)^{M}
\]
since the ideal $\langle h_1h_2, (z_1-\lambda_1)^{N},
(z_2-\lambda_2)^{M}\rangle$ is all of $\C[z_1,z_2]$ (by
Nullstellensatz).  Hence, $f \in \langle q,\refl{q} \rangle$, which shows
$[f] =0$ and the map $V$ is injective.
\end{proof}

\begin{lemma} Assume the setup of the previous lemma.
  The map $V: G_{\lambda} \to \mcG_{\lambda}$ is surjective.
\end{lemma}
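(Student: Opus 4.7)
The plan is to exhibit an explicit preimage. Given $g \in \mcG_\lambda$, I set $f(z_1, z_2) = z_2^{m-1} g(z_1, 1/z_2)$, a polynomial in $\mcP_{n-1, m-1}$ satisfying $f^\# = g$. Then $V[f] = Pf^\# = Pg = g$, so the whole task reduces to verifying $[f] \in G_\lambda$, i.e.\ finding $N', M'$ with $(z_1-\lambda_1)^{N'} f, (z_2-\lambda_2)^{M'} f \in \mathcal{I}$.

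The first main step is to turn the generalized eigenvalue conditions $(T_1-\lambda_1)^N g = (T_2^*-\lambda_2)^M g = 0$ into ideal membership up to a one-variable factor. For $T_1$: Theorem \ref{dilationthm} gives $(z_1-\lambda_1)^N g \perp \mcG$ in $\Lp$; Corollary \ref{cordecomp} places this element of $\mcP_{n-1+N, m-1}$ inside $\bigoplus_{j=0}^{N-1} z_1^j \mcF_2$, i.e.\ equal to $\vec{P}(z_1) F_2(z_1) \Lambda_m(z_2)$; applying the $\#$-involution produces $(z_1-\lambda_1)^N f = \vec{P}(z_1) F_2(z_1) X_m \Lambda_m(z_2)$, and Proposition \ref{proppq} with $h_2 = \det E_2$ gives $h_2(z_1)(z_1-\lambda_1)^N f \in \mathcal{I}$. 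The parallel argument for $T_2^*$---multiplying by $z_2^M$ to get $(1-\lambda_2 z_2)^M g \perp z_2^M \mcG$, using decomposition \eqref{cordecomp1} with $\mcE_1$, and the reflection identity $z_2^m E_1(1/z_2) = \bar{F}_1(z_2) X_n$ coming from \eqref{matrixreflection}---yields $(z_2-\lambda_2)^M f = \vec{R}^r(z_2) \bar{F}_1(z_2) X_n \Lambda_n(z_1)$, whence $h_1(z_2)(z_2-\lambda_2)^M f \in \mathcal{I}$ with $h_1 = \det \bar{E}_1$.

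The second main step is to clear the factors $h_1, h_2$. Decomposing $Q = \bigoplus_{\mu \in V(\mathcal{I})} G_\mu$ into joint generalized eigenspaces, I write $[f] = \sum_\mu [f]_\mu$ and aim to show $[f]_\mu = 0$ for every $\mu \ne \lambda$, which then yields $[f] = [f]_\lambda \in G_\lambda$. If $\mu_1 \ne \lambda_1$, the operator $(M_1-\lambda_1)^N$ is invertible on $G_\mu$ (its only eigenvalue is $\mu_1 - \lambda_1$), so the first containment collapses to $h_2(M_1)[f]_\mu = 0$, forcing $[f]_\mu = 0$ whenever $h_2(\mu_1) \ne 0$; the symmetric deduction from the second containment handles $\mu_2 \ne \lambda_2$ with $h_1(\mu_2) \ne 0$. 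Since $h_j$ has no zeros in $\D$ and $\lambda \in \D^2$, these cases cover every $\mu \in V(\mathcal{I}) \cap \D^2$ with $\mu \ne \lambda$.

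The remaining, genuinely hard case is $\mu \in V(\mathcal{I}) \setminus \D^2$---by Lemma \ref{commonzeros}, confined to $\T^2 \cup \{|z_1|, |z_2|>1\}$---where both $h_2(\mu_1)=0$ and $h_1(\mu_2)=0$, so the componentwise arguments above do not apply. I plan to handle these points by exploiting the small bidegree $\deg f \leq (n-1, m-1)$ together with the structure of $\mathcal{I} = \langle q, \refl{q}\rangle$ (both generators of bidegree $(n, m)$) near such $\mu$, possibly via a local analysis using the Puiseux-series techniques of Appendix C, or via the reflection symmetry $(z_1, z_2) \mapsto (1/z_1, 1/z_2)$ that pairs $\D^2$-zeros with $(\D^{-1})^2$-zeros and would force the image of $g \mapsto [f]$ to land inside $\bigoplus_{\mu \in V(\mathcal{I}) \cap \D^2} G_\mu$. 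This boundary-and-exterior case is the crux of the surjectivity argument.
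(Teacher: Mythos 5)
Your first two steps coincide with the paper's: using Corollary \ref{cordecomp} and \eqref{cordecomp1} to write $(z_1-\lambda_1)^N g$ and $(1-\lambda_2 z_2)^M g$ in terms of $\vec{F}_2$ and $\vec{E}_1$, applying $\#$, and invoking Proposition \ref{proppq} to get $h_2(z_1)(z_1-\lambda_1)^N g^{\#},\ h_1(z_2)(z_2-\lambda_2)^M g^{\#} \in \langle q,\refl{q}\rangle$. After that, however, your argument has a genuine gap, and you have flagged it yourself: you need $[g^{\#}]$ itself to lie in $G_\lambda$, your eigenspace decomposition only kills the components $[g^{\#}]_\mu$ for $\mu \in V(\mathcal{I})\cap\D^2$, $\mu\ne\lambda$, and the components at common zeros on $\T^2$ and in $(\D^{-1})^2$ --- exactly where $h_1$ and $h_2$ may vanish --- are left untouched. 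The two remedies you sketch (Puiseux analysis, reflection symmetry) are not carried out, and there is no structural reason to expect $[g^{\#}]_\mu=0$ there: the well-definedness lemma for $V$ itself uses $\lambda\in\D^2$ in an essential way, so nothing forces the class of a polynomial of bidegree $(n-1,m-1)$ to be supported only on the interior part of $V(\mathcal{I})$. As stated, the proof is incomplete at its self-identified crux.

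The paper avoids this issue entirely by not taking $g^{\#}$ as the preimage. Instead it sets
\[
h = 1-\Bigl(1-\tfrac{h_1(z_2)h_2(z_1)}{h_1(\lambda_2)h_2(\lambda_1)}\Bigr)^{N+M},
\]
which is a polynomial in $h_1h_2$ with no constant term; since $[h_1h_2\,g^{\#}]\in G_\lambda$ (by the two containments above) and $G_\lambda$ is invariant under multiplication by polynomials, $[h\,g^{\#}]\in G_\lambda$ automatically, with no case analysis over $V(\mathcal{I})$. The price is that one must check $V[h g^{\#}]=g$, and this is where the binomial exponent $N+M$ earns its keep: $1-h(z_1,\bar z_2)$ expands into terms $(z_1-\lambda_1)^j(\bar z_2-\lambda_2)^k$ with $j\ge N$ or $k\ge M$, each of which annihilates $g$ modulo $\mcG^{\perp}$ by Remark \ref{quadremark}, so $P\bigl((hg^{\#})^{\#}\bigr)=P\bigl((1-H)g\bigr)=Pg=g$. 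If you want to salvage your route, you would have to either prove the vanishing of the boundary and exterior components of $[g^{\#}]$ (which I do not believe follows from the tools in the paper) or, more simply, adopt the multiplier trick and change the candidate preimage.
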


\begin{proof}
Let $g \in\mcG_{\lambda}$.  Then, $(z_1-\lambda_1)^N g,
(\bar{z}_2-\lambda_2)^Mg \perp \mcG$ for some $N,M$.  So,
$(z_1-\lambda_1)^N g \in \mcP_{N+n-1,m-1}\ominus \mcG$ and by
Corollary \ref{cordecomp} 
\[
(z_1-\lambda_1)^N g = \vec{g}_2(z_1) \vec{F}_2(z)
\]
where $\vec{g}_2$ has degree at most $N-1$.  Also, $(1-\lambda_2
z_2)^M g \in \mcP_{n-1,M+m-1} \ominus z_2^M \mcG$ and by
\eqref{cordecomp1}
\[
(1-\lambda_2 z_2)^M g = \vec{g}_1(z_2) \vec{E}_1(z)
\]
where $\vec{g}_1$ has degree at most $M-1$.  Applying the $\#$
operation yields
\[
(z_1-\lambda_1)^N g^{\#} = \vec{g}_2(z_1) F_2(z_1) X_m \Lambda_m(z_2)
\]
\[
(z_2-\lambda_2)^M g^{\#} = z_2^{M-1}\vec{g}_1(1/z_2) \bar{F}_1(z_2)
X_n \Lambda_n(z_1).
\]
By Proposition \ref{proppq}, there exist $h_1 \in \C[z_2], h_2 \in
\C[z_1]$ with no zeros in $\D$ such that
\[
h_2(z_1) (z_1-\lambda_1)^N g^{\#}, h_1(z_2) (z_2-\lambda_2)^M g^{\#}
\in \langle q ,\refl{q} \rangle,
\]
so that $h_1(z_2)h_2(z_1)g^{\#} \in G_{\lambda}$.
Using a well-known trick \cites{Fulton, CLO}, we define
\[
h = 1 - (1 - \frac{h_1(z_2)h_2(z_1)}{h_1(\lambda_2) h_2(\lambda_1)}
)^{N+M}.
\]
Now $1-h=(1 - \frac{h_1(z_2)h_2(z_1)}{h_1(\lambda_2) h_2(\lambda_1)}
)^{N+M}$ can be expanded as a combination of terms $(z_1-\lambda_1)^j
(z_2-\lambda_2)^k$ where $j \geq N, k \geq M$.  Thus,
$H := 1-h(z_1,\bar{z}_2)$ is a combination of terms
$(z_1-\lambda_1)^j(\bar{z}_2-\lambda_2)^k$ where $j\geq N, k\geq M$
and therefore $H g$ is orthogonal to $\mcG$ by
Remark \ref{quadremark} (for instance the remark could be applied to
$(z_1-\lambda_1)^Ng$).  But, $h$ is a combination of powers of
$h_1h_2$ so that $h g^{\#} \in G_{\lambda}$.  Therefore,
\[
V[hg^{\#}] = P(h g^{\#})^{\#} = P( (1-H)g) =  Pg = g
\]
which shows $V$ is surjective.   
\end{proof}

\begin{proof}[Proof of Theorem \ref{thmdimsum}]
  We conclude from these lemmas that $N_{\lambda}(q,\refl{q}) = \dim
  G_{\lambda} = \dim \mcG_{\lambda}$ for $\lambda \in Z_q\cap
  Z_{\refl{q}} \cap \D^2$.  The theorem follows immediately because
  the sum of the dimensions of the generalized eigenspaces equals the
  dimension of the underlying space.
\end{proof}

If $\lambda = (\lambda_1,\lambda_2) \in Z_q\cap Z_{\refl{q}} \cap \T^2$, then
$\refl{\lambda} = (\lambda_1, \bar{\lambda}_2) \in Z_{p}\cap Z_{\refl{p}} \cap \T^2$
and the multiplicities match $N_{\lambda}(q,\refl{q}) =
N_{\refl{\lambda}} (p,\refl{p})$.  This follows from the
isomorphism between the localizations 
\[
\mathcal{O}_{\lambda}/ (\langle q, \refl{q} \rangle
\mathcal{O}_{\lambda})\to
\mathcal{O}_{\refl{\lambda}}/ (\langle p, \refl{p} \rangle
\mathcal{O}_{\refl{\lambda}})
\]
given by $f(z) \mapsto f(z_1,1/z_2)$.

Let $N_{\T^2}(p,\refl{p})$ denote the sum of the multiplicities of the
common roots of $p$ and $\refl{p}$ on $\T^2$.  By the above remarks,
$N_{\T^2} (p,\refl{p}) = N_{\T^2} (q,\refl{q})$.  Theorem \ref{intthmdim}
from the introduction is given by the following corollary.

\begin{corollary} Let $p\in \C[z_1,z_2]$ be semi-stable and $\deg p =
  (n,m)$.  For $j\geq n-1, k \geq m-1$
\[
\dim \mcP_{j,k} = (j+1)(k+1) - \frac{1}{2}N_{\T^2}(p,\refl{p}).
\]
\end{corollary}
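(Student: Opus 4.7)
The plan is to combine Corollary \ref{cordecomp}, Theorem \ref{thmdimsum}, and B\'ezout's theorem on $\C_\infty \times \C_\infty$ to reduce everything to a count of common zeros of $q(z)=z_2^m p(z_1,1/z_2)$ and $\refl{q}$.

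First I would apply Corollary \ref{cordecomp} with $(N,M)=(j,k)$, which is permitted since $j\geq n-1$ and $k\geq m-1$. Using $\dim\mcF_1=n$, $\dim\mcF_2=m$, and $\dim(\C\refl{p})=1$, summing dimensions of the orthogonal summands gives
\[
\dim\mcP_{j,k} = \dim\mcG + (j-n+1)m + (k-m+1)n + (j-n+1)(k-m+1).
\]
Expanding $(j+1)(k+1)=(j-n+1+n)(k-m+1+m)$ rewrites this as $\dim\mcP_{j,k}=\dim\mcG+(j+1)(k+1)-nm$, so the corollary reduces to proving $\dim\mcG = nm-\tfrac{1}{2}N_{\T^2}(p,\refl{p})$.

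Next I would deploy B\'ezout on $\C_\infty\times\C_\infty$ to count common zeros of $q$ and $\refl{q}$. This first requires verifying that $q$ has bidegree exactly $(n,m)$ and that $q,\refl{q}$ share no common factors. The $z_2$-leading coefficient of $q$ is $p(z_1,0)$, which is nonzero because otherwise $z_2\mid p$ would force $p(0,0)=0\in\D^2$; the $z_1$-leading coefficient of $q$ coincides with that of $p$, nonzero because $\deg p=(n,m)$; and since the transformation $p\leftrightarrow q$ is an involution (modulo the factor $z_2^m$, with $z_2\nmid q$ by the same degree argument), any nontrivial common factor of $(q,\refl{q})$ would descend to one of $(p,\refl{p})$. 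B\'ezout then delivers exactly $nm+mn=2nm$ common zeros of $q,\refl{q}$ in $\C_\infty\times\C_\infty$, counted with intersection multiplicity.

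Finally I would localize these $2nm$ zeros. Lemma \ref{commonzeros}, applied after the biholomorphism $(z_1,z_2)\mapsto(z_1,1/z_2)$ of $\C_\infty\times\C_\infty$, shows that every common zero of $(q,\refl{q})$ lies in $\D^2\cup\T^2\cup(\D^{-1})^2$. The involution $\lambda\mapsto(1/\bar{\lambda}_1,1/\bar{\lambda}_2)$ preserves the common zero set (since $\refl{q}$ is the bidegree $(n,m)$ reflection of $q$), exchanges $\D^2$ with $(\D^{-1})^2$, and fixes $\T^2$ pointwise while preserving local intersection multiplicities, whence $2N_{\D^2}(q,\refl{q})+N_{\T^2}(q,\refl{q})=2nm$. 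Combining this with Theorem \ref{thmdimsum} and the identity $N_{\T^2}(q,\refl{q})=N_{\T^2}(p,\refl{p})$ noted just before the corollary gives the desired formula for $\dim\mcG$. The most delicate step is the bidegree and coprimality check for $q,\refl{q}$ together with ensuring no multiplicity is lost at the toric boundary of $\C_\infty\times\C_\infty$; once that is secured, the symmetry argument and the decomposition count are routine.
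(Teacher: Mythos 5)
Your proposal is correct and follows essentially the same route as the paper: reduce to computing $\dim\mcG$ via the orthogonal decomposition of Corollary \ref{cordecomp}, then combine B\'ezout's theorem on $\C_\infty\times\C_\infty$ with the reflective symmetry of the common zeros of $q,\refl{q}$ and Theorem \ref{thmdimsum}. The extra checks you flag (bidegree and coprimality of $q,\refl{q}$, invariance of multiplicities under the reflection involution) are sound and are treated implicitly in the paper.
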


\begin{proof} By the B\'ezout theorem for $\C_{\infty}\times
  \C_{\infty}$ (see Section \ref{background}), $p$ and $\refl{p}$ have
  $2nm$ common zeros in $\C_{\infty} \times \C_{\infty}$, where we
  count zeros with appropriate multiplicities.  Let
  $N_{\D^2}(q,\refl{q})$ be the sum of the intersection multiplicities of the common
  roots of $q$ and $\refl{q}$ in $\D^2$.  By reflective symmetry of
  the common roots of $q,\refl{q}$ we have
\[
2nm = 2 N_{\D^2}(q,\refl{q}) + N_{\T^2}(q,\refl{q}) = 2 \dim \mcG + N_{\T^2}(p,\refl{p})
\]
by Theorem \ref{thmdimsum} and since $Z_q\cap Z_{\refl{q}} \subset
\D^2 \cup \T^2 \cup (\D^{-1})^2$; this follows from Lemma
\ref{commonzeros}.

This proves the corollary for $j=n-1,k=m-1$.  In general we use the
orthogonal decomposition of Corollary \ref{cordecomp} to see that
\[
\begin{aligned}
\dim \mcP_{j,k} =& \dim \mcG + n(k-m+1)\\
&+m(j-n+1) + (k-m+1)(j-n+1) \\
=& nm
+  n(k-m+1)+m(j-n+1) \\
&+ (k-m+1)(j-n+1) -
\frac{1}{2}N_{\T^2}(p,\refl{p}) \\
=& (j+1)(k+1) - \frac{1}{2}N_{\T^2}(p,\refl{p}) .
\end{aligned}
\]
\end{proof}

\begin{corollary} Let $p \in \C[z_1,z_2]$ be semi-stable.  Then, $p$
  has a unique Agler pair (up to unitary multiplication) iff $p$ and
  $\refl{p}$ have $2nm$ common zeros on $\T^2$, counting
  multiplicities; i.e. all common roots in $\C_{\infty}\times
  \C_{\infty}$ must be on $\T^2$.
\end{corollary}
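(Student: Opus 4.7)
The plan is to chain together two results already established: Corollary \ref{corunique}, which characterizes uniqueness of the Agler pair via $\mcG = \{0\}$, and the counting identity
\[
2nm = 2 \dim \mcG + N_{\T^2}(p,\refl{p})
\]
that appeared in the proof of the preceding corollary.

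First I would invoke Corollary \ref{corunique} to reduce the statement ``$p$ has a unique Agler pair (up to unitary multiplication)'' to the purely dimensional assertion $\dim \mcG = 0$. Next I would recall that by B\'ezout's theorem for $\C_\infty \times \C_\infty$ applied to the bihomogenizations of $p$ and $\refl{p}$ (both of bidegree $(n,m)$), the total number of common zeros in $\C_\infty \times \C_\infty$, counted with intersection multiplicity, is $2nm$. By Lemma \ref{commonzeros} these zeros lie in $(\D \times \D^{-1}) \cup \T^2 \cup (\D^{-1} \times \D)$, and the map $(z_1,z_2) \mapsto (z_1,1/z_2)$ sets up a multiplicity-preserving bijection between zeros of $(p,\refl{p})$ in $\D \times \D^{-1}$ and zeros of $(q,\refl{q})$ in $\D^2$, and similarly (via $(z_1,z_2)\mapsto(1/z_1,z_2)$, say, together with the analogous role swap) between those in $\D^{-1}\times \D$ and another copy in $\D^2$; more directly, the reflection symmetry $\lambda \mapsto (1/\bar\lambda_1,1/\bar\lambda_2)$ pairs the zeros off $\T^2$ into two symmetric pieces of equal multiplicity count. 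This yields
\[
2nm = 2\, N_{\D^2}(q,\refl{q}) + N_{\T^2}(q,\refl{q}),
\]
and then Theorem \ref{thmdimsum} identifies $N_{\D^2}(q,\refl{q})$ with $\dim \mcG$, while the localization isomorphism $f(z_1,z_2)\mapsto f(z_1,1/z_2)$ identifies $N_{\T^2}(q,\refl{q})$ with $N_{\T^2}(p,\refl{p})$.

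Combining, $\dim \mcG = 0$ if and only if $N_{\T^2}(p,\refl{p}) = 2nm$, which is exactly the condition that every common root of $p$ and $\refl{p}$ in $\C_\infty \times \C_\infty$ actually lies on $\T^2$. This completes the equivalence. No step should be a genuine obstacle, since both halves of the argument are direct consequences of results already in the paper; the only item requiring a line of care is the multiplicity-preserving matching between zeros of $(p,\refl{p})$ off $\T^2$ and zeros of $(q,\refl{q})$ inside $\D^2$, which follows from the explicit change of variables combined with the local-ring definition of intersection multiplicity being invariant under biholomorphic change of coordinates.
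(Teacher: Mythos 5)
Your proposal is correct and follows essentially the same route as the paper: the paper's proof likewise cites Corollary \ref{corunique} to reduce uniqueness of the Agler pair to $\mcG=\{0\}$, and then invokes the identity $2nm = 2\dim\mcG + N_{\T^2}(p,\refl{p})$ established in the preceding corollary via B\'ezout's theorem, Lemma \ref{commonzeros}, and Theorem \ref{thmdimsum}. The extra care you devote to the multiplicity-preserving change of variables between $(p,\refl{p})$ off $\T^2$ and $(q,\refl{q})$ in $\D^2$ is exactly the content the paper already packaged into that earlier corollary, so nothing new is needed.
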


\begin{proof} By Corollary \ref{corunique}, uniqueness of Agler pairs
  is equivalent to $\mcG = \{0\}$.  By the previous corollary, $\mcG$
  is trivial iff $N_{\T^2}(p,\refl{p}) = 2nm$.
\end{proof}

We show in Appendix C that the multiplicity at every common zero on
$\T^2$ is even.  This is obvious given a local B\'ezout theorem which
would say that if two polynomials have $k$ common zeros counting
multiplicities in an open set, then small perturbations of the
polynomials have this property.  We give a direct proof using Puiseux
series.

\section{Non-tangential boundary behavior and Theorem \ref{intthmnontan}} \label{secnontan}

Next we examine the non-tangential boundary behavior of rational
functions holomorphic in $\D^2$.  Some of our results hold naturally
in $d$ variables, so we keep this level of generality until we need
machinery that is only valid in two dimensions.

Let $u = (1,1,\dots, 1) \in \T^d$. For $z \in \C^d$ we write
$1/z \defn (1/z_1,\dots, 1/z_d)$.  This section is
entirely about local behavior at a point of $\T^d$ so we can without
loss of generality focus on $u$.  Let $RHP = \{z \in \C: \Re z >0\}$.
Everything in this section hinges on the following fact.

\begin{theorem} \label{thmbottom}
  Let $p\in \C[z_1,\dots,z_d]$ have total degree $n$, no zeros in
  $\D^d$, and assume $p(u) = 0$ with order $M$, meaning
\[
p(u-\zeta) = \sum_{j=M}^{n} P_j(\zeta)
\]
where the $P_j$ are homogeneous polynomials of degree $j$.  Then,
$P_M$ has no zeros in $RHP^d$.
\end{theorem}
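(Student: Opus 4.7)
The plan is to combine a rescaling argument with Hurwitz's theorem on the tube $RHP^d$. For $\epsilon > 0$ introduce
\[
Q_{\epsilon}(\zeta) \defn \epsilon^{-M}\, p(u - \epsilon \zeta) = \sum_{j=M}^{n} \epsilon^{j-M} P_j(\zeta),
\]
a polynomial in $\zeta$ of degree at most $n$ whose coefficients depend polynomially on $\epsilon$. In particular $Q_{\epsilon} \to P_M$ uniformly on compact subsets of $\C^d$ as $\epsilon \to 0^+$.

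The geometric input is that $u - \epsilon \zeta \in \D^d$ if and only if $|1 - \epsilon \zeta_j| < 1$ for each $j$, equivalently $\epsilon |\zeta_j|^2 < 2 \Re \zeta_j$ for each $j$. For a fixed compact set $K \subset RHP^d$, each $\zeta \in K$ has $\Re \zeta_j > 0$ and hence $\zeta_j \neq 0$, so by compactness the quantity $c_K \defn \inf_{\zeta \in K} \min_j (2\Re \zeta_j / |\zeta_j|^2)$ is strictly positive. For every $\epsilon \in (0, c_K)$ one then has $u - \epsilon K \subset \D^d$, and since $p$ has no zeros in $\D^d$ it follows that $Q_{\epsilon}$ is nowhere zero on $K$ for all such $\epsilon$.

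Combining these pieces, on each compact subset of $RHP^d$ the polynomials $Q_{\epsilon}$ are uniformly close to $P_M$ and eventually zero-free. The multivariable Hurwitz theorem --- reducible to the one-variable version by restricting to complex-line slices through any hypothetical zero of $P_M$ inside the (connected) region $RHP^d$ --- then forces $P_M$ to be either identically zero on $RHP^d$ or nowhere zero there. Since $M$ is the exact order of vanishing of $p$ at $u$, $P_M$ is not the zero polynomial, so the second alternative must hold. I expect the only delicate point to be formulating the multivariable Hurwitz statement in this slightly nonstandard setting, where the zero-free regions $\Omega_{\epsilon} = \{\zeta : u - \epsilon \zeta \in \D^d\}$ grow to exhaust $RHP^d$ rather than being a single fixed domain; once this exhaustion is recorded, the one-dimensional Hurwitz argument applied on slices through a hypothetical zero completes the proof.
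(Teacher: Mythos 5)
Your argument is correct and is essentially the paper's proof: both rescale via $\epsilon^{-M}p(u-\epsilon\zeta)\to P_M$, observe that the regions on which this function is zero-free (determined by $\epsilon|\zeta_j|^2<2\Re\zeta_j$) exhaust $RHP^d$ as $\epsilon\searrow 0$, and invoke Hurwitz's theorem together with $P_M\not\equiv 0$. The only difference is cosmetic --- you phrase the exhaustion through compact subsets and spell out the slicing form of Hurwitz, while the paper uses the increasing regions $R_r$ directly.
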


\begin{proof}
Observe that 
\[
P_M(\zeta) = \lim_{r \searrow 0} \frac{1}{r^M}
p(u-r\zeta).
\]
Now, $\zeta \mapsto p(u-r\zeta)$ has no zeros in the
region
\[
R_r = \{\zeta\in \C^d: \Re \zeta_j >  r |\zeta_j|^2 \text{ for }
j=1,\dots, d\}.
\]
These regions increase as $r>0$ decreases to $0$.  By Hurwitz's
theorem $P_M$ has no zeros in $R_r$ for every $r>0$ ($P_M$ is not
identically zero by construction).  Since $\bigcup_{r>0} R_r = RHP^d$,
$P_M$ has no zeros in $RHP^d$.  
\end{proof}

When studying $\zeta \in RHP^d$ approaching $0$ non-tangentially we
will think of the elements of
\[
D_{\zeta} = \{|\zeta_1|,\dots, |\zeta_d|, \Re \zeta_1,
\dots, \Re \zeta_d\}
\]
 all comparable to a quantity $r$ which is going
to $0$.  To be specific we can arbitrarily say $r=|\zeta_1|$.

A non-tangential approach region to $0$ in $RHP^d$ will be a region of
the form
\[
AR_c = \{\zeta \in RHP^d: c \geq x/y \geq 1/c \text{ for any } x,y \in D_{\zeta}\}
\]
for $c >1$.  (AR = ``approach region.'') Notice that $AR_c \cap
\{|\zeta_1|=r\}$ is a compact set in $RHP^d$ where every element of $D_{\zeta}$
is between $cr$ and $r/c$.  It is useful to point out that if $P$ is
homogeneous of degree $M$ and non-vanishing in $RHP^d$ then
\[
|P(\zeta)| \geq C r^M
\]
where $r=|\zeta_1|$ and $C = \inf\{|P(\zeta)|: |\zeta_1|=1, \zeta \in
AR_c\} >0$.

We say $f$ is non-tangentially bounded at $u$ if $f$ is bounded on
non-tangential approach regions to $u$.  We say $f = q/p$ has a
non-tangential limit at $u$ if the limit
\[
\lim_{\underset{\zeta \in AR_c}{\zeta \to 0}} f(u-\zeta)
\]
exists. We say $f$ is non-tangentially $C^k$ at $u$ if there exists a
polynomial $L$ of degree at most $k$ such that
\[
f(u-\zeta)-L(\zeta) = o(r^k)
\]
for $\zeta \to 0$ in $AR_c$ where $|\zeta_1| = r$.

\begin{prop} \label{nontanbound} Let $p \in \C[z_1,\dots,z_d]$ have no
  zeros in $\D^d$ assume and $p$ vanishes to order $M$ at $u$.  Let $q
  \in \C[z_1,\dots,z_d]$.  If $f=q/p$ then $f$ is bounded along
  non-tangential approach regions to $u$ iff $q$ vanishes to order at
  least $M$ at $u$.
\end{prop}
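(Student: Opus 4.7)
The plan is to analyze $p$ and $q$ through their homogeneous expansions at $u$ and use Theorem \ref{thmbottom} as the key non-vanishing input. Write
\[
p(u-\zeta) = \sum_{j=M}^{n} P_j(\zeta), \qquad q(u-\zeta) = \sum_{j \geq K} Q_j(\zeta),
\]
where $K$ is the order of vanishing of $q$ at $u$ (take $K = +\infty$ if $q(u-\cdot) \equiv 0$, in which case both sides of the equivalence are trivial). The whole proof will compare sizes of these two expansions on non-tangential approach regions, writing $r = |\zeta_1|$ and recalling that on $AR_c$ every element of $D_\zeta$ is comparable to $r$.

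For the ``if'' direction, assume $K \geq M$. By Theorem \ref{thmbottom}, $P_M$ is non-vanishing on $RHP^d$, so by homogeneity and compactness of $AR_c \cap \{|\zeta_1|=1\}$ there is a constant $C_1 = C_1(c) > 0$ with $|P_M(\zeta)| \geq C_1 r^M$ on $AR_c$. The remaining terms satisfy $|P_j(\zeta)| \leq C_2 r^j$, so for $r$ small enough (depending on $c$), $|p(u-\zeta)| \geq \tfrac{1}{2}C_1 r^M$. Since $q$ vanishes to order at least $M$, $|q(u-\zeta)| \leq C_3 r^M$ on $AR_c$, and dividing yields a uniform bound on $|f(u-\zeta)|$.

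For the ``only if'' direction, assume $K < M$. The polynomial $Q_K$ is a nonzero homogeneous polynomial, so its zero set is a proper algebraic subvariety of $\C^d$; in particular its complement meets the open set $RHP^d$, and we can pick $\zeta_0 \in RHP^d$ with $Q_K(\zeta_0) \neq 0$. Then for $t > 0$ small the ray $\zeta(t) = t\zeta_0$ lies in $AR_c$ for some $c$ depending only on $\zeta_0$ (the ratios defining $D_{\zeta(t)}$ are constant along the ray), and $u - \zeta(t) \in \D^d$. Along this ray $q(u-t\zeta_0) = Q_K(\zeta_0)\, t^K + O(t^{K+1})$ while $p(u-t\zeta_0) = P_M(\zeta_0) t^M + O(t^{M+1}) = O(t^M)$, so $|f(u-t\zeta_0)| \gtrsim t^{K-M} \to \infty$, contradicting non-tangential boundedness.

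The conceptual content of the argument is entirely packaged in Theorem \ref{thmbottom}; the main thing to check carefully is that the radial ray $\zeta_0 t$ used in the ``only if'' direction genuinely fits inside some fixed $AR_c$ (straightforward from the definition) and that higher-order terms in the expansion of $p$ can be absorbed into a factor of $\tfrac12$ uniformly on $AR_c$ in the ``if'' direction. I expect no serious obstacles beyond these routine estimates.
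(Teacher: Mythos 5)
Your proposal is correct and takes essentially the same route as the paper: homogeneous expansions at $u$, the lower bound $|P_M(\zeta)|\geq C r^M$ on $AR_c$ coming from Theorem \ref{thmbottom} for the ``if'' direction, and a radial ray along a direction where $Q_K$ is nonzero for the ``only if'' direction. The only cosmetic difference is that the paper picks the ray direction $a\in(0,\infty)^d$ with $Q_K(a)\neq 0$ while you pick $\zeta_0\in RHP^d$, which changes nothing.
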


\begin{proof} Write
\begin{equation} \label{q}
q(u-\zeta) = \sum_{j\geq 0} Q_j(\zeta)
\end{equation}
where each $Q_j$ is homogeneous of degree $j$. Then, let
\[
g(\zeta) = f(u-\zeta) = \frac{\sum_{j\geq 0} Q_j(\zeta)}{\sum_{j=M}^{n}
  P_j(\zeta)}.
\]
If $f$ is bounded along non-tangential approach regions then certainly
$Q_0=0$.  If $q$ vanishes to order $K$, then $Q_1=\dots=Q_{K-1} = 0$
and $Q_K\ne 0$.  Choose $a_1,\dots,a_d >0$ such that for
$a=(a_1,\dots,a_d)$, $Q_K(a) \ne 0$. Then, as $r\searrow 0$
\[
g(r a) = \frac{r^K Q_K(a) + O(r^{K+1})}{r^M P_M(a) + O(r^{M+1})} =
r^{K-M} \frac{Q_K(a) + O(r)}{P_M(a) + O(r)}
\]
which can only be bounded if $K\geq M$ since $Q_K(a), P_M(a) \ne 0$.

Conversely, if $q$ vanishes to order at least $M$, then $Q_j=0$ for
$j<M$, and since $|P_M(\zeta)| \geq c r^M$ for $\zeta$ in a
non-tangential approach region and $r=|\zeta_1|$ (or any other
comparable quantity) we have
\[
|g(\zeta)| \leq \frac{O(r^M)}{c r^M + O(r^{M+1})} = O(1).
\]
\end{proof}

\begin{prop} \label{nontanprop} Let $p \in \C[z_1,\dots,z_d]$ have no
  zeros in $\D^d$ and assume $p$ vanishes to order $M$ at $u$.  Let $q
  \in \C[z_1,\dots,z_d]$ vanish to order at least $M$ at $u$.  If
  $f=q/p$ then $f$ has a limit along non-tangential approach regions
  to $u$ iff $Q_M = bP_M$ for some constant $b$, with $Q_M$ defined
  as in \eqref{q}.  In this case, the non-tangential limit will equal
  the constant $b$.
\end{prop}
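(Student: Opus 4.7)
The plan is to reduce both directions to a leading-order comparison of $Q_M$ and $P_M$ along rays in $RHP^d$, using Theorem \ref{thmbottom} as the key input. First I would expand
\[
p(u-\zeta) = \sum_{j\geq M} P_j(\zeta), \qquad q(u-\zeta)=\sum_{j\geq M} Q_j(\zeta),
\]
with each $P_j,Q_j$ homogeneous of degree $j$. Theorem \ref{thmbottom} says $P_M$ has no zeros in $RHP^d$, and the homogeneity bound recorded just before Proposition \ref{nontanbound} then gives $|P_M(\zeta)| \geq c_0 r^M$ uniformly on any non-tangential approach region $AR_c$, where $r = |\zeta_1|$.

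For the sufficiency direction, assume $Q_M = bP_M$. Then I would write
\[
f(u-\zeta) - b = \frac{q(u-\zeta) - b\,p(u-\zeta)}{p(u-\zeta)}.
\]
The numerator vanishes at $u$ to order at least $M+1$ (its degree-$M$ homogeneous piece is $Q_M - bP_M = 0$), so it is $O(r^{M+1})$ on $AR_c$, while the denominator is bounded below by $c_0 r^M$ for $r$ small. Hence $|f(u-\zeta)-b| = O(r) \to 0$ as $\zeta \to 0$ in $AR_c$, giving non-tangential limit $b$.

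For the necessity direction, suppose the non-tangential limit exists and equals $\ell$. My strategy is to test along straight rays. Given an arbitrary $w_0 \in RHP^d$, I can choose $c$ large enough that $w_0 \in AR_c$, and by scale-invariance of the defining inequalities the entire ray $\{rw_0:r>0\}$ then lies in $AR_c$. Substituting $\zeta = rw_0$ and dividing numerator and denominator by $r^M$,
\[
f(u-rw_0) = \frac{Q_M(w_0) + rQ_{M+1}(w_0) + \cdots}{P_M(w_0) + rP_{M+1}(w_0) + \cdots} \xrightarrow[r\searrow 0]{} \frac{Q_M(w_0)}{P_M(w_0)},
\]
with the denominator limit nonzero by Theorem \ref{thmbottom}. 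Equating this to $\ell$ for every $w_0 \in RHP^d$ gives $Q_M(w_0) = \ell P_M(w_0)$ on a non-empty open set in $\C^d$, and the identity principle then yields $Q_M \equiv \ell P_M$ as polynomials, with $b=\ell$.

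The only real (and mild) obstacle is the set-theoretic step in necessity: verifying that every direction in $RHP^d$ is reachable non-tangentially, i.e.\ that $RHP^d = \bigcup_{c>1} AR_c$. This follows directly from the definition of $AR_c$, since for $w_0 \in RHP^d$ all entries of $D_{w_0}$ are strictly positive and finite, so their pairwise ratios are bounded by some finite $c$.
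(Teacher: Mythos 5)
Your proposal is correct and follows essentially the same route as the paper: expand $p(u-\zeta)$ and $q(u-\zeta)$ into homogeneous pieces, use Theorem \ref{thmbottom} to get the lower bound $|P_M(\zeta)|\geq c_0 r^M$ on approach regions for sufficiency, and for necessity test along rays to force $Q_M-bP_M\equiv 0$. The only cosmetic difference is that you test along all complex rays $rw_0$ with $w_0\in RHP^d$ and invoke the identity principle on that open set, whereas the paper tests only along real positive directions $a\in(0,\infty)^d$ and concludes by homogeneity; both close the argument the same way.
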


\begin{proof}
  If $f$ has a limit along non-tangential approach regions to $u$
  then, employing $g$ as in the previous proof, there exists $b$ such
  that
\[
o(1) = g(\zeta) - b = \frac{\sum_{j \geq M} Q_j(\zeta) - bP_j(\zeta)
}{\sum_{j\geq M} P_j(\zeta) }= \frac{Q_M(\zeta) - b P_M(\zeta)}{
  P_M(\zeta)} \frac{1}{1+O(r)}  + O(r). 
\]
Thus, $\frac{Q_M(\zeta) - b P_M(\zeta)}{P_M(\zeta)}$ goes to $0$ as $r
\searrow 0$.  This is not possible unless $Q_M = bP_M$ by homogeneity.
Indeed, if $Q_M(a) - b P_M(a) \ne 0$ for some $a \in (0,\infty)^{d}$,
then $\frac{r^M( Q_M(a)-bP_M(a))}{r^M P_M(a)}$ is a nonzero
constant. Thus, $Q_M - bP_M$ vanishes identically.

If $Q_M = b P_M$, the above computation shows $g(\zeta) - b = O(r)$,
so $f(u-\zeta)$ goes to $b$ as $\zeta \to 0$ non-tangentially.
\end{proof}

The next fact is included for convenience.

\begin{lemma} \label{pineq}
Let $p \in \C[z_1,\dots,z_d]$ have no zeros in $\D^d$, multidegree
$n=(n_1,n_2,\dots, n_d)$, and set
\[
\refl{p}(z) \defn z^n \overline{p(1/\bar{z})}.
\]
Then, $|\refl{p}(z)| \leq |p(z)|$ for $z \in \D^d$.
\end{lemma}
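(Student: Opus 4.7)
The plan is to reduce to the one-variable case by an elementary factorization and then to propagate the bound from $\cD\times\T^{d-1}$ to all of $\D^d$ by iterated one-variable maximum modulus. Throughout set $F:=\refl{p}/p$, which is holomorphic on $\D^d$ since $p$ has no zeros there.

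For $d=1$ I factor $p(z)=c\prod_{j=1}^{n}(z-\alpha_j)$ with $|\alpha_j|\geq 1$, which gives $\refl{p}(z)=\bar{c}\prod_{j=1}^{n}(1-\bar\alpha_j z)$. The elementary identity
\[
|z-\alpha|^2-|1-\bar\alpha z|^2=(1-|z|^2)(|\alpha|^2-1)
\]
shows each factor of $|\refl{p}(z)/p(z)|$ is at most $1$ on $\cD$, so $|\refl{p}(z)|\leq|p(z)|$ there.

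Next I handle slices of the form $\cD\times\T^{d-1}$. Fix $(z_2,\dots,z_d)\in\T^{d-1}$ outside the null exceptional locus where $q(z_1):=p(z_1,z_2,\dots,z_d)$ vanishes identically in $z_1$. Approaching $(z_2,\dots,z_d)$ from $\D^{d-1}$ and applying Hurwitz's theorem in $z_1$ shows that $q$ has no zeros in $\D$. Using $1/\bar z_j=z_j$ for $j\geq 2$,
\[
\refl{p}(z_1,z_2,\dots,z_d)=z_2^{n_2}\cdots z_d^{n_d}\,\tilde q(z_1),\qquad \tilde q(z_1):=z_1^{n_1}\overline{q(1/\bar z_1)},
\]
so the one-variable case yields $|\refl{p}(z)|\leq|p(z)|$ on $\cD$ for each such slice; in particular $|F|\leq 1$ on $\D\times\T^{d-1}$ off a null set.

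To propagate from $\D^k\times\T^{d-k}$ to $\D^{k+1}\times\T^{d-k-1}$, fix $(z_1,\dots,z_k)\in\D^k$ and $(z_{k+2},\dots,z_d)\in\T^{d-k-1}$ outside the relevant null sets. A further Hurwitz application ensures that $h(z_{k+1}):=F(z)$ is holomorphic on $\D$, while the inductive hypothesis gives $|h|\leq 1$ almost everywhere on $\T$. Since $h$ is rational, an a.e.\ bound on $\T$ rules out any poles on $\T$, so $h$ extends continuously to $\cD$ with $|h|\leq 1$ on $\T$, and one-variable maximum modulus forces $|h|\leq 1$ on $\cD$. Iterating to $k=d$ produces $|F|\leq 1$ on a dense subset of $\D^d$, whence continuity of $F$ on the open polydisk extends the inequality to all of $\D^d$. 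The main bookkeeping obstacle is controlling the null exceptional sets at each slicing step, but these are absorbed precisely because $F$ is continuous on $\D^d$.
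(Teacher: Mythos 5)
Your proof is correct in substance, but it takes a genuinely different and much longer route than the paper. The paper's argument is a two-line reduction: for $t\in(0,1)$ the dilation $p_t(z):=p(tz)$ has no zeros on $\cD^d$, its reflection is $t^{|n|}\refl{p}(z/t)$, and $\refl{p_t}/p_t$ is holomorphic in a neighborhood of $\cD^d$ and unimodular on $\T^d$, so the maximum principle (maximum over $\cD^d$ attained on the distinguished boundary) gives $|\refl{p_t}|\le|p_t|$ on $\D^d$, and letting $t\nearrow 1$ finishes. You instead prove the one-variable case by explicit factorization and then rebuild the polydisk maximum principle by hand, slicing one variable at a time with Hurwitz's theorem plus one-variable maximum modulus; this is more elementary in that only one-variable function theory is invoked, and your slicing observation (that $p(\cdot,\zeta)$ is zero-free on $\D$ for $\zeta\in\T^{d-1}$ unless it vanishes identically) is the same Hurwitz device the paper uses in Lemma \ref{commonzeros}, but the price is the null-set bookkeeping, which the dilation trick avoids entirely. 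Two points in your write-up deserve tightening: (i) on a torus slice the $z_1$-degree of $q$ may drop below $n_1$, so $\tilde q(z_1)=z_1^{\,n_1-\deg q}$ times the reflection of $q$ at its true degree; since $|z_1|\le 1$ this only strengthens the inequality, but your one-variable statement as written assumes exact degree $n$, so a sentence is needed. (ii) The intermediate exceptional sets on $\D^k\times\T^{d-k}$ are not ``absorbed because $F$ is continuous on $\D^d$''---those points do not lie in $\D^d$ when $k<d$; what actually saves you is that each exceptional set is contained in a countable union of proper algebraic subsets, so for parameters off a thin set every circle slice meets it in a finite (hence null) set, which is exactly what your a.e.\ boundary bound and the rational-function maximum-modulus step require. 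Continuity of $F$ on $\D^d$ is needed only at the final passage from a dense subset of $\D^d$ to all of it.
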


\begin{proof}  If $p$ has no zeros in $\cD^d$ then $\refl{p}/p$ is
  analytic in a neighborhood of $\cD^d$ and unimodular on $\T^d$.
  Therefore, by the maximum principle $|\refl{p}/p|\leq 1$ on $\D^d$.
  If there are zeros on the boundary we look at $p_t(z) \defn p(tz)$ for
  $t \in (0,1)$ and 
\[
\refl{p}_t(z) = t^{|n|} \refl{p}(z/t).
\]
We have $|t^{|n|} \refl{p}(z/t)| \leq |p(tz)|$ for $z \in \D^d$ and if
we let $t\nearrow 1$ we get $|\refl{p}(z)|\leq |p(z)|$.
\end{proof}

\begin{prop} \label{phomo} Assume the setup of Lemma \ref{pineq}.
  Suppose $p$ vanishes to order $M$ at $u$ so that
  we can write
\[
p(u - \zeta) = \sum_{j=M}^{|n|} P_j(\zeta) \qquad \tilde{p}(u-\zeta) =
\sum_{j=M}^{|n|} Q_j(\zeta)
\]
where $P_j,Q_j \in \C[\zeta_1,\dots, \zeta_d]$ are homogeneous of
degree $j$.  Then, $\nu P_M$ has real coefficients for some $\nu \in
\T$ and $Q_M$ is a unimodular multiple of $P_M$.  
\end{prop}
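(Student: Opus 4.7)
The plan is to first derive an explicit formula for $Q_M$ in terms of $P_M$, and then apply a maximum modulus argument. First I will expand
\[
\refl{p}(u-\zeta) = \prod_{j=1}^d (1-\zeta_j)^{n_j}\,\overline{p(1/(u-\bar\zeta))},
\]
in which the prefactor is $1 + O(|\zeta|)$. Using $1/(1-\bar\zeta_j) = 1 + \bar\zeta_j + O(\bar\zeta_j^2)$ together with the identity $p(u+\eta) = \sum_{k\ge M}(-1)^k P_k(\eta)$ (which follows from $p(u-\zeta)=\sum P_k(\zeta)$ and homogeneity $P_k(-\eta)=(-1)^k P_k(\eta)$), one computes $p(1/(u-\bar\zeta)) = (-1)^M P_M(\bar\zeta) + O(|\zeta|^{M+1})$. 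Conjugating and using $\overline{P_M(\bar\zeta)} = \bar{P}_M(\zeta)$ (the polynomial with conjugated coefficients) yields
\[
\refl{p}(u-\zeta) = (-1)^M \bar{P}_M(\zeta) + O(|\zeta|^{M+1}).
\]
Since $P_M\not\equiv 0$ forces $\bar{P}_M\not\equiv 0$, this confirms that $\refl{p}$ vanishes to order exactly $M$ at $u$ (justifying the stated decomposition) and identifies $Q_M(\zeta) = (-1)^M \bar{P}_M(\zeta)$.

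Next I will set $H(\zeta):=Q_M(\zeta)/P_M(\zeta)$. By Theorem \ref{thmbottom}, $P_M$ is non-vanishing on the connected open set $RHP^d$, so $H$ is holomorphic there. For $\zeta\in RHP^d$ and $r>0$ small enough, $u-r\zeta\in\D^d$ because $|1-r\zeta_j|^2 = 1-2r\Re\zeta_j + r^2|\zeta_j|^2 < 1$; applying Lemma \ref{pineq} and dividing by $r^M$ before letting $r\to 0$ gives $|Q_M(\zeta)|\le|P_M(\zeta)|$, so $|H|\le 1$ throughout $RHP^d$. Meanwhile, for any $t\in(0,\infty)^d$, which sits in the interior of $RHP^d$, one has $\bar{P}_M(t)=\overline{P_M(t)}$, hence
\[
|H(t)| \;=\; \frac{|\bar{P}_M(t)|}{|P_M(t)|} \;=\; 1.
\]
Because $H$ attains its maximum modulus at an interior point of the connected domain $RHP^d$, the maximum modulus principle in several complex variables forces $H \equiv \lambda$ for some $\lambda\in\T$. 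This gives $Q_M = \lambda P_M$, establishing the second claim.

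Finally, combining $Q_M = (-1)^M \bar{P}_M$ with $Q_M = \lambda P_M$ yields $\bar{P}_M = \mu P_M$ where $\mu := (-1)^M \lambda\in\T$. Choosing $\nu\in\T$ with $\nu^2=\mu$, one checks $\overline{\nu P_M} = \bar\nu \bar{P}_M = \bar\nu\mu P_M = \bar\nu\nu^2 P_M = \nu P_M$, so $\nu P_M$ has real coefficients, proving the first claim.

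The main obstacle will be executing the Taylor expansion of $\refl{p}(u-\zeta)$ carefully enough to extract the leading term $(-1)^M \bar{P}_M(\zeta)$ without bookkeeping mistakes in the two conjugations involved; the conceptual heart is then the recognition that $(0,\infty)^d$ lies in the \emph{interior} of $RHP^d$ (rather than merely on its distinguished boundary), so that $|H|\equiv 1$ there triggers maximum modulus and pins $H$ down to a unimodular constant.
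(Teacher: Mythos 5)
Your proof is correct. It shares the paper's first step --- expanding the reflection of $p$ at $u$ to identify $Q_M=(-1)^M\bar{P}_M$, where $\bar{P}_M$ denotes the polynomial with conjugated coefficients (the paper gets this by reflecting the monomials $(z-u)^\alpha$ directly rather than expanding $1/(1-\bar\zeta_j)$, but the bookkeeping is equivalent) --- and the closing algebra that produces $\nu$. Where you genuinely diverge is in proving $Q_M=\lambda P_M$. The paper lets $t\to 0$ through both positive \emph{and negative} values in $|p(u-t\zeta)|^2-|\refl{p}(u-t\zeta)|^2$, relying on the fact that the inequality of Lemma \ref{pineq} reverses outside the closed polydisk (asserted there without proof), to get $|P_M|\equiv|Q_M|$ on $RHP^d$, and then concludes $P_M=\mu Q_M$ by polarization. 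You use only the one-sided inequality $|Q_M|\le|P_M|$ coming from inside $\D^d$, observe that the already-established relation $Q_M=(-1)^M\bar{P}_M$ forces $|Q_M(t)|=|P_M(t)|$ on the positive orthant --- which, as you rightly stress, consists of \emph{interior} points of $RHP^d$ --- and then invoke the maximum modulus principle for $H=Q_M/P_M$. Your route needs Theorem \ref{thmbottom} to make $H$ holomorphic, which the paper's argument does not, but in exchange it avoids both the reversed inequality for $t<0$ and the polarization step; both arguments are sound, and yours makes visible that the identity $Q_M=(-1)^M\bar{P}_M$ is doing double duty.
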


\begin{proof}
If we perform the reflection operation $f \mapsto \refl{f}$ at degree
$n$ to $(z-u)^\alpha=(z_1-1)^{\alpha_1} \cdots (z_d-1)^{\alpha_d}$ we
get
\[
\begin{aligned}
z^{n-\alpha} (1-z_1)^{\alpha_1}\cdots (1-z_d)^{\alpha_d} &=
z^{n-\alpha} (-1)^{|\alpha|} (z-u)^{\alpha} \\
&=
(-1)^{|\alpha|}(z-u)^{\alpha}  \pm (z^{n-\alpha} -1)(z-u)^{\alpha} 
\end{aligned}
\]
which shows that reflecting $(z-u)^{\alpha}$ yields
$(-1)^{|\alpha|}(z-u)^{\alpha}$ plus terms of higher total degree.
This implies that in the homogeneous expansion of $\refl{p}$ we have
$Q_M = (-1)^{M} \bar{P}_M$ where $\bar{P}_M$ denotes taking conjugates
of the coefficients of $P_M$.

By Lemma \ref{pineq}, we have that for $\zeta \in RHP^d$ and $t>0$
sufficiently small
\[
|p(u-t\zeta)|^2 - |\refl{p}(u-t\zeta)|^2
\geq 0
\]
whereas when $t<0$ we have the opposite inequality.  In terms of
homogeneous expansions this expression on the left is
\[
t^{2M} |P_M(\zeta)|^2 - t^{2M}|Q_M(\zeta)|^2 + O(t^{2M+1}).
\]
Dividing by $t^{2M}$ and sending $t$ to $0$ from the left and right we
see that $|P_M(\zeta)|^2-|Q_M(\zeta)|^2$ is both $\leq$ and $\geq 0$.
Thus $|P_M(\zeta)|^2 = |Q_M(\zeta)|^2$ for $\zeta\in RHP^d$ which
implies $P_M = \mu Q_M = \mu (-1)^M \bar{P}_M$ for some $\mu \in \T$.
In turn, it follows that for $\nu = \sqrt{\bar{\mu} (-1)^M}$, $\nu P_M =
\bar{\nu} \bar{P}_M$ has real coefficients.     

\end{proof}

As mentioned in the introduction, rational inner functions on $\D^d$
are of the form
\[
\mu z^{\alpha} \frac{\refl{p}(z)}{p(z)}
\]
where $\mu \in \T$ and $\alpha$ is a multi-index (see \cite{rudin}).
Therefore, Theorem \ref{intthmnontan} from the introduction follows
from the next corollary, which is a direct consequence of Propositions
\ref{nontanprop} and \ref{phomo}.

\begin{corollary} \label{corthma} If $p \in \C[z_1,\dots, z_d]$ has no
  zeros in $\D^d$ then for any $\zeta \in \T^d$
\[
\lim_{z \to \zeta} \frac{\refl{p}(z)}{p(z)}
\]
exists as $z \to \zeta$ non-tangentially.  Moreover, this limit will
be an element of $\T$.
\end{corollary}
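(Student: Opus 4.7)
The plan is to reduce to the special case $\zeta = u = (1,\dots,1)$ and then split into the cases where $p(u)\ne 0$ and $p(u)=0$. For the reduction, given any $\zeta \in \T^d$ I consider the polynomial $\tilde{p}(z) := p(\zeta_1 z_1,\dots,\zeta_d z_d)$. The map $z\mapsto \zeta\cdot z$ preserves $\D^d$, so $\tilde{p}$ has no zeros in $\D^d$, and a direct calculation shows $\widetilde{\tilde{p}}(z) = \bar{\zeta}^{n}\refl{p}(\zeta\cdot z)$ (using $|\zeta_j|=1$), so $\widetilde{\tilde{p}}(z)/\tilde{p}(z)$ equals $\bar{\zeta}^{n}\refl{p}(\zeta\cdot z)/p(\zeta\cdot z)$. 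Non-tangential approach of $z$ to $u$ corresponds to non-tangential approach of $\zeta\cdot z$ to $\zeta$, so it suffices to prove the statement at $u$.

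In the easy case $p(u)\ne 0$, the function $\refl{p}/p$ is rational with nonvanishing denominator in a neighborhood of $u$, so it extends continuously and the limit exists. Moreover $|\refl{p}|=|p|$ on $\T^d$ forces $|\refl{p}(u)/p(u)| = 1$.

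For the main case, suppose $p$ vanishes to order $M\geq 1$ at $u$ and write the homogeneous expansions
\[
p(u-\zeta) = \sum_{j=M}^{|n|} P_j(\zeta), \qquad \refl{p}(u-\zeta) = \sum_{j\geq M} Q_j(\zeta).
\]
Here $\refl{p}$ also vanishes to order at least $M$ at $u$ because a direct computation of the action of the reflection operation on $(z-u)^{\alpha}$ yields $(-1)^{|\alpha|}(z-u)^{\alpha}$ plus strictly higher order terms (this is essentially the content of the opening step of the proof of Proposition \ref{phomo}). Proposition \ref{phomo} then tells us that $Q_M$ is a unimodular scalar multiple of $P_M$, say $Q_M = \mu P_M$ with $\mu\in\T$. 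Since $\refl{p}$ vanishes to order at least $M$, Proposition \ref{nontanbound} gives that $\refl{p}/p$ is non-tangentially bounded at $u$, and then Proposition \ref{nontanprop} applied with $q = \refl{p}$ and $b=\mu$ yields that $\refl{p}(z)/p(z)\to \mu\in\T$ as $z\to u$ non-tangentially.

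The nontrivial input is entirely supplied by Proposition \ref{phomo}, whose proof uses the key inequality $|\refl{p}|\le |p|$ on $\D^d$ (Lemma \ref{pineq}) together with Theorem \ref{thmbottom} to compare the bottom homogeneous terms on $RHP^d$; consequently there is no substantive obstacle remaining at this stage. The only small item to double-check is the reduction step to $\zeta=u$, and in particular that the reflection of the shifted polynomial is a unimodular multiple of the shift of the reflection, which is immediate from $|\zeta_j|=1$.
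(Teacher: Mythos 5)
Your proof is correct and follows essentially the same route as the paper: Corollary \ref{corthma} is deduced directly from Proposition \ref{phomo} (which gives $Q_M=\mu P_M$ with $\mu\in\T$, so $\refl{p}$ vanishes to order at least $M$) together with Proposition \ref{nontanprop} applied to $q=\refl{p}$, yielding the non-tangential limit $\mu\in\T$. The only differences are cosmetic: you spell out the rotation reduction to $u$ and the trivial case $p(u)\neq 0$, which the paper handles implicitly via its ``without loss of generality focus on $u$'' remark, and your invocation of Proposition \ref{nontanbound} is harmless but unnecessary since Proposition \ref{nontanprop} already delivers the limit.
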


Proposition \ref{phomo} has the following corollary in two dimensions.

\begin{corollary}
Suppose $p \in \C[z_1,z_2]$ is semi-stable and vanishes to order
$M$ at $\lambda \in \T^2$.  Then, $N_{\lambda}(p,\refl{p}) \geq
M(M+1)$.
\end{corollary}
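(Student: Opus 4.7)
The plan is to combine Proposition \ref{phomo} with the axioms of intersection multiplicity from Remark \ref{intaxioms}. The naive bound from axiom (2), $N_\lambda(p,\refl{p}) \geq m_\lambda(p)\cdot m_\lambda(\refl{p}) = M^2$, is off by a factor of $M$; the key extra input is that the \emph{leading} homogeneous terms of $p$ and $\refl{p}$ at $\lambda$ are proportional, so a suitable linear combination vanishes to one higher order.

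First I would reduce to the case $\lambda = u = (1,1)$ by the change of coordinates $z_j \mapsto \lambda_j w_j$. Setting $\tilde{p}(w) \defn p(\lambda_1 w_1, \lambda_2 w_2)$, a direct computation using $|\lambda_j|=1$ gives $\refl{p}(\lambda w) = \lambda_1^n\lambda_2^m \refl{\tilde{p}}(w)$, so $\tilde{p}$ is semi-stable, vanishes to order $M$ at $u$, and the local intersection multiplicity is preserved. Thus it suffices to prove the result for semi-stable $p$ with $p(u)=0$ of order $M$.

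Write the homogeneous expansions at $u$,
\[
p(u-\zeta) = \sum_{j=M}^{|n|} P_j(\zeta), \qquad \refl{p}(u-\zeta) = \sum_{j=M}^{|n|} Q_j(\zeta),
\]
with $P_j,Q_j$ homogeneous of degree $j$ and $P_M, Q_M \not\equiv 0$. By Proposition \ref{phomo}, $Q_M = c P_M$ for some $c \in \T$. Define
\[
q \defn \refl{p} - c\, p \in \C[z_1,z_2].
\]
Then $q(u-\zeta) = \sum_{j \geq M}(Q_j - c P_j)(\zeta)$, and the degree $M$ term is $Q_M - cP_M = 0$. Hence $m_\lambda(q) \geq M+1$. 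Moreover $q \not\equiv 0$, since $\refl{p} = cp$ would force $p \mid \refl{p}$, contradicting semi-stability; and any common irreducible factor of $p$ and $q$ would divide $\refl{p} = q + cp$ as well, again contradicting semi-stability. So $p$ and $q$ share no common factor.

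Now apply the axioms of Remark \ref{intaxioms}. Axiom (3) with $r = -c$ (a constant polynomial) gives
\[
N_\lambda(p,\refl{p}) = N_\lambda(p,\,\refl{p} - c\,p) = N_\lambda(p,q).
\]
Axiom (2) applied to the pair $(p,q)$, which has no common factor, then yields
\[
N_\lambda(p,q) \geq m_\lambda(p)\,m_\lambda(q) \geq M(M+1),
\]
completing the proof. No step looks technically hard; the only content beyond bookkeeping is Proposition \ref{phomo}, which was already established, so the main task is simply to package its conclusion correctly as a cancellation in the leading homogeneous parts and feed it into the standard intersection-multiplicity inequalities.
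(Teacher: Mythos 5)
Your proof is correct and follows essentially the same route as the paper: invoke Proposition \ref{phomo} to see that a unimodular combination of $p$ and $\refl{p}$ vanishes to order at least $M+1$, then apply axioms (2) and (3) of Remark \ref{intaxioms}. The extra bookkeeping you include (the rotation to $u=(1,1)$ and the check that $p$ and $\refl{p}-cp$ share no common factor, which axiom (2) requires) is a reasonable tightening of details the paper leaves implicit.
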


\begin{proof}
  By Proposition \ref{phomo} there is a $\nu \in \T$, such that $p-\nu
  \refl{p}$ vanishes to order at least $M+1$ at $\lambda$.  By Remark
  \ref{intaxioms}
\[
N_{\lambda}(p,\refl{p}) = N_{\lambda}(p, p-\nu \refl{p}) \geq M(M+1).
\]
\end{proof}

We now study higher regularity for rational inner functions.

\begin{theorem} \label{thmnontan} Suppose $f = \refl{p}/p$ has
  non-tangential value $\nu$ at $u$. Then, $f$ is non-tangentially
  $C^1$ at $u$ iff $P_M$ divides $Q_{M+1} - \nu P_{M+1}$.  More
  generally, $f$ is non-tangentially $C^k$ at
  $u$ iff
\[
F_1 \defn \frac{Q_{M+1}-\nu P_{M+1}}{P_M} \in \C[\zeta_1,\dots,
\zeta_d]
\]
\[
F_2 \defn \frac{Q_{M+2} -\nu P_{M+2} - F_1 P_{M+1}}{P_M} \in
\C[\zeta_1,\dots, \zeta_d]
\]
and so on up to the last condition
\[
F_k\defn \frac{Q_{M+k}-\nu P_{M+k} - \sum_{j=1}^{k-1} F_j
  P_{M+k-j}}{P_M} \in \C[\zeta_1,\dots, \zeta_d].
\]
In this case the non-tangential Taylor expansion is given by
$\sum_{j=1}^{k} F_j$ in the sense that
\[
f(u-\zeta)  - (\nu+\sum_{j=1}^{k} F_j(\zeta)) = o(r^k)
\]

\end{theorem}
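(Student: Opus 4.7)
The plan is to pin down, at each total degree, what the homogeneous components of the approximating polynomial must be, and to recognize the divisibility requirement ``$F_j\in\C[\zeta]$'' as exactly what makes such a polynomial exist. Throughout, expand $p(u-\zeta)=\sum_{j\geq M}P_j(\zeta)$ and $\refl{p}(u-\zeta)=\sum_{j\geq M}Q_j(\zeta)$, and note that Proposition \ref{nontanprop} applied to the limit $\nu$ gives $Q_M=\nu P_M$. Write a candidate polynomial as $L(\zeta)=\nu+\sum_{j=1}^k L_j(\zeta)$ with $L_j$ homogeneous of degree $j$. Using $|p(u-\zeta)|\geq c\,r^M$ on $AR_c$ (from Theorem \ref{thmbottom}) together with $p(u-\zeta)=O(r^M)$, the estimate $f(u-\zeta)-L(\zeta)=o(r^k)$ is equivalent to the polynomial condition
\[
S(\zeta)\defn \refl{p}(u-\zeta)-L(\zeta)\,p(u-\zeta)=o(r^{M+k}) \text{ in } AR_c.
\]

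The key technical ingredient is the following preliminary observation: if a polynomial $S\in\C[\zeta_1,\dots,\zeta_d]$ satisfies $S(\zeta)=o(r^N)$ as $\zeta\to 0$ inside $AR_c$, then every homogeneous component of $S$ of degree $\leq N$ vanishes. To see this, note that $AR_c$ contains a (complex) neighborhood of the diagonal direction $(1,1,\dots,1)$, since at that point all ratios in $D_\zeta$ equal $1$; for $a$ in this open set with $|a_1|=1$, the ray $ta$ lies in $AR_c$ with $r=t$, and the expansion $S(ta)=\sum_j t^j S_j(a)$ is a genuine power series in $t$ whose successive coefficients cannot cancel. Hence $S_j(a)=0$ for all $j\leq N$ and for $a$ in an open set, forcing $S_j\equiv 0$ for $j\leq N$. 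Applying this to the polynomial $S$ above, the degree-$M$ component is automatic ($Q_M-\nu P_M=0$), while the degree-$(M+j)$ component for $1\leq j\leq k$ yields
\[
L_j\,P_M=Q_{M+j}-\nu P_{M+j}-\sum_{l=1}^{j-1} L_l\,P_{M+j-l}.
\]

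I then finish by induction on $j$. Assuming $L_1=F_1,\dots,L_{j-1}=F_{j-1}$ have been identified as homogeneous polynomials, the right-hand side above matches the numerator defining $F_j$ in the theorem. Because $\C[\zeta_1,\dots,\zeta_d]$ is a unique factorization domain, the existence of a polynomial $L_j$ solving this equation is equivalent to $P_M$ dividing that numerator, which is precisely the condition $F_j\in\C[\zeta_1,\dots,\zeta_d]$; when it holds, $L_j=F_j$ is forced, and a degree count shows $L_j$ is homogeneous of degree $j$. Iterating from $j=1$ up to $j=k$ simultaneously yields both directions of the equivalence and identifies the non-tangential Taylor polynomial as $\nu+\sum_{j=1}^k F_j$. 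The main obstacle is the preliminary observation about polynomials being $o(r^N)$ on $AR_c$; everything else is bookkeeping driven by unique factorization. Once that lemma is in hand, the induction writes itself.
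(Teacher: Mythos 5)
Your proof is correct, and it takes a noticeably different route from the paper's. The paper works directly with the quotient: it writes $f(u-\zeta)-\nu$ as $\bigl(\sum_{j\ge1}(Q_{M+j}-\nu P_{M+j})/P_M\bigr)\cdot\bigl(1+\sum_{j\ge1}P_{M+j}/P_M\bigr)^{-1}$, proves sufficiency by a telescoping identity showing the error after subtracting $\nu+\sum_{j\le k}F_j$ is $O(r^{k+1})$, and proves necessity by a separate homogeneity argument (the homogeneous difference $F_1-Q$ is $o(r)$ on the approach region, hence vanishes identically, and similarly at each later stage). You instead clear the denominator: using the two-sided comparability $|p(u-\zeta)|\asymp r^M$ on $AR_c$ (the lower bound being exactly the consequence of Theorem \ref{thmbottom} the paper also exploits), you convert $f-L=o(r^k)$ into the statement that the polynomial $S=\refl{p}(u-\zeta)-L(\zeta)\,p(u-\zeta)$ is $o(r^{M+k})$, and a single lemma --- a polynomial that is $o(r^N)$ on $AR_c$ has vanishing homogeneous components through degree $N$, proved by the ray argument on the open cone around the diagonal direction --- then yields both implications at once by degree-by-degree matching, with $L_j=F_j$ forced because $P_M\neq 0$. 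The two arguments rest on the same ingredients ($|P_M|\gtrsim r^M$ on approach regions plus homogeneity along rays), but your packaging makes the equivalence symmetric and isolates the analytic input in one clean lemma, while the paper's version hands you the explicit $O(r^{k+1})$ error estimate as a byproduct. Two cosmetic points, neither a gap: the normalization $|a_1|=1$ in your lemma is unnecessary (as literally stated that slice is not open --- either drop the normalization, taking $r=t|a_1|\asymp t$, or use homogeneity of the components $S_j$ to pass from the cone over that slice to an open set), and you should note explicitly that the constant term of any admissible $L$ must equal $\nu$ because $f$ has non-tangential value $\nu$ at $u$.
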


\begin{proof} We can multiply by a unimodular constant to put $p$ in
  the form
\[
p(u-\zeta) = \sum_{j=M}^{|n|} P_j(\zeta)
\]
\[
\refl{p}(u-\zeta) = \nu P_M(\zeta) + \sum_{j=M+1}^{|n|}
Q_j(\zeta)
\]
where $P_M$ has real coefficients and no zeros in $RHP^d$.

Observe that
\begin{align}
\frac{\refl{p}(z)}{p(z)} -\nu &= \frac{\sum_{j\geq 1}
  Q_{M+j}(\zeta) - \nu P_{M+j}(\zeta) }{ \sum_{j\geq 0}
  P_{M+j}(\zeta)} \nonumber\\
&= \left(\sum_{j\geq 1} \frac{Q_{M+j}(\zeta) - \nu
  P_{M+j}(\zeta)}{P_M(\zeta)} \right) \left(\frac{1}{1 + \sum_{j\geq
    1} \frac{P_{M+j}(\zeta)}{P_M(\zeta)}} \right). \label{Or2}
\end{align}

Next, if $F_1 = (Q_{M+1} - \nu P_{M+1})/P_M$, then
\begin{multline*}
f(u-\zeta) -(\nu + F_1(\zeta)) \\
=  \left(\sum_{j\geq 1} \frac{Q_{M+j+1}(\zeta) - \nu
  P_{M+j+1}(\zeta)- F_1(\zeta) P_{M+j}(\zeta)}{P_M(\zeta)} \right) \left(\frac{1}{1 + \sum_{j\geq
    1} \frac{P_{M+j}(\zeta)}{P_M(\zeta)}} \right) \\
= \frac{O(r^2)}{1+O(r)} = O(r^2)
\end{multline*}
which shows $f$ is non-tangentially $C^1$ at $u$ assuming $F_1 \in
\C[\zeta_1,\dots,\zeta_d]$ . On the other hand, if $f$ is
non-tangentially $C^1$ at $u$, then there is a degree $1$ homogeneous
polynomial $Q$ such that
\[
f(u-\zeta) - (\nu+Q(\zeta)) = o(r)
\]
so
\[
 F_1(\zeta) -Q(\zeta) = o(r) \text{ by \eqref{Or2}}
\]
which means $F_1 = Q$ by homogeneity.  

The general case is proved similarly by induction using the formula
\begin{multline*}
f(u-\zeta) -(\nu + \sum_{j=1}^{k} F_j(\zeta)) \\
= \left(\sum_{j\geq 1} \frac{Q_{M+j+k} - \nu
  P_{M+j+k}- \sum_{m=1}^{k} F_m P_{M+k+j-m}}{P_M} \right) \left(\frac{1}{1 + \sum_{j\geq
    1} \frac{P_{M+j}}{P_M}} \right) \\
= O(r^{k+1}).
\end{multline*}

\end{proof}

We get from the above proof the existence of a non-tangential
directional derivative function
\[
F_1(\zeta) = \frac{Q_{M+1}(\zeta)-  \nu P_{M+1}(\zeta)}{P_M(\zeta)}
\]
for $f=\refl{p}/p$ even when $f$ is not non-tangentially $C^1$.  This
is closely related to a main result of \cite{AMYcara}, which holds for
bounded analytic functions on $\D^2$ (i.e. not just rational inner
functions).  The paper \cite{ATY} goes further and characterizes the
possible ``slope functions'' in two variables.

Restricting to two variables, we see that if $f$ is non-tangentially
$C^k$ at $u$, then $N_{u}(p,\refl{p})\geq M(M+k+1)$ because of the
following observation:
\[
\begin{aligned}
N_{u}(p,\refl{p}) &= N_{0}(P_M + \sum_{j\geq 1} P_{M+j}, \nu P_M +
  \sum_{j\geq 1}Q_{M+j}) \\
&= N_0 (P_M + \sum_{j\geq 1} P_{M+j}, \sum_{j\geq 1} (Q_{M+j} -\nu
P_{M+j})) \\
&= N_0 (P_M + \sum_{j\geq 1} P_{M+j}, \sum_{j\geq 1} (Q_{M+j+1} -
\nu P_{M+j+1}- F_1 P_{M+j})) \\
&= \cdots \\
&= N_0 (P_M + \sum_{j\geq 1} P_{M+j}, 
\sum_{j\geq 1}( Q_{M+j+k} - \nu 
  P_{M+j+k}- \sum_{m=1}^{k} F_m P_{M+k+j-m}) ) \\
& \geq M(M+k+1)
\end{aligned}
\]
This computation is based on the rules from Remark \ref{intaxioms}.

\begin{corollary} Suppose $p \in \C[z_1,z_2]$ is semi-stable and
  $\refl{p}/p$ is non-tangentially $C^k$ at a point $\lambda \in
  \T^2$.  If $p$ vanishes to order $M$ at $\lambda$, then
\[
N_{\lambda} (p,\refl{p}) \geq M(M+k+1).
\]
\end{corollary}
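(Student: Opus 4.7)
The plan is to recognize that the displayed computation immediately preceding the corollary already contains the essence of the argument, and to present it as two applications of the axioms in Remark \ref{intaxioms}: invariance of the intersection multiplicity under $(p,q)\mapsto(p,q+rp)$, together with the lower bound $N_{\lambda}(p,q)\geq m_{\lambda}(p)m_{\lambda}(q)$. The $C^k$-regularity hypothesis is used solely through Theorem \ref{thmnontan}, which guarantees that the rational expressions $F_1,\dots,F_k$ defined there are actually polynomials in $\zeta_1,\zeta_2$.

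First I would translate to the origin and normalize: substitute $z=\lambda-\zeta$ and replace $p$ by a unimodular scalar multiple as in Proposition \ref{phomo}, so that
\[
p(\lambda-\zeta)=\sum_{j\geq M}P_j(\zeta),\qquad \refl{p}(\lambda-\zeta)=\nu P_M(\zeta)+\sum_{j\geq M+1}Q_j(\zeta),
\]
with $P_M$ real, non-vanishing in $RHP^2$, and $\nu\in\T$ the non-tangential value of $\refl{p}/p$ at $\lambda$. Setting $F_0\defn\nu$, the recursions defining the $F_j$ in Theorem \ref{thmnontan} rearrange to the identity
\[
Q_{M+j}=\sum_{m=0}^{j}F_m\,P_{M+j-m}\qquad (j=0,1,\dots,k),
\]
so that the polynomial $R(\zeta)\defn\sum_{m=0}^{k}F_m(\zeta)$ satisfies
\[
\refl{p}(\lambda-\zeta)-R(\zeta)\,p(\lambda-\zeta)=\sum_{j\geq k+1}\Bigl(Q_{M+j}-\sum_{m=0}^{k}F_m\,P_{M+j-m}\Bigr),
\]
a sum whose terms all have total degree at least $M+k+1$.

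Now $R$ is a genuine polynomial in $z_1,z_2$ (this is exactly the content of the $C^k$ hypothesis via Theorem \ref{thmnontan}), so by the invariance axiom in Remark \ref{intaxioms},
\[
N_{\lambda}(p,\refl{p})=N_{\lambda}(p,\refl{p}-Rp).
\]
Since $p$ vanishes to order $M$ at $\lambda$ and $\refl{p}-Rp$ vanishes to order at least $M+k+1$, the product bound gives
\[
N_{\lambda}(p,\refl{p})\geq m_{\lambda}(p)\,m_{\lambda}(\refl{p}-Rp)\geq M(M+k+1),
\]
as desired.

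The only subtle point, and thus the main obstacle, is the verification of the algebraic identity $Q_{M+j}=\sum_{m=0}^{j}F_m P_{M+j-m}$ for $j\leq k$; but this is a direct rewriting (telescoping) of the definition of $F_j$ in Theorem \ref{thmnontan} together with the normalization $F_0=\nu$. Once this is in hand, the order-of-vanishing estimate on $\refl{p}-Rp$ is immediate, and the rest is routine bookkeeping with the intersection-multiplicity axioms.
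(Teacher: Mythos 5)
Your proof is correct and follows essentially the same route as the paper: the displayed computation preceding the corollary is exactly the iterative form of your argument, subtracting $\nu p$, then $F_1 p$, \dots, then $F_k p$ one step at a time using the axioms of Remark \ref{intaxioms}, whereas you subtract $R\,p$ with $R=\nu+\sum_{j=1}^{k}F_j$ in one stroke. The telescoped identity $Q_{M+j}=\sum_{m=0}^{j}F_m P_{M+j-m}$ for $j\le k$ that you single out is indeed just a rewriting of the recursion in Theorem \ref{thmnontan}, so no new ingredient is needed.
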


For example, if $\refl{p}/p$ is $C^1$ at $\lambda$, then
$N_{\lambda}(p,\refl{p}) \geq 4$, since the intersection multiplicity
must be even.  An interesting consequence is that the number of $C^1$
points which are not $C^2$ is finite (i.e. at most $nm/4$).

Finally, we point out that at least in two variables, if $f=q/p \in
L^2(\T^2)$ then $f$ is non-tangentially bounded at every point in
$\T^2$.  By Proposition \ref{nontanbound}, this is equivalent to
showing that $q$ vanishes at least to the same order as $p$ at every
zero of $p$ on $\T^2$.  

\begin{theorem} Assume $p \in \C[z_1,z_2]$ is semi-stable and $q\in
  \mathcal{I}_p$.  Then, $f:=q/p$ is non-tangentially bounded at every
  point of $\T^2$; equivalently, if $p$ vanishes to order $M$ at some
  point of $\T^2$ then every element of $\mathcal{I}_p$ vanishes to at
  least order $M$.
\end{theorem}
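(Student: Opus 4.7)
My plan is to reduce the statement to showing that the canonical vector polynomials $\vec{E}_1,\vec{F}_1,\vec{F}_2$ each vanish to order at least $M$ at the given boundary zero $\lambda$, where $M$ is the order of vanishing of $p$ at $\lambda$. By Theorem \ref{generators}, the ideal $\mathcal{I}_p$ is generated by the entries of these three vectors, so any $q \in \mathcal{I}_p$ is a polynomial combination of them and hence also vanishes to order at least $M$ at $\lambda$. Proposition \ref{nontanbound} then immediately yields non-tangential boundedness of $q/p$ at $\lambda$. After the unitary rotation $z_j \mapsto \bar\lambda_j z_j$, which preserves semistability, $\T^2$, and orders of vanishing, we may assume $\lambda = u = (1,1)$.

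For the key estimate, I use Proposition \ref{phomo}: $\refl{p}$ vanishes to the same order $M$ at $u$, and the leading homogeneous terms $P_M,Q_M$ of $p(u-\zeta),\refl{p}(u-\zeta)$ satisfy $|P_M|=|Q_M|$. Expanding the moduli squared, the leading $2M$-homogeneous pieces cancel, giving
\[
|p(u-\zeta)|^2 - |\refl{p}(u-\zeta)|^2 = O(|\zeta|^{2M+1}) \quad \text{as } \zeta \to 0 \text{ in } \C^2.
\]
Setting $w=z$ in Theorem \ref{thmsos} yields the polynomial identity
\[
|p(z)|^2 - |\refl{p}(z)|^2 = (1-|z_1|^2)|\vec{E}_1(z)|^2 + (1-|z_2|^2)|\vec{F}_2(z)|^2.
\]
For $z=u-\zeta \in \D^2$ approaching $u$ non-tangentially, $\Re\zeta_j$ is comparable to $|\zeta_j|$ which is comparable to $|\zeta|$, so $1-|z_j|^2 = 2\Re\zeta_j - |\zeta_j|^2 \asymp |\zeta|$. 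Since both Agler summands are nonnegative, dividing through gives
\[
|\vec{E}_1(u-\zeta)|^2,\; |\vec{F}_2(u-\zeta)|^2 = O(|\zeta|^{2M})
\]
on non-tangential approach regions.

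To conclude, let $k_1$ be the exact order of vanishing of $\vec{E}_1$ at $u$, with nonzero leading homogeneous part $\vec{V}_{k_1}$. Along the ray $\zeta = ta$ with fixed $a \in (0,\infty)^2$ (which lies in a non-tangential region), the bound above reads $t^{2k_1}|\vec{V}_{k_1}(a)|^2 + O(t^{2k_1+1}) \leq C t^{2M}$. If $k_1 < M$, dividing by $t^{2k_1}$ and sending $t \to 0^+$ forces $\vec{V}_{k_1}(a)=0$ for every $a \in (0,\infty)^2$. By homogeneity the one-variable polynomial $s \mapsto \vec{V}_{k_1}(1,s)$ vanishes on $(0,\infty)$, hence vanishes identically, and a second use of homogeneity gives $\vec{V}_{k_1}\equiv 0$, a contradiction. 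So $k_1 \geq M$; the identical argument applied to $\vec{F}_2$, and then to the other canonical Agler pair $(\vec{F}_1,\vec{E}_2)$, gives the required vanishing order for $\vec{F}_1$ (and $\vec{E}_2$) as well, completing the proof.

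The step I expect to be most delicate is not any single computation but the interplay in the second paragraph: the non-tangential factor $1-|z_j|^2 \asymp |\zeta|$ loses one full power of $|\zeta|$, so one must extract \emph{strictly better than} $O(|\zeta|^{2M})$ cancellation from $|p|^2-|\refl{p}|^2$ — and this improvement to $O(|\zeta|^{2M+1})$ is precisely what the modulus equality $|P_M|=|Q_M|$ from Proposition \ref{phomo} buys. Without that cancellation the estimate would only give $k_1 \geq M-1/2$, which is useless.
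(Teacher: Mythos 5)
Your proof is correct and takes essentially the same route as the paper: both arguments rest on the diagonal Agler decomposition from Theorem \ref{thmsos}, the comparability $1-|z_j|^2 \asymp r$ on non-tangential regions, the leading-term cancellation from Proposition \ref{phomo}, and the reduction to the generators of Theorem \ref{generators}. The only difference is bookkeeping — the paper divides the Agler identity by $|p|^2$ and phrases the estimate as boundedness of $|\vec{A}_j/p|$ via the unimodular non-tangential limit of $\refl{p}/p$, then cites Proposition \ref{nontanbound}, whereas you keep the polynomial difference $|p|^2-|\refl{p}|^2 = O(r^{2M+1})$ and re-derive the order-of-vanishing conclusion by the same ray/homogeneity computation that appears inside the proof of Proposition \ref{nontanbound}.
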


\begin{proof} We may assume $p$ vanishes to order $M$ at
  $u=(1,1)$. Let $\phi(\zeta) = \frac{\refl{p}}{p}(u-\zeta)$.  By
  Corollary \ref{corthma}, in a non-tangential approach region to
  $(0,0)$ in $RHP^2$, $\phi(\zeta) = \nu + O(r)$ for some $\nu \in
  \T$---actually this is the last line of the \emph{proof} of
  Proposition \ref{nontanprop}.

For any Agler pair $(\vec{A}_1,\vec{A}_2)$, we see that 
\[
1 - |\phi(\zeta)|^2 \geq (1-|1-\zeta_1|^2)
\frac{|\vec{A}_1(u-\zeta)|^2}{|p(u-\zeta)|^2}.
\]
A similar inequality could be written for $\vec{A}_2$.  Now,
$1-|\phi(\zeta)|^2 = 1-|\nu + O(r)|^2 = O(r)$ and $1-|1-\zeta_1|^2 =
2\Re \zeta_1 - |\zeta_1|^2 \geq c r$ for $|\zeta_1|$ small enough
(because we are in a non-tangential approach region).  Thus,
$O(1)\geq \frac{|\vec{A}_1(u-\zeta)|^2}{|p(u-\zeta)|^2}$ is bounded
along every non-tangential approach region to $(0,0)$.  Similarly,
$\frac{|\vec{A}_2|^2}{|p|^2}$ is bounded along non-tangential approach
regions to $u$.  

This allows us to conclude that $1/p(z)$ times any of
$\vec{E}_1,\vec{E}_2,\vec{F}_1,\vec{F}_2$ gives a rational function
bounded along non-tangential approach regions to $u$.

By Theorem \ref{generators}, every element of $\mathcal{I}_p$ can be
written in terms of polynomial multiples of $\mcE_1, \mcF_1,\mcF_2$.
Therefore, every element $q$ of $\mathcal{I}_p$ will vanish to at
least order $M$ at $u$, or equivalently $q/p$ will be non-tangentially
bounded at $u$.
\end{proof}

\section{Examples}
This section contains three examples to illustrate Theorems A,B,and C.
See \cite{Bickel} for a construction of more examples.

\begin{example}
The following example is taken from \cite{AMYcara}. Let
\[
p(z_1,z_2) = 4-z_1-3z_2-z_1z_2+z_2^2 \qquad \refl{p}(z_1,z_2) = 4z_1z_2^2 - z_2^2 - 3z_1z_2 - z_2 + z_1.
\]
The special Agler pairs for $p$ can be constructed as described in
Appendix B.  Namely, set $|z_2|=1$ and consider
\[
\frac{|p(z)|^2 - |\refl{p}(z)|^2}{1-|z_1|^2} = 4|(1-z_2)^2|^2.
\]
Since $(1-z_2)^2$ has no zeros in $\D$ it follows that $\vec{E}_1(z) =
2(1-z_2)^2$.  Since the reflection of this equals itself, we see that
$\vec{F}_1=\vec{E}_1$.  This automatically implies that $p$ has unique
Agler decomposition (up to unitary multiples of Agler pairs). 

The vector polynomial $\vec{E}_2=\vec{F}_2$ can be constructed as in
Remark \ref{constructgenerators}.  We get

% For $|z_1|=1$
% \[
% \begin{aligned}
% \frac{|p(z_1,z_2)|^2- |\refl{p}(z_1,z_2)|^2}{1-|z_2|^2} &=
% (1, \bar{z_2}) \begin{pmatrix} 16- 4z_1 - 4\bar{z_1} & -8 - 4z_1 + 4
%   \bar{z_1} \\ -8 +4z_1 - 4\bar{z_1} & 16-4z_1 - 4\bar{z_1} \end{pmatrix}
% \begin{pmatrix} 1 \\ z_2 \end{pmatrix} \\
% &= 4(1,\bar{z_2})A(z_1)^*A(z_1) \begin{pmatrix} 1\\ z_2 \end{pmatrix}
% \end{aligned}
% \]
% where
% \[
% A(z_1) = \begin{pmatrix} 1-z_1 & -(1-z_1) \\ \sqrt{2} &
%   -\sqrt{2}z_1 \end{pmatrix}.
% \]
% Therefore 
\[
\vec{F}_2(z) = \vec{E}_2(z)= 2\begin{pmatrix} (1-z_1)(1-z_2) \\
  \sqrt{2}(1-z_1z_2) \end{pmatrix}.
\]

The Agler decomposition for $p$ is given by
\[
|p|^2- |\refl{p}|^2 = 4(1-|z_1|^2)|(1-z_2)^2|^2 + 4(1-|z_2|^2)(
|(1-z_1)(1-z_2)|^2 + 2|1-z_1z_2|^2).
\]
Because this is unique we know $\mcP_{0,1} = \{0\}$.  We can also see
this by computing the intersection multiplicity at $(1,1)$.  

The expansion of $p$ at $(1,1)$ is given by
\[
p(1-\zeta,1-\eta) = 2(\zeta + \eta) + \eta^2 - \zeta \eta \qquad
\refl{p}(1-\zeta, 1-\eta) = -2(\zeta + \eta) +5\zeta \eta + 3\eta^2 -
4\zeta \eta^2
\]
and so by Remark \ref{intaxioms}
% The homogeneous expansion of the Agler decomposition becomes
% \[
% 2\Re(2(\zeta + \eta)\overline{(\eta^2 - \zeta \eta + 5\zeta \eta
%   + 3\eta^2)} = 8|\zeta+\eta|^2 \Re(\eta). 
% \]
% This implies elements of $\mcP_{0,1}$ vanish at $(1,1)$ and also have
% differential at $(1,1)$ given by $\zeta+\eta$.  This forces
% $\mcP_{0,1}$ to be trivial as we already knew.  
% We can also compute the intersection multiplicity of $p$ and
% $\refl{p}$ at $(1,1)$.  
\[
\begin{aligned}
N_{(1,1)} (p,\refl{p}) &= N_{(1,1)} (p, \refl{p}+p) \\
&=N_0 (2(\zeta + \eta) + \eta^2 - \zeta \eta , \eta(\zeta +\eta) -
\zeta \eta^2) \\
&= N_0(2(\zeta + \eta) + \eta^2- \zeta \eta, \eta) + N_0(2(\zeta +
\eta) + \eta^2- \zeta \eta, (\zeta + \eta) - \zeta \eta) \\
&= 1 + N_0( \eta^2 + \zeta \eta, \zeta + \eta - \zeta \eta) \\
&= 1 + N_0(\eta, \zeta + \eta -\zeta \eta) + N_0(\zeta + \eta, \zeta +
\eta -\zeta \eta) \\
&= 1 + 1 + N_0(\zeta + \eta, \zeta \eta) = 4.
\end{aligned}
\]
Thus, $\dim \mcP_{0,1} = 2 - (1/2)(4) = 0$. 

More generally, $\dim \mcP_{j,k} = (j+1)(k+1) - 2$.  This suggests $2$
conditions force $q \in \mathcal{I}_p$.  They are $q(1,1) = 0$ and
$\partial_1 q(1,1)=\partial_{2} q(1,1)$.  To see this, note that
$\{(1-z_2)^2, (1-z_1)(1-z_2), (1-z_1z_2)\}$ generates $\mathcal{I}_p$.  These
generators satisfy the two conditions $q(1,1)= 0, \partial_1
q(1,1)=\partial_2 q(1,1)$ and it can be shown that these conditions
determine an ideal in $\C[z,w]$ with codimension $2$.  Therefore, $q/p
\in L^2(\T^2)$ iff $q(1,1) = 0$ and $\partial_1q(1,1)
= \partial_2q(1,1)$.

The rational inner function $f = \refl{p}/p$ is non-tangentially $C^1$
because $\zeta + \eta$ divides the second order term of $p+\refl{p}$,
which is $4\eta^2+4\zeta\eta$.  In fact, $f$ is non-tangentially $C^2$
because $\nu = -1$, $F_1 = 2\eta$ and $F_2 = -\eta^2$:
\[
\frac{\refl{p}}{p} -(-1 + 2\eta - \eta^2) =
\eta^3\frac{\eta-\zeta}{2(\zeta+\eta)+\eta^2-\zeta\eta}.
\]
No higher regularity is possible because this would force an
intersection multiplicity at least $6$. This can also be seen
directly. \eox
\end{example}

\begin{example}
The next example is taken from \cite{KneseSchwarz}.  Let
\[
\begin{aligned}
p(z) = &\frac{1}{18} \left(3 \sqrt{5} z_2^2-2 z_2^2-6 \sqrt{5} z_2-9 z_2+18\right)\\
&+ \frac{1}{18} \left(9 z_2^2-14 z_2+6 \sqrt{5}-9\right) z_1 \\
&+\frac{1}{18} \left(9 z_2-3 \sqrt{5}-2\right) z_1^2.
\end{aligned}
\]
This example was designed to have the feature that for $f = \refl{p}/p$
\[
f(z_1,z_1) = z_1 \text{ and } f\left(\frac{z_1-\sqrt{5}/3}{1-(\sqrt{5}/3)z_1},
  \frac{z_1+\sqrt{5}/3}{1+(\sqrt{5}/3)z_1}\right) = z_1
\]
which means $f$ acts as an automorphism of the disk when restricted to
certain embedded disks.  One can check (with simple computer algebra)
that $N_{(1,1)} (p,\refl{p}) = 6$ and $N_{(-1,-1)}(p,\refl{p}) = 2$.
Therefore, 
\[
\dim \mcP_{1,1} = 2\cdot 2 - \frac{1}{2}(6+2) = 0
\]
by Theorem \ref{intthmdim}.  So, $p$ has a unique Agler pair.  

Now, $p$ vanishes to order $2$ at $(1,1)$, and elements
of $\mathcal{I}_p$ must have the same property.  This puts $3$
conditions on elements of $\C[z_1,z_2]$.  Also, $p$ vanishes to order
$1$ at $(-1,-1)$ and this puts one additional condition on elements of
$\C[z_1,z_2]$.  Thus, $q \in \mathcal{I}_p$ iff
\[
q(1,1)=\partial_1 q(1,1) = \partial_2 q(1,1) = 0 = q(-1,-1).
\]
These conditions are enough to show $\mcP_{2,1}$ is spanned by
$\{z_1^2-z_1-z_1z_2+z_2, z_1^2z_2-z_1-z_1z_2+1\}$ and $\mcP_{1,2}$ is spanned by
$\{z_2^2-z_2-z_1z_2+z_1, z_1z_2^2-z_2-z_1z_2+1\}$.  Since $\mcG$ is trivial, this implies
$\mcE_1=\mcF_1,\mcE_2=\mcF_2$ and therefore we can use spanning sets
for $\mcP_{2,1}=\mcE_2,\mcP_{1,2} = \mcE_1$ to generate
$\mathcal{I}_p$.  Therefore,
\[
\mathcal{I}_p = \langle z_1^2-z_1-z_1z_2+z_2, z_1^2z_2-z_1-z_1z_2+1, z_2^2-z_2-z_1z_2+z_1,
z_1z_2^2-z_2-z_1z_2+1\rangle
\]
which illustrates Theorem \ref{generators}.

Next, we discuss non-tangential regularity.  The bottom homogeneous
term of $p(1-\zeta,1-\eta)$ is
\[
(7/18-\sqrt{5}/6)\zeta^2 + (11/9)\zeta\eta +(7/18+\sqrt{5}/6)\eta^2
\]
and the bottom homogeneous term of
$p(1-\zeta,1-\eta)-\refl{p}(1-\zeta,1-\eta)$ is
\[
(1-\sqrt{5}/3)\zeta^2\eta+(1+\sqrt{5}/3) \zeta \eta^2.
\]
Since the former does not divide the latter, $f=\refl{p}/p$ is not
non-tangentially $C^1$ at $(1,1)$; notice that order of vanishing at
$(1,1)$ alone does not reveal this.  Since $N_{(-1,-1)}(p,\refl{p})
=2$, $f$ is also not non-tangentially $C^1$ at $(-1,-1)$.

\eox
\end{example}

\begin{example} \label{pascoeex} J. Pascoe has a method to construct
  rational inner functions which are non-tangentially $C^k$ but not
  $C^{k+1}$ at a point of $\T^2$---his construction will appear in
  forthcoming work \cite{Pascoe}.  He has generously allowed us to
  include the following example which comes from his construction.

Let
\[
p(z) = 4-5z_1-2z_2+2z_1z_2+3z_1^2-z_1^2z_2-z_1^3z_2 
\]
\[
\refl{p}(z) = 4 z_2 z_1^3-5 z_2 z_1^2+3 z_2 z_1-2 z_1^3+2 z_1^2-z_1-1.
\]
Note $p$ has degree $(3,1)$.  One can compute that
\[
N_{(1,1)}(p,\refl{p}) = 6
\]
which again means $\mcG=\mcP_{2,0} = \{0\}$ and $p$ has a unique Agler
pair.  Thus, $\mcE_1=\mcF_1=\mcP_{2,1}, \mcE_2=\mcF_2=\mcP_{3,0}$.  

By the dimension theorem $\mcP_{j,k} = (j+1)(k+1)-3$ for $j\geq
2,k\geq 0$.  So, $\mathcal{I}_p$ has codimension $3$ in $\C[z_1,z_2]$
and it is of interest to determine the 3 conditions imposed on
elements of $\mathcal{I}_p$.  Necessarily, $q(1,1) = 0$ for all $q\in
\mathcal{I}_p$. 

Using the method of Appendix B, one can compute that $\vec{E}_2(z) =
\sqrt{2}(1-z_1)^3$.  Once $\vec{E}_2$ is known, we can use the method
outlined after the proof of Theorem \ref{generators} to find
$\vec{E}_1=\vec{F}_1$.  In this case, we get the
following orthonormal basis for $\mcF_1=\mcE_1$
\begin{multline*}
\{2 \sqrt{\frac{2}{7}} (1-z_2 z_1), \sqrt{\frac{2}{133}} (-21 z_2 z_1^2+20 z_2
z_1-7 z_2+7 z_1^2+1), \\ \frac{1}{\sqrt{19}} (11 z_2 z_1^2-15 z_2 z_1+10 z_2-10 z_1^2+19
z_1-15) \}.
\end{multline*}
Thus, if we put these polynomials into a vector we get $\vec{F}_1$ and
hence we have computed the unique Agler pair $(\vec{F}_1,\vec{E}_2)$
(up to unitary multiplication).

Since this is messy, we use a slightly different approach to get
manageable numbers.  The coefficients of powers of $w$ in
$\vec{E}_1(w)^*\vec{E}_1(z)$ will span $\mcE_1$ and these can be found
directly in terms of $p,\refl{p}, \vec{E}_2$ by Theorem \ref{thmsos}.
This makes it possible to find the following non-orthonormal basis for
$\mcE_1$
\[
\{(1-z_1)^2(1-z_2), (1-z_1)(2-z_1-z_2), (1-z_1z_2)\}.
\]

Thus, the ideal $\mathcal{I}_p$ is generated by
\[
\{ (1-z_1)(2-z_1-z_2), (1-z_1z_2), (1-z_1)^3\}.
\]
The polynomial $(1-z_1)^2(1-z_2)$ in the basis for $\mcE_1$ can be
written in terms of these so we can safely remove it.  Using this we
can find defining relations for $\mathcal{I}_p$; namely, $q \in
\mathcal{I}_p$ iff $q(1,1)=\partial_1q(1,1)-\partial_2q(1,1)=0$ and
\[
\partial_{11}q(1,1) - 2\partial_{12}q(1,1)+\partial_{22}q(2,2) +2\partial_1q(1,1) = 0.
\]
One can check that these conditions actually define an ideal which has
codimension $3$ in $\C[z_1,z_1]$ which must then coincide with
$\mathcal{I}_p$.

We omit the details, but using Theorem \ref{thmnontan} we can
show $f=\refl{p}/p$ is non-tangentially $C^4$ but not $C^5$ at $(1,1)$.

\eox
\end{example}

\section{Appendix A: Theorem \ref{thmorth} and Lemma \ref{tfr}}
In this appendix we explain how Theorem \ref{thmorth} in Section
\ref{prelim} follows from the work in \cites{KneseAPDE, BickelKnese}.

The orthogonality relations for $p$ follow from Proposition 7.1 of
\cite{KneseAPDE}.  Since reflection $f\mapsto
\overline{f(1/\bar{z}_1,1/\bar{z}_2)}$ is an anti-unitary, the
orthogonality relations for $\refl{p}$ follow from those for $p$.  

We only need to establish the orthogonality relations for $\mcF_1$
since the relations for $\mcE_1$ follow by applying the anti-unitary
reflection, and the relations for $\mcE_2,\mcF_2$ follow by symmetry.

The orthogonality relation for $\mcF_1$ is not easily quotable from
\cite{KneseAPDE}, so we shall carefully explain how it follows from
work in \cite{BickelKnese}.

The setup of \cite{BickelKnese} is slightly different.  Instead of
working in $\Lp$ we work in $L^2(\T^2)$ and $H^2(\T^2)$ with Lebesgue
measure but the spaces of interest end up containing functions of the
form $f/p$ so that there is a direct comparison between this paper and
\cite{BickelKnese}.  Let $\phi = \frac{\refl{p}}{p}$; notice that
$\phi \in H^\infty$ and multiplication by $\phi$ is a unitary on
$L^2(\T^2)$ since $|\phi| =1$ a.e. on $\T^2$. Define
\[
\begin{aligned}
\mcH_{\phi} &= H^2 \ominus \phi H^2 \\
\mcH_{\phi}^{1} &= H^2 \cap \phi L^2_{\bullet -} \\
\mcH_{\phi}^{2} &= H^2 \cap \phi L^2_{-\bullet} \\
\mcK_{\phi} &= H^2 \cap \phi L^2_{--} \\
\mcK_{\phi}^{1} &= H^2 \cap z_1 \phi L^2_{--} \\
\mcK_{\phi}^{2} &= H^2 \cap z_2 \phi L^2_{--}
\end{aligned}
\]
where 
\[
\begin{aligned}
L^2_{\bullet -} &= \{f\in L^2(\T^2): \text{supp} \hat{f}
\subset \{(j,k): k<0\} \} \\
L^2_{-\bullet} &= \{f\in L^2(\T^2): \text{supp} \hat{f}
\subset \{(j,k): j<0\} \} \\ 
L^2_{--} &= \{f \in L^2(\T^2): \text{supp} \hat{f} \subset
\{(j,k): j,k<0\} \}. 
\end{aligned}
\]
We emphasize we are taking orthogonal complements in $L^2(\T^2)$.  We
will also use $L^2_{+\bullet}, L^2_{\bullet +}$ which are the
functions in $L^2(\T^2)$ with Fourier support in $\{(j,k):j\geq 0\},
\{(j,k): k\geq 0\}$ respectively. Warning: ``+'' refers to a non-strict
inequality in this notation and ``-'' refers to a strict inequality.

The following Proposition is similar to Proposition 5.1 of
\cite{BickelKnese}.

\begin{prop} \label{appprop}
\[
H^2 \ominus \mcH_{\phi}^1 = L^2_{+\bullet} \ominus (L^2_{+\bullet}
\cap \phi L^2_{\bullet -})
\]
\end{prop}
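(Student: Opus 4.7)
The plan is to prove the two inclusions using a single Fourier-support calculation. Write $B := L^2_{+\bullet} \cap \phi L^2_{\bullet -}$; note $\mcH_\phi^1 = H^2 \cap \phi L^2_{\bullet -} \subset B$ and $H^2 \subset L^2_{+\bullet}$, so both sides are well-defined subspaces. Since $\phi \in H^\infty(\T^2)$, its Fourier support lies in $\{(j,k) : j,k \geq 0\}$ and that of $\bar\phi$ lies in $\{(j,k) : j,k \leq 0\}$. Combined with the usual support-addition rule for products, this gives the key observation I will use repeatedly: any $g \in L^2(\T^2)$ whose Fourier support sits in $\{(j,k) : j \geq 0,\ k < 0\}$ satisfies $\bar\phi g \in L^2_{\bullet -}$, so $g \in \phi L^2_{\bullet -}$.

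For the inclusion $H^2 \ominus \mcH_\phi^1 \subset L^2_{+\bullet} \ominus B$, I pick $f$ on the left and an arbitrary $g \in B$ and show $\langle f, g\rangle = 0$. The plan is to replace $g$ by its $H^2$-part $g_+ := P_{H^2} g$, which is all that $f \in H^2$ sees, and then argue $g_+$ in fact lies in $\mcH_\phi^1$. Writing $g = \phi h$ with $h \in L^2_{\bullet -}$ and $g = g_+ + g_-$ with $g_-$ Fourier-supported in $\{j \geq 0,\ k<0\}$, I compute $\bar\phi g_+ = h - \bar\phi g_-$; the second term lies in $L^2_{\bullet -}$ by the key observation, so $\bar\phi g_+ \in L^2_{\bullet -}$, i.e., $g_+ \in \phi L^2_{\bullet -}$. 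Combined with $g_+ \in H^2$, this puts $g_+ \in \mcH_\phi^1$, and the orthogonality follows from the choice of $f$.

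For the reverse inclusion, orthogonality to $\mcH_\phi^1$ is automatic from $\mcH_\phi^1 \subset B$; the real task is to upgrade any $f \in L^2_{+\bullet} \ominus B$ into $H^2$. My plan is the familiar projection trick: let $g$ be the Fourier projection of $f$ onto the corner $\{(j,k) : j \geq 0,\ k<0\}$. Then $g \in L^2_{+\bullet}$, and the key observation places $g \in \phi L^2_{\bullet -}$, so $g \in B$. But $g$ is also a Fourier projection of $f$, so $\langle f, g\rangle = \|g\|^2$, while $f \perp B$ gives $\langle f, g\rangle = 0$. Hence $g = 0$, meaning $f$ has no Fourier support in $\{k < 0\}$, and therefore $f \in H^2$ as desired.

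The one point requiring a little care is justifying the support-of-product rule for $\bar\phi g$ when $\phi$ is only an $H^\infty$ function rather than a polynomial; this follows by a standard density/Fubini argument, or equivalently from recognizing $\bar\phi g$ as the convolution of Fourier series, valid since $\phi$ is bounded and $g \in L^2$. Once this is in hand, the argument is essentially bookkeeping on quadrants.
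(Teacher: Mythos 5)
Your proof is correct, and it takes a genuinely different route from the paper's. The paper computes the orthogonal complement of $B = L^2_{+\bullet}\cap \phi L^2_{\bullet -}$ inside $L^2(\T^2)$ as the join $L^2_{-\bullet}\vee \phi L^2_{\bullet +}$, projects onto $L^2_{+\bullet}$, observes (using $\phi L^2_{\bullet +}\subset L^2_{\bullet +}$) that the image lands inside $H^2$, and then invokes an abstract Hilbert-space lemma (Lemma 2.6 of \cite{BickelKnese}: if $\mcH\ominus\mcK_1\subset\mcK_2$ then $\mcH\ominus\mcK_1=\mcK_2\ominus(\mcK_1\cap\mcK_2)$) to conclude. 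You replace both the complement computation and the cited lemma with direct corner-projection bookkeeping: your ``key observation,'' which amounts to $L^2_{+\bullet}\ominus H^2\subset \phi L^2_{\bullet -}$, does double duty --- first to show $P_{H^2}B\subset \mcH_\phi^1$ (whence the easy inclusion), and then to show that any $f\in L^2_{+\bullet}$ orthogonal to $B$ has vanishing Fourier projection onto the corner $\{j\ge 0,\ k<0\}$ and so lies in $H^2$. The paper's route buys reusability, since the same abstract lemma handles the several analogous identities in \cite{BickelKnese}; yours buys self-containment. Two small points worth making explicit: you use $|\phi|=1$ a.e.\ (true since $\phi=\refl{p}/p$ is inner) to pass from $\bar\phi g\in L^2_{\bullet -}$ back to $g\in\phi L^2_{\bullet -}$, and your closing justification of the support-addition rule for $\bar\phi g$ with $\phi\in H^\infty$ and $g\in L^2$ (absolutely convergent convolution of $\ell^2$ coefficient sequences, via Parseval) is the right fix for the one genuinely delicate step.
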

\begin{proof}
Let $P_{L^2_{+\bullet}}$ denote orthogonal projection onto $L^2_{+\bullet}$.
Observe
\[
\begin{aligned} 
L^2_{+\bullet} \ominus (L^2_{+\bullet}
\cap \phi L^2_{\bullet -}) &= P_{L^2_{+\bullet}}( (L^2_{+\bullet} \cap
  \phi L^2_{\bullet -})^{\perp}) \\
&= P_{L^2_{+\bullet}} (L^2_{-\bullet} \vee \phi L^2_{\bullet +}) \\
&= \text{closure} (P_{L^2_{+\bullet}} (L^2_{-\bullet} + \phi
L^2_{\bullet +}) )\\
&= \text{closure}( P_{L^2_{+\bullet}} (\phi L^2_{\bullet +}) )\\
&\subset \text{closure}( P_{L^2_{+\bullet}} (L^2_{\bullet +}) )\\
& \subset H^2
\end{aligned}
\]
The main facts we are using are $(L^2_{+\bullet})^{\perp} =
L^2_{-\bullet}$ and $(\phi L^2_{\bullet -})^{\perp} = \phi
L^2_{\bullet +}$ since multiplication by $\phi$ is a unitary. Also,
$\phi L^2_{\bullet +} \subset L^2_{\bullet +}$ since $\phi \in
H^\infty$.  

Then, we can conclude that
\[
H^2 \ominus (H^2 \cap \phi L^2_{\bullet -}) = L^2_{+\bullet} \ominus (L^2_{+\bullet}
\cap \phi L^2_{\bullet -})
\]
by a simple Hilbert space lemma from \cite{BickelKnese}. Lemma 2.6 of
\cite{BickelKnese} says that if $\mcK_1,\mcK_2$ are closed subspaces
of a Hilbert space $\mcH$ satisfying $\mcH\ominus \mcK_1 \subset
\mcK_2$, then $\mcH \ominus \mcK_1 = \mcK_2 \ominus
(\mcK_1\cap\mcK_2)$.
\end{proof}

By Proposition 2.3, 5.1 and Corollary 9.2 of \cite{BickelKnese} we have
\[
\mcH_\phi \ominus \mcH_{\phi}^{1} = \mcH_{\phi}^{2} \ominus
\mcK_{\phi}
\]
\[
(\mcH_\phi^{2} \ominus \mcK_{\phi}) \ominus z_2(\mcH_\phi^2 \ominus
\mcK_{\phi}) = \mcK_{\phi}^2 \ominus \mcK_{\phi}
\]
\[
\begin{aligned}
\mcK_{\phi} &= \{ q/p \in H^2: q \in \C[z_1,z_2], \deg q \leq (n-1,m-1) \} \\
\mcK_{\phi}^2 &= \{q/p \in H^2: q\in \C[z_1,z_2], \deg q \leq (n-1,m).
\}
\end{aligned}
\]

\begin{prop}\label{prop81} If $f \in L^2_{+\bullet}$ and $f/p \in L^2$, then $f/p
  \in L^2_{+\bullet}$.
\end{prop}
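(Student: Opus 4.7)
The plan is to prove this fiber-wise in the $z_2$ variable. First I would observe that $f \in L^2_{+\bullet}$ is equivalent to $f(\cdot,z_2) \in H^2(\T)$ for almost every $z_2 \in \T$ (by comparing Fourier coefficients via Fubini), and analogously for $f/p \in L^2_{+\bullet}$. The hypothesis $f/p \in L^2(\T^2)$ gives $(f/p)(\cdot,z_2) \in L^2(\T)$ for a.e.\ $z_2$. Moreover, for every $z_2 \in \T$ with $p(\cdot,z_2) \not\equiv 0$ (which excludes at most finitely many $z_2$), the one-variable polynomial $P_{z_2}(z_1) := p(z_1,z_2)$ has no zeros in $\D$: by Hurwitz's theorem, applied to a sequence $z_2^{(n)} \in \D$ tending to $z_2$, $P_{z_2}$ is either non-vanishing in $\D$ or identically zero.

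This reduces the proposition to the one-variable claim: \emph{if $P \in \C[z]$ has no zeros in $\D$, $h \in L^2(\T)$, and $Ph \in H^2(\T)$, then $h \in H^2(\T)$.} I would factor $P = QR$ where $Q$ has all zeros in $\C \setminus \cD$ and $R$ has all zeros on $\T$. Since $1/Q$ is analytic on $\cD$, multiplication by $1/Q$ preserves $H^2(\T)$, so $Rh = (Ph)/Q \in H^2(\T)$. Peeling off one linear factor at a time then reduces matters to the case $R(z) = z-\zeta$ for a single $\zeta \in \T$.

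The crucial step is: if $h \in L^2(\T)$ and $(z-\zeta)h \in H^2(\T)$ for $\zeta \in \T$, then $h \in H^2(\T)$. Writing $h = \sum_k a_k z^k$, the condition $(z-\zeta)h \in H^2(\T)$ yields the recurrence $a_{k-1} = \zeta a_k$ for all $k \leq -1$, so $a_{-n} = \zeta^{n-1} a_{-1}$ for $n \geq 1$. Since $|\zeta| = 1$, these coefficients all share the modulus $|a_{-1}|$, and the $L^2$-summability $\sum_k |a_k|^2 < \infty$ forces $a_{-1} = 0$, hence $a_{-n} = 0$ for all $n \geq 1$, so $h \in H^2(\T)$.

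The main obstacle is handling the possible zeros of $p$ on $\T$: if $p$ had no zeros on $\cD^2$, then $1/p \in H^\infty(\D^2)$ would multiply $L^2_{+\bullet}$ into itself and the conclusion would be immediate. The unimodular-coefficient recurrence above is precisely what lets us divide out those linear factors of $p$ whose zeros lie on $\T$ itself.
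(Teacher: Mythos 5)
Your proof is correct. The paper itself does not prove this proposition---it defers to Proposition 8.1 of \cite{BickelKnese}---but your argument (slice in $z_2$, use Hurwitz to see that each slice $p(\cdot,z_2)$ is zero-free in $\D$ for a.e.\ $z_2\in\T$, and then handle the linear factors of the slice with zeros on $\T$ via the unimodular-coefficient recurrence $a_{-n}=\zeta^{\,n-1}a_{-1}$, which square-summability kills) is a complete, self-contained proof in the same spirit as the cited one.
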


This is Proposition 8.1 of \cite{BickelKnese} except
\cite{BickelKnese} has the hypothesis $f\in H^2$ and conclusion
$f/p\in H^2$. However, the proof of Proposition 8.1 of
\cite{BickelKnese} is structured so that it proves Proposition
\ref{prop81} and then by symmetry $f \in L^2_{\bullet +}$ and $f/p\in
L^2$ implies $f/p \in L^2_{\bullet +}$, which yields the result for
$H^2$.

This shows
\[
\mcK_{\phi} = p^{-1} \mcG \text{ and } \mcK_{\phi}^2 = p^{-1} \mcP_{n-1,m}
\]
which shows the connection to this paper.  Thus, $\mcK_{\phi}^2
\ominus \mcK_{\phi} = p^{-1} \mcF_1 \subset \mcH^2_{\phi} \ominus
\mcK_{\phi} \subset H^2 \ominus \mcH_{\phi}^{1} \perp L^2_{+\bullet}
\cap \phi L^2_{\bullet -}$ by Proposition \ref{appprop} above.  
Therefore, $p^{-1} \mcF_1$ is
orthogonal (in $L^2(\T^2)$) to $\mathcal{Q} \defn L^2_{+\bullet} \cap \phi L^2_{\bullet
  -}$.  If we can show 
\[
\{f/p \in L^2: \text{supp} \hat{f} \subset \{(j,k): j \geq 0
\text{ and } k < m\} \} \subset \mathcal{Q}
\]
then we will be finished.  

Now, if $f/p \in L^2$ and $\text{supp} \hat{f} \subset \{(j,k):j\geq
0, k<m\}$ then $f/p \in L^2_{+\bullet}$ by Proposition \ref{prop81}.
We need to show $(f/p) \bar{\phi} \in L^2_{\bullet -}$ or
$\overline{(f/p)} \phi \in z_2 L^2_{\bullet +}$ so we compute
\[
\frac{\bar{f}}{\bar{p}} \frac{\refl{p}}{p}  = \frac{z_1^n z_2^m
  \bar{f}}{p} \text{ on } \T^2.
\]
Since $z_2^{m-1} \bar{f} \in L^2_{\bullet +}$ we see that the above is
in $z_2L^2_{\bullet +} $ by Proposition \ref{prop81}.  This proves
$f/p \in \mathcal{Q}$.

Thus, $p^{-1} \mcF_1 \perp f/p$ for any $f$ with $f/p \in L^2$ and $\text{supp}
\hat{f} \subset \{(j,k):j\geq 0\text{ and } k<m\}$, where the
  orthogonality ``$\perp$'' is in $L^2(\T^2)$.  But, this exactly
  means
\[
\mcF_1 \perp \{f\in \Lp: \text{supp}
\hat{f} \subset \{(j,k):j\geq 0\text{ and } k<m\} \}
\]
using the inner product of $\Lp$.  This concludes our explanation of
Theorem \ref{thmorth}.  We now prove Lemma \ref{tfr}. 

\begin{proof}[Proof of Lemma \ref{tfr}]
By definition of ``Agler pair'' we have,
\[
|p|^2+\sum_{j=1}^{2} |z_j\vec{A}_j|^2 
= |\refl{p}|^2 + \sum_{j=1}^{2} |\vec{A}_j|^2
\]
and by Proposition \ref{isomprop} there exists a $(1+N+M)\times
(1+N+M)$ isometric matrix $U$, which is necessarily a unitary because
it is square, such that
\[
U \begin{pmatrix} p \\ z_1 \vec{A}_1 \\ z_2 \vec{A}_2 \end{pmatrix}
= \begin{pmatrix} \refl{p} \\ \vec{A}_1 \\ \vec{A}_2\end{pmatrix}.
\]
Write $U$ in block form $\begin{pmatrix} A & B \\ C & D \end{pmatrix}$
where the blocks correspond to the direct sum $\C^{1+N+M} = \C\oplus
\C^{N+M}$.  Recall $\Delta(z) = \begin{pmatrix} z_1 I_N & 0 \\ 0 & z_2
  I_M \end{pmatrix}$.  Then, for $\vec{A} = \begin{pmatrix} \vec{A}_1
  \\ \vec{A}_2 \end{pmatrix}$ we have
\[
\begin{pmatrix} A & B \\ C &
    D \end{pmatrix} \begin{pmatrix} p \\ \Delta \vec{A} \end{pmatrix}
  = \begin{pmatrix} A p + B \Delta \vec{A}\\ Cp + D \Delta
  \vec{A}  \end{pmatrix} = \begin{pmatrix} \refl{p} \\ \vec{A}\end{pmatrix}.
\]
Then, $\vec{A} = p (I-D\Delta)^{-1} C$ and consequently $p(A+B\Delta
(I-D\Delta)^{-1} C) = \refl{p}$.
\end{proof}

\section{Appendix B: Constructing Agler pairs} 
Theorem \ref{thmsos} and Proposition \ref{Estable} make it possible to
construct $\vec{E}_j, \vec{F}_j$ using the one variable matrix
Fej\'er-Riesz lemma (see \cites{Rosenblatt, Rosenblum}).  This
approach can actually be pushed further to prove the main formula in
Theorem \ref{thmsos} using the method of Kummert \cite{Kummert}, but
we will not do this here.  The construction goes as follows. For $z_2
\in \T$ we write
\[
\frac{\overline{p(w_1,z_2)} p(z) - \overline{\refl{p}(w_1,z_2)}
  \refl{p}(z)}{1-\bar{w}_1 z_1} = \Lambda_n(w_1)^* T_1(z_2)
\Lambda_n(z_1) \\
\]
for some matrix Laurent polynomial $T_1(z_2) \in \C^{n\times
  n}[z_2,z_2^{-1}]$.  In fact, if we write $p(z) = \sum_{j=0}^{n}
p_j(z_2)z_1^j$ and define $\refl{p}_j(z_2) = z_2^m
\overline{p_j(1/\bar{z}_2)}$ as well as
\[
R(z_2) = \begin{pmatrix} p_0(z_2) & p_1(z_2) & \cdots & p_{n-1}(z_2)
  \\
0 &p_0(z_2) & \cdots & p_{n-2}(z_2) \\
\vdots & \vdots & \ddots & \vdots \\
0 & 0 & \cdots & p_0(z_2) \end{pmatrix}
\]
\[
S(z_2) = \begin{pmatrix} \refl{p}_n(z_2) & \refl{p}_{n-1}(z_2) & \cdots & \refl{p}_{1}(z_2)
  \\
0 &\refl{p}_n(z_2) & \cdots & \refl{p}_{2}(z_2) \\
\vdots & \vdots & \ddots & \vdots \\
0 & 0 & \cdots & \refl{p}_n(z_2) \end{pmatrix}
\]
then by direct calculation $T_1(z_2) = R(z_2)^* R(z_2) - S(z_2)^*
S(z_2)$ for $z_2 \in \T$.  

By Theorem \ref{thmsos}, for $z_1,w_1 \in \C, z_2 \in \T$
\[
\Lambda_n(w_1)^* T_1(z_2)
\Lambda_n(z_1) = \Lambda_n(w_1)^* E_1(z_2)^*E_1(z_2) \Lambda_n(z_1).
\]
 As this formula holds
for all $z_1,w_1 \in \C$, we get 
\[
T_1(z_2) = E_1(z_2)^* E_1(z_2)
\]
for $z_2 \in \T$.  By the matrix Fej\'er-Riesz lemma, there exists a
matrix polynomial $A(z_2)$ with $\det A(z_2)$ non-vanishing for $z_2
\in \D$ such that $A(z_2)^* A(z_2) = E_1(z_2)^* E_1(z_2)$ for $z_2 \in \T$.
The functions $\Psi := A E_1^{-1}$ and $\Psi^{-1}$ are both matrix
rational inner functions on $\D$ which by the maximum principle can
only happen if $\Psi$ is a constant unitary matrix.  Thus, $E_1$ can
be constructed via the Fej\'er-Riesz lemma, which yields a
construction for $F_1$ via \eqref{matrixreflection}.  The construction
for $E_2,F_2$ is analogous.  

If $p$ has no zeros on $\cD^2$ this construction can be done
numerically since there are algorithms for performing Fej\'er-Riesz
factorizations for univariate matrix Laurent polynomials which are
positive on $\T$; see Theorem 3.1 of \cite{GeronimoLai} as well as the
references of \cite{GeronimoLai}.  In our case, $T_1$ is positive
definite on $\T$ except at finitely many points.  One could certainly
apply the algorithm to $T_1+\epsilon I$ for $\epsilon >0$ and let
$\epsilon \to 0$, but we suspect this would have numerical issues.  It
would be interesting, then, to produce a matrix Fej\'er-Riesz
factorization algorithm with prescribed singularities on $\T$.

\section{Appendix C: Multiplicities on $\T^2$ are even} %\label{even}
This section is technically not necessary for the main theorems of
this paper, but it is perhaps reassuring to know that our formula for
the dimension of $\mcP_{j,k}$ in Theorem \ref{intthmdim} does not
actually involve any fractions.  Lemma \ref{Puilemma}, describing the
initial power series development of a Puiseux series associated to the
zero set of a semi-stable polynomial around a zero in the boundary,
may be of some independent interest.

\begin{prop} Let $p \in \C[z_1,z_2]$ be semi-stable.  If $p(t) = 0$
  for $t \in \T^2$, then $N_t(p,\refl{p})$ is even.
\end{prop}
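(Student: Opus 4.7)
The plan is to use Puiseux series, reducing in the local ring at~$t$ to an analysis of a single irreducible branch against its image under a local reflection.

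First I would work in the local ring $\mathcal{O}_t$ at $t\in\T^2$ and introduce the anti-$\C$-linear ring automorphism $R(f)(z) := \overline{f(1/\bar z)}$; since $z\mapsto 1/\bar z$ is a locally anti-holomorphic involution of a neighborhood of any point of~$\T^2$, $R$ is well-defined, an involution, and preserves intersection multiplicities. Because $z_1^n z_2^m$ is a unit at~$t$, the ideals $(p,\refl{p})$ and $(p, R(p))$ coincide in $\mathcal{O}_t$, so $N_t(p,\refl{p}) = N_t(p, R(p))$. Factor $p = u\prod_j p_j^{s_j}$ into distinct irreducibles in $\mathcal{O}_t$; then $R(p) = R(u)\prod_j R(p_j)^{s_j}$ with $R(p_j)$ again distinct irreducibles, and semi-stability forces $R(p_j) \ne p_j$ (up to units) for all~$j$. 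Additivity of intersection multiplicity together with the symmetry $N_t(p_j, R(p_k)) = N_t(p_k, R(p_j))$ (from general symmetry of $N_t$ and its $R$-invariance) then yields
\begin{equation*}
N_t(p, R(p)) = \sum_j s_j^2\, N_t(p_j, R(p_j)) + 2\sum_{j<k} s_j s_k\, N_t(p_j, R(p_k)),
\end{equation*}
and the second sum is manifestly even. So the problem reduces to showing $N_t(p_j, R(p_j))$ is even for each irreducible branch~$p_j$ at~$t$.

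Second, for a single irreducible branch $C = V(p_j)$ I would change coordinates via $\eta_j := 1 - z_j/t_j$ so that $t$ becomes the origin and $\D^2$ locally becomes a product of right half-planes; the involution $R$ in these coordinates acts componentwise by $\eta \mapsto -\bar\eta/(1-\bar\eta)$. Choose a Puiseux parameterization $\gamma(\tau) = (\tau^N,\phi(\tau))$ of~$C$ with $\phi$ convergent and $\phi(0)=0$. The Puiseux intersection formula gives
\begin{equation*}
N_t(p_j, R(p_j)) = \operatorname{ord}_{\tau=0}\, R(p_j)(\gamma(\tau)),
\end{equation*}
and factoring $R(p_j)$ through the Puiseux roots of the coefficient-wise conjugate $\overline{p_j}$ precomposed with the inversion $w\mapsto -w/(1-w)$ produces a product of $N$ holomorphic factors in~$\tau$ whose total order of vanishing is the intersection multiplicity.

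Third, I would show this order is even using Lemma \ref{Puilemma}. Because $C$ avoids $\D^2$ locally, the argument of Theorem \ref{thmbottom} applied to~$p_j$ (and Proposition \ref{phomo}) constrains the initial terms of~$\phi$ to satisfy a reality condition after a unimodular rescaling. This condition induces a natural pairing on the $N$ Puiseux factors, under which paired contributions have conjugate leading terms and thus contribute an even total order, while any self-paired factor vanishes individually to even order; summing, the intersection multiplicity is even. The main obstacle is precisely tracking this reality/pairing structure through the Puiseux expansion, which is exactly the technical content of Lemma \ref{Puilemma} on the initial power series development of~$\phi$.
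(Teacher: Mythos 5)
You follow the paper's global reduction faithfully: replacing $\refl{p}$ by $R(p)$ (legitimate since $z_1^nz_2^m$ is a unit at $t$), factoring into branches, and using symmetry plus $R$-invariance of the multiplicity to dispose of the off-diagonal terms, so that everything comes down to the evenness of $N_t(p_j,R(p_j))$ for a single branch; this is exactly the paper's skeleton. (One small caution: for the single-branch step you must factor in the analytic local ring via Weierstrass preparation, as the paper does, not merely in the localization of $\C[z_1,z_2]$, since an irreducible polynomial can split into several Puiseux branches at $t$.) The genuine gap is in your Steps 2--3, which carry the actual content. Your coordinates $\eta_j=1-z_j/t_j$ do not put the problem in the form required by Lemma \ref{Puilemma}: the reflection becomes $\eta\mapsto-\bar\eta/(1-\bar\eta)$ rather than coefficientwise conjugation, and the excluded region is $\{|1-\eta_1|<1\}\times\{|1-\eta_2|<1\}$, only approximately a product of half-planes. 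Lemma \ref{Puilemma} is stated and proved under the hypotheses that the curve avoids an exact product of half-planes and that the reflected branch is parameterized by the coefficientwise conjugate $\bar\phi$; neither holds in your coordinates, so the lemma cannot be invoked as stated. This is precisely why the paper first applies a full Cayley transform to $\C_{+}^2$: there the reflection is literally $f\mapsto\bar f$, the Puiseux root of the reflected branch is $\bar\phi$, and the resultant computation is clean.

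More seriously, your mechanism for evenness is asserted rather than proved. You propose pairing the resultant factors so that paired factors contribute an even total, ``while any self-paired factor vanishes individually to even order.'' That last clause is the theorem in disguise: under the natural pairing in the model (Cayley-transformed) situation, a self-paired factor restricted to $X>0$ equals $2i\,\Im\phi(\mu^iX^{1/k})$, and proving that its order of vanishing is even is essentially equivalent to what you are trying to show; nothing in your sketch establishes it, and routing the composite inversion $w\mapsto -w/(1-w)$ through the Puiseux roots of $R(p_j)$ only obscures matters further. The paper's argument uses no pairing at all: Lemma \ref{Puilemma} shows that the initial terms of $\phi$ are real multiples of powers of $t^k$ and that the first deviating term occurs at exponent $2kL$, an even multiple of $k$, with coefficient $b$ in the open lower half-plane; consequently every one of the $k^2$ resultant factors has leading term $(b-\bar b)X^{2L}$, and the multiplicity is $2Lk^2$, which is even. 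To repair your proof, either perform the Cayley transform so that Lemma \ref{Puilemma} and the paper's resultant computation apply verbatim, or prove an analogue of Lemma \ref{Puilemma} for the region $\{|1-\eta|<1\}$ and carry out the order computation for the transformed parameterization; that adaptation, not the reduction in your Step 1, is the missing content.
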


To prove this, we switch to the product upper half plane
$\C_{+}^2=\{(z_1,z_2)\in \C^2: \Im z_1,\Im z_2 >0\}$ using a Cayley
transform.  Thus, we assume $p \in \C[z_1,z_2]$ has no zeros in
$\C_{+}^2$ and no common factors with $\bar{p}(z_1,z_2) \defn
\overline{p(\bar{z}_1,\bar{z}_2)}$.  We assume $p(0,0) = 0$ and we
will show $N_0(p,\bar{p})$ is even.  Using the local ring definition
of multiplicity and the Cayley transform, this yields our original
proposition, but we will not go through all of the details of this
conversion.

First, note that $p(0,z_2)$ is not identically zero, and by the
Weierstrass preparation theorem (see \cite{Fischer}) we can factor
\[
p = q p_1^{n_1} p_2^{n_2} \cdots p_m^{n_m}
\]
where $p_1,\dots, p_m$ are irreducible Weierstrass polynomials in
$z_2$ and $q$ is analytic and non-vanishing in a neighborhood of
$(0,0)$.  We can reflect this formula to obtain a factorization of
$\bar{p}$
\[
\bar{p} = \bar{q} \bar{p}_1^{n_1} \cdots \bar{p}_m^{n_m}.
\]
Section \ref{background} explains how to compute the intersection
multiplicity at $0$ of $p$ and $\bar{p}$.  First,
\[
N_{0}(p, \bar{p}) = \sum_{j,k} n_j n_k N_{0}(p_j,\bar{p}_k) =
2\sum_{j<k} n_j n_k N_{0} (p_j,\bar{p}_k) + \sum_{j} n_j^2
N_0(p_j,\bar{p}_j)
\]
and therefore it suffices to show $N_0(p_j,\bar{p}_j)$ is even for
each $j$.  We may as well drop the $j$ and prove the following lemma.

\begin{lemma} Let $p \in \C\langle z_1 \rangle [z_2]$ be an
  irreducible Weierstrass polynomial, analytic in a neighborhood $\Omega$
  of $(0,0) \in \C^2$.  Suppose $p$ is non-vanishing in $\Omega\cap
  \C_{+}^2$.  Then, $N_0(p,\bar{p})$ is even.
\end{lemma}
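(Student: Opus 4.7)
The plan is to reduce the question to the order of vanishing of the univariate resultant and then use the non-vanishing on $\C_+^2$ to force this order to be even. Since both $p$ and $\bar p$ are monic Weierstrass polynomials of the same degree $N$ in $z_2$ with no common factor, the classical resultant-multiplicity formula (\cite{Fischer}, Section 2.7) gives
\[
N_0(p, \bar p) = \text{ord}_{z_1 = 0} R(z_1), \quad R(z_1) := \text{Res}_{z_2}\bigl(p(z_1, z_2), \bar p(z_1, z_2)\bigr).
\]
Thus it suffices to show $\text{ord}_0 R$ is even, and I aim to do this by pinning down the argument of $R(x)$ on a real punctured neighborhood of $0$.

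Let $\rho_1(z_1), \ldots, \rho_N(z_1)$ denote the roots of $p(z_1, z_2)$ in $z_2$ (the multivalued Puiseux branches). For real $z_1 = x$, the identity $\bar p(x, z_2) = \overline{p(x, \bar z_2)}$ shows the roots of $\bar p(x, \cdot)$ are exactly the complex conjugates $\overline{\rho_l(x)}$. Expanding the resultant as a product over all ordered pairs $(k,l)$, separating the $k = l$ terms, and pairing $(k,l)$ with $(l,k)$ when $k \neq l$, yields
\[
R(x) = (2i)^N (-1)^{\binom{N}{2}} \prod_{k=1}^N \Im \rho_k(x) \cdot \prod_{1 \leq k < l \leq N} \bigl|\rho_k(x) - \overline{\rho_l(x)}\bigr|^2.
\]
The non-vanishing assumption forces $\rho_k(z_1) \in \overline{\C_-}$ whenever $z_1 \in \C_+$, so by continuity $\Im \rho_k(x) \leq 0$ for all real $x$ near $0$. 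Moreover no branch can be identically real, for then $(x, \rho_k(x))$ would lie in $Z_p \cap Z_{\bar p}$ along a real interval, contradicting finiteness of the common zero locus implied by the no-common-factor assumption.

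Hence on a small real punctured neighborhood of $0$, $\prod_k \Im \rho_k(x)$ has constant sign $(-1)^N$, $\prod_{k<l} |\rho_k - \overline{\rho_l}|^2 > 0$, and the prefactor $(2i)^N(-1)^{\binom{N}{2}}(-1)^N$ is a fixed nonzero scalar (one checks it equals $2^N$ for $N$ even and $-2^N i$ for $N$ odd). Thus $R(x)$ lies on a single ray in $\C^*$ as $x$ ranges over this punctured neighborhood. Writing $R(z_1) = a_m z_1^m + O(z_1^{m+1})$ with $a_m \neq 0$, constancy of $\arg R(x)$ on both sides of $0$ is incompatible with $m$ odd (since $x^m$ would then change sign), so $m$ must be even, proving $N_0(p, \bar p)$ is even. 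The main technical obstacle is the continuity claim $\Im \rho_k(x) \leq 0$ given the multivalued nature of the roots; this becomes routine once one notices that the products in the formula for $R(x)$ are symmetric in the $\rho_k$'s, so one may work with the unordered multiset of roots throughout.
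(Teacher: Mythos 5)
Your argument is correct, and it reaches the conclusion by a genuinely different route than the paper. Both proofs reduce $N_0(p,\bar p)$ to the order of vanishing at $z_1=0$ of the resultant $R(z_1)=\mathrm{Res}_{z_2}\bigl(p(z_1,\cdot),\bar p(z_1,\cdot)\bigr)$, but from that point the paper parametrizes the branch by a Puiseux series $\phi$ and proves a normal-form statement (Lemma \ref{Puilemma}): the half-plane condition forces the initial coefficients of $\phi$ to be real coefficients on powers of $t^k$ until the first deviation, which must occur at an \emph{even} multiple of $k$ with coefficient in the open lower half-plane; substituting into the product formula for the resultant then gives the explicit order $2Lk^2$. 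You bypass the Puiseux analysis entirely: the half-plane hypothesis together with continuity of the root multiset confines the fiber roots over real $x$ to $\overline{\C}_{-}$, your conjugate-pairing identity then pins $R(x)$ to a single fixed ray in $\C^*$ for real $x\neq 0$, and a function holomorphic at $0$ whose restriction to a punctured real interval lies on one ray must vanish to even order. This is shorter and more elementary; what the paper's heavier route buys is the exact value $2Lk^2$ of the multiplicity and the branch normal form, which the author notes may be of independent interest. One small point to tighten in your write-up: the constant-sign claim needs $\Im\rho_k(x)<0$ strictly and $\rho_k(x)\neq\overline{\rho_l(x)}$ at \emph{every} $x\neq 0$ of the punctured interval, whereas you only explicitly exclude a branch that is identically real; this is immediate, though, from the same coprimality you invoke (equivalently $R\not\equiv 0$, so its zeros are isolated and one may shrink the neighborhood so that $R(x)\neq 0$ for $0<|x|<\epsilon$), so the gap is expository rather than substantive.
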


\begin{proof}
  By Puiseux's theorem (see \cite{Fischer}), there is a function
  $\phi(t) = a_r t^{r}+ a_{r+1} t^{r+1} + \dots$, analytic in a
  neighborhood of $0\in \C$, and a positive integer $k$ such that
\[
p(X,Y) = \prod_{j=1}^{k} (Y-\phi(\mu^j X^{1/k}) )
\]
where $\mu$ is a primitive $k$-th root of unity.  The expression
$X^{1/k}$ can be interpreted as a formal symbol whose $k$-th power is
$X$ and the above can be viewed as a formal power series computation.
Alternatively, the map $t \mapsto (t^k, \phi(t))$ gives a local
parametrization of the zero set of $p$.  By performing the reflection
operation,
\[
\bar{p}(X,Y) = \prod_{j=1}^{k} (Y-\bar{\phi}(\mu^j X^{1/k}) )
\]
and $t \mapsto (t^k,\bar{\phi}(t))$ gives a local parametrization of
the zero set of $\bar{p}$.

Now, $N_{0}(p,\bar{p})$ is given as the order of vanishing of the
resultant of $p$ and $\bar{p}$ which is given by
\[
\prod_{i=1}^{k}\prod_{j=1}^{k} (\phi(\mu^i X^{1/k}) - \bar{\phi}(\mu^j
X^{1/k}) ).
\]
We will show the order of vanishing is even by examining $\phi$
separately.  We will use the following lemma which is proved at the
end of the section.

\begin{lemma} \label{Puilemma} Let $\phi$ be analytic in a
  neighborhood of $0 \in \C$, $\phi(0)=0$ and assume $t\mapsto
  (t^k,\phi(t))$ is injective into $\C^2 \setminus \C_{+}^2$.  Then,
  $\phi$ has power series expansion
\[
\phi(t) = \sum_{j=1}^{M} a_{j} t^{jk} + b t^{2kL} +
\sum_{j=2kL+1}^{\infty} b_j t^j
\]
where $a_1<0$, $a_2,\dots, a_M \in \R$, $\arg b \in (\pi, 2\pi)$.
\end{lemma}

Thus, the initial terms of $\phi$ must be powers of $t^k$ taken with
real coefficients until an even power of $t^k$ is attained with
coefficient in the lower half plane, and after that point all we say
is that there must be terms that are not powers of $t^k$, else $(t^k,
\phi(t))$ would not be injective.

We proceed to look at the resultant computation
\[
\prod_{i=1}^{k}\prod_{j=1}^{k} (\phi(\mu^i X^{1/k}) - \bar{\phi}(\mu^j
X^{1/k}) ) = \prod_{i=1}^{k}\prod_{j=1}^{k} (b-\bar{b}) X^{2L} + \text{
  higher order} 
\]
which vanishes to order $2Lk^2$ since $b \ne \bar{b}$.  Therefore the
intersection multiplicity is even and we are finished aside from the
proof of Lemma \ref{Puilemma}
\end{proof}

\begin{proof}[Proof of Lemma \ref{Puilemma}]
To begin we analyze the first term of $\phi(t) = a t^r + \cdots$,
$a\ne 0$.  Our assumption is that $t\mapsto (t^k,\phi(t))$ does not
map into the upper half plane.  So, if $t = |t| e^{i\theta}$, then
$\sin k\theta >0$ implies $\Im \phi(t) \leq 0$.  So, writing $a = |a|
e^{i\alpha}$ and letting $\theta$ be fixed and satisfy $\sin k\theta
>0$ we have
\[
0 \geq \lim_{|t| \to 0} \frac{1}{|t|^r} \Im \phi(t) = |a| \Im
e^{i(\alpha +
  r\theta)} = |a| \sin (\alpha+ r\theta).
\]
The fact that $\sin(\alpha + r\theta)$
has constant sign on an interval of length $\pi/k$ means $r \leq k$.
On the other hand, if $\sin(\alpha + r\theta) >0$, then the above
limit calculation shows that $\Im \phi(t) >0$ for $|t|$ sufficiently
small in which case we must have $\sin(k\theta) \leq 0$.  Thus,
$\sin(k\theta)$ has constant sign on an interval of length $\pi/r$
which means $k\leq r$.  Therefore, $k=r$.  As a result, $\sin(\theta)
>0$ implies $\sin(\alpha + \theta) \leq 0$ which is only possible if
$\alpha = \pi$ (modulo multiples of $2\pi$).  Thus, $a$ is a negative
real number.

Next, we may suppose 
\[
\phi(t) = a_1 t^k +a_2 t^{2k}+\cdots a_M t^{Mk} + b t^L + \cdots
\]
where $a_1,\dots, a_M \in \R$ and $b t^L$ is the first term not of
this type (either $b$ is not real or $L$ is not a multiple of $k$).
Note that we allow $M=1$.  Choose $\theta$ so $\sin(k\theta) = 0$;
namely $\theta$ is an integer multiple of $\pi/k$.  Then, writing $b =
|b| e^{i\beta}$
\[
\lim_{|t| \to 0} \frac{1}{|t|^L} \Im \phi(t) = |b| \sin(\beta +
L\theta).
\]
This must be non-positive.  Otherwise, $\Im \phi(t)$ would be positive
for $|t|$ small enough and then we could perturb $\theta$ to get a
point where $(t^k, \phi(t))$ is in the upper half plane.  So, $\beta +
L \pi j/k \in [\pi, 2\pi]+2\pi \mathbb{Z}$ for $j=0,1,2,\dots$.  This
can only happen if $L$ is a multiple of $k$---by our assumption this
means $b$ is not real.  If $L$ is an odd multiple of $k$ then $\beta,
\beta +\pi \in [\pi,2\pi]$ which can only happen if $b$ is real which
is not true by assumption.  Thus, $L$ must be an even multiple of $k$
in which case $\beta \in (\pi,2\pi)$, again since $b$ is not real.
\end{proof}

\section*{Notation} 
We collect the notation of the paper in one place and refer to where
it was defined if possible.

\allowdisplaybreaks
\begin{align*}
\C &= \text{ the set of complex numbers }\\
\D &= \{z \in \C: |z| <1\} = \text{ the unit disk}\\
\D^2 & = \D \times \D = \text{ the bidisk}\\
\T &= \{z \in \C: |z|=1\} = \text{ the unit circle}\\
\T^2 &= \T\times \T = \text{ the two-torus}\\
\C^n &= \text{ $n$-dimensional column vectors with entries in $\C$}\\
\C^{1\times n} &= \text{ $n$-dimensional row vectors with entries in
  $\C$}\\
\C^{m \times n} &= \text{ $n\times m$ matrices with entries in $\C$}
\\
\C[z_1,z_2] &= \text{ polynomials in $z_1,z_2$ with coefficients in
  $\C$}\\
\deg p &= \text{ the bidegree of $p \in \C[z_1,z_2]$ }\\
V[z_1,z_2] &= \text{ polynomials in $z_1,z_2$ with coefficients in
  $V$} \\
L^2(\T^2) &= L^2 \text{ with respect to Lebesgue measure on $\T^2$}\\
\mathcal{I}_p &= \{q \in \C[z_1,z_2]: q/p \in L^2(\T^2)\} \\
\mcP_{j,k} &= \{q \in \mathcal{I}_p: \deg q \leq (j,k) \} \quad \text{
  Note $p$ is taken from context }\\
\refl{p}(z_1,z_2) &= z_1^nz_2^m\overline{p(1/\bar{z}_1,1/\bar{z}_2)} \text{
  for } p \in \C[z_1,z_2] \text { with } \deg p = (n,m)\\
\mathcal{I}_p^{\infty} &= \{ q \in \C[z_1,z_2]: q/p \in
L^{\infty}(\T^2)\} \\
N_{\T^2}(p,q) &= \text{ the number of common zeros of $p,q$ in
  $\T^2$,} \\
& \qquad \text{ counted with multiplicity as in Section
  \ref{background}}\\
\Lp &= L^2 \text{ space with respect to Lebesgue measure on $\T^2$
  times $\frac{1}{|p|^2}$} \\
RHP &= \{z\in \C: \Re z >0\} = \text{ right half plane}\\
\Hrow &= \text{ row-vector valued Hardy space on $\T$}\\
\mcE_j,\mcF_j,\mcG & \quad \text{ See Notation \ref{not:spaces} }\\
\vec{E}_j,\vec{F}_j,\vec{G} & \quad \text{ See Notation
  \ref{not:vecs}}\\
\oplus &= \text{ orthogonal direct sum in Hilbert space}\\
\Lambda_n &= \text{ See \eqref{Lam}}\\
E_j,F_j &= \text{ See \eqref{EFbreakup} }\\
X_n &= \text{ See \eqref{Xmat} }\\
 P &= \text{ projection onto $\mcG$ in Section \ref{seccomm}}\\
T_j &= \text{ the linear map on $\mcG$ given by $f \mapsto Pz_jf$}\\
\C_{\infty} & = \C\cup \{\infty\} = \text{ the Riemann sphere}\\
\D^{-1} &= \{z\in \C: |z|>1\} \cup \{\infty\} \subset \C_{\infty} \\
Z_Q &= \{z \in \C^2: Q(z) = 0\} \text{ for } Q \in \C[z_1,z_2]\\
N_{\lambda}(I) &= \text{ Intersection multiplicity of $\lambda$ in the
  ideal $I$; see Section \ref{background} }\\
\langle p,q\rangle &= \text{ the ideal generated by $p,q \in
  \C[z_1,z_2]$}\\
\ip{f}{g}_{\mcH} &= \text{ Inner product in Hilbert space $\mcH$}\\
N_{\lambda}(p,q) & = N_{\lambda}(\langle p ,q\rangle) \\ &= \text{
  intersection multiplicity of the common zero $\lambda$ of $p,q$} \\
f^{\#}(z_1,z_2) &= z_2^{m-1} f(z_1,1/z_2)\\
u &= (1,1,\dots, 1)\in \C^d \text{ in Section \ref{secnontan}}\\
\C_{+} &= \{z \in \C: \Im z >0\} = \text{ upper half plane}
\end{align*}

\begin{bibdiv}
\begin{biblist}

\bib{AM3point}{article}{
   author={Agler, Jim},
   author={McCarthy, John E.},
   title={The three point Pick problem on the bidisk},
   journal={New York J. Math.},
   volume={6},
   date={2000},
   pages={227--236 (electronic)},
   issn={1076-9803},
   review={\MR{1781508 (2001j:32007)}},
}

\bib{Pickbook}{book}{
   author={Agler, Jim},
   author={McCarthy, John E.},
   title={Pick interpolation and Hilbert function spaces},
   series={Graduate Studies in Mathematics},
   volume={44},
   publisher={American Mathematical Society, Providence, RI},
   date={2002},
   pages={xx+308},
   isbn={0-8218-2898-3},
   review={\MR{1882259 (2003b:47001)}},
}

\bib{AMhankel}{article}{
   author={Agler, Jim},
   author={McCarthy, John E.},
   title={Hankel vector moment sequences and the non-tangential regularity
   at infinity of two variable Pick functions},
   journal={Trans. Amer. Math. Soc.},
   volume={366},
   date={2014},
   number={3},
   pages={1379--1411},
   issn={0002-9947},
   review={\MR{3145735}},
   doi={10.1090/S0002-9947-2013-05952-1},
}

\bib{AMYcara}{article}{
   author={Agler, Jim},
   author={McCarthy, John E.},
   author={Young, N. J.},
   title={A Carath\'eodory theorem for the bidisk via Hilbert space methods},
   journal={Math. Ann.},
   volume={352},
   date={2012},
   number={3},
   pages={581--624},
   issn={0025-5831},
   review={\MR{2885589}},
   doi={10.1007/s00208-011-0650-7},
}

\bib{AMY}{article}{
   author={Agler, Jim},
   author={McCarthy, John E.},
   author={Young, N. J.},
   title={Operator monotone functions and L\"owner functions of several
   variables},
   journal={Ann. of Math. (2)},
   volume={176},
   date={2012},
   number={3},
   pages={1783--1826},
   issn={0003-486X},
   review={\MR{2979860}},
   doi={10.4007/annals.2012.176.3.7},
}

\bib{ATY}{article}{
   author={Agler, J.},
   author={Tully-Doyle, R.},
   author={Young, N. J.},
   title={Boundary behavior of analytic functions of two variables via
   generalized models},
   journal={Indag. Math. (N.S.)},
   volume={23},
   date={2012},
   number={4},
   pages={995--1027},
   issn={0019-3577},
   review={\MR{2991931}},
   doi={10.1016/j.indag.2012.07.003},
}

\bib{BSV}{article}{
   author={Ball, Joseph A.},
   author={Sadosky, Cora},
   author={Vinnikov, Victor},
   title={Scattering systems with several evolutions and multidimensional
   input/state/output systems},
   journal={Integral Equations Operator Theory},
   volume={52},
   date={2005},
   number={3},
   pages={323--393},
   issn={0378-620X},
   review={\MR{2184571 (2006h:47013)}},
   doi={10.1007/s00020-005-1351-y},
}

\bib{dirichlet}{misc}{
author={B\'en\'eteau, Catherine},
author={Knese, Greg},
author={Kosi\'nski, {\L}ukasz},
author={Liaw, Constanze},
author={Seco, Daniel},
author={Sola, Alan},
title={Cyclic polynomials in two variables},
status={submitted},
date={2014},
}

\bib{Bickel}{article}{
   author={Bickel, Kelly},
   title={Fundamental Agler decompositions},
   journal={Integral Equations Operator Theory},
   volume={74},
   date={2012},
   number={2},
   pages={233--257},
   issn={0378-620X},
   review={\MR{2983064}},
   doi={10.1007/s00020-012-1992-6},
}

\bib{BickelKnese}{article}{
   author={Bickel, Kelly},
   author={Knese, Greg},
   title={Inner functions on the bidisk and associated Hilbert spaces},
   journal={J. Funct. Anal.},
   volume={265},
   date={2013},
   number={11},
   pages={2753--2790},
   issn={0022-1236},
   review={\MR{3096989}},
   doi={10.1016/j.jfa.2013.08.002},
}

\bib{CW}{article}{
   author={Cole, Brian J.},
   author={Wermer, John},
   title={Ando's theorem and sums of squares},
   journal={Indiana Univ. Math. J.},
   volume={48},
   date={1999},
   number={3},
   pages={767--791},
   issn={0022-2518},
   review={\MR{1736979 (2000m:47014)}},
   doi={10.1512/iumj.1999.48.1716},
}

\bib{CLO}{book}{
   author={Cox, David A.},
   author={Little, John},
   author={O'Shea, Donal},
   title={Using algebraic geometry},
   series={Graduate Texts in Mathematics},
   volume={185},
   edition={2},
   publisher={Springer, New York},
   date={2005},
   pages={xii+572},
   isbn={0-387-20706-6},
   review={\MR{2122859 (2005i:13037)}},
}

\bib{Dangelo}{book}{
   author={D'Angelo, John P.},
   title={Several complex variables and the geometry of real hypersurfaces},
   series={Studies in Advanced Mathematics},
   publisher={CRC Press, Boca Raton, FL},
   date={1993},
   pages={xiv+272},
   isbn={0-8493-8272-6},
   review={\MR{1224231 (94i:32022)}},
}

\bib{dangeloineq}{book}{
   author={D'Angelo, John P.},
   title={Inequalities from complex analysis},
   series={Carus Mathematical Monographs},
   volume={28},
   publisher={Mathematical Association of America, Washington, DC},
   date={2002},
   pages={xvi+264},
   isbn={0-88385-033-8},
   review={\MR{1899123 (2003e:32001)}},
   doi={10.5948/UPO9780883859704},
}

\bib{Fischer}{book}{
   author={Fischer, Gerd},
   title={Plane algebraic curves},
   series={Student Mathematical Library},
   volume={15},
   note={Translated from the 1994 German original by Leslie Kay},
   publisher={American Mathematical Society, Providence, RI},
   date={2001},
   pages={xvi+229},
   isbn={0-8218-2122-9},
   review={\MR{1836037 (2002g:14042)}},
}
		
\bib{Fulton}{book}{
   author={Fulton, William},
   title={Algebraic curves},
   series={Advanced Book Classics},
   note={An introduction to algebraic geometry;
   Notes written with the collaboration of Richard Weiss;
   Reprint of 1969 original},
   publisher={Addison-Wesley Publishing Company, Advanced Book Program,
   Redwood City, CA},
   date={1989},
   pages={xxii+226},
   isbn={0-201-51010-3},
   review={\MR{1042981 (90k:14023)}},
}

\bib{GeronimoLai}{article}{
   author={Geronimo, Jeffrey S.},
   author={Lai, Ming-Jun},
   title={Factorization of multivariate positive Laurent polynomials},
   journal={J. Approx. Theory},
   volume={139},
   date={2006},
   number={1-2},
   pages={327--345},
   issn={0021-9045},
   review={\MR{2220044 (2007a:47023)}},
   doi={10.1016/j.jat.2005.09.010},
}

\bib{GW}{article}{
   author={Geronimo, Jeffrey S.},
   author={Woerdeman, Hugo J.},
   title={Positive extensions, Fej\'er-Riesz factorization and
   autoregressive filters in two variables},
   journal={Ann. of Math. (2)},
   volume={160},
   date={2004},
   number={3},
   pages={839--906},
   issn={0003-486X},
   review={\MR{2144970 (2006b:42036)}},
   doi={10.4007/annals.2004.160.839},
}

\bib{GWzeros}{article}{
   author={Geronimo, Jeffrey S.},
   author={Woerdeman, Hugo J.},
   title={Two-variable polynomials: intersecting zeros and stability},
   journal={IEEE Trans. Circuits Syst. I Regul. Pap.},
   volume={53},
   date={2006},
   number={5},
   pages={1130--1139},
   issn={1057-7122},
   review={\MR{2235187 (2007c:93050)}},
   doi={10.1109/TCSI.2005.862180},
}

 \bib{goodman}{article}{
   author={Goodman, Dennis},
   title={Some stability properties of two-dimensional linear
   shift-invariant digital filters},
   journal={IEEE Trans. Circuits and Systems},
   volume={CAS-24},
   date={1977},
   number={4},
   pages={201--208},
   issn={0098-4094},
   review={\MR{0444266 (56 \#2624)}},
}

\bib{KneseSchwarz}{article}{
   author={Knese, Greg},
   title={A Schwarz lemma on the polydisk},
   journal={Proc. Amer. Math. Soc.},
   volume={135},
   date={2007},
   number={9},
   pages={2759--2768 (electronic)},
   issn={0002-9939},
   review={\MR{2317950 (2008j:32004)}},
   doi={10.1090/S0002-9939-07-08766-7},
}

\bib{KneseDV}{article}{
   author={Knese, Greg},
   title={Polynomials defining distinguished varieties},
   journal={Trans. Amer. Math. Soc.},
   volume={362},
   date={2010},
   number={11},
   pages={5635--5655},
   issn={0002-9947},
   review={\MR{2661491 (2011f:47022)}},
   doi={10.1090/S0002-9947-2010-05275-4},
}

\bib{KneseAPDE}{article}{
   author={Knese, Greg},
   title={Polynomials with no zeros on the bidisk},
   journal={Anal. PDE},
   volume={3},
   date={2010},
   number={2},
   pages={109--149},
   issn={1948-206X},
   review={\MR{2657451 (2011i:42051)}},
   doi={10.2140/apde.2010.3.109},
}

\bib{KneseRIF}{article}{
   author={Knese, Greg},
   title={Rational inner functions in the Schur-Agler class of the polydisk},
   journal={Publ. Mat.},
   volume={55},
   date={2011},
   number={2},
   pages={343--357},
   issn={0214-1493},
   review={\MR{2839446 (2012k:47033)}},
   doi={10.5565/PUBLMAT\_55211\_04},
}

\bib{KneseSemi}{misc}{
author={Knese, Greg},
title={Determinantal representations for semi-hyperbolic polynomials},
date={2013},
status={submitted},
}

\bib{KM}{article}{
   author={Knorn, Steffi},
   author={Middleton, Richard H.},
   title={Stability of two-dimensional linear systems with singularities on
   the stability boundary using LMIs},
   journal={IEEE Trans. Automat. Control},
   volume={58},
   date={2013},
   number={10},
   pages={2579--2590},
   issn={0018-9286},
   review={\MR{3106063}},
   doi={10.1109/TAC.2013.2264852},
}

\bib{Kummert}{article}{
   author={Kummert, Anton},
   title={Synthesis of two-dimensional lossless $m$-ports with prescribed
   scattering matrix},
   journal={Circuits Systems Signal Process.},
   volume={8},
   date={1989},
   number={1},
   pages={97--119},
   issn={0278-081X},
   review={\MR{998029 (90e:94048)}},
   doi={10.1007/BF01598747},
}

\bib{Lind1}{article}{
   author={Lind, Douglas},
   author={Schmidt, Klaus},
   title={Homoclinic points of algebraic ${\bf Z}^d$-actions},
   journal={J. Amer. Math. Soc.},
   volume={12},
   date={1999},
   number={4},
   pages={953--980},
   issn={0894-0347},
   review={\MR{1678035 (2000d:37002)}},
   doi={10.1090/S0894-0347-99-00306-9},
}

\bib{Lind2}{article}{
   author={Lind, Douglas},
   author={Schmidt, Klaus},
   author={Verbitskiy, Evgeny},
   title={Homoclinic points, atoral polynomials, and periodic points of
   algebraic $\mathbb{Z}^d$-actions},
   journal={Ergodic Theory Dynam. Systems},
   volume={33},
   date={2013},
   number={4},
   pages={1060--1081},
   issn={0143-3857},
   review={\MR{3082539}},
   doi={10.1017/S014338571200017X},
}

\bib{Pascoe}{misc}{
author={Pascoe, J.E.},
title={An inductive Julia-Carath\'eodory theorem for Pick functions in
  two variables},
status={preprint},
}

\bib{Rosenblatt}{article}{
   author={Rosenblatt, Murray},
   title={A multi-dimensional prediction problem},
   journal={Ark. Mat.},
   volume={3},
   date={1958},
   pages={407--424},
   issn={0004-2080},
   review={\MR{0092332 (19,1098c)}},
}

\bib{Rosenblum}{article}{
   author={Rosenblum, Marvin},
   title={Vectorial Toeplitz operators and the Fej\'er-Riesz theorem},
   journal={J. Math. Anal. Appl.},
   volume={23},
   date={1968},
   pages={139--147},
   issn={0022-247x},
   review={\MR{0227794 (37 \#3378)}},
}

\bib{rudin}{book}{
   author={Rudin, Walter},
   title={Function theory in polydiscs},
   publisher={W. A. Benjamin, Inc., New York-Amsterdam},
   date={1969},
   pages={vii+188},
   review={\MR{0255841 (41 \#501)}},
}

\bib{Scheinker}{article}{
   author={Scheinker, David},
   title={Hilbert function spaces and the Nevanlinna-Pick problem on the
   polydisc II},
   journal={J. Funct. Anal.},
   volume={266},
   date={2014},
   number={1},
   pages={355--367},
   issn={0022-1236},
   review={\MR{3121734}},
   doi={10.1016/j.jfa.2013.07.027},
}

\bib{shaf}{book}{
   author={Shafarevich, Igor R.},
   title={Basic algebraic geometry. 1},
   edition={3},
   edition={Translated from the 2007 third Russian edition},
   note={Varieties in projective space},
   publisher={Springer, Heidelberg},
   date={2013},
   pages={xviii+310},
   isbn={978-3-642-37955-0},
   isbn={978-3-642-37956-7},
   review={\MR{3100243}},
}

\bib{simon}{book}{
   author={Simon, Barry},
   title={Orthogonal polynomials on the unit circle. Part 1},
   series={American Mathematical Society Colloquium Publications},
   volume={54},
   note={Classical theory},
   publisher={American Mathematical Society, Providence, RI},
   date={2005},
   pages={xxvi+466},
   isbn={0-8218-3446-0},
   review={\MR{2105088 (2006a:42002a)}},
}

\end{biblist}
\end{bibdiv}

\end{document}